\newtheorem{theorem}{Theorem}[subsection]
\newtheorem{proposition}[theorem]{Proposition}
\newtheorem{lemma}[theorem]{Lemma}
\newtheorem{corollary}[theorem]{Corollary}
\newtheorem{properties}[theorem]{Properties}
\theoremstyle{remark}
\newtheorem{remark}[theorem]{Remark}
\theoremstyle{definition}
\newtheorem{definition}[theorem]{Definition}
\theoremstyle{remark}
\newtheorem{example}[theorem]{Example}
\theoremstyle{definition}
\crefname{theorem}{Theorem}{Theorems}
\crefname{definition}{Definition}{Definitions}
\crefname{lemma}{Lemma}{Lemmas}
\crefname{proposition}{Proposition}{Propositions}
\crefname{corollary}{Corollary}{Corollaries}
\crefname{equation}{Equation}{Equations}
\crefname{condition}{Condition}{Conditions}
\newcommand{\numberthis}{\refstepcounter{equation}\tag{\arabic{equation}}}
\newcommand{\Z}{\mathbb{Z}}
\newcommand{\F}{\mathbb{F}}
\newcommand{\Q}{\mathbb{Q}}
\newcommand{\m}{\mathfrak{m}}
\renewcommand{\subset}{\subseteq}
\DeclareMathOperator{\Spf}{Spf}
\DeclareMathOperator{\Spec}{Spec}
\DeclareMathOperator{\End}{End}
\DeclareMathOperator{\cont}{cont}
\DeclareMathOperator{\soc}{soc}
\DeclareMathOperator{\cosoc}{cosoc}
\DeclareMathOperator{\Ext}{Ext}
\DeclareMathOperator{\Hom}{Hom}
\DeclareMathOperator{\Inj}{Inj}
\DeclareMathOperator{\Proj}{Proj}
\DeclareMathOperator{\JH}{JH}
\DeclareMathOperator{\GL}{GL}
\DeclareMathOperator{\SL}{SL}
\DeclareMathOperator{\Res}{Res}
\DeclareMathOperator{\res}{res}
\DeclareMathOperator{\Rep}{Rep}
\DeclareMathOperator{\Rad}{Rad}
\DeclareMathOperator{\sgn}{sgn}
\DeclareMathOperator{\Coker}{Coker}
\DeclareMathOperator{\Sym}{Sym}
\DeclareMathOperator{\Adm}{Adm}
\DeclareMathOperator{\Disc}{Disc}
\DeclareMathOperator{\HT}{HT}
\DeclareMathOperator{\Mod}{Mod}
\DeclareMathOperator{\reg}{reg}
\DeclareMathOperator{\Mat}{Mat}
\DeclareMathOperator{\inv}{inv}
\DeclareMathOperator{\triv}{triv}
\DeclareMathOperator{\adj}{adj}
\DeclareMathOperator{\Supp}{Supp}
\DeclareMathOperator{\Frob}{Frob}
\DeclareMathOperator{\Gal}{Gal}
\DeclareMathOperator{\poly}{poly}
\DeclareMathOperator{\Id}{Id}
\DeclareMathOperator{\et}{\textrm{\small \'et}}
\title{On Breuil's Lattice Conjecture for $\GL_2$}
\author{Hymn Chan\footnote{Department of Mathematics, University of Toronto, 40 St. George St., BA6290, Toronto, ON M5S 2E4, Canada}}
\date{\today}
\begin{document}
\maketitle
\begin{abstract}
    We prove Breuil's lattice conjecture for higher Hodge--Tate weights in the case of $\GL_2(K)$ where $K$ is an unramified extension of $\Q_p$. More precisely, under some genericity conditions, we show that the lattice inside a locally algebraic type induced by the completed cohomology of a $U(2)$-arithmetic manifold depends only on the Galois representation at the fixed place above $p$ for arbitrary Hodge--Tate weights, which are small relative to $p$. We further prove that the patched modules of all lattices inside the locally algebraic types with irreducible cosocle are cyclic.\par
    One key input of the paper is a structure theorem for mod $p$ representations of $\GL_2(\mathcal{O}_K)$, which are residually multiplicity free and of finite length. Another input is an explicit computation of universal framed Galois deformation rings, which parameterize potentially crystalline lifts with fixed tame inertial types and higher Hodge--Tate weights. 
\end{abstract}
\tableofcontents
\section{Introduction}
\subsection{Breuil's lattice conjecture} The motivation for this paper is the $p$-adic Langlands Program. Let $E$ be a finite extension of $\Q_p$, which is sufficiently large, with ring of integer $\mathcal{O}$, uniformizer $\varpi$ and residue field $\F$. Roughly speaking, the $p$-adic Langlands Program is looking for a correspondence between continuous representations $\rho:G_K\to \GL_n(E)$ and certain unitary Banach space representations of $\GL_n(K)$ over $E$. Taking modulo $p$ on both sides, we expect a mod $p$ Langlands correspondence. When $n=2$ and $K=\Q_p$, we have a precise $p$-adic and mod $p$ Langlands correspondence due to the culmination of works by Breuil, Colmez, Pa{\v s}k\=unas and many others \cite{modplanglands}, \cite{padiclanglands} and \cite{padiclanglandspaskunas}. However, little is known beyond this case, even when $n=2$ and $K$ is unramified over $\Q_p$.\par
Indeed, Breuil and Pa{\v s}k\=unas prove that there are too many smooth admissible mod $p$ representations of $\GL_2(K)$ to have a naive correspondence with Galois representations $G_K\to \GL_2(\overline{\F}_{p})$ when $K\neq \Q_p$ \cite{breuil2012towards}. In order to determine which representations should appear in the correspondence, we need guiding principles from the global setting. Emerton proves that the $p$-adic Langlands correspondence for $\GL_2(\Q_p)$ is realized in the completed cohomology of modular curves \cite{localglobal}. Therefore, the $p$-adic and mod $p$ Langlands correspondences for $\GL_2(K)$ are expected to be realized in the completed cohomology of Shimura curves or arithmetic manifolds as follows. Fix a CM field $F$ with totally real field $F^+$ with $p$ unramified in $F$, and let $v$ be a place dividing $p$ in $F^+$ and splits in $F$ and let $w$ be a place lying above $v$ in $F$. Let $E/\Q_p$ be sufficiently large with ring of integer $\mathcal{O}$, uniformizer $\varpi$ and residue field $\F$. Let $r\colon G_F\to \GL_2(E)$ be an automorphic Galois representation with $\overline{r}$ absolutely irreducible. We are looking for a smooth admissible representation $\pi(r_w)$ of $\GL_2(F_w)$ corresponding to $r_w\colonequals r|_{G_{F_w}}$. This $\pi(r_w)$ is constructed in \cite{CEG}, and we would like to show that $\pi(r_w)$ depends only on $r_w$ but not on the choices made in the global set-up. Breuil suggested a lattice conjecture \cite{Breuiloriginal}, which provides evidence for such a claim. Assume that $r_w$ is potentially crystalline with Hodge--Tate weight $\lambda$ and tame inertial type $\tau$. Using the inertial local Langlands correspondence (proven by Henniart in the appendix of \cite{Breuil-Mezard}), we have a locally algebraic type $\sigma( \lambda,\tau)$, which is a representation of $\GL_2(\mathcal{O}_{F_w})$ over $E$, associated to $\lambda$ and $\tau$.
We write $M(K,\mathcal{O})$ for the cohomology of an appropriate Shimura curve or arithmetic manifold at level $K$. Let $\mathfrak{m}$ be the maximal ideal in the Hecke algebra corresponding to $\overline{r}$ and $\mathfrak{p}$ for the kernel of the system of Hecke eigenvalues given by $r$. Then we can consider \[\widetilde{H}[\mathfrak{p}]=\varprojlim_n\; \varinjlim_{K_p}M(K_pK^p,\mathcal{O}/\varpi^n)_{\mathfrak{m}}[\mathfrak{p}]\] where the direct limit is over all compact open subgroups of $\GL_2(F_w)$. 
Then, by local-global compatibility, $\widetilde{H}[\mathfrak{p}][\frac{1}{p}]$ as a $\GL_2(\mathcal{O}_{F_w})$-representation contains $ \sigma(\lambda,\tau)$ with multiplicity one. 
Then, the cohomology with integral coefficients $\widetilde{H}[\mathfrak{p}]$ determines a $\GL_2(\mathcal{O}_{F_w})$-stable lattice $\sigma^\circ(\lambda,\tau)$ inside $\sigma(\lambda, \tau)$. When $r_w$ is potentially Barsotti--Tate, using Shimura curves (and using a totally real field $F$ instead), Breuil showed that there are many homothety classes of lattices in $\sigma(\lambda,\tau)$, and conjectured that $\sigma^\circ(\lambda,\tau)$ is a local invariant and is determined by the Dieudonn\'e module of $r_w$. Under a genericity condition on $\overline{r}_w$ and usual Taylor--Wiles conditions, Breuil's original lattice conjecture was proven by Emerton, Gee and Savitt \cite{EGS}. Note that $r_w$ is potentially Barsotti--Tate if and only if $r_w$ has Hodge--Tate weight $(1,0)$ at all embeddings \cite{barsottitatedef}. 
In this paper, our main theorem generalizes the result to higher Hodge--Tate weights, as predicted by \cite{EGS}. (The notion of $m$-generic for $\overline{r}_w$ and for $\tau$ is defined in \cref{notation for Gal def ring}.)
\begin{theorem}\label{lattice conjecture theorem}(\cref{lattice conjecture})
Fix $n\geq 1$. If the gap of Hodge--Tate weights of $r_w$ are between $1$  and $n$, $\overline{r}_w$ and $\tau$ are sufficiently generic (depending on $n$), then the lattice $\sigma^\circ(\lambda,\tau)$ depends only on $r_w$.
\end{theorem}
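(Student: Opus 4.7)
The plan is to transport the statement into local representation theory and local Galois deformation theory via the Taylor-Wiles-Kisin patching construction, following the strategy pioneered by Emerton-Gee-Savitt in the potentially Barsotti-Tate case and upgrading the inputs to higher weight. Concretely, I would first patch the completed cohomology $\widetilde H$ to obtain a finitely generated $R_\infty$-module $M_\infty$, where $R_\infty = R_{\overline r}^{\mathrm{loc}} \widehat{\otimes}_{\mathcal{O}} \mathcal{O}[[y_1,\ldots,y_g]]$ contains as a tensor factor the framed local deformation ring at $w$. Any continuous $\GL_2(\mathcal{O}_{F_w})$-representation $\sigma$ on a finite $\mathcal{O}$-module produces a patched module $M_\infty(\sigma)$, and at the relevant Hecke maximal ideal $\mathfrak{p}$ the module $M_\infty(\sigma^\circ(\lambda,\tau))$ recovers the homothety class of the lattice $\sigma^\circ(\lambda,\tau)$ inside $\sigma(\lambda,\tau)$. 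It therefore suffices to show that the $R_\infty$-module $M_\infty(\sigma^\circ)$, attached to a $\GL_2(\mathcal{O}_{F_w})$-stable lattice $\sigma^\circ \subset \sigma(\lambda,\tau)$, depends only on $r_w$.

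The local claim to aim for is the second main theorem advertised in the abstract: for every $\sigma^\circ$ with irreducible $\GL_2(\mathcal{O}_{F_w})$-cosocle, $M_\infty(\sigma^\circ)$ is a cyclic module over $R_\infty^{\lambda,\tau}$, where $R_\infty^{\lambda,\tau}$ is the quotient cutting out the potentially crystalline locus of type $(\lambda,\tau)$. Once cyclicity holds, $M_\infty(\sigma^\circ) \cong R_\infty^{\lambda,\tau}/J$ for an ideal $J$ determined by the support cycle and the cosocle, both of which are intrinsic to $r_w$ (the former via the geometric Breuil-Mezard formula, the latter via Serre weight considerations). I would prove cyclicity by induction on lattices ordered by inclusion. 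The base case treats the smallest lattice with a given cosocle, whose mod $\varpi$ reduction is a single Serre weight $\overline\sigma \subset \overline{\sigma(\lambda,\tau)}^{\mathrm{ss}}$, and reduces to a Nakayama argument on $M_\infty(\overline\sigma)/\varpi$ using the explicit presentation of $R_\infty^{\lambda,\tau}/\varpi$ computed in the paper. The inductive step uses short exact sequences $0 \to \sigma_1^\circ \to \sigma_2^\circ \to Q \to 0$ of lattices inside $\sigma(\lambda,\tau)$: the cosocle of $Q$ pinpoints the new generator of $M_\infty(\sigma_2^\circ)$, while the structure theorem for residually multiplicity-free finite length mod $p$ $\GL_2(\mathcal{O}_K)$-representations organizes the possible $Q$, and, together with the deformation ring computation, forces the extension data on the $R_\infty^{\lambda,\tau}$ side to match that on the $\GL_2(\mathcal{O}_{F_w})$ side.

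The main obstacle is precisely matching the two sides of the geometric Breuil-Mezard picture in higher weight. In the Barsotti-Tate case the Jordan-Hölder graph of $\overline{\sigma(\lambda,\tau)}$ is multiplicity-free with simple extension combinatorics (the Breuil-Paskunas labeled cube), and the potentially crystalline deformation ring decomposes transparently into components indexed by Serre weights. For Hodge-Tate weights in $\{0,\ldots,n\}$ with $n \geq 2$, both features degrade: Serre weights reappear with multiplicities, extension classes proliferate among them, and the irreducible components of $R_\infty^{\lambda,\tau}$ are no longer immediate. The twin inputs of the paper—a structure theorem for residually multiplicity-free $\GL_2(\mathcal{O}_K)$-representations, and an explicit computation of the universal framed potentially crystalline deformation ring with prescribed tame inertial type and higher weight—are exactly what is needed to tame this combinatorics; assembling them to propagate cyclicity through the entire lattice poset is where the bulk of the technical effort will sit.
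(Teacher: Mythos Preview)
Your proposal identifies the right global-to-local bridge (patching plus the identity $(M_\infty(\sigma^\circ)/\mathfrak{p})^\vee \cong \Hom_{\GL_2(\mathcal{O}_{F_w})}(\sigma^\circ,\widetilde H[\mathfrak{p}])$), but the mechanism you propose for deducing the lattice conjecture from cyclicity does not work, and the paper does not go this route.

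The gap is in the sentence ``Once cyclicity holds, $M_\infty(\sigma^\circ)\cong R_\infty^{\lambda,\tau}/J$ for an ideal $J$ determined by the support cycle and the cosocle.'' Under the genericity hypotheses, $R_\infty^{\lambda,\tau}$ is a normal domain and $M_\infty(\sigma_\kappa)$ is maximal Cohen--Macaulay and generically free of rank one over it; cyclicity therefore forces $J=0$ for \emph{every} $\kappa$. So each $M_\infty(\sigma_\kappa)$ is abstractly isomorphic to $R_\infty^{\lambda,\tau}$, and cyclicity alone carries no information about how the $\sigma_\kappa$ sit inside one another. What pins down the lattice $\sigma^\circ(\lambda,\tau)$ is the collection of saturation indices $\epsilon_\kappa(\sigma^\circ)$, and those come from the \emph{relative} position of the $M_\infty(\sigma_\kappa)$ inside a common ambient module---information invisible to the individual isomorphism classes.

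What the paper actually does is compute these relative positions directly. For $\kappa,\kappa'$ adjacent in the extension graph it exhibits an explicit element $\varpi(\kappa,\kappa')\in R^{\lambda,\tau}_{\overline r_w}$ with $\varpi(\kappa,\kappa')\,M_\infty(\sigma_{\kappa'})=M_\infty(\sigma_\kappa)$, by first identifying $\JH(\sigma_{\kappa'}/\sigma_\kappa)$ via the structure theorem for $I(\sigma,\tau)$ and then reading off the scheme-theoretic support of $M_\infty(\sigma_{\kappa'}/\sigma_\kappa)$ from the explicit presentation of $R^{\lambda,\tau}_{\overline r_w}$. Iterating along paths in the graph yields $M_\infty(\sigma_{\kappa_\circ})=\varpi(\kappa)\,M_\infty(\sigma_\kappa)$ for a fixed base point $\kappa_\circ$. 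Specialising at $\mathfrak{p}$ and using the gauge decomposition $\sigma^\circ=\sum_\kappa p^{v(\kappa)}\sigma_\kappa$ then shows $v(\kappa)=\mathrm{val}_p(\varpi_{\mathfrak{p}}(\kappa))$, and since $\varpi(\kappa)$ lives in the local deformation ring this depends only on $r_w$. Cyclicity is proved separately in the paper and plays no role here; moreover its proof is not the lattice-induction you sketch but rather a reduction of $M_\infty(\overline\sigma_\kappa)$ to $M_\infty$ of a quotient of a lattice in some Barsotti--Tate type $\sigma(\tau')$, where the EGS result applies. (A smaller point: under the genericity assumptions $\overline\sigma(\lambda,\tau)$ is multiplicity free, so the difficulty you anticipate with repeated Serre weights does not arise.)
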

In \cref{lattice conjecture}, we give an explicit formula for the lattice in terms of the Breuil--Kisin modules associated to $r_w$. In order to allow the Hodge--Tate weights to vary at different embeddings and avoid parity issues, we use arithmetic manifolds associated with unitary groups, instead of an inner form of $\GL_2$.
\subsection{Representation theory result}
Given a Serre weight $\kappa\in \JH(\overline{\sigma}(\lambda,\tau))$, we can find a unique, up to homothety, $\GL_2(\mathcal{O}_{F_w})$-invariant lattice inside $\sigma(\lambda,\tau)$, with cosocle $\kappa$, which we label as $\sigma_\kappa$. Inspired by the approach of \cite{EGS}, we study the cosocle filtration of $\sigma_\kappa$, and of its reduction $\overline{\sigma}_\kappa$. We call an irreducible representation of $\GL_2(\mathcal{O}_K)$ over $\F$ a \emph{Serre weight}; equivalently, it is an irreducible representation of $\Gamma\colonequals \GL_2(k)$ over $\F$. Putting an edge between two Serre weights for which there exists a nontrivial extension, we form an extension graph \cite{LMS} (with the idea coming from \cite{LatticeforGL3}). Let $\Inj_\Gamma \sigma$ be the injective envelope of $\sigma$ in the category of representations of $\GL_2(k)$ over $\F$. Assuming $\sigma,\tau$ are Serre weights, with $\tau$ an irreducible subquotient of $\Inj_{\Gamma}\sigma$, Breuil and Pa{\v s}k\=unas showed that there is a unique representation $I(\sigma,\tau)$ with socle $\sigma$ and cosocle $\tau$, which is multiplicity free and whose cosocle filtration is given by the extension graph between $\tau$ and $\sigma$ \cite[\S3-4]{breuil2012towards}. However, these injective envelopes in the category of representation of $\GL_2(k)$ over $\F$ are too small for our purposes. Let $K_1$ be the first congruence subgroup of $\GL_2(\mathcal{O}_K)$, $Z$ be the centre of $\GL_2(\mathcal{O}_K)$, and let $Z_1\colonequals Z\cap K_1$. We have the Iwasawa algebra $\F\llbracket K_1/Z_1\rrbracket$ which is local with maximal ideal $\mathfrak{m}_{K_1}$. We abuse notation and denote the ideal generated by the image of $\m_{K_1}$ under the natural inclusion $\F\llbracket K_1/Z_1\rrbracket\hookrightarrow\F\llbracket\GL_2(\mathcal{O}_K)/Z_1\rrbracket$ also as $\m_{K_1}$. Then $\F[\Gamma]=\F\llbracket \GL_2(\mathcal{O}_K)/Z_1\rrbracket/\m_{K_1}$. Instead of representations of $\GL_2(k)$ over $\F$, we consider representations of $\GL_2(\mathcal{O}_K)$ over $\F$ killed by $\m_{K_1}^n$ for some fixed positive integer $n$. Let $\Inj \sigma$ be the injective envelope of $\sigma$ in the category of smooth representations of $\GL_2(\mathcal{O}_K)$ over $\F$. We generalize the results in \cite[3-4]{breuil2012towards}($n=1$) and \cite[2]{HuWang2}($n=2$), and obtain the following theorem. (The notion of $m$-generic Serre weight is defined in \cref{definition in general}.)
\begin{theorem}\label{representation result}(\cref{Main theorem on generalization})
    Assuming $\sigma, \tau$ are Serre weights, which are $(2n-1)$-generic, and $\tau\in\JH((\Inj \sigma)[\mathfrak{m}_{K_1}^n]) $, there is a unique multiplicity-free representation $I(\sigma, \tau)$ of $\Inj \sigma$ with cosocle $\tau$. Moreover, the cosocle filtration of $I(\sigma, \tau)$ is determined by the extension graph between $\tau$ and $\sigma$. In particular, if $\tau\in \JH(\Inj_\Gamma \sigma)$, then $I(\sigma,\tau)$ recovers the $\Gamma$-representation defined in \cite[Corollary~3.12]{breuil2012towards}.
\end{theorem}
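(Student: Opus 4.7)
My plan is to construct $I(\sigma,\tau)$ as a specific subrepresentation of $\Inj\sigma$ and read off its cosocle filtration simultaneously, running an outer induction on $n$ with the cases $n=1$ from \cite{breuil2012towards} and $n=2$ from \cite{HuWang2} as base cases, and an inner induction on the distance $d(\tau,\sigma)$ in the extension graph between $\tau$ and $\sigma$. The conceptual backbone would be a Pontryagin duality between smooth $\m_{K_1}^n$-torsion representations of $\GL_2(\mathcal{O}_K)/Z_1$ and finitely generated modules over $\F\llbracket \GL_2(\mathcal{O}_K)/Z_1\rrbracket/\m_{K_1}^n$, under which subrepresentations of $(\Inj\sigma)[\m_{K_1}^n]$ with cosocle $\tau$ correspond to quotients of a projective cover of $\sigma^\vee$ with socle $\tau^\vee$. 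This would let me move freely between ``dualising'' statements about cosocles and statements about socles.

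I would establish uniqueness and multiplicity-freeness together. If $V_1,V_2\subset\Inj\sigma$ are both multiplicity-free with cosocle $\tau$, then $V_1+V_2$ has cosocle containing $\tau$ with multiplicity, forcing $V_1=V_2$. It thus suffices to produce \emph{some} multiplicity-free candidate with cosocle $\tau$, which I would define recursively by setting $I(\sigma,\sigma):=\sigma$ and, for $d(\tau,\sigma)>0$, letting $I(\sigma,\tau)$ be the subrepresentation of $\Inj\sigma$ generated by the union of the previously-constructed $I(\sigma,\tau')$ (as $\tau'$ runs through neighbours of $\tau$ strictly closer to $\sigma$) together with a canonical non-split extension by $\tau$. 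The $(2n-1)$-genericity hypothesis is designed to ensure (i) the one-dimensionality of the $\Ext^1$-groups controlling these extensions inside the $\m_{K_1}^n$-torsion category, making the extensions canonical, and (ii) the pairwise distinctness of all Serre weights appearing at graph distance $\leq n$ on minimal paths between $\sigma$ and $\tau$, which is exactly what is needed for multiplicity-freeness. Compatibility with \cite[Corollary 3.12]{breuil2012towards} in the case $\tau\in\JH(\Inj_\Gamma\sigma)$ is then immediate, since the entire construction takes place inside $(\Inj\sigma)[\m_{K_1}]=\Inj_\Gamma\sigma$ and literally reproduces the Breuil-Pa\v{s}k\={u}nas subrepresentation.

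The main obstacle I expect is the Ext computation between Serre weights inside the $\m_{K_1}^n$-torsion category itself. Beyond the cases $n\leq 2$ one has to analyse how new cosocle layers arise from the graded pieces $\m_{K_1}^{k-1}(\Inj\sigma)/\m_{K_1}^k(\Inj\sigma)$ for $3\leq k\leq n$, and check whether the Serre weights appearing there collide with ones already present at lower levels. I would attack this by exploiting the principal series structure of $\Inj\sigma$ modulo powers of $\m_{K_1}$ together with an Iwahori-level Mackey decomposition, which should let me lift extensions one $\m_{K_1}$-layer at a time and reduce each lifting problem to an explicit computation on the Iwasawa algebra $\F\llbracket K_1/Z_1\rrbracket$. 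The $(2n-1)$ in the genericity bound should track the cumulative shift that a Serre weight parameter can accumulate after at most $n$ edge traversals in each of the two directions between $\sigma$ and $\tau$: each edge shifts a weight parameter by a bounded amount, and the genericity margin must be wide enough so that every intermediate weight remains in the generic locus where the Ext computation and the extension-graph combinatorics do not degenerate.
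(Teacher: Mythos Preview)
Your uniqueness argument has a genuine gap. From $V_1\neq V_2$ (neither containing the other) you can deduce that $\cosoc(V_1+V_2)$ surjects onto $\tau\oplus\tau$, hence $[V_1+V_2:\tau]\ge 2$; but this is not yet a contradiction, since $V_1+V_2$ is not assumed multiplicity-free and $\Inj_n\sigma$ itself has $[\Inj_n\sigma:\tau]>1$ in general. The inequality you implicitly need, $[W:\sigma]\ge[W:\tau]$ for every $W\subset\Inj_n\sigma$, is exactly the paper's \cref{cor on multiplicities}, and that is a \emph{consequence} of the theorem, not an input to it. Neither of your inductive hypotheses helps here: $V_1+V_2$ is $\m_{K_1}^n$-torsion (so the outer induction on $n$ does not apply) and has a cosocle constituent at the current distance $d(\tau,\sigma)$ (so the inner induction on distance does not apply either).

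The paper's route is genuinely different, and the difference matters. Rather than constructing $I(\sigma,\tau)$ layer by layer in graph distance, \cref{Main theorem on generalization} starts from an \emph{arbitrary} $V\subset\Inj_n\sigma$ with $\cosoc V=\tau$ and $[V:\sigma]=1$ (such $V$ exist by \cref{lemma 10}) and proves that $V$ is automatically multiplicity-free and unique; this is strictly stronger than what you are targeting. The induction is on lexicographic pairs $(n,m)$, where $\tau$ is an $m$-weight, and the argument slices $V$ by the $\m_{K_1}$-filtration $V^k=V[\m_{K_1}^k]$: for $m<n$ one forces $V=V^{n-1}$ by controlling $\soc(V/V^{n-1})$, and for $m=n$ one first pins down $\soc(V/V^1)$ exactly, then passes to quotients like $V/I(\sigma,\tau^{(i)})$ whose socle lies at distance one so the inductive hypothesis bites. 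Uniqueness (\cref{uniqueness when m=n}) is the hardest step and is what forces the $(2n-1)$-genericity bound: one writes $V$ as an extension of explicit subquotients $\widetilde W^i$ by $W^i$ and shows $\dim_\F\Ext^1_{K/Z_1}(\widetilde W^i,W^i)\le 1$ via d\'evissage using the $\Ext^1$-vanishing of \cite[Lemma~2.4.6]{BHHMS}. Your proposed Mackey/Iwahori attack on the Ext spaces is not what the paper uses, and it is unclear how it would yield the required one-dimensionality; this is precisely the point where your ``canonical non-split extension by $\tau$'' step needs real content.
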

This theorem not only allows us to deduce the submodule structure of $\sigma_\kappa$, but also allows us to deduce that certain subquotients of $\sigma_\kappa$ are $\Gamma$-representations.
\subsection{Galois deformation ring result}
Another key input for the proof of the lattice conjecture is the notion of a patching functor, which was first developed in \cite{EGS}. We let $R_{\infty}$ be a suitable power series ring over $R^{\square}_{\overline{r}_w}$, the universal framed deformation ring of $\overline{r}_w$. A patching functor $M_\infty$ is a functor from the category of finitely generated $\mathcal{O}$-modules with a continuous $\GL_2(\mathcal{O}_{F_w})$-action to the category of coherent sheaves over $R_\infty$ satisfying some natural properties. A fundamental property of a patching functor is that
\begin{equation}\label{moding out augmented ideal intro}
\Hom_{\GL_2(\mathcal{O}_{F_w})}(\sigma_\kappa, \widetilde{H}[\mathfrak{p}])=(M_\infty(\sigma_\kappa)/\mathfrak{p})^\vee
\end{equation}
for a prime ideal $\mathfrak{p}\subset R_\infty$ corresponding to $r_w$. Let $R^{\lambda,\tau}_{\overline{\rho}}$ (resp. $R^{\leq \lambda,\tau,\reg}_{\overline{\rho}}$) be the Galois deformation ring which parametrizes potentially crystalline lifts of $\overline{\rho}$ with Hodge Tate weight $\lambda$ (resp. regular $\lambda'\leq \lambda$) and with inertial type $\tau$. As the action of $R_\infty$ on $M_\infty (\sigma_\kappa)$ factors through $R_\infty\otimes_{R_{\overline{r}_w}^\square}R^{\lambda,\tau}_{\overline{r}_w} $, we need to compute the Galois deformation rings $R^{\lambda, \tau}_{\overline{r}_w}$ with explicit genericity bounds. (The results in \cite{localmodel} are insufficient for our purpose, as the genericity bound is not explicit.)\par
We generalize the result in \cite[4]{BHHMS}, which is based on the method developed in \cite{potentiallycrystalline} and \cite{localmodel}. Let $\lambda_j\colonequals (\ell_j,0)$ for some positive integers $\ell_j$ for each $j$ and let $n\colonequals \max\{\ell_j\}$. Let $W(\overline{\rho})$ denote the set of modular Serre weights of $\overline{\rho}$ defined in \cite{BDJ}. We compute some explicit height and monodromy conditions and deduce that
\[R_{\overline{\rho}}^{\leq\lambda, \tau, \reg}\llbracket X_1, \dots,X_{2f}\rrbracket \cong(R/\sum_j I^{(j)})\llbracket Y_1,\dots,Y_4\rrbracket,\]
where $R$ is a certain power series ring over $\mathcal{O}$, and $I^{(j)}$ is generated by a set of equations that are explicit modulo $p^{n}$ and $\reg$ denotes the quotient that kills all components of non-maximal dimension. On the other hand, from these explicit equations we can deduce that $p^{2n+1}\in H$, where $H$ is the ideal used in Elkik's approximation. With these two calculations, we deduce the following theorem.
\begin{theorem}\label{Galois deformation ring intro}(\cref{normal domain})
    Assume that $\overline{\rho}$ is $(4n+1)$-generic and the tame type $\tau$ is $(2n+1)$-generic. If $W(\overline{\rho})\cap \JH(\overline{\sigma}(\lambda,\tau))\neq \emptyset$, then
    \[R^{\lambda,\tau}_{\overline{\rho}}\cong \mathcal{O}\llbracket(x_j,y_j)_{j=1}^m, Z_1,\ldots, Z_{f-m+4}\rrbracket/(x_jy_j-p)_{1\leq j\leq m}\]
    for some positive integer $m$. Recall that $m$ is determined by $2^m=|W(\overline{\rho})\cap \JH(\overline{\sigma}(\lambda,\tau))|$. In particular, $R^{\lambda,\tau}_{\overline{\rho}}$ is a normal domain and a complete intersection ring. Moreover, the special fibre $\overline{R}^{\lambda,\tau}_{\overline{\rho}}$ is reduced, and every component of the special fibre is formally smooth over $\F$ and can be identified explicitly with $W(\overline{\rho})\cap \JH(\overline{\sigma}(\lambda,\tau))$ via the isomorphism above.
\end{theorem}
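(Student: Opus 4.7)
The plan is to follow the integral $p$-adic Hodge theory computation of \cite{BHHMS}, which in turn builds on \cite{potentiallycrystalline} and \cite{localmodel}, but to track the genericity bounds explicitly so that the allowed Hodge-Tate weights can grow with $n$.

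First, I would describe $R^{\leq\lambda,\tau,\reg}_{\overline{\rho}}$ via rank-two Breuil-Kisin modules with descent datum of type $\tau$ and height bounded by $\lambda$. After choosing a basis adapted to a fixed Serre weight in $W(\overline{\rho}) \cap \JH(\overline{\sigma}(\lambda,\tau))$, the Frobenius matrix at embedding $j$ takes a canonical $2\times 2$ form; the height condition cuts out an ideal of height relations, and a careful calculation on the associated $(\varphi,\hat G)$-module yields the monodromy ideal. Their sum is $I^{(j)}$, the framing absorbs the $X_1,\ldots,X_{2f}$ variables, and changing between the universal Frobenius matrix and the local model coordinates absorbs $Y_1,\ldots,Y_4$. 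This produces the intermediate isomorphism
\[R^{\leq\lambda,\tau,\reg}_{\overline{\rho}}\llbracket X_1,\ldots,X_{2f}\rrbracket \cong \bigl(R\big/\textstyle\sum_j I^{(j)}\bigr)\llbracket Y_1,\ldots,Y_4\rrbracket\]
stated in the excerpt.

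Second, under $(4n+1)$-genericity of $\overline{\rho}$ and $(2n+1)$-genericity of $\tau$, I would show that modulo $p^n$ each generator of $I^{(j)}$ reduces to an equation of the form $x_j y_j - u_j p$ for a unit $u_j$ at each of the $m$ embeddings where the local model has a node, and to a trivial relation at the remaining ones. The integer $m$ is forced by the combinatorics of the extension graph developed for \cref{representation result} to satisfy $2^m = |W(\overline{\rho}) \cap \JH(\overline{\sigma}(\lambda,\tau))|$, because each nodal factor $x_j y_j = p$ produces two smooth branches in the special fibre and each branch corresponds to exactly one Serre weight. The role of the explicit genericity bounds is precisely to absorb all higher-order cross terms in the Frobenius matrix into the $p^n$-error. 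I would then apply Elkik's approximation theorem to promote this mod-$p^n$ presentation to an honest isomorphism: differentiating the leading terms $x_j y_j - u_j p$ directly shows $p^{2n+1} \in H$, and the balance between this exponent and the $p^n$-precision of the presentation is exactly what Elkik requires. The resulting presentation is $\mathcal{O}\llbracket (x_j,y_j)_{j=1}^m, Z_1,\ldots,Z_{f-m+4}\rrbracket/(x_j y_j - p)$; normality, the complete intersection property, reducedness of the special fibre, and formal smoothness of each component are all immediate from this form, since $x_j y_j = 0$ cuts out two transversal smooth branches. Finally, since $\lambda$ is regular and $W(\overline{\rho}) \cap \JH(\overline{\sigma}(\lambda,\tau))$ is nonempty, no stratum with strictly smaller Hodge-Tate weights contributes to the generic fibre, so $R^{\lambda,\tau}_{\overline{\rho}} = R^{\leq\lambda,\tau,\reg}_{\overline{\rho}}$.

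The main obstacle is the precision tracking in the second step. With $\ell_j$ as large as $n$, the Frobenius matrix has many entries that could a priori contribute to the height and monodromy relations, and one must keep explicit control of the $p$-adic size of every such contribution in order to collapse the combined ideal to the clean form $x_j y_j - u_j p$ modulo $p^n$. Calibrating the genericity constants $4n+1$ and $2n+1$ so that they dominate every error term while simultaneously leaving room for the Elkik exponent $2n+1$ is the sharpest part of the calculation, and is what goes beyond the low-weight computation in \cite{BHHMS}.
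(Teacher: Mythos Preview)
Your outline has the right ingredients (Kisin modules with descent data, height and monodromy conditions, Elkik approximation), but there is a genuine error in the identification of the rings, and it propagates through the rest of your sketch.

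The ring $R^{\leq\lambda,\tau,\reg}_{\overline{\rho}}$ is \emph{not} equal to $R^{\lambda,\tau}_{\overline{\rho}}$. The former parametrizes potentially crystalline lifts with \emph{any} regular Hodge--Tate weight $\lambda'\leq\lambda$; its irreducible components are exactly the $\Spec R^{\lambda',\tau}_{\overline{\rho}}$ for those $\lambda'$ with $W(\overline{\rho})\cap\JH(\overline{\sigma}(\lambda',\tau))\neq\emptyset$ (this is the content of \cref{deformation ring}). Correspondingly, the ideal $I^{(j),\reg}$ does \emph{not} reduce to a single relation $x_jy_j-u_jp$: besides the relations determining $\bm{a}_{-k},\bm{b}_{-k},\bm{c}_{-k},\bm{d}_{-k}$, it contains the product equation marked $*$ in Tables~\ref{Table 1}--\ref{Table 2}, namely a product of terms of the form $Z-c_kp$ over $k$, one factor for each admissible Hodge--Tate weight. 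Your claim that ``no stratum with strictly smaller Hodge--Tate weights contributes'' is simply false. To isolate $R^{\lambda,\tau}_{\overline{\rho}}$ one must further quotient by the ideal $\mathfrak{p}^{\lambda}=\sum_j\mathfrak{p}^{(j),\lambda_{f-1-j}}$, which picks out the correct factor; only then does one obtain, at each $j$, a single relation of shape $x_jy_j-p$ (or a unit, or a variable, depending on the shape), and one still has to eliminate the auxiliary variables $\bm{a}_{-k},\ldots$ via Weierstrass preparation and invoke power-series cancellation (Hamann's theorem) to strip the formal variables $X_1,\ldots,X_{2f}$.

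This misidentification also breaks your Elkik argument. Differentiating a single $x_jy_j-u_jp$ puts $p$ itself in $H$, not $p^{2n+1}$; the genuine content of \cref{pth power} is a resultant computation showing that for the full \emph{product} $G=\prod_k(Z-\alpha_k)$ (with each $\alpha_k$ of valuation~$1$ and pairwise differences of valuation~$1$) one has $p^{\ell_{f-1-j}}\in(G,z\partial G/\partial z)$. It is this product structure, and the accompanying determinant/resultant calculation, that forces the high power of $p$ and makes the balance with the genericity constant $N\geq 4\ell$ delicate. Your sketch bypasses precisely the step where the actual work happens.
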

This explicit description of the Galois deformation rings for higher Hodge--Tate weights was only previously known for $\lambda\leq (3,0)$ \cite[4]{BHHMS}, \cite[4]{Yitong}, and may be of independent interest. We showed these Galois deformation rings are complete intersection rings, generalizing the result in~\cite[Theorem 1.1]{crystabellinedef}, although with a more restrictive bound on the Hodge--Tate weights. We can thus deduce that certain global Galois deformation rings to be $p$-torsion free \cite[8]{crystabellinedef}. Moreover, the property of complete intersection may have applications to derived Galois deformation rings~\cite{derivedGaldef}.\par
Inducting on the distance in the extension graph, we deduce that $M_\infty(\sigma_\kappa)=\varpi(\kappa, \kappa')M_\infty(\sigma_{\kappa'})$ where $\varpi(\kappa, \kappa')$ is given by a certain element in $R^{\lambda,\tau}_{\overline{r}_w}$. Since any lattice $\sigma^\circ$ inside $\sigma(\lambda,\tau)$ can be written as $\sum_{\kappa\in \JH(\overline{\sigma}(\lambda,\tau))}p^{v(\kappa)}\sigma_\kappa$ such that $p^{v(\kappa)}\sigma_\kappa\hookrightarrow\sigma^\circ$ is saturated. We conclude the lattice conjecture using \cref{moding out augmented ideal intro}.
\subsection{Cyclicity of patched modules}
In this paper, we also show that certain patched modules are cyclic, which is closely related to proving a multiplicity one result at the Iwahori level and a conjecture of Demb\'el\'e (appendix of \cite{Breuiloriginal}).
\begin{theorem}(\cref{cyclicity of patched module})
    Under some mild genericity conditions on $\overline{r}_w$ and $\tau$, given a minimal patching functor $M_\infty$, $M_\infty(\sigma_\kappa)$ is a cyclic module over its scheme-theoretic support.
\end{theorem}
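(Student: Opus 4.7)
The plan is to inductively propagate cyclicity through the extension graph using the identity $M_\infty(\sigma_\kappa)=\varpi(\kappa,\kappa')M_\infty(\sigma_{\kappa'})$ established in the proof of \cref{lattice conjecture theorem}. Since $\overline{\sigma}_\kappa$ and $\overline{\sigma}_{\kappa'}$ share the same Jordan-H\"older constituents (both being $\F$-reductions of $\mathcal{O}$-lattices in $\sigma(\lambda,\tau)$), Breuil-M\'ezard forces their patched modules to have the same scheme-theoretic support, call it $A$. If $M_\infty(\sigma_{\kappa'})$ is generated over $A$ by a single element $e$, then $M_\infty(\sigma_\kappa)=A\cdot(\varpi(\kappa,\kappa')e)$ is also cyclic over $A$, and cyclicity therefore propagates along the edges of the extension graph. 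It suffices to verify the property at a single carefully chosen base weight $\kappa_0\in \JH(\overline{\sigma}(\lambda,\tau))$.

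For the base case I would take $\kappa_0$ to be a modular Serre weight in $W(\overline{r}_w)\cap \JH(\overline{\sigma}(\lambda,\tau))$, so that by \cref{Galois deformation ring intro} the support $A$ is a power-series extension of the complete intersection $\mathcal{O}\llbracket(x_j,y_j)_{j=1}^m, Z_\bullet\rrbracket/(x_jy_j-p)$. Topological Nakayama reduces cyclicity to showing $\dim_\F M_\infty(\sigma_{\kappa_0})\otimes_{R_\infty}\F=1$, or equivalently $\dim_\F M_\infty(\overline{\sigma}_{\kappa_0})\otimes_{R_\infty}\F=1$ since $\varpi\in \mathfrak{m}_{R_\infty}$. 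Using the cosocle filtration of $\overline{\sigma}_{\kappa_0}$ supplied by \cref{representation result} together with the exactness of the patching functor, this computation breaks into contributions from the Jordan-H\"older constituents $\sigma'$ of $\overline{\sigma}_{\kappa_0}$: non-modular constituents contribute zero by Breuil-M\'ezard; modular constituents contribute multiplicities controlled by the Hilbert-Samuel multiplicities of the branches of $A$ cut out by the equations $x_jy_j=p$; and the filtration should arrange these contributions across distinct branches so that only $\kappa_0$ itself survives with a one-dimensional piece.

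The main obstacle is the detailed matching of the Jordan-H\"older layers of $\overline{\sigma}_{\kappa_0}$ with the branch structure $(x_jy_j=p)$ of $A$. Concretely, one must use the extension-graph description from \cref{representation result} together with the explicit presentation of \cref{Galois deformation ring intro} to pair each modular $\sigma'\in \JH(\overline{\sigma}_{\kappa_0})$ with its correct branch, and to show that under this pairing the various $M_\infty(\sigma')$ contribute cleanly across the filtration down to the single cosocle piece at $\kappa_0$. A secondary technical point is verifying that $\varpi(\kappa,\kappa')$ acts as a non-zero-divisor on $M_\infty(\sigma_{\kappa'})$ over $A$, so that multiplication by $\varpi(\kappa,\kappa')$ preserves the single-generator property rather than merely producing a proper quotient; this should follow from the explicit form of $\varpi(\kappa,\kappa')$ in terms of the variables $x_j,y_j$ appearing in the complete-intersection presentation.
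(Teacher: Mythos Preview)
Your propagation step is correct but buys nothing: if $M_\infty(\sigma_{\kappa'})=A\cdot e$ then $M_\infty(\sigma_\kappa)=\varpi(\kappa,\kappa')M_\infty(\sigma_{\kappa'})=A\cdot(\varpi(\kappa,\kappa')e)$ is automatically cyclic, and your ``secondary technical point'' is a non-issue since the equality (not merely an inclusion) $M_\infty(\sigma_\kappa)=\varpi(\kappa,\kappa')M_\infty(\sigma_{\kappa'})$ is already part of \cref{equality on patched module}. But the base case you sketch is exactly as hard as the general case, so the reduction is idle.

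The gap is precisely the ``main obstacle'' you name, and it is genuine. Right-exactness of $-\otimes_{R_\infty}\F$ applied to the cosocle filtration only gives the upper bound
\[
\dim_\F M_\infty(\overline{\sigma}_{\kappa_0})\otimes_{R_\infty}\F \;\leq\; \sum_{\sigma'\in\JH(\overline{\sigma}_{\kappa_0})}\dim_\F M_\infty(\sigma')\otimes_{R_\infty}\F,
\]
and each modular $\sigma'\in W(\overline{r}_w)\cap\JH(\overline{\sigma}(\lambda,\tau))$ contributes $1$ here (its patched module is free of rank one over a formally smooth support). So the naive bound is $2^m$, not $1$. To collapse this to $1$ you would need to show that each non-cosocle modular layer maps into $\mathfrak{m}_\infty$ times the quotient above it, and you offer no mechanism for that; the vague hope that ``contributions arrange across distinct branches'' is not an argument.

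The paper's route is entirely different and avoids this obstruction. It does \emph{not} propagate and does \emph{not} attempt any branch-matching. For arbitrary $\kappa$ it first trims $\overline{\sigma}_\kappa$ (using exactness and vanishing on non-modular weights) to the subquotient $W$ whose Jordan--H\"older set is exactly $W(\overline{r}_v,\lambda_v,\tau_v)$. The key structural input from \cref{Main theorem on generalization} is that this $W$ is a $\Gamma$-representation: the modular Serre weights in $\JH(\overline{\sigma}(\lambda_v,\tau_v))$ form a hypercube of side $1$ in the extension graph, so $W\cong\bigotimes_v I(\alpha_v,\beta_v)$ with $\alpha_v,\beta_v$ adjacent. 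One then chooses a \emph{weight-one} tame type $\tau'_v$ with $\JH(I(\alpha_v,\beta_v))\subset\JH(\overline{\sigma}(\tau'_v))$, realizes $W$ as a quotient of the lattice $\overline{\sigma}(\tau'_\mathcal{S})_\beta$, and invokes the potentially Barsotti--Tate cyclicity theorem \cite[Theorem~10.1.1]{EGS}, whose proof is an explicit interval induction on the $X'_j,Y'_j$ variables of the (Barsotti--Tate) deformation ring. In other words, the higher-weight cyclicity is reduced wholesale to the known weight-one case, rather than proved by any direct multiplicity bookkeeping on the ring of \cref{Galois deformation ring intro}.
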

Its scheme-theoretic support is irreducible by \cref{Galois deformation ring intro}, and hence it is sufficient to show that $M_\infty(\overline{\sigma}_\kappa)$ is a cyclic $R_\infty$-module by Nakayama's lemma. Since the patching functor is an exact functor, by \cref{representation result}, we can show that $M_\infty(\overline{\sigma}_\kappa)=M_\infty(W)$ where $W$ is a subquotient of $\overline{\sigma}_\kappa$ and is isomorphic to a quotient of a lattice of $\sigma(\tau')$ for another tame type $\tau'$. We therefore deduce our theorem from the analogous result proven in the potentially Barsotti--Tate case in \cite[Theorem 10.1.1]{EGS}.
\subsection{Candidate for the mod \texorpdfstring{$p$}{TEXT} %
Langlands Correspondence}
Now let $F$ be a totally real number field in which $p$ is unramified. Fix a place $v$ lying above $p$. Let $D$ be a quaternion algebra with centre $F$, which splits at exactly one infinite place. Fix $U^v$ a compact open subgroup of $D\otimes_F\mathbb{A}_{F,f}^{v}$. Given a compact open subgroup $U$ of $(D\otimes_F\mathbb{A}_{F,f})^\times$, we let $X_U$ be the associated smooth projective Shimura curve over $F$. Letting $U_v$ run over compact open subgroups of $(D\otimes_F F_v)^\times\cong \GL_2(F_v)$, we consider
\[\pi(\overline{\rho})\colonequals \varinjlim_{U_v}\Hom_{G_F}(\overline{r},H^1_{\textrm{\small \'et}}(X_{U^vU_v}\times_F \overline{F}, \F)),\]
which is a smooth admissible representation of $\GL_2(F_v)$ over $\F$. A priori, $\pi(\overline{\rho})$ depends on $\overline{r}$ rather than $\overline{\rho}\colonequals \overline{r}|_{G_{F_{v}}}$, but it is the global candidate to correspond to $\overline{\rho}$ under the conjectural mod $p$ Langlands correspondence; hence we abuse notation and write it as $\pi(\overline{\rho})$. By \cite[Theorem 1.1]{LMS}, \cite[Theorem 1.1]{HuWang2} and \cite[Theorem 1.1]{multone}, we deduce that $\pi^{K_1}=\pi[\mathfrak{m}_{K_1}]$ is the maximal $\Gamma$-representation with socle given by $W(\overline{\rho})$ (same as \cref{pimultone}) and multiplicity free. 
We have the following result regarding $\pi$, which generalizes the result above.
\begin{theorem}\label{pimultone}(\cref{nth torsion of pi is mult free})
Under a genericity condition on $\sigma$ depending on $n$, 
   $\pi[\mathfrak{m}_{K_1}^n]$ is the largest multiplicity-free representation of $\GL_2(\mathcal{O}_{F_v})$ with socle $\bigoplus_{\sigma\in W(\overline{\rho})}\sigma$, which is killed by $\mathfrak{m}_{K_1}^n$.
\end{theorem}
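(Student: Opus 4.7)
The plan is to identify $\pi[\mathfrak{m}_{K_1}^n]$ with an explicit representation $\Pi_n$ built from \cref{representation result}, then verify the two halves of ``largest multiplicity-free''. First, the socle: using $\pi[\mathfrak{m}_{K_1}^n] \supset \pi[\mathfrak{m}_{K_1}]$ and $\soc(\pi[\mathfrak{m}_{K_1}^n]) \subseteq \soc(\pi)$, the $n=1$ case recalled in the paragraph preceding the theorem gives $\soc(\pi[\mathfrak{m}_{K_1}^n]) = \bigoplus_{\sigma \in W(\overline{\rho})} \sigma$. Next, by \cref{representation result} each pair $(\sigma, \tau)$ with $\sigma \in W(\overline{\rho})$ and $\tau \in \JH((\Inj \sigma)[\mathfrak{m}_{K_1}^n])$ yields a unique multiplicity-free subrepresentation $I(\sigma, \tau) \subset (\Inj \sigma)[\mathfrak{m}_{K_1}^n]$ with socle $\sigma$ and cosocle $\tau$; gluing these along the socle yields a canonical $\Pi_n \subset \bigoplus_{\sigma \in W(\overline{\rho})}(\Inj \sigma)[\mathfrak{m}_{K_1}^n]$, and the cosocle-filtration characterization from \cref{representation result} shows that $\Pi_n$ is the maximal multiplicity-free representation with the prescribed socle killed by $\mathfrak{m}_{K_1}^n$. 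It therefore suffices to prove $\pi[\mathfrak{m}_{K_1}^n] \cong \Pi_n$.

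To embed $\Pi_n$ into $\pi[\mathfrak{m}_{K_1}^n]$, I would use injectivity of $\bigoplus_{\sigma \in W(\overline{\rho})} (\Inj \sigma)[\mathfrak{m}_{K_1}^n]$ in the category of smooth $\GL_2(\mathcal{O}_{F_v})$-representations over $\F$ killed by $\mathfrak{m}_{K_1}^n$. Extending the socle inclusion $\bigoplus_\sigma \sigma \hookrightarrow \pi[\mathfrak{m}_{K_1}^n]$ to a map to this injective envelope and intersecting the image of each $I(\sigma, \tau)$ with $\pi[\mathfrak{m}_{K_1}^n]$ realizes $\Pi_n$ as a subrepresentation of $\pi[\mathfrak{m}_{K_1}^n]$, once one verifies that none of the expected extensions are annihilated; this verification uses exactness of the patching functor and the explicit structure of the $\Ext^1$-groups between Serre weights coming from Galois cohomology, already implicit in the proof of \cref{representation result}.

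The main obstacle is the reverse inclusion, namely showing that $\pi[\mathfrak{m}_{K_1}^n]$ itself is multiplicity-free. My plan is to combine the cyclicity of patched modules (\cref{cyclicity of patched module}) with local-global compatibility (\cref{moding out augmented ideal intro}) to bound $\dim_\F \Hom_{\GL_2(\mathcal{O}_{F_v})}(\tau, \pi[\mathfrak{m}_{K_1}^n])$ above by the multiplicity of $\tau$ in $\Pi_n$. For each Jordan-H\"older factor $\tau$ of $\Pi_n$, I would choose a tame inertial type $\tau_0$ and Hodge-Tate weight $\lambda$ so that $\tau$ occurs in $\JH(\overline{\sigma}(\lambda, \tau_0))$ at a controlled position of the cosocle filtration of a specific lattice $\sigma_\kappa$; cyclicity of $M_\infty(\sigma_\kappa)$ together with the explicit form of $R_{\overline{\rho}}^{\lambda, \tau_0}$ given in \cref{Galois deformation ring intro} then yields the desired bound. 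The delicate step is arranging the data $(\lambda, \tau_0, \kappa)$ coherently across all Jordan-H\"older factors so that the individual bounds fit together to force global multiplicity-freeness; this should follow from the extension-graph combinatorics controlled by \cref{representation result}, but matching the cosocle filtrations on the $\GL_2$-side with the deformation-theoretic computation is where the main work lies.
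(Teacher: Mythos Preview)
Your reverse-inclusion strategy has a genuine gap. The quantity $\dim_\F \Hom_{\GL_2(\mathcal{O}_{F_v})}(\tau, \pi[\mathfrak{m}_{K_1}^n])$ measures the multiplicity of $\tau$ in the \emph{socle} of $\pi[\mathfrak{m}_{K_1}^n]$, not its multiplicity as a Jordan--H\"older factor; since the socle is already identified as $\bigoplus_{\sigma\in W(\overline{\rho})}\sigma$, bounding this Hom-space is automatic and contributes nothing towards multiplicity-freeness. Even reinterpreting the bound via $\Hom(\Proj_n\tau,\pi)$ or via $M_\infty(\overline{\sigma}_\kappa)/\mathfrak{m}_\infty$, the cyclicity result in the paper is proved in the unitary-group patching setup of \S4--6, whereas $\pi(\overline{\rho})$ here is built from Shimura-curve cohomology with a different $\mathbb{M}_\infty$; and in any case $\overline{\sigma}_\kappa$ is not $\Proj_n\tau$, so cyclicity of $M_\infty(\sigma_\kappa)$ does not translate into a bound on $[\pi[\mathfrak{m}_{K_1}^n]:\tau]$ for arbitrary $\tau$.

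The paper's argument is both shorter and structurally different. By the maximality defining $D_0^n(\overline{\rho})$, it suffices to show $[\pi[\mathfrak{m}_{K_1}^n]:\sigma]=1$ only for $\sigma\in W(\overline{\rho})$, not for every Jordan--H\"older factor. This is then reduced to the known case $n=2$ (from \cite{BHHMS}, \cite{hu2022mod}) by a brief contradiction: if some $\sigma\in W(\overline{\rho})$ occurred with multiplicity $\ge 2$, one extracts $V\subset\pi[\mathfrak{m}_{K_1}^n]$ with $\cosoc(V)=\sigma$ and $[V:\sigma]=2$; since $\pi[\mathfrak{m}_{K_1}^2]$ is multiplicity-free, $\soc(V/\sigma)$ consists of Serre weights adjacent to $\sigma$ in the extension graph, so by \cref{Main theorem on generalization} the image of $\Proj_n\sigma\twoheadrightarrow V/\sigma$ lands in $\bigoplus_{\sigma'}I(\sigma',\sigma)$, which is $\mathfrak{m}_{K_1}$-torsion. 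Hence $V$ is $\mathfrak{m}_{K_1}^2$-torsion, contradicting multiplicity-freeness of $\pi[\mathfrak{m}_{K_1}^2]$. No choice of auxiliary $(\lambda,\tau_0,\kappa)$ or higher-weight deformation rings is needed.

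For the forward inclusion $D_0^n(\overline{\rho})\hookrightarrow\pi[\mathfrak{m}_{K_1}^n]$, the paper also takes a more direct route than your injective-envelope extension: one shows $\Hom_{K/Z_1}(D^n_{0,\sigma}(\overline{\rho}),\pi)\xrightarrow{\sim}\Hom_{K/Z_1}(\sigma,\pi)$ using that $\mathbb{M}_\infty/(p,x_1,\dots)$ is a direct sum of projective envelopes, so the embedding is automatic once $\soc(\pi)$ is known.
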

Again, \cref{representation result} plays an important role in the proof, as it allows us to reduce the statement to the case where $n=2$, which was previously proven in \cite[Theorem 8.4.2]{BHHMS} and \cite[Corollary~8.13]{hu2022mod}.
\subsection{Notation}If $F$ is a field, we write $G_F\colonequals \Gal(\overline{F}/F)$. If $F$ is a number field and $v$ is a finite place in $F$, we write $F_v$ for the completion of $F$ at $v$, and we denote the ring of integers of $F_v$ as $\mathcal{O}_{F_v}$ with residue field $k_v$. We fix an embedding $\overline{F}\hookrightarrow\overline{F}_v$, which allows us to identify the decomposition group of $F$ at $v$ with $G_{F_v}$. If $K/\Q_p$ is finite, we write $I_K$ for the inertial subgroup of $G_K$. We normalize the Artin's reciprocity map so that the uniformizers are mapped to geometric Frobenius elements. We will always have $p>3$.\par
We assume $E$ to be a finite extension over $\Q_p$, which is sufficiently large, in particular, $E$ contains all embeddings of $F$. We will take $E$ to be unramified in \cref{ch6: cyclicity of patched module}. We denote $\F$ for the residue field of $E$ and $\varpi$ for the uniformizer of $E$ and $\mathcal{O}$ the ring of integers of $E$. We use $[x]$ to denote the Techn\"uller lift of $x$.
We write $\varepsilon$ (resp. $\omega$) for the $p$-adic (resp. mod $p$) cyclotomic character. We write $\omega_f$ for the Serre's fundamental character of level $f$. \par
If $F$ is a $p$-adic field and $V$ is a de Rham representation of $G_F$ over $E$, then for each embedding $\kappa\colon F\hookrightarrow E$, we have $\HT_\kappa(V)$, the multiset of Hodge--Tate weights of $V$ with respect to $\kappa$. We take the normalization such that $\HT_\kappa(\varepsilon)=\{1\}$ for all embeddings $\kappa$.\par
Let $K$ be an unramified extension of $\Q_p$ of degree $f$ with residue field $k$. We fix an embedding $\sigma_0\colon k\hookrightarrow \F$. If $\varphi$ is the arithmetic Frobenius and we let $\sigma_j\colonequals \sigma_0\circ \varphi^j$, then we can identify $\mathcal{J}\colonequals \Hom(k,\F)$ with $(\Z/f\Z)$ via $\sigma_j\xleftrightarrow[]{}j$.\par
If $V$ is a finite-dimensional representation of a group $G$ over $\mathcal{O}$, then we denote by $\overline{V}$ the reduction modulo $\varpi$ of the semi-simplification of a $G$-stable $\mathcal{O}$-lattice in V. For readability, we write $\overline{\sigma}(\lambda,\tau)$ instead of $\overline{\sigma(\lambda,\tau})$, $\overline{\sigma}(\lambda,\tau)_\kappa$ instead of $\overline{\sigma(\lambda,\tau)_\kappa}$ etc. If $R$ is a ring (for example, $\F\llbracket G\rrbracket$) and $M$ is a left $R$-module, we denote by $\soc(M)$ (resp. $\cosoc(M))$ for the socle (resp. cosocle) of $M$. (See \cite[Definition~A.3]{hu2022mod}.) We can then define the socle and cosocle filtration of $M$ inductively. If $M$ is of finite length, we denote by $\JH(M)$ the \emph{Jordan--H\"{o}lder factors} (i.e., the multiset of the composition factors). In the case where $M$ is a finite representation of $G$, this is the set of Jordan--H\"{o}lder factors of $M$ in the usual sense. If $\sigma$ is a simple $R$-module and $M$ is a finite length $R$-module, and we denote the multiplicity of $\sigma$ in $M$ as the number of times $\sigma$ appears in $\JH(M)$.\par
If $s\in S_2$ is a permutation, we let $\sgn(s)\in \{\pm1\}$ be the signature of $s$. And $\dot(s)$ is the corresponding permutation matrix. \par
We write $R^\vee$ for the Pontryagin dual $\Hom_{\mathcal{O}}^{\cont}(R, E/\mathcal{O})$ and $V^d$ for the Schikhof dual $\Hom_{\mathcal{O}}^{\cont}(V,\mathcal{O})$.
\subsection{Acknowledgment}
First and foremost, I want to thank Florian Herzig; I could not imagine having a better supervisor. I thank him for suggesting this problem to me. Throughout the process of writing this paper, he has been patient, kind, and supportive, answering many of my questions, providing many valuable insights for my research, and pointing out some mistakes in the earlier drafts. \par
I want to thank Heejong Lee, Chol Park, and Yitong Wang for many helpful discussions and teaching me many things related to the $p$-adic Langlands Program. I also thank Yitong Wang for careful reading of an early draft. I thank David Savitt for a helpful comment on the dimension of Galois deformation rings for irregular Hodge--Tate weights. I want to thank Daniel Le for a helpful insight on the Breuil--M\'ezard Conjecture. \par
\section{Representation theory results}\label{ch2: representation result}
\subsection{Notation and background}
First, we recall some notation and results concerning the extension graph for $\GL_2$ from \cite[\S~2]{BHHMS}.
Let $K$ be a fixed finite unramified extension of $\Q_p$ of degree $f$, with $\mathcal{O}_K$ its ring of integers and $k$ its the residue field. We consider the group scheme $\GL_2$ defined over $\Z$, let $T\subset \GL_2$ be the diagonal maximal torus and $Z$ its centre. We write $R$ for the set of roots of $(\GL_2, T)$, $W$ for its Weyl group, with the longest element $\mathfrak{w}$. Let $G_0$ be the algebraic group $\Res _{\mathcal{O}_K/\Z_p} \GL_{2/\mathcal{O}_K}$ with $T_0$ the diagonal maximal torus and the
centre $Z_0$. Let $\underline{G}$ be the base change $G_0 \times_{ \Z_p} \mathcal{O}$, and similarly define $\underline{T} $ and $\underline{Z}$. Let $\underline{R}$ denote the set of roots of $(\underline{G}, \underline{T})$. \par
There is a natural isomorphism $\underline{G}\cong \prod_{\mathcal{J}} \GL_{2/\mathcal{O}}$, and analogously for $\underline{T}$, $\underline{Z},\underline{R}$. We identify $X^{*}(\underline{T})$ with $(\Z^2)^{\mathcal{J}}$, and so for $\mu \in X^{*}(\underline{T})$, we write correspondingly $\mu=(\mu_j)_{j\in \mathcal{J}}$. We let $\eta_j\in X^*(\underline{T})$ be $(1,0)$ in the $j$th coordinate and 0 otherwise. We write $\eta_J \colonequals \sum_{j\in J} \eta_j$ for all $J\subset \mathcal{J}$. We define $\eta \colonequals \eta_\mathcal{J}$. Let $\alpha_j\in \underline{R}$ be $(1,-1)$ in the $j$th coordinate and $0$ otherwise. The set of positive roots of $\underline{G}$ with respect to the upper triangular Borel in each embedding is given by $\underline{R}^+=\{\alpha_j\colon 0\leq j\leq f-1\}$. We have the following definitions;
$$\text{dominant weights: }X^{*}_{+}(\underline{T}) \colonequals \{\lambda\in X^{*}(\underline{T})\colon  0\leq \langle \lambda,\alpha^{\vee} \rangle \text{ for all }\alpha\in \underline{R}^{+} \}.$$ 
$$\text{$p$-restricted weights: }X_{1}(\underline{T}) \colonequals \{\lambda\in X^{*}_{+}(\underline{T})\colon  0\leq \langle \lambda,\alpha^{\vee} \rangle\leq p-1\text{ for all }\alpha\in \underline{R}^{+} \}.$$ 
 $$\text{regular weights: } X_{\mathrm{reg}}(\underline{T}) \colonequals \{\lambda\in X^{*}_{+}(\underline{T})\colon  0\leq \langle \lambda,\alpha^{\vee} \rangle< p-1\text{ for all }\alpha\in \underline{R}^{+} \}.$$ 
 $$X^0(\underline{T}) \colonequals \{ \lambda\in X^{*}_{+}(\underline{T})\colon  \langle \lambda,\alpha^{\vee} \rangle=0\text{ for all }\alpha\in \underline{R}^{+} \}.$$
 \par
 The \emph{lowest alcove} is defined as $\underline{C}_0 \colonequals \{\lambda\in X^{*}(\underline{T})\otimes \mathbb{R}\colon 0< \langle \lambda+\eta,\alpha^{\vee}\rangle <p \text{ for all }\alpha\in \underline{R}^{+} \}$.
 Let $\underline{W}$ be the affine Weyl group of $(\underline{G}, \underline{T})$. We can identify $\underline{W}$ with $\prod_{j\in\mathcal{J}}W$, which acts on $X^*(\underline{T})\cong (\Z^2)^\mathcal{J} $ in a compatible manner. Let $\underline{W}_a \cong \Lambda_R \rtimes \underline{W}$ be the \emph{affine Weyl group}, where $\Lambda_R$ is the root lattice. And let $\widetilde{W}\cong X^{*}(\underline{T}) \rtimes \underline{W}$ be the \emph{extended Weyl group}. For $\lambda\in X^{*}(\underline{T})$, we denote  its image in $\widetilde{W}$ by $t_\lambda$. We have a $p$-dot action of $\widetilde{W}$ on $X^{*}(\underline{T})$, defined as follows: if $\widetilde{w}=wt_v \in \widetilde{W}$ and $\mu \in X^{*}(\underline{T}) $, then $\widetilde{w}\cdot \mu \colonequals w(\mu+\eta+pv)-\eta$.\par
 Let $\Omega$ be the stabilizer of the lowest alcove $C_0 $ in $\widetilde{W}$. One checks that $\widetilde{W}=\underline{W}_a \rtimes \Omega$ and $\Omega$ is the subgroup of $\widetilde{W}$ generated by $X^0(\underline{T})$ and $\{1, \mathfrak{w} t_{-(1,0)}\}$.\par
 A Serre weight of $\underline{G}_0\times_{\Z_p}\F_p$, or simply a Serre weight if it is clear from the context, is an isomorphism class of an absolutely irreducible representation
of $\underline{G}_0(\F_p) = \GL_2(k)$ over $\F$. If $\lambda\in X_{1}(\underline{T})$, we write $L(\lambda)$ for the irreducible algebraic representation of $\underline{G}\times _{\mathcal{O}} \F$ of highest weight $\lambda$, and $F(\lambda)$ for the restriction of
$L(\lambda)$ to the group $\underline{G}_0(\F_p)$. We define an automorphism $\pi$ on $X^{*}(\underline{T})$ by $\pi(\mu)_j  \colonequals \mu_{j-1}$. The map $\lambda\mapsto F(\lambda)$ induces a bijection between $X_{1}(\underline{T})/(p-\pi)X^0(\underline{T})$
and the set of Serre weights of $\underline{G}_0\times_{\Z_p}\F_p$. A Serre weight $\sigma$ is \emph{regular}
if $\sigma\cong F(\lambda)$ with $\lambda\in X_{\mathrm{reg}}(\underline{T})$.\par
Let $\Lambda_W \colonequals X^*(\underline{T})/X^0(\underline{T})$ denote the weight lattice of $(\Res_{\mathcal{O}_K/\Z_p} \SL_{2/\mathcal{O}_K}) \times_{\Z_p} \mathcal{O}$. We identify $\Lambda_W$ with $\Z^f$ as above. Given $\mu \in X^{*}(\underline{T})$, we define
 $$\Lambda_W^\mu \colonequals \{\omega \in \Lambda_W\colon  0\leq \langle \overline{\mu}+\omega, \alpha^{\vee}\rangle <p-1 \text{ for all }\alpha\in \underline{R}^{+} \},$$
where $\overline{\mu}$ denotes the image of $\mu$ in $\Lambda_W$. The set $\Lambda_W^\mu$ is called the extension graph associated to $\mu$. Moreover, we define $\Sigma \subset \Lambda_W$ as the set $\{\overline{\eta}_{J}\colon  J\subset \mathcal{J}\}$.\par
We construct a map $\mathfrak{t}'_\mu\colon  X^*(\underline{T}) \to X^*(\underline{T})/(p-\pi) X^0(\underline{T}) $ as follows: given $\omega'\in X^*(\underline{T})$, there is a unique $\widetilde{\omega}'\in \Omega\cap t_{-\pi^{-1}(\omega')}\underline{W}_a$. We set
$$\mathfrak{t}'_\mu(\omega') \colonequals \widetilde{\omega}'\cdot (\mu +\omega') \text{ mod} (p-\pi) X^0(\underline{T}).$$
This map factors through $X^*(\underline{T})/X^0(\underline{T})\cong \Lambda_W$ by the definition. Therefore, we have an induced map $$\mathfrak{t}_{\mu}\colon  \Lambda_W^\mu \to X_{\mathrm{reg}}(\underline{T})/(p-\pi) X^0(\underline{T}).$$
This map gives a bijection between $\Lambda^\mu_W$ and regular Serre weights
with central character $\mu|_{\underline{Z}_0(\F_p)}$.\par
We have the following ``change of origin" formula for the map $\mathfrak{t}_{\mu}$. For $\omega\in \Lambda^\mu_W$, we take a lift $\omega'\in X^*(\underline{T})$. Then we define $w_\omega$ as the unique image of $ \widetilde{\omega}'$ under the map $\Omega\cap t_{-\pi^{-1}(\omega')}\underline{W}_a\to \underline{W}$ as above. It can be easily checked that $w_\omega$ does not depend on the choice of the lift.
\begin{lemma}\label{change of origin} (\cite[Lemma~2.4]{BHHMS})
    Let $\omega\in \Lambda^\mu_W$ and let $\lambda \in X^*(\underline{T})$ be such that $\mathfrak{t}_{\mu}(\omega)\equiv \lambda \text{ mod } (p-\pi)X^0(\underline{T})$. Then $w_\omega^{-1}(\beta)+\omega \in \Lambda^\mu_W$ and $\mathfrak{t}_\lambda(\beta)=\mathfrak{t}_{\mu}(w_\omega^{-1}(\beta)+\omega)$ for all $\beta\in \Lambda^{\lambda}_W$. Equivalently $\mathfrak{t}_\mu(\omega')=\mathfrak{t}_{\lambda}(w_\omega(\omega'-\omega))$ for all $\omega'\in \Lambda^\mu_W$.
\end{lemma}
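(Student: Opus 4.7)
The plan is to lift everything from $\Lambda_W$ up to $X^*(\underline{T})$, exploit that $\Omega$ is a subgroup of $\widetilde{W}$, and use the affine-linearity of the $p$-dot action. Fix lifts $\omega',\beta'\in X^*(\underline{T})$ of $\omega$ and $\beta$, let $\widetilde{\omega}'\in\Omega\cap t_{-\pi^{-1}(\omega')}\underline{W}_a$ be the unique lift used in the construction of $\mathfrak{t}'_\mu(\omega')$, and let $\widetilde{\beta}'\in\Omega\cap t_{-\pi^{-1}(\beta')}\underline{W}_a$ be the analogous lift for $\mathfrak{t}'_\lambda(\beta')$. Writing $\widetilde{\omega}'=w_\omega t_v$, the defining formula $\widetilde{w}\cdot x=w(x+\eta+pv)-\eta$ is affine in $x$ with linear part $w_\omega$, which yields the key identity
\[
\widetilde{\omega}'\cdot\bigl(\mu+w_\omega^{-1}(\beta')+\omega'\bigr)=\widetilde{\omega}'\cdot(\mu+\omega')+\beta'.
\]

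Next I would identify $\widetilde{\beta}'\widetilde{\omega}'$ with the unique element $\widetilde{u}\in\Omega\cap t_{-\pi^{-1}(w_\omega^{-1}(\beta')+\omega')}\underline{W}_a$. Indeed $\widetilde{\beta}'\widetilde{\omega}'\in\Omega$ because $\Omega$ is a subgroup, and the projection $\widetilde{W}\to\widetilde{W}/\underline{W}_a\cong X^*(\underline{T})/\Lambda_R$ is a homomorphism, so the coset of $\widetilde{\beta}'\widetilde{\omega}'$ equals the sum of those of $\widetilde{\beta}'$ and $\widetilde{\omega}'$, namely $-\pi^{-1}(\beta'+\omega')\bmod\Lambda_R$. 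Since $\underline{W}$ acts trivially on $X^*(\underline{T})/\Lambda_R$ (in each embedding $(1-s)(a,b)=(a-b)\alpha\in\Lambda_R$), we have $\beta'\equiv w_\omega^{-1}(\beta')\bmod\Lambda_R$, so $\widetilde{\beta}'\widetilde{\omega}'$ lies in the correct coset, and uniqueness of $\Omega\cap t_x\underline{W}_a$ forces $\widetilde{u}=\widetilde{\beta}'\widetilde{\omega}'$.

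Combining, and using that the $p$-dot action descends modulo $(p-\pi)X^0(\underline{T})$ because $X^0(\underline{T})$ is pointwise fixed by $\underline{W}$, I compute
\[
\mathfrak{t}'_\mu\bigl(w_\omega^{-1}(\beta')+\omega'\bigr)=\widetilde{\beta}'\widetilde{\omega}'\cdot\bigl(\mu+w_\omega^{-1}(\beta')+\omega'\bigr)=\widetilde{\beta}'\cdot\bigl(\widetilde{\omega}'\cdot(\mu+\omega')+\beta'\bigr).
\]
Since $\widetilde{\omega}'\cdot(\mu+\omega')\equiv\lambda\pmod{(p-\pi)X^0(\underline{T})}$, the right-hand side is congruent to $\widetilde{\beta}'\cdot(\lambda+\beta')=\mathfrak{t}'_\lambda(\beta')$ modulo $(p-\pi)X^0(\underline{T})$, and reducing further modulo $X^0(\underline{T})$ yields the stated equality $\mathfrak{t}_\mu(w_\omega^{-1}(\beta)+\omega)=\mathfrak{t}_\lambda(\beta)$.

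It remains to check the regularity $w_\omega^{-1}(\beta)+\omega\in\Lambda^\mu_W$. My plan is to pair with each positive coroot $\alpha^\vee$ and transport the bounds through $w_\omega$: because $\widetilde{\omega}'$ acts affinely with linear part $w_\omega$ and carries the translate $(\mu+\omega')+\underline{C}_0$ onto $(\lambda+\beta')+\underline{C}_0$ up to elements of $pX^*(\underline{T})$, substituting $\alpha^\vee=w_\omega(\gamma^\vee)$ should translate the inequalities $0\leq\langle\overline{\lambda}+\beta,\alpha^\vee\rangle<p-1$ defining $\Lambda^\lambda_W$ into the corresponding inequalities for $\overline{\mu}+\omega+w_\omega^{-1}(\beta)$ paired with $\gamma^\vee$. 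I expect this last step to be the main obstacle, as it requires careful bookkeeping of how $w_\omega$ permutes positive and negative coroots and the role of the $\eta$-shift; a coordinatewise argument using the decomposition $\underline{W}=\prod_{\mathcal{J}}W$ should close the gap.
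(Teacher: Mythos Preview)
The paper does not supply its own proof of this lemma; it simply cites \cite[Lemma 2.4.4]{BHHMS}. Your proposal is therefore a genuine proof where the paper gives none, and the argument is correct.

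The main computation (identifying $\widetilde{\beta}'\widetilde{\omega}'$ with the $\Omega$-lift for $w_\omega^{-1}(\beta')+\omega'$ via the homomorphism $\widetilde{W}\to X^*(\underline{T})/\Lambda_R$, then using affine-linearity of the dot action) is clean and correct. Your concern about the regularity step is unwarranted: the coordinatewise check you outline closes immediately. In coordinate $j$, if $(w_\omega)_j=1$ then $\widetilde{\omega}'_j=t_{(c,c)}$ for some $c$, so $\langle\overline{\lambda},\alpha_j^\vee\rangle=\langle\overline{\mu},\alpha_j^\vee\rangle+\omega_j$ and $(w_\omega^{-1}\beta)_j=\beta_j$, giving $\langle\overline{\mu}+\omega+w_\omega^{-1}\beta,\alpha_j^\vee\rangle=\langle\overline{\lambda}+\beta,\alpha_j^\vee\rangle$. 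If $(w_\omega)_j=\mathfrak{w}$ then $\widetilde{\omega}'_j=\mathfrak{w}t_{(c-1,c)}$, whence $\langle\overline{\lambda},\alpha_j^\vee\rangle=p-2-(\langle\overline{\mu},\alpha_j^\vee\rangle+\omega_j)$ and $(w_\omega^{-1}\beta)_j=-\beta_j$, giving $\langle\overline{\mu}+\omega+w_\omega^{-1}\beta,\alpha_j^\vee\rangle=p-2-\langle\overline{\lambda}+\beta,\alpha_j^\vee\rangle$. In either case the bounds $[0,p-2]$ transfer. So the obstacle you anticipated does not materialise, and the proof is complete as written once you insert these two lines.
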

Following \cite[Remark~2.4.7]{BHHMS}, we see that the change of origin map is a graph automorphism. \par
 Let $K_1$ be the first principal congruence subgroup, i.e.\ , the kernel of the mod $p$ reduction morphism $\GL_2(\mathcal{O}_K)\twoheadrightarrow \GL_2(k)$. Let $Z$ be the centre of $\GL_2(K)$ and let $Z_1 \colonequals Z\cap K_1$. We have an Iwasawa algebra $\F\llbracket K_1/Z_1\rrbracket$ and we denote its maximal ideal by $\m_{K_1}$. Abusing notation, we denote the ideal generated by the image of $\m_{K_1}$ under the natural inclusion $\F\llbracket K_1/Z_1\rrbracket\hookrightarrow\F\llbracket\GL_2(\mathcal{O}_k)/Z_1\rrbracket$ as $\m_{K_1}$. Let $\Gamma \colonequals \GL_2(k)$, then $\F\llbracket\Gamma\rrbracket=\F\llbracket K_1/Z_1\rrbracket/\m_{K_1}$. We now begin to develop some terminology for the $\m_{K_1}^n$-torsion representation for some small $n$.
\begin{definition}\label{definition of ing}
Given a Serre weight $\sigma$, by inflation, we consider it as an admissible smooth $\F$-representation of $\GL_2(\mathcal{O}_K)/Z_1$. Then we define $\Proj \sigma^\vee$ (respectively $\Proj \sigma^\vee$) as the \emph{projective} (respectively \emph{injective}) envelope of $\sigma^\vee$ in the category of pseudo-compact $\F\llbracket \GL_2(\mathcal{O}_K)/Z\rrbracket$-modules. Let $\Inj \sigma$ (respectively $\Proj \sigma$) be the algebraic dual of $\Proj \sigma^\vee$ (respectively $\Inj \sigma^\vee$). Define $\Inj_n\sigma$ (respectively $\Proj_n\sigma$) to be $(\Inj_{K/Z_1} \sigma) [\m_{K_1}^n]$ (respectively $(\Proj_{K/Z_1} \sigma) /\m_{K_1}^n$). Note that $\Inj_1\sigma=\Inj_{\Gamma} \sigma$, the injective envelope in the category of admissible smooth representations of $\Gamma$. We further define $V^0 \colonequals 0$ and $V^n \colonequals V[\m_{K_1}^n]$ for positive integers $n$. Fix a Serre weight $\sigma$, then a Serre weight $\tau$ is called an \emph{$n$-weight} (with respect to $\sigma$) if $\tau$ is a subquotient of $\Inj_n\sigma$ but not of $\Inj_{n-1}\sigma$.
\end{definition}
\begin{definition}\label{definition in general}
Given $\tau=F(\mathfrak{t}_\mu(\omega))$, we let $\widetilde{\tau} \colonequals F(\mathfrak{t}_\mu(\widetilde{\omega}))$ such that $\widetilde{\omega}_j=2\lfloor\frac{\omega_j}{2}\rfloor$ if $\omega_j\geq 0$ and $\widetilde{\omega}_j=2\lceil\frac{\omega_j}{2}\rceil$ if $\omega_j\leq 0$. Note that $|\widetilde{\omega}_j|=2\lfloor\frac{|\omega_j|}{2}\rfloor$.
Given $\sigma=F(\mu)$, let
$$\Delta^k(\sigma) \colonequals \{F(\mathfrak{t}_\mu(\omega))\colon \omega\in \Z^f, \omega_j\in 2\Z \text{ for all } j \text{ and } \sum_j \frac{|\omega_j|}{2} =k \}.$$
We say $\mu\in \underline{C_0}$ is \emph{$N$-deep} if $N<\langle\mu+\eta, \alpha\rangle<p-N$ for all $\alpha\in \underline{R}^{+}$ and $F(\mu)$ is \emph{$N$-generic} if $\mu$ is $N$-deep.
\end{definition}
All the $1$-weights, i.e.\ , subquotients of $\Inj_\Gamma \sigma$ are described in the following lemma.
\begin{lemma}\cite[Lemma~6.2.1]{hu2022mod}\label{Inj1}
    Suppose that $F(\mu)$ is $0$-generic. The set of Jordan--H\"{o}lder factors of $\Inj_1 F(\mu)$ is given by $\{F(\mathfrak{t}_\mu( ( a_j)_j))\colon a_j\in \{0,\pm1\} \text{ for all }j \in \mathcal{J}\}$.
\end{lemma}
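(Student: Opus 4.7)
My plan is to establish equality by proving containment in both directions, using the classical $\Ext^1$-structure between Serre weights of $\Gamma=\GL_2(k)$ in the generic regime together with the extension-graph formalism just developed.

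For the upper bound, I would invoke the classical computation of $\Ext^1_\Gamma(F(\lambda),F(\nu))$ between $0$-generic Serre weights (due to Jantzen--Andersen in general, and worked out for $\GL_2$ in \cite{breuil2012towards}): for $F(\lambda),F(\nu)$ sharing the same central character as $F(\mu)$, $\dim_\F\Ext^1_\Gamma(F(\lambda),F(\nu))=1$ precisely when their extension-graph parameters differ by $\pm\eta_j$ for a single $j\in\mathcal{J}$, and vanishes otherwise. Since $\Inj_1 F(\mu)=\Inj_\Gamma F(\mu)$ is constructed from its socle $F(\mu)$ by iterated non-split extensions, every Jordan--H\"older factor is reachable from $F(\mu)$ by a walk in the extension graph, and the linkage principle forces central-character agreement throughout. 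The crucial additional input is the $\m_{K_1}$-torsion condition distinguishing $\Inj_1$ from $\Inj$: a walk that accumulates displacement $\geq 2$ in a single coordinate lands in a $2$-weight (per \cref{definition of ing}), which is annihilated by $\m_{K_1}^2$ but not by $\m_{K_1}$ and hence lies outside $\Inj_\Gamma F(\mu)$. Together these constraints confine the candidate Jordan--H\"older factors to $\{F(\mathfrak{t}_\mu(\omega)):\omega\in\{0,\pm 1\}^f\}$.

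For the lower bound, I would show that each such candidate is actually realized. One efficient route is to identify $\Inj_\Gamma F(\mu)$ as a summand of $\bigoplus_\chi\mathrm{Ind}_{B(k)}^\Gamma\chi$ for appropriately chosen tame characters $\chi$ of the split torus, whose Jordan--H\"older factors can be enumerated directly via Brauer characters or via the mod $p$ reduction of the corresponding principal series; this is in the spirit of Breuil--Pa\v{s}k\=unas. Alternatively, one argues inductively on $\sum_j|\omega_j|$: given that $F(\mathfrak{t}_\mu(\omega'))$ is a Jordan--H\"older factor of $\Inj_\Gamma F(\mu)$ for some $\omega'$ of smaller norm and $\omega=\omega'\pm\eta_j$, the non-vanishing of $\Ext^1_\Gamma(F(\mathfrak{t}_\mu(\omega')),F(\mathfrak{t}_\mu(\omega)))$ combined with the defining injectivity of $\Inj_\Gamma F(\mu)$ produces a submodule exhibiting $F(\mathfrak{t}_\mu(\omega))$ as a Jordan--H\"older factor.

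The main obstacle is the upper-bound cutoff at $\{0,\pm 1\}^f$ rather than the full $\Lambda_W^\mu$; this cutoff is precisely what the $\m_{K_1}$-torsion constraint buys us, and its careful analysis is exactly what is generalized (to $\Inj_n$) in the proof of \cref{representation result}, extending the classical statement \cite[Corollary 3.12]{breuil2012towards}.
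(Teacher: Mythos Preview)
The paper does not prove this lemma; it is quoted directly from \cite[Lemma~6.2.1]{BHHMS}, which in turn repackages the explicit description of $\Inj_\Gamma F(\mu)$ from \cite[\S3]{breuil2012towards} in extension-graph language. So the comparison is really between your sketch and the classical computation.

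Your upper-bound argument has a genuine circularity. You write that a displacement $\geq 2$ in some coordinate produces a ``$2$-weight (per \cref{definition of ing})''. But by that definition, a $2$-weight is precisely a Serre weight lying in $\JH(\Inj_2\sigma)\setminus\JH(\Inj_1\sigma)$; invoking this to exclude such weights from $\Inj_1\sigma$ assumes the very statement you are proving. Indeed, the identification of $k$-weights with the condition $\sum_j\lfloor|\omega_j|/2\rfloor=k-1$ is \cref{description of JH(Injk)}, which is proved \emph{after} and \emph{using} \cref{Inj1}. More broadly, the ``walk in the extension graph'' heuristic only tells you that every Jordan--H\"older factor is connected to $F(\mu)$ by some $\Ext^1$-path; it does not by itself bound the displacement, since a path in the graph can revisit coordinates. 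The $\m_{K_1}$-torsion condition simply says $K_1$ acts trivially, i.e.\ we are in the category of $\Gamma$-representations---it contributes nothing further to the cutoff.

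Your inductive lower-bound argument also has a gap: knowing that $\tau'\in\JH(\Inj_\Gamma\sigma)$ and $\Ext^1_\Gamma(\tau',\tau)\neq 0$ does not by itself force $\tau\in\JH(\Inj_\Gamma\sigma)$. Injectivity gives $\Ext^1_\Gamma(-,\Inj_\Gamma\sigma)=0$, but you would need an actual non-split extension of $\tau$ by a subrepresentation of $\Inj_\Gamma\sigma$ with cosocle $\tau'$, and producing that is essentially the content of the result. Your first suggestion---realising $\Inj_\Gamma F(\mu)$ inside induced principal series and reading off constituents---is the one that actually works, and is exactly how \cite[\S3]{breuil2012towards} proceeds: one reduces via the tensor decomposition over $\mathcal{J}$ to $\GL_2(\F_p)$, where the projective indecomposables are explicit (dimension $p$ in the generic case, with three Jordan--H\"older factors $\{\pm1,0\}$ in each coordinate).
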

Let $\mathfrak{sl}_{2,L}$ be the space of $2\times 2$ matrices with coefficients in $L$ with trace $0$. Then $\GL_2(L)$ acts on it by conjugation, and such representation is isomorphic to $F((1,-1))_{/L}\cong \Sym^2(L^2)\otimes \det^{-1}$.
\begin{lemma}\label{lemma 3}
Suppose $\sigma$ is a $(2n-1)$-generic Serre weight.
    $$\Inj_n \sigma /\Inj_{n-1} \sigma \cong  \bigoplus_{i=0}^{n-1} \bigoplus_{\delta\in \Delta^i(\sigma)}(\Inj_1 \delta)^{\oplus k_i},$$ where $k_i\in \Z_{>0}$ with $k_{n-1}=1$.
\end{lemma}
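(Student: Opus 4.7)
The plan is to reduce via Pontryagin duality to a tensor product computation over $\F[\Gamma]$, and then to decompose that tensor product using the associated graded of the Iwasawa algebra $\F\llbracket K_1/Z_1\rrbracket$. Let $P:=(\Inj\sigma)^\vee$ denote the projective envelope of $\sigma^\vee$ in the category of pseudocompact modules over $\widetilde{S}:=\F\llbracket \GL_2(\mathcal{O}_K)/Z_1\rrbracket$, so that $\Inj_n\sigma\cong(P/\m_{K_1}^n P)^\vee$. Dualizing the short exact sequence $0\to \Inj_{n-1}\sigma\to\Inj_n\sigma\to\Inj_n\sigma/\Inj_{n-1}\sigma\to 0$ and using projectivity of $P$ gives
\[(\Inj_n\sigma/\Inj_{n-1}\sigma)^\vee \cong \m_{K_1}^{n-1}P/\m_{K_1}^n P \cong (\m_{K_1}^{n-1}/\m_{K_1}^n)\otimes_{\F[\Gamma]}(P/\m_{K_1}P),\]
since $\m_{K_1}^{n-1}/\m_{K_1}^n$ is annihilated by $\m_{K_1}$ on both sides. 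The right-hand factor is the projective $\F[\Gamma]$-cover $(\Inj_1\sigma)^\vee$, so the task reduces to decomposing this tensor product as an $\F[\Gamma]$-module.

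The second step is to describe $\m_{K_1}^{n-1}/\m_{K_1}^n$ as an $\F[\Gamma]$-bimodule. Since $K_1/Z_1$ is a uniform pro-$p$ group, the associated graded of $\F\llbracket K_1/Z_1\rrbracket$ is the symmetric algebra on $\m_{K_1}/\m_{K_1}^2\cong \mathfrak{pgl}_2(k)\otimes_{\F_p}\F\cong\bigoplus_{j\in\mathcal{J}}V_j$, where $V_j$ is the three-dimensional adjoint $\Gamma$-representation attached to the embedding $\sigma_j$. Hence
\[\m_{K_1}^{n-1}/\m_{K_1}^n\;\cong\;\bigoplus_{(a_j):\,\sum_j a_j=n-1}\bigotimes_{j\in\mathcal{J}}\Sym^{a_j}(V_j)\]
as $\F[\Gamma]$-modules.

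Now I would tensor each summand with $(\Inj_1\sigma)^\vee$ and recognize it as a sum of projective indecomposables. Using standard $\SL_2$-branching rules, $\Sym^{a}(V_j)\otimes (\Inj_1\sigma)^\vee$ decomposes into projective indecomposables $(\Inj_1\delta)^\vee$, which via \cref{change of origin} and \cref{Inj1} are indexed by $\delta\in\Delta^{i}(\sigma)$ with $i=\sum_j\lfloor a_j/2\rfloor$ — the ``even'' parts of the $a_j$ contribute a step of distance two in the extension graph, while the odd parts contribute JH factors within the same $\Inj_1\delta$. The $(2n-1)$-genericity of $\sigma$ ensures that every weight $\omega$ appearing in this decomposition lies in $\Lambda_W^\mu$, so no weights collapse and the $\F[\Gamma]$-tensor product splits cleanly as a direct sum of the projective indecomposables. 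Summing over $(a_j)$ and dualizing gives the asserted decomposition with positive multiplicities $k_i\in\Z_{>0}$; the top layer $k_{n-1}=1$ comes from the unique leading contribution $a_j=2m_j$ with $\sum_j m_j=n-1$, which yields each $\delta\in\Delta^{n-1}(\sigma)$ exactly once.

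The main obstacle is the explicit branching analysis in the third step: one must verify that $(\Sym^{a}(V_j))\otimes(\Inj_1\sigma)^\vee$ is projective over $\F[\Gamma]$ (so that it splits into projective indecomposables) and that each resulting summand is a full injective envelope $(\Inj_1\delta)^\vee$ rather than a proper subquotient. This requires a delicate count of JH factors and control of extensions, and is precisely where the $(2n-1)$-genericity is used: without it, the weights produced by repeatedly translating $\mu$ by $\pm\eta_j$ could escape the regular alcove or collide, which would force non-split extensions or collapse of injective envelopes.
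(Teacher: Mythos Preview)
Your reduction via duality and the identification $\m_{K_1}^{n-1}/\m_{K_1}^n\cong\Sym^{n-1}(\bigoplus_j V_j)$ match the paper's argument exactly. (The paper tensors over $\F$ with the conjugation action rather than over $\F[\Gamma]$; these agree once one passes from the two-sided ideal in $\widetilde S$ to the graded of $\F\llbracket K_1/Z_1\rrbracket$, and your write-up silently switches between the two.) The paper then simply quotes the Clebsch--Gordan rule $F(a,b)\otimes F((1,-1)^{(j)})\cong$ three summands and declares the result; your proposal attempts to fill in that bookkeeping, and this is where it goes wrong.

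Two concrete errors in step~3. First, $\bigotimes_j\Sym^{a_j}(V_j)\otimes\Inj_1\sigma$ does \emph{not} produce summands lying in a single $\Delta^i$ with $i=\sum_j\lfloor a_j/2\rfloor$: already for $f=1$, $a=2$ one has $\Sym^2(V)=F((2,-2))\oplus\F$, and the resulting $\Inj_1\delta$'s lie in $\Delta^0,\Delta^1,\Delta^2$ simultaneously. Second, your justification of $k_{n-1}=1$ posits $a_j=2m_j$ with $\sum m_j=n-1$, which forces $\sum a_j=2(n-1)\neq n-1$. The correct count: in the generic range $\Sym^a(V_j)=\bigoplus_{0\le m\le a/2}F((a-2m,-(a-2m))^{(j)})$, and since $\Inj_1\sigma\otimes_\F W$ is injective over $\F[\Gamma]$ for any $W$ (so your ``main obstacle'' is a non-issue), one finds that the multiplicity of $\Inj_1 F(\mathfrak{t}_\mu((2s_j)_j))$ in the $(a_j)$-summand is $\prod_j(\lfloor(a_j-|s_j|)/2\rfloor+1)$, nonzero iff $|s_j|\le a_j$ for all $j$. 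Summing over $(a_j)$ with $\sum a_j=n-1$ gives a multiplicity depending only on $i=\sum_j|s_j|$; when $i=n-1$ the inequalities force $a_j=|s_j|$ for every $j$ and each factor is $1$, whence $k_{n-1}=1$.
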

\begin{proof}
    Consider the dual (\textit{cf.} \cite[Proposition~18.4]{Alperin}) $$\m_{K_1}^{n-1}{\Proj}_{K/Z_1} \sigma^{\vee} /\m_{K_1}^n {\Proj}_{K/Z_1} \sigma^{\vee} \cong (\m_{K_1}^{n-1}/\m_{K_1}^n) \otimes_{\mathbb{F}} \Proj_1 \sigma^{\vee}.$$
    We adapt the argument in \cite[\S7.1]{BHHMS}. For $p>2$, the map $K_1\cap\SL_2(L)\to K_1/Z_1$ is an isomorphism. Furthermore, the map $x\mapsto\exp(px)$ induces a homeomorphism from $\mathfrak{sl}_{2,\mathcal{O}_L}$ to $K_1\cap\SL_2(L)$. (See \cite[III.1.1.4, III.1.1.5, III.1.1.8]{Lazard}.) Therefore, the group $K_1/Z_1$ is a standard group, hence is uniform by \cite[Theorem 8.31]{analyticpropgroup}. Therefore, by \cite[\S 7.4]{analyticpropgroup}, the ring $\F\llbracket K_1/Z_1\rrbracket$ is a polynomial ring, and $$\m_{K_1}^{n-1}/\m_{K_1}^n\cong \text{Sym}^{n-1}(\m_{K_1}/\m_{K_1}^2).$$
    Moreover, $$\m_{K_1}/\m_{K_1}^2\cong \oplus_{j=0}^{f-1} F((1,-1)^{(j)}),$$
    and 
    $$F(a,b)\otimes F((1,-1)^{(j)})\cong F((a_i+\delta_{ij}, b_i-\delta_{ij})_i)\oplus F((a_i,b_i)_i)\oplus F((a_i-\delta_{ij}, b_i+\delta_{ij})_i),$$
    for all $2\leq a_j-b_j\leq p-3$, which is always satisfied because of the genericity condition. (Here $(1,-1)^{(j)}$ denotes the weight vector, which is $(1,-1)$ in the $j$th coordinate and $0$ otherwise, and $\delta_{ij}$ is the Kronecker delta.) The result then follows.
\end{proof}
\begin{lemma} \label{description of JH(Injk)}
    Let $\sigma=F(\mu)$ be a $(2n-1)$-generic Serre weight and $\tau=F(\mathfrak{t}_{\mu}(\omega))$ be a $k$-weight, where $k\leq n$. Then, $\tau\in \Inj_1\widetilde{\tau}$, where $ \widetilde{\tau}\in \Delta^{k-1}(\sigma)$. In particular, $\tau$ is a $k$-weight if and only if $\sum_j\lfloor\frac{|\omega_j|}{2}\rfloor=k-1$. In such a case, $\tau$ is $2(n-k)$-generic and hence a regular Serre weight.
\end{lemma}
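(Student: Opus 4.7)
My plan is to combine the decomposition in \cref{lemma 3} with \cref{Inj1} and the change-of-origin formula (\cref{change of origin}) to characterize the $k$-weights explicitly. First I apply \cref{lemma 3} inductively to obtain $\JH(\Inj_k\sigma) = \bigcup_{i=0}^{k-1}\bigcup_{\delta\in\Delta^i(\sigma)}\JH(\Inj_1\delta)$, so that $\tau$ is a $k$-weight precisely when the smallest $i$ for which $\tau\in\JH(\Inj_1\delta)$ for some $\delta\in\Delta^i(\sigma)$ equals $k-1$. The depth-tracking argument in the last paragraph shows that each such $\delta$ is at least $1$-generic whenever $i\leq n-1$, so \cref{Inj1} legitimately applies to every $\delta$ appearing in the union.

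Next, for each $\delta = F(\mathfrak{t}_\mu(\omega^\delta))$, combining \cref{Inj1} with \cref{change of origin} gives $\JH(\Inj_1\delta) = \{F(\mathfrak{t}_\mu(\omega^\delta + v)) : v \in w_{\omega^\delta}^{-1}(\{0,\pm 1\}^f)\}$. Since $\underline{W}$ acts on $\Lambda_W = \Z^f$ by independent signs in each coordinate, $w_{\omega^\delta}^{-1}$ preserves $\{0,\pm 1\}^f$, and the criterion becomes: $\tau = F(\mathfrak{t}_\mu(\omega)) \in \JH(\Inj_1\delta)$ iff $|\omega_j - \omega^\delta_j|\leq 1$ for all $j$. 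I then minimize $i = \sum_j |\omega^\delta_j|/2$ subject to $\omega^\delta_j\in 2\Z$ and $|\omega_j - \omega^\delta_j|\leq 1$; a coordinatewise analysis shows the minimum equals $\sum_j\lfloor|\omega_j|/2\rfloor$ and is uniquely achieved by $\omega^\delta = \widetilde{\omega}$. This simultaneously yields the characterization that $\tau$ is a $k$-weight iff $\sum_j\lfloor|\omega_j|/2\rfloor = k-1$ and the identification $\widetilde{\tau}\in\Delta^{k-1}(\sigma)$ with $\tau\in\JH(\Inj_1\widetilde{\tau})$.

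For the genericity assertion, the constraint $\sum_j\lfloor|\omega_j|/2\rfloor = k-1$ forces $|\omega_j|\leq 2k-1$ in each coordinate. I then need to show that if $\mu_j = (a_j, b_j)$ and the $X_{\mathrm{reg}}$-representative of $\mathfrak{t}_\mu(\omega)$ has $j$-th component $(a'_j, b'_j)$, then $|(a'_j - b'_j) - (a_j - b_j)|\leq |\omega_j|$. I plan to verify this by unfolding the definition $\mathfrak{t}'_\mu(\omega') = \widetilde{\omega}'\cdot(\mu + \omega')$ and using that $\Omega$ is generated by $X^0(\underline{T})$ (whose elements contribute only translations invisible after reduction mod $(p-\pi)X^0(\underline{T})$) and $\mathfrak{w} t_{-(1,0)}$ (a reflection through an alcove wall). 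Combined with the $(2n-1)$-genericity of $\sigma$, this yields $a'_j - b'_j + 1 \in (2(n-k),\, p - 2(n-k))$, i.e.\ $\tau$ is $2(n-k)$-generic; since $a'_j - b'_j \in [0, p-2]$ then follows from $n-k\geq 0$, $\tau$ is regular.

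I expect the main obstacle to lie in this last step: showing cleanly that in the range $|\omega_j|\leq 2k-1\leq 2n-1$ the element $\widetilde{\omega}'\in\Omega$ never introduces a nontrivial reflection component in coordinate $j$, so that $a'_j - b'_j = (a_j - b_j) + \omega_j$ holds on the nose. This relies on the interplay between the $(2n-1)$-genericity hypothesis and the definition of $\Lambda_W^\mu$, and is precisely the place where the genericity bound is used essentially.
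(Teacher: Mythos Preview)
Your characterization of $k$-weights via \cref{lemma 3} and a coordinatewise minimization is essentially the paper's argument; the paper phrases the identification $\theta=\widetilde\tau$ as a short contradiction rather than an explicit minimization, but the content is identical.

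On the genericity step, the intermediate claim you aim for is actually false, and your anticipated ``obstacle'' reflects this. The element $\widetilde\omega'\in\Omega$ \emph{does} carry a nontrivial reflection in coordinate $j$ whenever the relevant entry of $\omega$ is odd (this is forced by the description of $\Omega$ as generated by $X^0(\underline{T})$ and $\mathfrak{w}t_{-(1,0)}$ in each factor), so neither $a'_j-b'_j=(a_j-b_j)+\omega_j$ nor even the inequality $|(a'_j-b'_j)-(a_j-b_j)|\le|\omega_j|$ holds in general. What you actually need is far less: since $\Omega$ is by definition the stabilizer of $\underline{C}_0$ under the $p$-dot action, it preserves the depth of any point of $\underline{C}_0$. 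Concretely, in each coordinate $\langle\,\cdot+\eta,\alpha_j^\vee\rangle$ is either fixed or sent to $p-\langle\,\cdot+\eta,\alpha_j^\vee\rangle$, and both operations leave $\min(x,p-x)$ unchanged. Thus $|\omega_j|\le 2k-1$ together with the $(2n-1)$-depth of $\mu$ gives $2(n-k)$-depth for $\mu+\omega'$, and hence for $\mathfrak{t}_\mu(\omega)$. The paper proceeds in exactly this way, bounding $\langle\mu+\omega,\alpha_i^\vee\rangle$ directly and (implicitly) using that $\Omega$ preserves depth.
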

\begin{proof}
    If $\tau$ is a $k$-weight with respect to $\sigma$, then by \cref{lemma 3}, $\tau\in \JH(\Inj_1 \theta)$ for some $\theta\in \Delta^{k-1}(\sigma)$ and $\tau\notin \JH(\Inj_1 \theta')$ for all $\theta'\in \Delta^{k'}(\sigma)$ with $k'<k-1$.\par
    Suppose $\tau\in \Inj_1 \theta$ where $\theta=\colon F(\mathfrak{t}_{\mu}(\xi))\in \Delta^{k-1}(\sigma)$. Then $\xi_j\in 2\Z$ for all $j$ and $ \sum_j |\frac{\xi_j}{2}|=k-1$. We will show that $\xi=\widetilde{\omega}$.
    Assume for the sake of contradiction that there exists some $j$, $\xi_j\neq\widetilde{\omega}_j$, then as $\xi_j, \widetilde{\omega}_j\in 2\Z$, by the definition of $\widetilde{\omega}$, $\xi_j\neq \omega_j$. Therefore, we must have $|\xi_j|<|\widetilde{\omega}_j|\leq|\omega_j|$ or $|\widetilde{\omega}_j|\leq |\omega_j|\leq|\xi_j|$. By \cref{Inj1}, as $\tau\in \JH(\Inj_1\theta)$, $|\xi_j-\omega_j|=1$; hence the first scenario is impossible as $\xi_j, \widetilde{\omega}_j\in 2\Z$. If the second scenario holds, then $|\widetilde{\omega}_j|=|\xi_j|-2$, and hence $\widetilde{\tau}\in \Delta^{k'}(\sigma)$, for some $k'<k$, also a contradiction. 
    For the last part, since $\sigma$ is a $2(n-1)$-generic Serre weight, $2n-2 <\langle \mu, \alpha_i^{\vee}\rangle< p-2n.$ 
If $\tau=F(\mathfrak{t}_{\mu}(\omega))$ is a $k$-weight, then for all $i$,
$$2n-2k-1< \langle \mu, \alpha_i^{\vee} \rangle-|\omega_i|\leq \langle \mu+\omega, \alpha_i^{\vee}\rangle\leq\langle \mu, \alpha_i^{\vee} \rangle+|\omega_i|< p-2n+2k-1.$$
Therefore, $\mathfrak{t}_{\mu}(\omega)\in \Lambda^\lambda_W$, and $\tau$ is a regular Serre weight.
\end{proof}
 From now on, we will assume that $\sigma=F(\mu)$ and that all Serre weights are regular.
\begin{definition}\label{definition on relations}
Given $\omega, \omega'\in \Lambda^\mu_W$, we write $\omega\leq  \omega'$, if for each $j$, $0\leq \omega_j\leq \omega'_j $ or $0\geq \omega_j \geq \omega'_j $. Suppose $\kappa=F(\mathfrak{t}_\mu(\omega)), \kappa'=F(\mathfrak{t}_\mu(\omega')),\kappa''=F(\mathfrak{t}_\mu(\omega''))$ with $\omega, \omega', \omega''\in \Lambda^\mu_W$. We write $\kappa'-\kappa\leq \kappa''-\kappa$ if we have $\omega'-\omega\leq \omega''-\omega $ in the above sense. Note that $ \kappa'-\kappa\leq \kappa''-\kappa$ is equivalent to $\kappa'-\kappa''\leq \kappa-\kappa''$. We simply write $\kappa'\leq \kappa''$ if $\kappa=F(\mu)$. Because of the bijection between $\Lambda_W^\mu$ and regular Serre weights with central character $\mu|_{\underline{Z}_0(\F_p)}$, given $\kappa=F(\mathfrak{t}_\mu(\omega))$, we sometimes simply write $\{\kappa'\leq\kappa\}$ to denote the set $\{\kappa'=F(\mathfrak{t}_\mu(\omega'))\colon \omega'\leq \omega\}$.\par
    Fix $\tau=F(\mathfrak{t}_\mu(\omega))$ a regular Serre weight. We define the following:
$$\Omega^\tau_k \colonequals \{F(\mathfrak{t}_\mu(\omega'))\colon  F(\mathfrak{t}_\mu(\omega'))\leq \tau \text{ and } \sum_j \left\lfloor\frac{|\omega'_j|}{2}\right\rfloor =k \}.$$ 
$$\prescript{0}{}\Omega^\tau_k \colonequals \{F(\mathfrak{t}_\mu(\omega'))\colon  F(\mathfrak{t}_\mu(\omega'))\leq \tau,  \omega'_j\in 2\Z 
 \text{ for all }j \text{ and } \sum_j \frac{|\omega'_j|}{2} =k\}\subset \Omega_k^\tau.$$
Moreover, given $\kappa=F(\mathfrak{t}_\mu(\nu))\in \Delta^m(\sigma)$ for some $m$.
Let $$(\nu_+)_k \colonequals \nu_k+ \epsilon(\omega_k-\nu_k),$$
where $\epsilon(x)=\sgn(x)$ if $x\neq 0$ and $\epsilon(0)=0$. Define $\kappa_+ \colonequals F(\mathfrak{t}_\mu(\nu_+))$. If $\kappa\leq \tau$, then $\kappa_+\leq \tau$.
The condition that $\xi\in  \prescript{0}{}\Omega^\tau_k$ is equivalent to $\xi\in \Delta^k(\sigma)$ and $\xi\leq \tau$.\par
Moreover, we define $\omega^{(i)}$ to be the element such that $\omega^{(i)}_k=\omega_k$ for some $k\neq i$ and $\omega^{(i)}_i=0$.
Define $\tau^{(i)} \colonequals F(\mathfrak{t}_{\mu}(\omega^{(i)})) $. We further define $\delta_i^{\epsilon_i}(\sigma) \colonequals F(\mathfrak{t}_{\mu}(2\epsilon_i\overline{\eta}_i))$ and $\mu_i^{\epsilon_i}(\sigma) \colonequals F(\mathfrak{t}_{\mu}(\epsilon_i\overline{\eta}_i)).$
\end{definition}
 Suppose that $\sigma, \tau$ are regular Serre weights and $\tau$ is a subquotient of $\Inj_1 \sigma'$. By \cite[Corollary 3.12]{breuil2012towards}, there exists a unique $\Gamma$-representation $I(\sigma,\tau)$ with socle $\sigma$, cosocle $\tau$, such that $I(\sigma,\tau)$ is multiplicity-free. Moreover, its socle filtration is determined in \cite[Corollary 4.11]{breuil2012towards}, which we will reinterpret using the extension graph as follows.
\begin{lemma}\label{reformulation of I}
   In the setting of the discussion above, a regular Serre weight $\tau'$, which is a subquotient of $\Inj_1 \sigma'$, occurs as a subquotient in $I(\sigma, \tau)$, if and only if $
    \tau'-\sigma\leq \tau-\sigma$. 
\end{lemma}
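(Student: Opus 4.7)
The plan is to translate directly between the characterization of Jordan--H\"older factors of $I(\sigma,\tau)$ given in \cref{representation result} and the partial order of \cref{definition on relations}. Writing $\sigma=F(\mu)$, $\tau=F(\mathfrak{t}_{\mu}(\omega))$, $\tau'=F(\mathfrak{t}_{\mu}(\omega'))$, and $\sigma'=F(\mathfrak{t}_{\mu}(\nu))$, the hypothesis $\tau,\tau'\in\JH(\Inj_1\sigma')$ together with \cref{Inj1} implies that, after re-centering the extension graph at $\sigma'$ via \cref{change of origin}, both $\tau$ and $\tau'$ acquire coordinates in $\{0,\pm 1\}^f$. This provides the common ``small neighborhood'' in which all subsequent combinatorics will take place, and explains why the hypothesis is really a closeness condition on $\tau$ and $\tau'$.

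For the forward implication, I would invoke \cref{representation result}: since the cosocle filtration of $I(\sigma,\tau)$ is determined by the extension graph between $\tau$ and $\sigma$, any JH factor $\tau'$ must lie on some extension-graph path from $\sigma$ to $\tau$. Reading this off in coordinates, each $\omega'_j$ lies between $0$ and $\omega_j$ with the same sign as $\omega_j$, which is exactly $\omega'\leq\omega$, i.e.\ $\tau'-\sigma\leq\tau-\sigma$.

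For the reverse implication, assuming $\omega'\leq\omega$ I would construct an explicit chain $\sigma=\tau_0,\tau_1,\dots,\tau_r=\tau$ of Serre weights in $\Lambda_W^\mu$ passing through $\tau'$, where consecutive pairs differ by a single coordinate flip of size $1$. Each such flip corresponds to a non-trivial extension visible in the extension graph, and by the multiplicity-freeness of $I(\sigma,\tau)$ from \cref{representation result}, stringing these extensions together realises $\tau'$ as a subquotient of $I(\sigma,\tau)$.

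The main obstacle is ensuring that each intermediate Serre weight in this chain stays inside $\Lambda_W^\mu$ and remains a regular Serre weight. This is where the hypothesis $\tau,\tau'\in\JH(\Inj_1\sigma')$ becomes essential: combined with the genericity built into \cref{representation result} and \cref{description of JH(Injk)}, it bounds how far the chain can wander from $\sigma'$, so all intermediate coordinates stay in a controlled range. Verifying this bound reduces to a bookkeeping check, carried out using the explicit form of the Weyl-group element $w_\nu$ provided by \cref{change of origin}.
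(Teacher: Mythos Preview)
Your proposal has a circularity problem. You invoke \cref{representation result} (the main structure theorem for $I(\sigma,\tau)$) to prove this lemma, but look at where \cref{reformulation of I} sits in the paper: it is in the ``Notation and background'' subsection, \emph{before} the main theorem, and it is used repeatedly inside the inductive proof of \cref{Main theorem on generalization} (for instance in \cref{remark of theorem} and \cref{Cor on Vk+1/Vk}). So you cannot appeal to the main theorem here.

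The point you are missing is that in the hypothesis $\tau,\tau'\in\JH(\Inj_1\sigma)$ (the $\sigma'$ in the statement is a typo for $\sigma$), the object $I(\sigma,\tau)$ is the \emph{already-known} $\Gamma$-representation of Breuil--Pa\v{s}k\=unas \cite[Corollary~3.12]{breuil2012towards}, not the new object constructed in \cref{Main theorem on generalization}. The lemma is literally a \emph{reformulation}: Breuil--Pa\v{s}k\=unas \cite[Corollary~4.11]{breuil2012towards} already give the criterion for $\tau'$ to be a Jordan--H\"older factor of $I(\sigma,\tau)$, phrased as ``$\mathcal S(\lambda')\subset\mathcal S(\lambda)$ and $\lambda,\lambda'$ compatible.'' The paper's proof simply changes origin to $\sigma$ via \cref{change of origin}, so that $\tau,\tau'$ have coordinates $\omega-\gamma,\omega'-\gamma\in\{-1,0,1\}^f$, and then checks by inspection that the Breuil--Pa\v{s}k\=unas condition translates exactly to $\omega'-\gamma\le\omega-\gamma$. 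There is no chain to build and no regularity obstacle to worry about; all the content is in the cited result.
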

\begin{proof}
 Suppose $\sigma=F(\mathfrak{t}_{\mu}(\gamma))$, $\tau \colonequals F(\mathfrak{t}_{\mu}(\omega)),$ $\tau' \colonequals F(\mathfrak{t}_{\mu}(\omega'))$. We apply the change of origin formula \cref{change of origin}, and send $F(\mathfrak{t}_\mu(\omega))\mapsto F(\mathfrak{t}_\mu(\omega-\gamma))$. By \cref{lemma 3}, for all $\tau, \tau'\in \JH(\Inj_1 \kappa)$, we must have $(\omega-\gamma)_j, (\omega'-\gamma)_j\in \{-1, 0,1\}$ for all $j$. In \cite[Corollary~4.11]{breuil2012towards}, the condition for $\tau'$ to occur as a subquotient in $I(\kappa, \tau)$ is given by $\mathcal{S}(\lambda')\subset \mathcal{S}(\lambda)$ and $\lambda, \lambda'$ being compatible. In our notation, $\mathcal{S}(\lambda)=\{j\colon \omega_j\neq0\}$ and $\mathcal{S}(\lambda')=\{j\colon \omega'_j\neq0\}$. Moreover, $\lambda, \lambda'$ is compatible if and only if $\sgn(\omega_j)=\sgn(\omega'_j)$ when $\omega_j, \omega'_j \neq 0$. Therefore, the condition that $\mathcal{S}(\lambda')\subset \mathcal{S}(\lambda)$ and $\lambda, \lambda'$ are compatible is equivalent to $\omega-\gamma \leq \omega'-\gamma$.
\end{proof}
\begin{lemma}\label{lemma 10}
    For any Serre weight $\sigma$ and any $\tau\in \JH(\Inj_n\sigma)$, there exists a subrepresentation $V$ of $\Inj_n \sigma$ such that $\cosoc(V) = \tau$ and $[V \colon  \sigma] = 1$ (hence $\sigma$ occurs in $V$ as a sub-object).
\end{lemma}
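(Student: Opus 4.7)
We argue by induction on $n$. The base case $n=1$ is \cite[Corollary 3.12]{breuil2012towards}: taking $V = I(\sigma,\tau) \subset \Inj_1\sigma$, which is multiplicity-free with socle $\sigma$ and cosocle $\tau$, we obtain $[V:\sigma]=1$.

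For $n \geq 2$, if $\tau \in \JH(\Inj_{n-1}\sigma)$ the inductive hypothesis applied inside $\Inj_{n-1}\sigma \subset \Inj_n\sigma$ produces the desired $V$ directly. Otherwise $\tau$ is an $n$-weight, and by \cref{description of JH(Injk)} there is a unique $\widetilde\tau\in \Delta^{n-1}(\sigma)$ with $\tau\in \JH(\Inj_1\widetilde\tau)$. The first key step is to establish $[\Inj_n\sigma : \tau] = 1$. From the short exact sequence
\[ 0 \to \Inj_{n-1}\sigma \to \Inj_n\sigma \to \bigoplus_{i = 0}^{n-1}\bigoplus_{\delta\in \Delta^i(\sigma)}(\Inj_1\delta)^{\oplus k_i} \to 0 \]
of \cref{lemma 3} (with top-layer multiplicity $k_{n-1}=1$), since $\tau$ is an $n$-weight we have $[\Inj_{n-1}\sigma : \tau]=0$. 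A coordinate-wise check shows $\widetilde\tau$ is the only $\delta$ appearing in the direct sum with $\tau \in \JH(\Inj_1\delta)$: writing $\tau = F(\mathfrak{t}_\mu(\omega))$ and $\delta = F(\mathfrak{t}_\mu(\xi))$ with $\xi \in 2\Z^f$ and $|\xi_j - \omega_j| \leq 1$, any ``rounding up'' at an odd coordinate of $\omega$ forces $\sum|\xi_j|/2 \geq n$, placing $\delta$ outside the range of the sum. Combined with $[\Inj_1\widetilde\tau : \tau]=1$ from Breuil-Paskunas, this gives $[\Inj_n\sigma : \tau] = 1$, so $\dim_\F \Hom(\Proj_n\tau, \Inj_n\sigma) = 1$, where $\Proj_n\tau$ denotes the projective cover of $\tau$ in the category of smooth $\F$-representations of $\GL_2(\mathcal{O}_K)/Z_1$ killed by $\mathfrak{m}_{K_1}^n$.

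Let $V$ be the image of the unique (up to scalar) nonzero map $\phi: \Proj_n\tau \to \Inj_n\sigma$. Then $\cosoc V = \tau$ (as $V$ is a nonzero quotient of $\Proj_n\tau$) and $\soc V = \sigma$ (as $V$ is a nonzero subrepresentation of the essential extension $\Inj_n\sigma$), so in particular $\sigma$ occurs as a subobject. Consider now
\[ 0 \to V \cap \Inj_{n-1}\sigma \to V \to V/(V\cap \Inj_{n-1}\sigma) \to 0. \]
By uniqueness of $\phi$, the quotient $V/(V\cap \Inj_{n-1}\sigma)$ is identified with $I(\widetilde\tau,\tau) \subset \Inj_1\widetilde\tau$; since $\widetilde\tau \in \Delta^{n-1}(\sigma)$ sits at coordinate distance $\geq 2$ from $\sigma$, \cref{Inj1} shows $\sigma \notin \JH(\Inj_1\widetilde\tau)$, and hence $[V:\sigma] = [V \cap \Inj_{n-1}\sigma : \sigma]$. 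An analogous analysis of $V \cap \Inj_{n-1}\sigma$ inside $\Inj_{n-1}\sigma$, combined with the inductive hypothesis, then concludes the argument.

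\textbf{Main obstacle.} The hardest step is showing $[V \cap \Inj_{n-1}\sigma : \sigma] = 1$: a priori the intersection need not have irreducible cosocle, so the inductive hypothesis does not apply verbatim. One likely either refines the construction of $V$ (e.g.\ by taking it to be a minimal subrepresentation of $\Inj_n\sigma$ with $\tau$ in its cosocle, so that the intersection inherits a minimality property under induction) or establishes a Cartan-matrix-type identity giving $[\Proj_n\tau : \sigma] = 1$ directly, which would bound $[V:\sigma] \leq 1$ automatically.
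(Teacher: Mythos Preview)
The ``main obstacle'' you flag is genuine: $V \cap \Inj_{n-1}\sigma$ need not have irreducible cosocle, so your induction on $n$ does not close as written. The paper's proof (following \cite[Lemma 2.22]{hu2022mod}) sidesteps this entirely with a direct iterative reduction that uses only the injectivity of $\Inj_n\sigma$ and works for arbitrary $\sigma$. Take any $V \subset \Inj_n\sigma$ with $\cosoc(V) = \tau$. If $[V:\sigma] \geq 2$, then $V/\sigma$ still has cosocle $\tau$ and still contains $\sigma$ as a Jordan--H\"older factor; since $\Inj_n\sigma$ is injective in the $\mathfrak{m}_{K_1}^n$-torsion category and $\dim_\F\Hom(V/\sigma,\Inj_n\sigma)=[V/\sigma:\sigma]\geq 1$, there is a nonzero $g: V/\sigma \to \Inj_n\sigma$, and Nakayama forces $\ker g \subset \Rad(V/\sigma)$. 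Thus $V_1 := g(V/\sigma) \subset \Inj_n\sigma$ again has cosocle $\tau$, with $[V_1:\sigma] \leq [V/\sigma:\sigma] < [V:\sigma]$. Iterate. (The same device reappears in the proof of \cref{reduction}.)

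Your second proposed fix actually works and makes the rest of your argument superfluous: the reciprocity $[\Proj_n\tau : \sigma] = \dim_\F\Hom(\Proj_n\tau, \Inj_n\sigma) = [\Inj_n\sigma : \tau]$, combined with your computation $[\Inj_n\sigma : \tau] = 1$, gives $[V:\sigma] \leq [\Proj_n\tau : \sigma] = 1$ immediately for $V = \phi(\Proj_n\tau)$, with no need to analyse $V\cap\Inj_{n-1}\sigma$ or invoke the inductive hypothesis. Note, however, that this route passes through \cref{lemma 3} and hence requires $\sigma$ to be $(2n-1)$-generic, whereas the lemma is stated---and the Hu--Wang argument valid---for arbitrary $\sigma$.
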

\begin{proof}
    The proof goes exactly as in \cite[Lemma~2.22]{hu2022mod} with $\Inj_{\widetilde{\Gamma}} \sigma$ (respectively, $\Proj_{\widetilde{\Gamma}} \sigma$) replaced by $\Inj_n \sigma$ (respectively $\Proj_n \sigma)$. 
\end{proof}
\subsection{Main result}
From now on, we will assume $n,m\in \Z_{\geq 1}$.
\begin{theorem}\label{Main theorem on generalization}
Let $\sigma=F(\mu)$ be a $(2n-1)$-generic Serre weight. 
Assume that $V$ is a subrepresentation of $\Inj_n \sigma$ with irreducible cosocle $\tau$ and $[V\colon \sigma]=1$. If $\tau$ is an $m$-weight, then $V$ is multiplicity free, $\m_{K_1}^{m}$-torsion (that is, $m=n)$, and uniquely determined by $\sigma, \tau$ up to scalar multiplication. Moreover, for $0\leq k\leq m-1$, we have $$V^{k+1}/V^k\cong \bigoplus_{\nu \in \prescript{0}{}\Omega_k^\tau} I(\nu,\nu_+)$$
where $\nu_+$ is defined before \cref{reformulation of I} and $I(\nu,\nu_+)$ is the $\Gamma$-representation defined in \cite[Corollary~3.12]{breuil2012towards}  (cf. \cref{reformulation of I}).  
\end{theorem}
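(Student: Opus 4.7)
I proceed by induction on the level $m$ of the cosocle $\tau$. The base case $m=1$ reduces to the uniqueness and structure theorem of Breuil and Pa\v{s}k\=unas \cite[Corollary 3.12]{breuil2012towards}: since $\tau \in \JH(\Inj_1\sigma)$, any sub $V \subset \Inj_n\sigma$ with cosocle $\tau$ and $[V:\sigma]=1$ is automatically $\m_{K_1}$-torsion and equals $I(\sigma,\tau)$, which is multiplicity free with the claimed cosocle filtration. Here $\sigma_+ = \tau$ (since $\omega_j\in\{-1,0,1\}$ gives $\epsilon(\omega_j)=\omega_j$), so the single nontrivial layer $V^1/V^0 = I(\sigma,\sigma_+)$ matches the formula with $\prescript{0}{}\Omega_0^\tau = \{\sigma\}$.

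For the inductive step with $\tau$ an $m$-weight and $m\geq 2$, I would first analyze the top graded piece $V/V^{m-1}$. By \cref{description of JH(Injk)}, $\tau \in \JH(\Inj_1\widetilde{\tau})$ for the unique $\widetilde{\tau}\in \Delta^{m-1}(\sigma)$ attached to $\tau$ via \cref{definition in general}. Using the decomposition of $\Inj_n\sigma/\Inj_{n-1}\sigma$ from \cref{lemma 3}, I would show that the image of $V/V^{n-1}$ in this quotient is supported on a single copy of $\Inj_1\widetilde{\tau}$ (forced by the irreducibility of $\cosoc(V)=\tau$), and then apply the base case to the pair $(\widetilde{\tau},\tau)$ along with the condition $[V:\sigma]=1$ to conclude $V/V^{m-1}\cong I(\widetilde{\tau},\tau)$. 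A crucial by-product is that $V$ is in fact $\m_{K_1}^m$-torsion, yielding $m=n$. Next, since $\soc(I(\widetilde{\tau},\tau))=\widetilde{\tau}$, the cosocle of $V^{m-1}$ is forced to contain $\widetilde{\tau}$ and, more generally, the direct sum $\bigoplus_{\nu\in\prescript{0}{}\Omega_{m-2}^\tau}\nu_+$ predicted by the formula. For each summand $\nu_+$, I would use \cref{lemma 10} to produce the unique $V_{\nu_+}\subset V^{m-1}$ with cosocle $\nu_+$ and $[V_{\nu_+}:\sigma]=1$, then apply the inductive hypothesis to each $V_{\nu_+}$ (whose cosocle is an $(m-1)$-weight) to obtain its filtration, and finally show $V^{m-1}=\sum_\nu V_{\nu_+}$ has the claimed overall filtration.

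The main obstacle is the bookkeeping in this last step: the inductive hypothesis is stated only for irreducible cosocle, whereas $\cosoc(V^{m-1})$ is a nontrivial direct sum indexed by a subset of $\prescript{0}{}\Omega_{m-2}^\tau$. Extracting the clean decomposition $V^{k+1}/V^k\cong\bigoplus_{\nu\in\prescript{0}{}\Omega_k^\tau}I(\nu,\nu_+)$ from the overlapping sub-pieces $V_{\nu_+}$, verifying multiplicity freeness of the sum, and checking compatibility of the gluing data require careful use of \cref{reformulation of I} to track Jordan-H\"older factors across the extension graph, and the change-of-origin formula (\cref{change of origin}) to compare extension graphs based at different origins. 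The $(2n-1)$-genericity of $\sigma$ provides enough $\Ext$-rigidity between Serre weights to pin down these interactions and to yield uniqueness of $V$ up to scalar.
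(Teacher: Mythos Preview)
Your outline has the right flavor but contains a genuine gap at the very first step of the inductive argument, and this gap is precisely what forces the paper to induct on the pair $(n,m)$ in lexicographic order rather than on $m$ alone.

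The problematic claim is that ``the image of $V/V^{n-1}$ in $\Inj_n\sigma/\Inj_{n-1}\sigma$ is supported on a single copy of $\Inj_1\widetilde{\tau}$, forced by the irreducibility of $\cosoc(V)=\tau$.'' This is only correct when $m=n$ (this is exactly \cref{n=m irred soc(V/Vn-1)}): in that case $\tau$ is an $n$-weight, $\widetilde{\tau}\in\Delta^{n-1}(\sigma)$, and by \cref{lemma 3} the summand $\Inj_1\widetilde{\tau}$ appears with multiplicity $k_n=1$. When $m<n$, however, $\tau$ lies in $\Inj_1\theta'$ for several $\theta'\in\bigcup_{s\leq n-1}\Delta^s(\sigma)$, and those $\Inj_1\theta'$ can occur with multiplicity $k_s>1$; irreducibility of the cosocle does \emph{not} pin down a single summand. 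Consequently you cannot conclude $V/V^{n-1}=0$ (equivalently, that $V$ is $\m_{K_1}^m$-torsion) from this argument. The paper devotes \cref{simplest 1-weight}--\cref{theorem for m<n} to exactly this point: one must first prove, by a separate and rather delicate argument, that if $m<n$ then $V/V^{n-1}=0$, thereby reducing to a smaller $n$. Only after the case $m<n$ is disposed of can one assume $m=n$ and proceed.

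Even once $m=n$ is established, your step ``apply the base case to $(\widetilde{\tau},\tau)$ \dots\ to conclude $V/V^{m-1}\cong I(\widetilde{\tau},\tau)$'' is incomplete: the Breuil--Pa\v{s}k\=unas input requires $[V/V^{n-1}:\widetilde{\tau}]=1$, and the hypothesis $[V:\sigma]=1$ does not give this directly (the constituent $\widetilde{\tau}$ could a priori occur with higher multiplicity in $V/V^{n-1}$). The paper obtains this multiplicity-one statement only in \cref{m=n multiplicity free}, via the quotient $V/I(\sigma,\tau^{(i)})$ constructed in \cref{quotienting out 1st time} and an application of the inductive hypothesis to that quotient. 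Your proposal does not supply an alternative mechanism for this bound, so the top layer remains uncontrolled.
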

By \cref{lemma 3}, $m$ is the smallest positive integer such that $\tau\in\JH(\Inj_m(\sigma))$.\par
As before, we denote such a representation by $I(\sigma, \tau)$. When $\tau$ is a $1$-weight, then $I(\sigma, \tau)$ is a $\Gamma$-representation and coincides with the definition in \cite[Corollary~3.12]{breuil2012towards}, and when $\tau$ is a $2$-weight, $I(\sigma, \tau)$ also coincides with the definition in \cite[Theorem~2.30]{hu2022mod}. This theorem is a generalization of \cite[Theorem~2.23]{hu2022mod}. \par
To elucidate the theorem, we have the following lemma as a remark.
\begin{lemma}\label{remark of theorem}
Given $\nu \in \prescript{0}{}\Omega_k^\tau$ for some $k<n$. Assume that $\kappa$ is a regular Serre weight. 
\hfill\begin{enumerate}[label=(\roman*)]
    \item $\kappa-\widetilde{\kappa}\leq \widetilde{\kappa}_+-\widetilde{\kappa}$ if and only if $\kappa\leq \tau$.
    \item $\kappa\in \JH(I(\nu,\nu_+))$ if and only if $\nu=\widetilde{\kappa}$ and $\kappa\leq \tau$. In this case, $\kappa$ is a $k$-weight.
    \item Assume that \cref{Main theorem on generalization} holds, the Jordan--H\"{o}lder factors of $V$ are exactly those $\kappa$ where $\kappa\leq \tau$.
    \item Given $\nu' \in \prescript{0}{}\Omega_{k'}^\tau$, $I(\nu,\nu_+)$ and $I(\nu',\nu'_+)$ do not share a common Jordan--H\"{o}lder factor if $\nu\neq \nu'$.
\end{enumerate}
\end{lemma}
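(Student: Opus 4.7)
The plan is to prove parts 1--4 in order, translating everything into coordinates via $\kappa=F(\mathfrak{t}_\mu(\lambda))$ and $\tau=F(\mathfrak{t}_\mu(\omega))$. Under this dictionary $\widetilde{\kappa}$ corresponds to the vector $\widetilde{\lambda}$ with $|\widetilde{\lambda}_j|=2\lfloor|\lambda_j|/2\rfloor$ and the same sign as $\lambda_j$, so $(\lambda-\widetilde{\lambda})_j\in\{-1,0,1\}$; similarly $\widetilde{\kappa}_+$ corresponds to $\widetilde{\lambda}+\epsilon(\omega-\widetilde{\lambda})$, so $(\widetilde{\kappa}_+-\widetilde{\kappa})_j=\epsilon(\omega_j-\widetilde{\lambda}_j)\in\{-1,0,1\}$ as well. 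This turns every statement into a coordinate-wise combinatorial question.

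For part 1 I would split cases on the parity and sign of $\lambda_j$. If $\lambda_j$ is even then $(\lambda-\widetilde{\lambda})_j=0$, the $j$-th coordinate of the left-hand inequality is automatic, and the $j$-th coordinate of $\kappa\leq\tau$ reduces to $|\omega_j|\geq|\widetilde{\lambda}_j|=|\lambda_j|$ with matching sign, which matches the implicit setup $\widetilde{\kappa}\in\prescript{0}{}\Omega_k^\tau$. If $\lambda_j$ is odd and positive, the left-hand coordinate is $+1$, and the signed ordering forces $\epsilon(\omega_j-\widetilde{\lambda}_j)=+1$, i.e.\ $\omega_j>\widetilde{\lambda}_j=\lambda_j-1$, equivalently $\omega_j\geq\lambda_j$, which is precisely $\kappa\leq\tau$ at $j$. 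The case $\lambda_j<0$ is symmetric.

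For part 2 I would apply \cref{reformulation of I} to split $\kappa\in\JH(I(\nu,\nu_+))$ into the two conditions $\kappa\in\JH(\Inj_1\nu)$ and $\kappa-\nu\leq\nu_+-\nu$. Writing $\nu=F(\mathfrak{t}_\mu(\xi))$ with $\xi_j\in 2\Z$ and $\xi\leq\omega$ (since $\nu\in\prescript{0}{}\Omega_k^\tau$), the change-of-origin formula \cref{change of origin} combined with \cref{Inj1} gives $\lambda_j-\xi_j\in\{-1,0,1\}$. A short case analysis shows that the only even $\xi_j$ simultaneously satisfying $\lambda_j-\xi_j\in\{-1,0,1\}$, $\xi_j\leq\omega_j$ in the signed sense, and $\kappa-\nu\leq\nu_+-\nu$ at $j$ is $\xi_j=\widetilde{\lambda}_j$, forcing $\nu=\widetilde{\kappa}$. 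Conversely, once $\nu=\widetilde{\kappa}$, part 1 identifies the remaining inequality with $\kappa\leq\tau$, and the weight count follows from \cref{description of JH(Injk)} applied to $\widetilde{\kappa}\in\Delta^k(\sigma)$.

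Parts 3 and 4 are then immediate. By \cref{Main theorem on generalization}, $\JH(V)=\bigsqcup_{k}\bigsqcup_{\nu\in\prescript{0}{}\Omega_k^\tau}\JH(I(\nu,\nu_+))$, which by part 2 equals $\{\kappa:\kappa\leq\tau\}$: indeed $\kappa\leq\tau$ automatically implies $\widetilde{\kappa}\leq\tau$ (because $|\widetilde{\lambda}_j|\leq|\lambda_j|\leq|\omega_j|$ with consistent signs), so $\widetilde{\kappa}$ appears as some $\nu\in\prescript{0}{}\Omega_k^\tau$ and $\kappa\in\JH(I(\widetilde{\kappa},\widetilde{\kappa}_+))$. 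For part 4, any common Jordan--H\"older factor $\kappa$ would yield $\nu=\widetilde{\kappa}=\nu'$ by part 2, contradicting $\nu\neq\nu'$. The main technical obstacle is the coordinate-wise case analysis in part 1 and the uniqueness step $\xi=\widetilde{\lambda}$ in part 2, since the signed ordering makes zero comparable to every element and several boundary cases (zero coordinates, equal magnitudes) require individual attention.
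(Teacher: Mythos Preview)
Your proposal is correct and follows essentially the same coordinate-wise strategy as the paper. The paper's proof of (ii) also reduces to $\kappa-\nu\leq\nu_+-\nu$ via \cref{reformulation of I}, then deduces $\nu=\widetilde{\kappa}$; the only real difference is that the paper invokes the uniqueness established in \cref{description of JH(Injk)} for this step, whereas you redo that uniqueness argument by a direct parity case analysis on $\xi_j$. Parts (iii) and (iv) are handled identically. Your observation that part (i) tacitly relies on $\widetilde{\kappa}\leq\tau$ (which is supplied in practice by $\widetilde{\kappa}=\nu\in\prescript{0}{}\Omega_k^\tau$) is accurate and in fact sharper than the paper's own write-up, whose backward direction in (i) is terse on the even-coordinate case.
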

\begin{proof}
We can assume $\kappa=F(\mathfrak{t}_\mu(\omega'))$ and $\nu=F(\mathfrak{t}_\mu(\alpha))$.\par
(i) 
Assume $\omega'\leq \omega$, then for each $j$, $0\leq \omega'_j\leq \omega_j$ or $0\geq \omega'_j\geq \omega_j$. If it is the former case, then it follows from the definition that $0\leq \widetilde{\omega}'_j\leq \omega'_j\leq \widetilde{\omega}'_{+_j}$, and analogously for the latter case. Conversely, assume $ \omega'-\widetilde{\omega}'\leq \widetilde{\omega}'_+-\widetilde{\omega}'$. For each $j$, if $0\leq 2\lfloor \frac{\omega'_j}{2}\rfloor \leq \omega'_j \leq  \omega'_j+\epsilon(\omega_j-\omega'_j)$, then $0\leq  \omega'_j \leq \omega_j $. And the same result holds if $\leq $ is replaced by $\geq$ and the floor function is replaced by the ceiling function.\par
(ii) If $\kappa\in \JH(I(\nu, \nu_+))$, by \cref{reformulation of I}, $\kappa-\nu\leq \nu_+-\nu$, and hence $|\alpha_j|\leq|\omega'_j|\leq |\alpha_{+_j}|$. As $\nu,\nu_+$ are $k+1$-weights, by \cref{description of JH(Injk)}, so is $\kappa$. Moreover, then by \cref{description of JH(Injk)}, we deduce that $\nu=\widetilde{\kappa}$. By (i), we deduce that $\omega'\leq \omega$. The converse follows from (i) and \cref{reformulation of I}.\par 
(iii) Given $\kappa\in \JH(V)$, then $\kappa\in \JH(I(\nu,\nu_+))$ for some $ \nu \in \prescript{0}{}\Omega_k^\tau$, and by (ii), $\nu\cong \widetilde{\kappa}$ and $\kappa\leq \tau$. Conversely, if $\kappa\leq \tau$, by (ii), $\kappa\in \JH(I(\widetilde{\kappa}, \widetilde{\kappa}_+))$ and the result follows by (ii).\par
(iv) It follows from (ii).
\end{proof}
These three corollaries follow immediately from the theorem.
\begin{corollary}\label{cor on multiplicities}
Let $V$ be a subrepresentation of $\Inj_n \sigma^{\oplus s}$ for some $s \geq 1$. Then for any irreducible Serre weight $\tau $, we have $[V \colon  \sigma] \geq [V \colon  \tau]$. Moreover, if $\cosoc(V)$ is isomorphic to $\tau^{\oplus r}$ for some $(2n-1)$-generic Serre weight $\tau$ and some $r \geq 1$, then $[V \colon  \sigma] = [V \colon  \tau]$.
\end{corollary}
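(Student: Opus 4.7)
My plan is to prove the first inequality by induction, with the Main Theorem (\cref{Main theorem on generalization}) supplying multiplicity-free building blocks, and then deduce the equality via Pontryagin duality which reduces it to an instance of the first inequality.

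For the first inequality, I would induct on $[V:\sigma]$. The base case $[V:\sigma]=1$ is handled as follows: here $\soc V=\sigma$, so by injectivity of $\Inj_n\sigma$ in the category of $\m_{K_1}^n$-torsion representations we have $V\hookrightarrow\Inj_n\sigma$. Writing $\cosoc V=\bigoplus_i\tau_i$, for each $i$ I obtain from \cref{lemma 10} and the Main Theorem a subrepresentation $V_i\subset V$ with $V_i\cong I(\sigma,\tau_i)$, realized as the image of a map $\Proj_n(\tau_i)\to V$ lifting the cosocle quotient, and Nakayama gives $V=\sum_i V_i$. The crucial observation is that whenever $\kappa$ is a common Jordan--H\"older factor of $V_i$ and $V_j$, the canonical subrepresentations $I(\sigma,\kappa)\subset V_i$ and $I(\sigma,\kappa)\subset V_j$ coincide inside the ambient $\Inj_n\sigma$ by the uniqueness in the Main Theorem, identifying their copies of $\kappa$; hence $V$ is multiplicity free and $[V:\sigma]=1\geq[V:\kappa]$ for all $\kappa$.

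For the inductive step $[V:\sigma]=k\geq 2$, if $\sigma\in\cosoc V$ I peel off a surjection $V\twoheadrightarrow\sigma$ with kernel $V'\subset\Inj_n\sigma^{\oplus s}$ and apply induction, giving $[V:\sigma]=[V':\sigma]+1\geq[V':\tau]+1\geq[V:\tau]$. If $\sigma\notin\cosoc V$, peeling off a single simple $\theta\in\cosoc V$ only yields $[V':\sigma]\geq[V':\tau]$, which falls one short when $\tau=\theta$. My remedy is to peel off a larger, multiplicity-free quotient: I construct a surjection $V\twoheadrightarrow I(\sigma,\theta)$ for some $\theta\in\cosoc V$, where $I(\sigma,\theta)\subset\Inj_n\sigma$ is the unique multiplicity-free subrepresentation with cosocle $\theta$ and $[I(\sigma,\theta):\sigma]=1$ guaranteed by \cref{lemma 10} and the Main Theorem. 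The kernel $V'\subset\Inj_n\sigma^{\oplus s}$ then has $[V':\sigma]=k-1$, and the induction hypothesis together with the multiplicity-freeness of $I(\sigma,\theta)$ yields $[V:\tau]=[V':\tau]+[I(\sigma,\theta):\tau]\leq (k-1)+1=[V:\sigma]$. The main obstacle is rigorously constructing this surjection when $\sigma\notin\cosoc V$; I expect it to follow by lifting the quotient $V\twoheadrightarrow\theta$ through the universal property of $I(\sigma,\theta)$ as a multiplicity-free cover, using the socle inclusion $\sigma\subset\soc V$ to fix the source of the lift, together with uniqueness from the Main Theorem.

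For the equality $[V:\sigma]=[V:\tau]$ under $\cosoc V=\tau^{\oplus r}$ with $\tau$ being $(2n-1)$-generic, I use Pontryagin duality in the category of $\m_{K_1}^n$-torsion representations, which swaps socle and cosocle and interchanges injective envelopes with projective covers. The dual $V^\vee$ has $\soc V^\vee=(\tau^\vee)^{\oplus r}$, and by injectivity of $\Inj_n(\tau^\vee)$ in the dual category, $V^\vee\hookrightarrow\Inj_n(\tau^\vee)^{\oplus r}$. Since $(2n-1)$-genericity is preserved under duality of Serre weights, applying the first inequality to $V^\vee$ with the roles of $\sigma$ and $\tau$ interchanged gives $[V^\vee:\tau^\vee]\geq[V^\vee:\sigma^\vee]$, equivalently $[V:\tau]\geq[V:\sigma]$. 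Combined with the first inequality, this yields the required equality.
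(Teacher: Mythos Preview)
Your base case $[V:\sigma]=1$ and your duality argument for the equality are both correct and match the paper's (terse) treatment. The problem lies in your inductive step when $\sigma\notin\cosoc(V)$: the surjection $V\twoheadrightarrow I(\sigma,\theta)$ you propose is not established, and your appeal to a ``universal property of $I(\sigma,\theta)$ as a multiplicity-free cover'' is not a valid lifting principle, since $I(\sigma,\theta)$ is not a projective object. In fact, insisting on a quotient with \emph{irreducible} cosocle is too restrictive: if the only multiplicity-free quotient $Q\subset\Inj_n\sigma$ of $V$ with $[Q:\sigma]=1$ happens to have reducible cosocle, then no further quotient of $Q$ is isomorphic to any $I(\sigma,\theta)$, because every nonzero subrepresentation of $Q$ contains its unique copy of $\sigma$ in the socle.

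The paper sidesteps this by filtering $V$ by \emph{subrepresentations} rather than quotients: take $V_1\subset V$ maximal subject to $[V_1:\sigma]=1$. Then $V_1$ is multiplicity-free by your base-case argument, and the key point is that $\soc(V/V_1)$ is again $\sigma$-isotypic --- if some $\tau\neq\sigma$ lay in $\soc(V/V_1)$, its preimage $V_1'\supsetneq V_1$ would still satisfy $[V_1':\sigma]=1$, contradicting maximality. Hence $V/V_1\hookrightarrow(\Inj_n\sigma)^{\oplus s'}$ with $[V/V_1:\sigma]=[V:\sigma]-1$, and iterating gives a filtration whose graded pieces are multiplicity-free with $[\cdot:\sigma]=1$, from which $[V:\tau]\le[V:\sigma]$ follows at once. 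Your quotient approach can also be repaired by relaxing the target: one can always produce a surjection $V\twoheadrightarrow Q$ with $Q\subset\Inj_n\sigma$ and $[Q:\sigma]=1$ (hence $Q$ multiplicity-free by the base case) by iterating nonzero maps to $\Inj_n\sigma$ that vanish on a copy of $\sigma$ in the socle, and then your induction goes through verbatim with $Q$ in place of $I(\sigma,\theta)$; but $\cosoc(Q)$ will in general be reducible.
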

\begin{proof}
    It follows verbatim from \cite[Corollary~2.3]{hu2022mod}. Since $\soc(V)$ has the form $\sigma^{\oplus s'}$ for some $s' \leq s$, we can construct a finite filtration of $V$ such that each graded piece has socle isomorphic to $\sigma$ and $\sigma$ occurs only once there. Hence, we reduce it to the situation where $\soc(V)=\sigma$ and $[V \colon  \sigma] = 1$, and the result follows from \cref{Main theorem on generalization}. The second assertion follows by duality.
\end{proof}
\begin{corollary}\label{Cor on socle filtration}
 Assume that $\sigma$ is $(2n-1)$-generic, and $\theta,\theta',\tau\in \JH(\Inj_{n}\sigma)$. Then $\theta'$ is a subquotient of $I(\theta,\tau)$ if and only if $\theta'-\theta\leq \tau-\theta$. Furthermore, if $\theta=F(\mathfrak{t}_\mu(\omega'))$ and $\tau=F(\mathfrak{t}_\mu(\omega))$, then the graded pieces of its socle filtration are given by:
$$I(\theta,\tau)_k\cong \bigoplus_{\omega'-\omega\leq \omega''-\omega, \sum_j|\omega''_j-\omega_j|=k} F(\mathfrak{t}_{\mu}(\omega'')).  $$
In other words, the socle filtration coincides with the filtration given by the distance from the socle $\theta$ in the extension graph. 
\end{corollary}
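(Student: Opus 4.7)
My strategy is to deduce both assertions from Theorem \cref{Main theorem on generalization} applied at $\theta$ (rather than $\sigma$) as the origin of the extension graph, using the change-of-origin formalism of Lemma \cref{change of origin}.

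For the first assertion, I would first verify the hypotheses of Theorem \cref{Main theorem on generalization} at origin $\theta$: since $\sigma$ is $(2n-1)$-generic and $\theta\in\JH(\Inj_n\sigma)$, Lemma \cref{description of JH(Injk)} supplies an explicit genericity bound on $\theta$, and the position of $\tau$ relative to $\theta$ in the extension graph is controlled by the same lemma together with Lemma \cref{change of origin}. Once these hypotheses are in place, Theorem \cref{Main theorem on generalization} produces a unique multiplicity-free $I(\theta,\tau)$ whose Jordan–H\"older factors, by Lemma \cref{remark of theorem}(iii), are exactly the Serre weights $\theta''$ satisfying $\theta''\leq\tau$ in the extension graph centered at $\theta$. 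Since the change-of-origin map is a graph automorphism (see the remark after Lemma \cref{change of origin}), which in particular preserves the partial order $\leq$ componentwise, this inequality translates to $\theta''-\theta\leq\tau-\theta$ in the original $\sigma$-centered graph, matching the statement.

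For the second assertion, I would read off the socle filtration of $I(\theta,\tau)$ by combining two nested filtrations supplied by Theorem \cref{Main theorem on generalization}. At the coarse scale, the theorem applied at origin $\theta$ produces the $\mathfrak{m}_{K_1}^k$-torsion filtration, whose graded pieces $V^{k+1}/V^k$ are direct sums $\bigoplus_\nu I(\nu,\nu_+)$ over $\nu\in\prescript{0}{}\Omega^\tau_k$. At the fine scale, each summand $I(\nu,\nu_+)$ is a $\Gamma$-representation whose socle filtration is known from \cite[Corollary 3.12]{breuil2012towards} to coincide with the graph-distance filtration based at $\nu$. Combining these refines $I(\theta,\tau)$ into layers indexed by total graph distance from $\theta$; the formula in the statement, which indexes by distance from $\tau$, then follows via the triangle-like identity $d(\theta,\theta'')+d(\theta,\tau)=d(\theta'',\tau)$ valid for every $\theta''\in\JH(I(\theta,\tau))$, itself a direct consequence of the componentwise split of the constraint $\omega'-\omega\leq\omega''-\omega$.

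The principal obstacles, in my view, are (i) verifying the precise genericity bookkeeping needed to legitimately invoke Theorem \cref{Main theorem on generalization} at the shifted origin $\theta$ — the level of $\tau$ relative to $\theta$ could a priori exceed the genericity allowance of $\theta$, and one must combine Lemma \cref{description of JH(Injk)} with the change-of-origin analysis to confirm this does not happen — and (ii) confirming that the two filtrations above actually combine into the socle filtration of $I(\theta,\tau)$, i.e., ruling out extension classes between non-adjacent layers that would make the socle filtration coarser than expected. The latter is controlled by the multiplicity-freeness established in Theorem \cref{Main theorem on generalization} and the explicit structure of extensions between regular Serre weights, but will require careful organization.
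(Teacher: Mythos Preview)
Your approach matches the paper's: change origin to $\theta$ via \cref{change of origin}, then read off both the Jordan--H\"older content and the graded structure from \cref{Main theorem on generalization} together with \cref{remark of theorem}(iii), exactly as the paper does (its proof is even terser, deferring to the $n=2$ argument in \cite[Corollary~2.35]{hu2022mod} after the change of origin). Your explicit combination of the $\mathfrak{m}_{K_1}$-filtration with the $\Gamma$-socle filtration of each $I(\nu,\nu_+)$ fills in what the paper leaves implicit; note only that your triangle identity should read $d(\theta,\theta'')+d(\theta'',\tau)=d(\theta,\tau)$, and that your genericity concern~(i) is likewise not spelled out in the paper but is handled implicitly via the inductive bracket notation ``$[n']$'' used throughout.
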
 
\begin{proof}
As in \cref{reformulation of I}, by the change of origin map $F(\mathfrak{t}_\mu(\omega))\mapsto F(\mathfrak{t}_\mu(\omega-\gamma))$, we change the origin to $\mathfrak{t}_\mu(\gamma)$. The proof follows the same way as in \cite[Corollary~2.35]{hu2022mod}, which reformulates the theorem using \cite[Corollary~4.11]{breuil2012towards}, which is reinterpreted in light of \cref{reformulation of I} and \cref{remark of theorem}.
\end{proof}
\begin{remark}
Comparing the results when $n=1$ and $n=2$ with \cite{breuil2012towards} and \cite{HuWang2}, our genericity condition is higher by $1$ for the induction argument in \cref{uniqueness when m=n}.\par
We refer to the Serre weights in the extension graph as lattice points here. Intuitively, the theorem is saying that the Jordan H\"older factors of $I(\sigma, \tau)$ are given by the lattice points within (including the boundary of) the smallest hypercuboid with opposite corners given by $\sigma$ and $\tau$, which we simply call the hypercuboid. Moreover, the $n$-th socle filtration of $I(\sigma,\tau)$ is given by the lattice points of distance $n$ from $\sigma$ in the hypercuboid. \par
Let $d_j(\alpha,\beta)$ denote the distance between $\alpha$ and $\beta$ in the $j$-th direction in the extension graph. The general strategy of the proof is as follows. We first handle the case where $n=3$ and $\tau$ is right next to $\sigma$ \cref{simplest 1-weight}. In general, we first show that $\soc(V/V^1)$ are exactly the lattice points at distances two from $\sigma$ in the direction $j$ where $d_j(\sigma,\tau)\geq 2$ (\textit{cf.} \cref{description of soc(V/V1)}, \cref{exact soc(V/V^1) II}). (It is empty if $\tau$ is a $1$-weight.) We need to make sure these Serre weights appear with multiplicity one; in particular, they are not in $\JH(V/V^{n-1})$ \cref{soc(V/Vn-1)}.
If $m<n$, then we apply the induction hypothesis to $V/V^1$ and deduce that $V/V^{1}$ is $\mathfrak{m}_{K_1}^{m-1}$-torsion \cref{theorem for m<n} and finish the proof. \par
If $n=m$, the second step is to deduce that $\JH(V)$ is as conjectured by the theorem up to multiplicity \cref{JH factors when m=n}. We deduce that $\JH(V/V^1)$ is correct using our induction hypothesis. Then, we consider the Jordan--H\"{o}lder factors given by shifting the Serre weights in $\soc(V/V^1)$ towards $\tau$ by at most $1$ in every direction (no shifting in the direction where it is equal to $\tau$), and deduce that $\JH(V^1)$ is correct. In the example below for $n=3$ and $f=2$ (\cref{fig:example}), the green dots denote the Serre weights in $\soc(V/V^1)$ and the lattice points inside the green rectangle are the Jordan--H\"older factors of $V/V^1$. Then we know that $x,y\in \JH(V^2)$, and hence $\JH(V^2)$ contains and is indeed given by the lattice points inside the orange rectangle. \par
The third step is to prove that $V$ is multiplicity free \cref{m=n multiplicity free}. As $V^{n-1}$ is multiplicity free by the induction hypothesis, it suffices to show that for $V/V^{n-1}$. Equivalently, we have to show that $\soc(V/V^{n-1})$, which is irreducible, appears with multiplicity $1$. We quotient $V$ by a subrepresentation whose Jordan--H\"older factors are given by all the integral points on a face of the hypercuboid and do not contain any Jordan--H\"older factor of $V/V^{n-1}$. By the induction hypothesis, we can deduce that $\soc(V/V^{n-1})$ appears with multiplicity $1$. In the example below, the Jordan--H\"older factors of this quotient are given by the lattice points inside the blue rectangle. \par
Finally, we show the uniqueness of $V$ by showing that the dimension of the extension between a subrepresentation $V'$ of $V$ and the quotient $\widetilde{V}=V/V'$ is $1$ \cref{uniqueness when m=n}. Instead, we replace $V'$ by its quotient, where the Jordan--H\"older factor of the subrepresentation we quotient out are the integral points on a face of the cuboid. Similarly, we replace $\widetilde{V}$ with a subrepresentation of $\widetilde{V}$, such that the Jordan--H\"older factors ``we lost" are the integral points of the face in the hypercuboid parallel to the one we removed above. By the induction hypothesis, we conclude that the extension between the subquotients has dimension one. In the example below, the Jordan--H\"older factors of $V'$ (resp.$\widetilde{V}$) are given by the lattice points inside the red rectangle (resp. green rectangle), and the subquotients we replace them with have Jordan--H\"older factors given by the lattice points on the two vertical line segments in the middle. \par
\begin{figure}[h!]
\begin{minipage}[c]{0.3\linewidth}
    \centering
    \includegraphics[width=\linewidth]{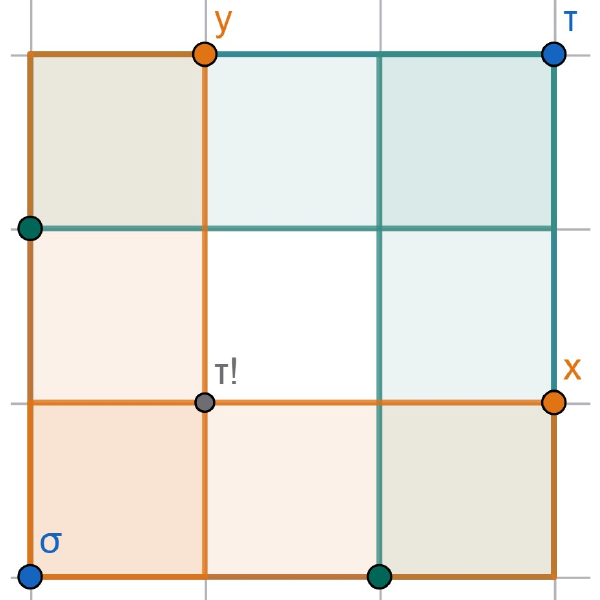}
\caption{step 2}
\end{minipage}
\begin{minipage}[c]{0.3\linewidth}
    \centering
\includegraphics[width=\linewidth]{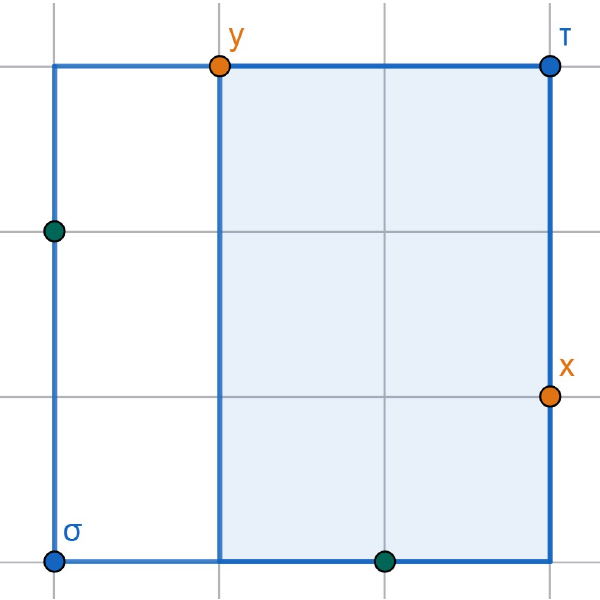}
 \caption{Step 3}     
\end{minipage}
\begin{minipage}[c]{0.3\linewidth}  
    \centering
    \includegraphics[width=\linewidth]{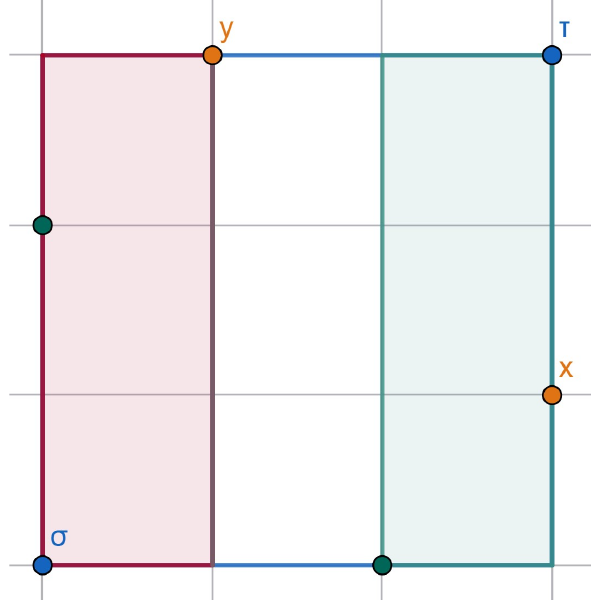}
    \caption{Step 4}
\end{minipage} \caption{Example when $m=n=3$}\label{fig:example}
\end{figure}
\end{remark}
\subsection{Preliminary lemmas}
Before we prove the theorem, we first prove the following lemmas.
   \begin{lemma}\label{lemma on socles}
    Let $V$ be a $\m_{K_1}^n$-torsion representation.
    \hfill\begin{enumerate}    
        \item $\soc(V^k/V^{k-1})=\soc(V/V^{k-1})$ and $\cosoc(V^k)=\cosoc(V^k/V^{k-1})$ for all $k$.
        \item Let $T$ be a proper subrepresentation of $V^{n-1}$. In addition, assume that $\soc(V/V^{n-1})\eqqcolon\theta$ is irreducible, and $\theta\not \subset\soc(V)$, and $\Ext^1_{K/Z_1}(\theta, \sigma')=0$ for all $\sigma'\in \JH(T)$. Then $\soc (V^{n-1}/T)=\soc(V/T)$.
    \end{enumerate}
    \end{lemma}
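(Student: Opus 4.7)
The plan is to prove (1)(a) by a direct lifting argument, (1)(b) by dualizing (1)(a) via Pontryagin duality, and (2) by a devissage that uses the Ext-vanishing hypothesis to split an obstruction extension.

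For (1)(a), the key input is that every irreducible $\GL_2(\mathcal{O}_K)/Z_1$-representation is a Serre weight and is therefore annihilated by $\m_{K_1}$. Given a simple $S \subseteq V/V^{k-1}$ and a lift $\tilde S \subseteq V$ with $\tilde S/V^{k-1} = S$, we have $\m_{K_1}\tilde S \subseteq V^{k-1}$, so $\m_{K_1}^k \tilde S \subseteq \m_{K_1}^{k-1} V^{k-1} = 0$. Thus $\tilde S \subseteq V^k$, which gives $S \subseteq V^k/V^{k-1}$; the reverse inclusion is immediate.

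For (1)(b), I would pass to the Pontryagin dual $V^\vee$. Under this duality, $V^k = V[\m_{K_1}^k]$ corresponds to the quotient $V^\vee/\m_{K_1}^k V^\vee$, the inclusion $V^{k-1} \subseteq V^k$ dualizes to the surjection $V^\vee/\m_{K_1}^k V^\vee \twoheadrightarrow V^\vee/\m_{K_1}^{k-1} V^\vee$ with kernel $\m_{K_1}^{k-1}V^\vee/\m_{K_1}^k V^\vee$, and $\cosoc$ interchanges with $\soc$. So (1)(b) translates into: every simple subrepresentation of $V^\vee/\m_{K_1}^k V^\vee$ lies inside $\m_{K_1}^{k-1}V^\vee/\m_{K_1}^k V^\vee$. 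This is the exact dual formulation of (1)(a) and is handled by the same lifting argument applied to $V^\vee$.

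For (2), let $W \subseteq V/T$ be a simple subrepresentation; I claim $W \subseteq V^{n-1}/T$. If not, then by simplicity $W \cap (V^{n-1}/T) = 0$, so $W$ injects into $V/V^{n-1}$; since $\soc(V/V^{n-1}) = \theta$ is irreducible, this forces $W \cong \theta$. Pulling $W$ back to $V$ produces a subrepresentation $\tilde\theta \subseteq V$ fitting into a short exact sequence $0 \to T \to \tilde\theta \to \theta \to 0$. A short induction on the length of $T$ using the long exact sequence of $\Ext^\bullet_{K/Z_1}(\theta, -)$ and the hypothesis $\Ext^1_{K/Z_1}(\theta, \sigma') = 0$ for all $\sigma' \in \JH(T)$ yields $\Ext^1_{K/Z_1}(\theta, T) = 0$, so this sequence splits. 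Then $\theta$ appears as a direct summand of $\tilde\theta \subseteq V$ and hence embeds into $\soc(V)$, contradicting the hypothesis $\theta \not\subseteq \soc(V)$. Hence $W \subseteq V^{n-1}/T$, which gives $\soc(V/T) = \soc(V^{n-1}/T)$.

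I expect the main obstacle to be part (1)(b): one must carefully verify that Pontryagin duality identifies the $\m_{K_1}$-torsion filtration on $V$ with the $\m_{K_1}$-adic filtration on $V^\vee$ as described, and that the dual lifting argument genuinely places every simple submodule of $V^\vee/\m_{K_1}^k V^\vee$ inside the kernel $\m_{K_1}^{k-1}V^\vee/\m_{K_1}^k V^\vee$. Part (1)(a) is standard and part (2) is essentially formal once the splitting of the extension is arranged.
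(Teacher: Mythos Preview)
Your arguments for the socle identity in (i) and for (ii) are correct and match the paper's proof closely. For the socle part the paper is slightly more direct---it simply observes that any simple $\sigma\subset\soc(V/V^{k-1})$ satisfies $\m_{K_1}\sigma=0$ and hence lies in $(V/V^{k-1})[\m_{K_1}]=V^k/V^{k-1}$---but your lifting to $V$ amounts to the same computation. For (ii) your d\'evissage and splitting argument is exactly what the paper does.

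The cosocle identity is where your argument has a genuine gap. Your claim that the dual statement is ``the exact dual formulation of (1)(a)'' is not correct: under Pontryagin duality the torsion filtration $V^k=V[\m_{K_1}^k]$ corresponds to the \emph{adic} filtration $\m_{K_1}^kV^\vee$ on $V^\vee$, so what you must show is that every simple submodule of $V^\vee/\m_{K_1}^kV^\vee$ lies in $\m_{K_1}^{k-1}V^\vee/\m_{K_1}^kV^\vee$. The lifting argument does not give this: from $\m_{K_1}\tilde S\subseteq\m_{K_1}^kV^\vee$ one cannot conclude $\tilde S\subseteq\m_{K_1}^{k-1}V^\vee$. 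In fact the identity $\cosoc(V^k)=\cosoc(V^k/V^{k-1})$ is false as stated: take $V=A\oplus B$ with $A$ a Serre weight and $B$ genuinely $\m_{K_1}^2$-torsion, $n=k=2$; then $\cosoc(V^2)=A\oplus\cosoc(B)$ while $\cosoc(V^2/V^1)=\cosoc(B/B^1)$ misses the summand $A$. The underlying issue is that the maximal $\m_{K_1}$-killed quotient of $V^k$ is $V^k/\m_{K_1}V^k$, and the inclusion $\m_{K_1}V^k\subseteq V^{k-1}$ may be strict; the paper's one-line justification (``the $\m_{K_1}$-torsion quotient of $V^k$ is $V^k/V^{k-1}$'') runs into the same problem. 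In the paper's actual applications the relevant $V^k$ has irreducible socle and its (irreducible) cosocle already lives in $V^k/V^{k-1}$, which forces $V^{k-1}\subseteq\Rad(V^k)$, so the identity holds there---but not for the reason given, and not in the generality claimed.
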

\begin{proof}
(i) From the exact sequence
$0 \to V^k/V^{k-1}\to V/V^{k-1} \to V/V^k \to 0$, we have $\soc(V^k/V^{k-1})\hookrightarrow\soc(V/V^{k-1}) $. Conversely, if $\sigma\subset\soc(V/V^{k-1})$ is a Serre weight, then $\sigma\subset (V/V^{k-1})[\m_{K_1}]=V^k/V^{k-1}$. Therefore, $\soc(V^k/V^{k-1})=\soc(V/V^{k-1})$.\\
For the cosocle case, we apply the same argument dually, noticing that the $\m_{K_1}$-torsion quotient of $V^k$ is $V^k/V^{k-1}$.\par
(ii) Since $\soc(V/T)\hookrightarrow\soc(V^{n-1}/T)\oplus \soc(V/V^{n-1})$, suppose to the contrary that $T\neq V^{n-1}$ but $\theta\subset \soc(V/T)$. Then let $\pi_T:V\to V/T$ be the quotient map, and consider the subrepresentation $\pi_T^{-1}(\theta)$. By construction, all the Jordan--H\"{o}lder factors of $\pi_T^{-1}(\theta)$ except $\theta$ are in $\JH(T)$. Since $\Ext^1_{K/Z_1}(\theta, \sigma')=0$ for all $\sigma'\in \JH(T)$, by d\'evissage, we deduce that $\Ext_{K/Z_1}^1(\theta, T)=0$. 
Therefore, since $\pi_T^{-1}(\theta)$ is a subrepresentation of $V$, $\theta \subset \soc(\pi_T^{-1}(\theta)) \subset \soc(V)$, a contradiction.
\end{proof}
\begin{lemma}\label{n=m irred soc(V/Vn-1)} Suppose $V$ as in \cref{Main theorem on generalization} with $m=n$, then $\soc(V/V^{n-1})\cong \widetilde{\tau}$ is irreducible.
\end{lemma}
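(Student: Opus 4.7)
The plan is to exploit the embedding $V/V^{n-1}\hookrightarrow \Inj_n\sigma/\Inj_{n-1}\sigma$ together with the decomposition of the target given by \cref{lemma 3}, and to argue that the embedding must factor through the single indecomposable summand $\Inj_\Gamma\widetilde{\tau}$, whose socle is the irreducible $\widetilde{\tau}$.

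First I would record the basic structural facts about $V/V^{n-1}$. Since $V\subseteq \Inj_n\sigma$ is $\mathfrak{m}_{K_1}^n$-torsion, the quotient $V/V^{n-1}$ is $\mathfrak{m}_{K_1}$-torsion, hence a $\Gamma$-representation. Its cosocle is $\tau$: because $\tau$ is an $n$-weight, \cref{description of JH(Injk)} gives $\tau\notin\JH(\Inj_{n-1}\sigma)\supseteq \JH(V^{n-1})$, so the composite $V^{n-1}\hookrightarrow V\twoheadrightarrow \tau$ vanishes. Hence the cosocle map $V\twoheadrightarrow\tau$ descends to a surjection $V/V^{n-1}\twoheadrightarrow\tau$; moreover every simple quotient of $V/V^{n-1}$ is a simple quotient of $V$ and so must be $\tau$, giving $\cosoc(V/V^{n-1})\cong\tau$.

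The crucial step is the following. By \cref{lemma 3}, there is a $\Gamma$-equivariant isomorphism $\Inj_n\sigma/\Inj_{n-1}\sigma\cong \bigoplus_{i=0}^{n-1}\bigoplus_{\delta\in\Delta^i(\sigma)}(\Inj_\Gamma\delta)^{\oplus k_i}$, and in this decomposition $\widetilde{\tau}\in\Delta^{n-1}(\sigma)$ appears with multiplicity one. I claim that for every $\delta=F(\mathfrak{t}_\mu(\xi))$ occurring in this direct sum with $\delta\neq\widetilde{\tau}$, every $\Gamma$-morphism $\varphi\colon V/V^{n-1}\to\Inj_\Gamma\delta$ is zero. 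Indeed, a nonzero image would simultaneously be a submodule of $\Inj_\Gamma\delta$, hence with irreducible socle $\delta$, and a nonzero quotient of $V/V^{n-1}$, hence with cosocle $\tau$; in particular $\tau\in\JH(\Inj_\Gamma\delta)$. Writing $\tau=F(\mathfrak{t}_\mu(\omega))$, by \cref{Inj1} applied after the change-of-origin formula \cref{change of origin}, this forces $|\omega_j-\xi_j|\leq 1$ for every $j\in\mathcal{J}$. Combined with $\xi\in(2\Z)^f$ and $\sum_j|\xi_j|/2\leq n-1 = \sum_j\lfloor|\omega_j|/2\rfloor$, a coordinate-by-coordinate inspection then yields $\xi=\widetilde{\omega}$, i.e.\ $\delta=\widetilde{\tau}$.

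Consequently, the embedding $V/V^{n-1}\hookrightarrow \bigoplus_{\delta}(\Inj_\Gamma\delta)^{\oplus k_i}$ has vanishing projection onto every summand except the unique copy of $\Inj_\Gamma\widetilde{\tau}$, and so factors as an embedding $V/V^{n-1}\hookrightarrow \Inj_\Gamma\widetilde{\tau}$. Since $\soc(\Inj_\Gamma\widetilde{\tau})=\widetilde{\tau}$ is irreducible and $V/V^{n-1}\neq 0$, we conclude $\soc(V/V^{n-1})\cong\widetilde{\tau}$, proving the lemma. The only genuinely delicate point is the combinatorial identification $\xi=\widetilde{\omega}$: for $\omega_j$ even, parity forces $\xi_j=\omega_j$; for $\omega_j$ odd, both $\xi_j=\omega_j\pm 1$ satisfy the unit bound, but only the choice with $|\xi_j|=|\omega_j|-1=|\widetilde{\omega}_j|$ is compatible with the global bound $\sum|\xi_j|/2\leq n-1$, which is saturated.
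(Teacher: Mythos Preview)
Your proof is correct and follows essentially the same approach as the paper: embed $V/V^{n-1}$ into $\Inj_n\sigma/\Inj_{n-1}\sigma$, use the decomposition from \cref{lemma 3}, and show that only the projection to the single copy of $\Inj_\Gamma\widetilde{\tau}$ can be nonzero. The paper's proof is terser because it invokes \cref{description of JH(Injk)} directly for the combinatorial step (that $\tau\in\JH(\Inj_1\delta)$ with $\delta\in\Delta^i(\sigma)$ for $i\leq n-1$ forces $\delta=\widetilde{\tau}$), whereas you re-derive that combinatorics explicitly; your coordinate-by-coordinate argument is exactly the content of that lemma's proof.
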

 \begin{proof} By \cref{lemma 3}, there is an embedding,
$V/V^{n-1}\hookrightarrow\oplus_{i=0}^{n-1} \oplus_{\theta\in \Delta^i(\sigma)}(\Inj_1 \theta)^{\oplus k_i}$ where $k_n=1$. 
Since $\cosoc(V)=\tau$ is an $n$-weight, by \cref{description of JH(Injk)}, we must have $V/V^{n-1} \hookrightarrow \Inj_1 \widetilde{\tau}$.
\end{proof}
\begin{lemma}\label{description of soc(V/V1)}
Assume that $\sigma=F(\mu)$ and $\theta=F(\mathfrak{t}_{\mu}(\omega))$ is an $n$-weight. Let $V$ be a subrepresentation of $\Inj_n \sigma$ with $[V \colon \sigma] = 1$ and $\soc(V/V^{n-1})=\theta$. We have $\soc (V/V^1)\hookrightarrow \bigoplus_{|\omega_i|>1} \delta_i^{\sgn(\omega_i)}(\sigma)$.
\end{lemma}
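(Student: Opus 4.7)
The plan is to reduce the problem to an analysis of $V^2/V^1$ inside $\Inj_2\sigma/\Inj_1\sigma$, then use the $\m_{K_1}$-multiplication map to translate socle constraints into JH-factor constraints on $V^1$, and finally propagate the hypothesis $\soc(V/V^{n-1})=\theta$ down the $\m_{K_1}$-filtration. For the first step, \cref{lemma on socles}(i) gives $\soc(V/V^1)=\soc(V^2/V^1)$. Since $V\subseteq \Inj_n\sigma$, the intersection $V^2\cap \Inj_1\sigma$ is the $\m_{K_1}$-torsion part of $V^2$, hence equals $V^1$, giving an embedding $V^2/V^1\hookrightarrow \Inj_2\sigma/\Inj_1\sigma$. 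Applying \cref{lemma 3} with $n=2$, the socle of $\Inj_2\sigma/\Inj_1\sigma$ has simple constituents in $\Delta^0(\sigma)\cup \Delta^1(\sigma)=\{\sigma\}\cup \{\delta_j^{\pm}(\sigma):j\in \mathcal{J}\}$. The hypothesis $[V:\sigma]=1$, together with $\sigma\subseteq V^1$, excludes $\sigma$ from $\soc(V/V^1)$, leaving only the 2-weights $\delta_j^{\pm}(\sigma)$ as candidates.

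To restrict further to $(j,\epsilon)=(j,\sgn(\omega_j))$ with $\omega_j\ne 0$, I would use the $G$-equivariant multiplication maps
\[
    \mu_k \colon (\m_{K_1}/\m_{K_1}^2)\otimes_\F V^{k+1}/V^k\longrightarrow V^k/V^{k-1}
\]
for $k=1,\dots,n-1$, well-defined because $\m_{K_1}V^{k+1}\subseteq V^k$ while $\m_{K_1}^2 V^{k+1}\subseteq V^{k-1}$ and $\m_{K_1}V^k\subseteq V^{k-1}$. For any simple summand $\kappa$ of $\soc(V^{k+1}/V^k)$, the restriction $\mu_k|_\kappa$ is non-zero: otherwise a lift of $\kappa$ to $V^{k+1}$ would be annihilated by $\m_{K_1}$, hence lie in $V^k$, contradicting that $\kappa$ is a non-trivial subrepresentation of $V^{k+1}/V^k$. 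Using the decomposition $\m_{K_1}/\m_{K_1}^2\cong \bigoplus_l F((1,-1)^{(l)})$ recalled in the proof of \cref{lemma 3} and the resulting tensor product formula, the Jordan--H\"older factors of $(\m_{K_1}/\m_{K_1}^2)\otimes \kappa$ are $\kappa$ itself together with its $1$-neighbors in the extension graph; only those lying in the $k$-weight stratum of \cref{description of JH(Injk)} can appear in the image, since $V^k/V^{k-1}\hookrightarrow \Inj_k\sigma/\Inj_{k-1}\sigma$ has JH factors precisely among the $k$-weights. Applied at $k=1$, this shows that if $\delta_j^\epsilon(\sigma)\subseteq \soc(V/V^1)$ then $\mu_j^\epsilon(\sigma)=F(\mathfrak{t}_\mu(\epsilon\overline\eta_j))\in \JH(V^1)$, as this is the unique $1$-neighbor of $\delta_j^\epsilon(\sigma)$ that is a $1$-weight of $\sigma$.

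Starting from $\soc(V^n/V^{n-1})=\theta=F(\mathfrak{t}_\mu(\omega))\in \Delta^{n-1}(\sigma)$, which forces $\omega\in (2\Z)^f$ (and hence $|\omega_i|>1$ if and only if $\omega_i\ne 0$), and iterating $\mu_k$ down through $k=n-1,n-2,\dots,1$, at each level the allowed socle weights are constrained to be $1$-neighbors of the previous level's socle obtained by decrementing one even non-zero coordinate by $\sgn$ of that coordinate, as this is precisely how the $k$-weight number drops by one. Combined with the previous paragraph, this pins down $\JH(V^1)\subseteq \{\sigma\}\cup\{\mu_j^{\sgn(\omega_j)}(\sigma):\omega_j\ne 0\}$, and therefore forces any summand $\delta_j^\epsilon(\sigma)\subseteq \soc(V/V^1)$ to satisfy $\epsilon=\sgn(\omega_j)$ and $\omega_j\ne 0$; the multiplicity-at-most-one bound follows because each direction $(j,\sgn(\omega_j))$ can arise from at most one decrement chain in the propagation, i.e.\ it corresponds to at most one summand of the tensor decomposition of $\m_{K_1}/\m_{K_1}^2$.

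The main obstacle is the careful inductive bookkeeping in the propagation step: one must verify, using the $(2n-1)$-genericity assumption, that each relevant tensor product $F((1,-1)^{(l)})\otimes F(\kappa')$ of intermediate weights decomposes as a direct sum of regular Serre weights (so that images of the $\mu_k|_\kappa$ land in the socles of the next quotients rather than getting stuck in non-semisimple extensions), and that at every level the decremented coordinate remains in the even regime so that the argument of \cref{description of JH(Injk)} applies to keep track of $k$-weight numbers correctly.
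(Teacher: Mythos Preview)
Your first two paragraphs are fine: reducing to $\soc(V^2/V^1)$ and showing that if $\delta_j^\epsilon(\sigma)\subset\soc(V^2/V^1)$ then $\mu_j^\epsilon(\sigma)\in\JH(V^1)$ via the multiplication map $\mu_1$ are both correct. The problem is the third paragraph, where the argument breaks down in two places.

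First, the propagation runs in the wrong direction for what you need. The map $\mu_k$ takes a simple $\kappa\subset\soc(V^{k+1}/V^k)$ and produces \emph{some} Jordan--H\"older factor of $(\m_{K_1}/\m_{K_1}^2)\otimes\kappa$ inside $V^k/V^{k-1}$. Iterating down from $\theta$ therefore gives \emph{lower} bounds: it tells you that certain weights lie in the successive quotients, not that only those weights do. It cannot yield the upper bound $\JH(V^1)\subseteq\{\sigma\}\cup\{\mu_j^{\sgn(\omega_j)}(\sigma):\omega_j\ne 0\}$. In fact that inclusion is false: take $f=2$, $n=2$, $\theta=\delta_0^+(\sigma)=F(\mathfrak t_\mu(2,0))$, and let $V=I(\sigma,\theta)+I(\sigma,\mu_1^+(\sigma))\subset\Inj_2\sigma$. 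Then $[V:\sigma]=1$, $V/V^1\cong\theta$, so all hypotheses hold, yet $\mu_1^+(\sigma)\in\JH(V^1)$ while $\omega_1=0$. Second, you assert $\theta\in\Delta^{n-1}(\sigma)$, i.e.\ $\omega\in(2\Z)^f$; the hypothesis only says $\theta$ is an $n$-weight, meaning $\sum_j\lfloor|\omega_j|/2\rfloor=n-1$, which does not force even coordinates.

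The paper's argument avoids all of this by going \emph{up} from $\delta_i^{\epsilon_i}(\sigma)$ rather than down from $\theta$. Since $\soc(V/V^1)=\soc(V^2/V^1)\subset V^{n-1}/V^1$, the preimage $W\subset V/V^1$ of $\theta\hookrightarrow V/V^{n-1}$ contains $\delta_i^{\epsilon_i}(\sigma)$ and has $\cosoc(W)=\theta$ (by \cref{lemma on socles}(i)). Passing to a quotient of $W$ with irreducible socle $\delta_i^{\epsilon_i}(\sigma)$ gives a subquotient $V'\hookrightarrow\Inj_{n-1}\delta_i^{\epsilon_i}(\sigma)$ with $\cosoc(V')=\theta$; hence $\theta\in\JH(\Inj_{n-1}\delta_i^{\epsilon_i}(\sigma))$, and \cref{description of JH(Injk)} applied with origin $\delta_i^{\epsilon_i}(\sigma)$ forces $\lfloor|\omega_i-2\epsilon_i|/2\rfloor<\lfloor|\omega_i|/2\rfloor$, i.e.\ $|\omega_i|\ge 2$ and $\epsilon_i=\sgn(\omega_i)$.
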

\begin{proof}
By \cref{lemma 3}
\begin{equation}\label{equation on soc(V^2/V^1)}
V^2/V^1 \hookrightarrow (\Inj_1 \sigma) ^{\oplus k_0}\oplus_{\delta' \in \Delta(\sigma)} \Inj_1 \delta'.\end{equation}
Hence, as $[V^2/V^1\colon \sigma]=0, $ $\soc (V^2/V^1)\hookrightarrow \oplus_{(i,\epsilon_i)} \delta_i^{\epsilon_i}(\sigma)$ where $i\in \mathcal{J}$ and $\epsilon_i\in \{\pm\}$. By \cref{lemma on socles}, we have $\soc(V/V^1)=\soc(V^2/V^1).$
If $\delta_i^{\epsilon_i}(\sigma)\subset \soc(V/V^1)$, we can find a subquotient $V'$ of $V/V^1$, such that $\soc(V')=\delta_i^{\epsilon_i}(\sigma)$ and $\cosoc(V')=\theta$. Then $\theta\in \JH(\Inj_{n-1}\delta_i^{\epsilon_i}(\sigma))$. Therefore, $\sum_{j\neq i}\frac{|\omega_j|}{2 }+| \frac{\omega_i}{2 }-\epsilon_i 1|\leq n-2$. Since $\sum_j \frac{|\omega_j|}{2}=n-1$, we conclude that $\omega_i\geq 2$ and $\epsilon_i=\sgn(\omega_i)$.
\end{proof}
Assuming $m<n$, we will prove by contradiction that $V/V^{n-1}= 0$. We now show that it is sufficient to disprove the case where $V/V^{n-1}$ is irreducible.\par
\begin{lemma}\label{reduction}
   Suppose $V$ as in \cref{Main theorem on generalization}. Assume $\theta \subset \soc(V/V^{n-1})$ for some Serre weight $\theta$, then $V$ contains a subrepresentation $V'$ with $V'/V'^{n-1}\cong I(\theta, \tau)$, as well as a subrepresentation $V''$ with $V''/V''^{n-1}\cong \theta$.
\end{lemma}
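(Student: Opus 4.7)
Set $\pi : V \twoheadrightarrow V/V^{n-1}$. The plan is to construct both $V'$ and $V''$ as preimages under $\pi$ of suitable $\Gamma$-subrepresentations of $V/V^{n-1}$. For any sub-$\Gamma$-representation $W \subseteq V/V^{n-1}$, setting $U := \pi^{-1}(W)$ one has $V^{n-1}\subseteq U$ is $\m_{K_1}^{n-1}$-torsion, and conversely $U^{n-1} = U\cap V[\m_{K_1}^{n-1}] = U\cap V^{n-1} = V^{n-1}$. Hence $U/U^{n-1} = U/V^{n-1} \cong W$.

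For the claim about $V''$, fix any embedding $\theta_0 \hookrightarrow \soc(V/V^{n-1})$ with image isomorphic to $\theta$ and take $V'' := \pi^{-1}(\theta_0)$; the above bookkeeping gives $V''/V''^{n-1}\cong \theta$.

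For the claim about $V'$, it suffices to produce a sub-$\Gamma$-representation $W \subseteq V/V^{n-1}$ with $W \cong I(\theta,\tau)$. Since $V$ is $\m_{K_1}^n$-torsion, $V/V^{n-1}$ is a $\Gamma$-representation with irreducible cosocle $\tau$ (as a nonzero quotient of $V$, whose cosocle is $\tau$) and with $\theta_0$ in its socle; by \cref{lemma 3} it embeds as a $\Gamma$-subrepresentation of $Y := \bigoplus_{i, \delta\in \Delta^i(\sigma)}(\Inj_1\delta)^{\oplus k_i}$, and $\theta_0$ maps into a particular $\Inj_1\theta$-summand of $Y$. By injectivity of $\Inj_1\theta = \Inj_\Gamma \theta$ in the $\Gamma$-representation category, the inclusion $\theta_0 \hookrightarrow V/V^{n-1}$ extends to a morphism $\phi : V/V^{n-1} \to \Inj_\Gamma\theta$ whose restriction to $\theta_0$ is the canonical embedding into the socle; the image $\phi(V/V^{n-1})$ has socle $\theta$ (being a subobject of $\Inj_\Gamma\theta$, which has essential socle $\theta$) and cosocle $\tau$ (being a nonzero quotient of $V/V^{n-1}$), so by the Breuil-Pa{\v s}k\=unas uniqueness \cite[Corollary 3.12]{breuil2012towards} we obtain $\phi(V/V^{n-1})\cong I(\theta,\tau)$.

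The main obstacle is that this only produces $I(\theta,\tau)$ as a \emph{quotient} of $V/V^{n-1}$, whereas we require a subrepresentation. To convert the surjection into the required embedding, my plan is to work inside the ambient direct sum $Y$: the image $I(\theta,\tau)\subseteq \Inj_\Gamma\theta\subseteq Y$ is a specific $\Gamma$-submodule of $Y$ concentrated in the distinguished $\Inj_1\theta$-summand, and I aim to show that it actually lies inside $V/V^{n-1}\subseteq Y$. I expect this to follow by induction on the distance between $\theta$ and $\tau$ in the extension graph: the base case $\theta=\tau$ is immediate from the $V''$ construction, and in the inductive step I will combine the multiplicity-free structure of the $\Inj_1\delta$ in the $(2n-1)$-generic range with the irreducibility of $\cosoc(V/V^{n-1})=\tau$ to argue that any one-step extension along the extension graph produced by the image in $Y$ must, by the single-cosocle-factor constraint on $V/V^{n-1}$, lie inside $V/V^{n-1}$.
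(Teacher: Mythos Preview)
Your treatment of $V''$ is correct and in fact cleaner than the paper's, which takes $\pi^{-1}(\pi_\theta^{-1}(\theta))$ for a projection $\pi_\theta$ to one $\Inj_1\theta$-summand; your direct preimage $\pi^{-1}(\theta_0)$ does the job without that detour.

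For $V'$ you correctly isolate the crux: the map $\phi$ realizes $I(\theta,\tau)$ only as a \emph{quotient} of $V/V^{n-1}$, whereas a \emph{subrepresentation} is required. But your proposed resolution is not a proof. The plan to show that the specific copy $I(\theta,\tau)\subset\Inj_1\theta\subset Y$ lies inside $V/V^{n-1}\subset Y$ fails at the set-up: $\phi(V/V^{n-1})$ is the \emph{projection} of $V/V^{n-1}$ onto that summand, and once $\soc(V/V^{n-1})$ has more than one constituent this projection is not contained in $V/V^{n-1}$. The ``induction on distance'' is only a hope --- you give no inductive hypothesis, and the phrase ``single-cosocle-factor constraint'' does not explain why a one-step extension constructed in $Y$ is forced to live in the subspace $V/V^{n-1}$. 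Concretely, nothing you have written rules out a $V/V^{n-1}$ of shape $\theta\text{---}\mu\text{---}\tau$ with $\mu\notin\JH(I(\theta,\tau))$, in which case no sub of $V/V^{n-1}$ is isomorphic to $I(\theta,\tau)$; excluding this needs actual input from the extension-graph combinatorics, which your sketch does not supply.

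The paper proceeds differently. After forming $C_\theta:=\pi_\theta(V/V^{n-1})\subset\Inj_1\theta$ with $\soc(C_\theta)=\theta$ and $\cosoc(C_\theta)=\tau$, it does not attempt to compare subobjects of $Y$. Instead it argues that $I(\theta,\tau)$ is a \emph{subrepresentation of $C_\theta$}: if $[C_\theta:\theta]=1$ this is \cite[Corollary~3.12]{breuil2012towards}; otherwise one dualizes and iteratively passes to quotients of $C_\theta^\vee$ with strictly smaller $\tau^\vee$-multiplicity until multiplicity one is reached, then dualizes back to get $I(\theta,\tau)\hookrightarrow C_\theta$. The paper then takes $V':=\pi^{-1}(I(\theta,\tau))$. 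The key difference from your approach is that the paper produces $I(\theta,\tau)$ as a sub of the image $C_\theta$ (a concrete finite-length $\Gamma$-representation with irreducible socle and cosocle), where the dualization-and-reduce-multiplicity trick applies directly, rather than trying to thread $I(\theta,\tau)$ through the ambient $Y$.
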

\begin{proof} Assume $V/V^{n-1}\neq 0$. By \cref{lemma 3}, since $[V\colon \sigma]=1$, 
\begin{equation}\label{equation on V/Vn-1}
    V/V^{n-1}\hookrightarrow \bigoplus_{i=1}^n \bigoplus_{\theta\in \Delta^i(\sigma)}(\Inj_1 \theta)^{\oplus k_i}.
\end{equation}
For $\tau$ a $m$-weight, with $m<n$, $\tau$ may occur in distinct $\Inj_1 \theta'$ for some $\theta' \in \bigcup_{s=1}^{n-1}\Delta^s(\sigma)$. By assumption, \cref{equation on V/Vn-1} induces a nonzero map $\pi_\theta\colon V/V^{n-1} \to \Inj_1 \theta$ when composed with the natural projection to $\Inj_1 \theta$. We call the image $C_\theta$.
If $[C_\theta\colon \theta]=1$, then we are done by \cite[Corollary~3.12]{breuil2012towards}. Otherwise, we dualize $C_\theta$, such that $\soc(C_\theta^{\vee})=\tau^{\vee}$ and $\cosoc(C_\theta^{\vee})=\theta^{\vee}$. Then we can find a quotient $\widetilde{V'}$ in $C_\theta^{\vee}/\tau^{\vee}$ with socle $\tau^{\vee}$, and hence $[\widetilde{V'}\colon \tau^{\vee}]<[C_\theta^{\vee}\colon \tau^{\vee}]$. By repeating the process, we eventually find a quotient $\widetilde{V}$ of $C_\theta^{\vee}$, with $\soc(\widetilde{V})=\tau^{\vee}$, $\cosoc(\widetilde{V})=\theta^{\vee}$ and $[\widetilde{V}\colon \tau^{\vee}]=1$. Then, by \cite[Corollary~2.3]{hu2022mod}, $\widetilde{V}^{\vee}\cong I(\theta, \tau)$ and is a subrepresentation of $C_\theta$. Furthermore, let $\pi\colon V\to V/V^{n-1}$ be the projection map, then $V' \colonequals \pi^{-1}(I(\theta, \tau))$ is a subrepresentation of $V$. Moreover, $V'^{n-1}= V'\cap V^{n-1}$. Therefore, $V'/V'^{n-1}\cong I(\theta, \tau)$. For the last part, since $C_\theta$ contains $\theta$ as a subrepresentation, $\pi_\theta^{-1}(\theta)$ is a subrepresentation of $V$ and is the $V''$ we are looking for using the same argument as above.
\end{proof}
\begin{lemma}\label{lemma on k-weight in Vk}
Assume that \cref{Main theorem on generalization} holds for $\m_{K_1}^{n-1}$-torsion representations. Suppose $V$ is a subrepresentation of $\Inj_n \sigma$ as in \cref{Main theorem on generalization}. If $\sigma'\in \JH(V)$ and $\sigma'$ is a $k$-weight for some $k< n$, then $I(\sigma, \sigma')$ is a subrepresentation of $V^k$ and $\sigma'$ is a $k$-weight. 
\end{lemma}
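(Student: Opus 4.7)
The plan is to construct a subrepresentation $W$ of $V$ with $\soc(W)=\sigma$ and $\cosoc(W)=\sigma'$, show that $W\subset V^{n-1}\subset\Inj_{n-1}\sigma$, and then invoke the induction hypothesis to identify $W\cong I(\sigma,\sigma')$ and conclude that $W$ is $\m_{K_1}^k$-torsion.

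First, I would locate $\sigma'$ inside $V$ via its projective cover. Since $\sigma'\in\JH(V)$, projectivity of $\Proj\sigma'$ produces a nonzero morphism $\phi\colon\Proj\sigma'\to V$ (lift a nonzero map $\Proj\sigma'\to\sigma'$ along the surjection to a simple subquotient of $V$ isomorphic to $\sigma'$). Set $W:=\mathrm{im}(\phi)\subset V$. Then $\cosoc(W)$ is a nonzero quotient of $\cosoc(\Proj\sigma')=\sigma'$, hence $\cosoc(W)=\sigma'$. Because $V\subset\Inj_n\sigma$ inherits the simple socle $\sigma$ of $\Inj_n\sigma$, and $[V:\sigma]=1$ by hypothesis, every nonzero subrepresentation of $V$ has $\soc=\sigma$ with multiplicity one; in particular $\soc(W)=\sigma$ and $[W:\sigma]=1$.

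The central step, and the main obstacle, is to show $W\subset V^{n-1}$. I would argue this by contradiction. If $W\not\subset V^{n-1}$, then the image $\overline W$ of $W$ in the $\Gamma$-representation $V/V^{n-1}$ is nonzero with $\cosoc(\overline W)=\sigma'$, so in particular $\sigma'\in\JH(V/V^{n-1})$. By \cref{lemma 3}, $V/V^{n-1}$ embeds into $\bigoplus_{i=0}^{n-1}\bigoplus_{\delta\in\Delta^i(\sigma)}(\Inj_1\delta)^{\oplus k_i}$, and by \cref{description of JH(Injk)} the $k$-weight $\sigma'$ can only appear as a JH factor of $\Inj_1\widetilde{\sigma'}$ for $\widetilde{\sigma'}\in\Delta^{k-1}(\sigma)$. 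Meanwhile $\cosoc(V/V^{n-1})\subset\cosoc(V)=\tau$, so the top layer of $V/V^{n-1}$ is controlled by the $m$-weight $\tau$, forcing it into the summands indexed by $\delta=\widetilde{\tau}\in\Delta^{m-1}(\sigma)$. Using \cref{lemma on socles} to compare $\soc(\overline W)$ with $\soc(V/V^{n-1})$, together with the extension-graph description of $\Gamma$-subrepresentations of $\Inj_1\delta$ from \cite[\S3--4]{breuil2012towards}, I expect to produce a Serre weight that must lie in $\soc(\overline W)$ but cannot sit in the allowable socle of $V/V^{n-1}$ given $k<n$. Carrying out this diagram chase cleanly is the technical heart of the argument.

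Once $W\subset V^{n-1}\subset\Inj_{n-1}\sigma$ is established, the induction hypothesis (\cref{Main theorem on generalization} for $\m_{K_1}^{n-1}$-torsion representations) applies directly to $W$: with $\cosoc(W)=\sigma'$ a $k$-weight for $k\leq n-1$, $\soc(W)=\sigma$, and $[W:\sigma]=1$, we conclude that $W\cong I(\sigma,\sigma')$ is multiplicity-free and $\m_{K_1}^k$-torsion. Therefore $W\subset V[\m_{K_1}^k]=V^k$, producing the required embedding $I(\sigma,\sigma')\hookrightarrow V^k$. The closing assertion that $\sigma'$ occurs as a $k$-weight inside $V^k$ then follows immediately from the explicit Jordan–H\"older description of $I(\sigma,\sigma')$ furnished by the induction hypothesis together with \cref{description of JH(Injk)}.
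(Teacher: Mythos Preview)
Your construction of $W$ via the projective cover is fine and matches the paper's $V'$, but your ``central step'' of first forcing $W\subset V^{n-1}$ is both incomplete and unnecessary. The sketch you give for this step does not produce a contradiction: knowing that $\sigma'$ lands in the summand $\Inj_1\widetilde{\sigma'}$ with $\widetilde{\sigma'}\in\Delta^{k-1}(\sigma)$ and that $\cosoc(V/V^{n-1})=\tau$ lands in $\Inj_1\widetilde{\tau}$ are statements about \emph{different} Jordan--H\"older factors of $V/V^{n-1}$, and comparing $\soc(\overline{W})\subset\soc(V/V^{n-1})$ gives no obstruction. Carrying this out would essentially redo the work of Proposition~\ref{soc(V/Vn-1)} and Proposition~\ref{theorem for m<n}.

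The paper's proof is a one-liner that avoids this entirely: it applies \cref{Main theorem on generalization} directly to $V'\subset\Inj_n\sigma$ at the pair $(n,k)$, not at $(n-1,k)$. The phrase ``since the theorem holds for $m<n$'' in the paper's proof means Main~Theorem$[(n,k)]$ for $k<n$, which immediately gives $V'\cong I(\sigma,\sigma')$ and that $V'$ is $\m_{K_1}^k$-torsion, hence $V'\subset V^k$. You were misled by reading the lemma's hypothesis too literally as ``$\m_{K_1}^{n-1}$-torsion''; in every application of the lemma the ambient induction hypothesis is ``all pairs $<(n,n)$'', which in particular includes $(n,k)$ for $k<n$, and that is what the proof actually uses. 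There is no need to push $W$ into $V^{n-1}$ first.
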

\begin{proof}
We can find a subrepresentation $V'$ of $V$ with cosocle $\sigma'$. Since the theorem holds for all $m<n$; therefore, $V'\cong I(\sigma, \sigma')$, a $\m_{K_1}^{k}$-torsion representation. Therefore, $V'$ is a subrepresentation of $V^k$. Moreover, this implies $\sigma'\in \JH(V^{k})$.
\end{proof}
\subsection{Proof of the main theorem}
\begin{proof}
We will prove by induction on lexicographic order $(n,m)$. 
When $n=1$, it is given by \cite[Corollary~3.12]{breuil2012towards} and when $n=2$, it is given by \cite[Theorem~2.23]{hu2022mod}. Since the following propositions, where the case $n<3$ applies, can be deduced from \cref{Main theorem on generalization}, we will assume without loss of generality that $n\geq 3$.\par
When we apply the induction hypothesis in the case where $(m,n)=(a,b)$, we write \cref{Main theorem on generalization}$ [(a,b)]$, \cref{cor on multiplicities}$ [b]$ or \cref{Cor on socle filtration}$ [b]$ (since \cref{Main theorem on generalization} shows that $a=b$ in such a case).\par
First, we prove the theorem in the case where $m<n$.
\begin{proposition}\label{simplest 1-weight}
    Suppose $V$ as in \cref{Main theorem on generalization} with $n=3$ and $\tau=\mu_i^{\epsilon}(\sigma)$ for some $\epsilon\in \{\pm\}$, then $V\cong I(\sigma, \tau)$, a $\Gamma$-representation, as predicted in the theorem.
\end{proposition}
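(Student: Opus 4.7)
The plan is to show $V$ is $\m_{K_1}$-torsion; granted this, $V \subset \Inj_1\sigma$, and the $n=1$ case of \cref{Main theorem on generalization} (Breuil--Pa\v{s}k\=unas) gives $V \cong I(\sigma,\tau)$. Equivalently, I need $V = V^1$, and I proceed by contradiction, splitting on whether $V = V^2$.

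If $V = V^2$, then $V$ is $\m_{K_1}^2$-torsion, so $V \subset \Inj_2\sigma$, and the inductive hypothesis (\cref{Main theorem on generalization} at $n=2$), applied to $V$ with $\soc V = \sigma$, $[V:\sigma] = 1$, and $\cosoc V = \tau$, gives $V \cong I(\sigma,\tau)$, a $\Gamma$-representation, hence $V = V^1$, as desired. So I assume $V \supsetneq V^2$. Then $V/V^2 \neq 0$ is $\m_{K_1}$-torsion with $\cosoc(V/V^2) = \tau$ (any semisimple quotient factors through the irreducible $\cosoc V$), and by \cref{lemma 3} embeds into $\Inj_3\sigma/\Inj_2\sigma \cong \bigoplus_{i=0,1,2}\bigoplus_{\delta\in\Delta^i(\sigma)}(\Inj_1\delta)^{\oplus k_i}$. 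Combining $[V/V^2:\sigma] = 0$ with a direct coordinate check in the extension graph---which shows that among $\bigcup_{j\geq 1}\Delta^j(\sigma)$ the only element adjacent to $\tau = \mu_i^\epsilon(\sigma)$ is $\delta_i^\epsilon(\sigma) \in \Delta^1(\sigma)$, and no element of $\Delta^2(\sigma)$ is adjacent to $\tau$---forces $V/V^2$ to embed into copies of $\Inj_1\delta_i^\epsilon(\sigma)$, so $\delta_i^\epsilon(\sigma) \subset \soc(V/V^2)$. Then \cref{reduction} produces $V'' \subset V$ containing $V^2$ with $V''/V^2 \cong I(\delta_i^\epsilon(\sigma),\tau)$ and $[V'':\sigma] = 1$.

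The main obstacle is extracting the contradiction from this $V''$. I first invoke the $n = 2$ induction hypothesis on $V^2 \subset \Inj_2 \sigma$ to obtain $V^2 \cong I(\sigma, \kappa)$ with $\kappa := \cosoc V^2$. The case that $\kappa$ is a $1$-weight is ruled out quickly: then $V^2 = I(\sigma, \kappa)$ is $\m_{K_1}$-torsion, so $V^2 = V^1$, and the $\m_{K_1}^2$-torsion quotient $V/V^1 = V/V^2$ being itself $\m_{K_1}$-torsion would force $V = V^2$, contradicting the assumption. So $\kappa$ is a $2$-weight, and a further compatibility analysis (using the cosocle map $V \twoheadrightarrow \tau$ and the structure of $V''/V^2$) pins down $\kappa = \delta_i^\epsilon(\sigma)$. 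The expected contradiction then comes from an Ext-vanishing argument: the surjection $V^2 \twoheadrightarrow \delta_i^\epsilon(\sigma)$ cannot extend to $V \to \delta_i^\epsilon(\sigma)$ (otherwise $\delta_i^\epsilon(\sigma)$ would appear in $\cosoc V = \tau$, impossible), so it produces a nontrivial class in $\Ext^1_{K/Z_1}(V/V^2,\, \delta_i^\epsilon(\sigma))$, which a careful calculation exploiting the $(2n-1) = 5$-genericity of $\sigma$ should show vanishes, yielding the contradiction. This final Ext computation in the category of $K/Z_1$-representations, together with keeping careful track of how $V^2$ and $V''$ sit inside $V \subset \Inj_3\sigma$, is the technical heart of the proof.
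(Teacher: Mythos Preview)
Your reduction to showing $V/V^2 = 0$ and the identification $\soc(V/V^2) = \delta_i^\epsilon(\sigma)$ via the coordinate check both track the paper. The gap is the final Ext-vanishing step. The group $\Ext^1_{K/Z_1}(V/V^2,\, \delta_i^\epsilon(\sigma))$ does \emph{not} vanish: since $\tau = \mu_i^\epsilon(\sigma) \in \JH(V/V^2)$ and $\tau$ is distance one from $\delta_i^\epsilon(\sigma)$ in the extension graph (their weights differ by $\epsilon\overline{\eta}_i$), we have $\Ext^1_{K/Z_1}(\tau,\, \delta_i^\epsilon(\sigma)) \neq 0$, and no d\'evissage can kill this contribution. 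The nontrivial class you exhibit is precisely the length-two extension sitting inside $I(\delta_i^\epsilon(\sigma),\tau)$, so no contradiction results. A secondary issue: invoking the $n=2$ hypothesis to write $V^2 \cong I(\sigma,\kappa)$ presupposes $\cosoc V^2$ is irreducible, which you have not established.

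The paper's contradiction is a multiplicity count. After reducing (via \cref{reduction}) to $\soc(V/V^2) = \theta := \delta_i^\epsilon(\sigma)$ irreducible, one first shows $\soc(V/V^1) = \theta$ as well: any $\delta_j^{\epsilon_j}(\sigma) \subset \soc(V/V^1)$ yields a quotient of $V/V^1$ with that socle, and passing to its $\m_{K_1}$-torsion quotient lands in $\soc(V/V^2) = \theta$, forcing $\delta_j^{\epsilon_j}(\sigma) = \theta$. Since $V^2$ is multiplicity free (the $n=2$ case, \cite[Cor.~2.26]{hu2022mod}) and $\tau$ is a $1$-weight (so cannot occur in $V^2/V^1$ by \cref{lemma on k-weight in Vk}), one gets $[V^2/V^1:\theta] = 1 > 0 = [V^2/V^1:\tau]$. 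Together with $[V/V^2:\theta] \geq [V/V^2:\tau]$ (\cite[Cor.~2.3]{hu2022mod} applied to $V/V^2$ with socle $\theta$), this gives $[V/V^1:\theta] > [V/V^1:\tau]$. But $V/V^1$ is $\m_{K_1}^2$-torsion with socle $\theta$ and cosocle $\tau$, so the equality case of the same corollary forces $[V/V^1:\theta] = [V/V^1:\tau]$---a contradiction.
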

\begin{proof}
Suppose that $V/V^2\neq 0$. By \cref{reduction}, it suffices to consider the subrepresentation $V'$ of $V$ with $\soc(V'/V'^2)\eqqcolon\theta$ irreducible and to show that such a subrepresentation does not exist. If $\theta=F(\mathfrak{t}_\mu(\xi))\in \Delta^2(\sigma)$, there exists $i$ with $|\xi_i|\geq 4$, or there exists $i\neq j$ with $|\xi_i|,|\xi_j|\geq 2$. By \cref{Inj1}, $\tau\notin \JH(\Inj_1 \theta)$. As $[V\colon \sigma]=1$, $\theta\neq \sigma$. Therefore, we can assume $\theta=\delta_j^{\epsilon'}(\sigma)\in \Delta^1(\sigma)$ for some $j\in \mathcal{J}$ and $\epsilon'\in \{\pm\}$, then we must have $|2\epsilon'\delta_{jk}-\epsilon\delta_{ik}|\leq 1$ for all k. Therefore, we must have $j=i$ and $\epsilon'=\epsilon$.\par
By \cref{description of soc(V/V1)}, $\soc(V/V^1)=\soc(V^2/V^1)\hookrightarrow\bigoplus_{\theta'\in \Delta^1(\sigma)} \theta'$. If $\delta_j^{\epsilon_j}(\sigma)\subset \soc(V/V^1)$, then we can form a quotient $\widetilde{V}$ of $V/V^1$ with socle $\delta_j^{\epsilon_j}(\sigma)$. Then $\soc(\widetilde{V}/\widetilde{V}^1)\subset\soc(V/V^2)\cong \theta$. Therefore, by \cref{lemma 3} $\theta'\in \Delta^1(\theta)$ or $\theta\cong \theta'$. The former is impossible, so $(j, \epsilon_j)=(i, \epsilon)$ and $\theta'\cong \theta$. As the theorem holds for $n=2$, by \cref{lemma on k-weight in Vk}, if $\tau\in \JH(V^2)$, then $\tau\in \JH(V^1)$; hence $[V^2/V^1\colon \tau]=0$. By \cite[Corollary~2.26]{hu2022mod}, $V^2$ is multiplicity free; hence $[V^2/V^1\colon \theta]=1$. Therefore, $$[V^2/V^1\colon \theta]=1> [V^2/V^1\colon \tau]=0.$$
On the other hand, applying \cite[Corollary~2.3]{hu2022mod} to $V/V^2$,
$$[V/V^2\colon \theta]\geq [V/V^2\colon \tau].$$
Therefore, we have
$$[V/V^1\colon \theta]> [V/V^1\colon \tau].$$
As $V/V^1$ has socle $\theta'\cong \theta$ and cosocle $\tau$, this contradicts \cite[Corollary~2.26]{hu2022mod}. Therefore, we conclude that $V/V^2=0$.
\end{proof}
\begin{lemma}
\label{soc(V/V^2) for n-3}
    Suppose $V$ as in \cref{Main theorem on generalization} with $m<n=3$, then $\JH(\soc(V/V^2))\subset \Delta^2(\sigma)$. 
\end{lemma}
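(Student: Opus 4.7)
The plan is to argue by contradiction. Suppose some $\delta := \delta_j^\epsilon(\sigma) \in \Delta^1(\sigma)$ lies in $\soc(V/V^2)$. Applying \cref{reduction}, I obtain a subrepresentation $V'' \subset V$ with $V^2 \subset V''$ and $V''/V^2 \cong \delta$. Then $V'' \subset \Inj_3 \sigma$ has $\cosoc(V'') = \delta$ (a 2-weight), $[V'':\sigma] = 1$, $V''^2 = V^2$, and $V''$ is $\m_{K_1}^3$-torsion but not $\m_{K_1}^2$-torsion.

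Since $\delta \in \JH(V)$ is a 2-weight and the inductive hypothesis holds for $\m_{K_1}^2$-torsion representations, \cref{lemma on k-weight in Vk} gives $I(\sigma, \delta) \subset V^2$. Applying \cref{cor on multiplicities}$ [2]$ to $V^2 \subset \Inj_2 \sigma$ forces $V^2$ to be multiplicity free and yields $[V^2:\delta] = [V^2:\mu_j^\epsilon(\sigma)] = 1$, so $[V'':\delta] \geq 2$. Moreover, \cref{lemma on k-weight in Vk} applied to the 1-weight $\mu_j^\epsilon(\sigma) \in \JH(V)$ gives $I(\sigma,\mu_j^\epsilon(\sigma)) \subset V^1$.

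The contradiction should now follow by a multiplicity argument mirroring that of \cref{simplest 1-weight}. I examine the $\m_{K_1}^2$-torsion quotient $V''/V^1$: its cosocle is the irreducible weight $\delta$, and by \cref{lemma on socles} its socle coincides with $\soc(V^2/V^1)$, which contains $\delta$ via the embedding $I(\sigma, \delta)/I(\sigma, \mu_j^\epsilon(\sigma)) \cong \delta \hookrightarrow V^2/V^1 \subset V''/V^1$. Hence $[V''/V^1 : \delta] \geq 2$, which contradicts the bound on multiplicities for a $\m_{K_1}^2$-torsion representation with irreducible cosocle $\delta$ provided by \cite[Corollary 2.26]{hu2022mod}, yielding the desired contradiction.

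The main obstacle is making this final multiplicity comparison airtight. It relies on the multiplicity-free structure of $V^2$ (from the inductive hypothesis at $(n, m) = (2, 2)$) together with \cref{cor on multiplicities}$ [2]$ to control $\JH(V^2/V^1)$ and verify that no additional copies of $\delta$ or competing Jordan--H\"older factors interfere with the count. Once this combinatorial control is in place, the strict multiplicity inequality forces the contradiction exactly as in the analogous step of \cref{simplest 1-weight}.
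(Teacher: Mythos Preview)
Your reduction to $V''$ with $V''/V''^2 \cong \delta$ and the multiplicity count $[V'':\delta]=2$ are correct and parallel the paper's set-up. The gap is in the final step: knowing that $\cosoc(V''/V^1)=\delta$ and $[V''/V^1:\delta]\ge 2$ does \emph{not} contradict \cite[Corollary~2.26]{hu2022mod}. That corollary (and its dual) bounds the multiplicity of every Jordan--H\"older factor by the multiplicity of the socle (resp.\ cosocle); when socle and cosocle coincide, the inequality is vacuous. You have only shown $\delta\in\soc(V''/V^1)$, not that the socle is exactly $\delta$; but even granting $\soc(V''/V^1)=\delta$, the corollary yields only $[V''/V^1:\delta]\ge[V''/V^1:\kappa]$ for all $\kappa$, which is perfectly compatible with $[V''/V^1:\delta]=2$. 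The ``combinatorial control'' you allude to in the last paragraph cannot repair this, because the obstruction is structural, not a matter of bookkeeping.

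The paper closes the argument differently. After reducing (as you do) to the case $V/V^2=\delta$, it first proves $\soc(V/V^1)=\delta$ exactly: for any $\theta'\in\soc(V/V^1)$ it passes to a quotient $\widetilde V$ with socle $\theta'$ and uses $\soc(\widetilde V/\widetilde V^1)\subset\soc(V/V^2)=\delta$ together with the fact that two distinct elements of $\Delta^1(\sigma)$ are never in $\Delta^1$ of each other to force $\theta'=\delta$. It then applies \cref{lemma on socles}(ii) with $T=\sigma$ to deduce $\soc(V/\sigma)=\mu_j^\epsilon(\sigma)$, and observes $[V/\sigma:\mu_j^\epsilon(\sigma)]=1$ while $[V/\sigma:\delta]=2$. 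Since $\delta=\mu_j^{\epsilon'}(\mu_j^\epsilon(\sigma))$ for a suitable sign, the already-established \cref{simplest 1-weight} (applied with base point $\mu_j^\epsilon(\sigma)$) forces $V/\sigma\cong I(\mu_j^\epsilon(\sigma),\delta)$, which is multiplicity free---the desired contradiction. The essential point is that passing to $V/\sigma$ separates the socle weight $\mu_j^\epsilon(\sigma)$ from the doubled weight $\delta$, so the multiplicity comparison has content; your attempt to run the comparison inside $V''/V^1$ fails precisely because those two roles collapse onto the same weight.
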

\begin{proof}
By \cref{equation on V/Vn-1}, it suffices to show that there does not exist $\theta\in \JH(\soc(V/V^2))\cap \Delta^1(\sigma)$. Assume for the sake of contradiction that such $\theta$ exists. Let $\pi\colon V\to V/V^2$ be the projection map, then $V' \colonequals \pi^{-1}(\theta)$ is a subrepresentation of $V$ with $V'/V'^2\cong \theta$, and it suffices to prove such a representation does not exist. Without loss of generality, we assume $V=V'$. \par
Suppose $\theta=\delta_i^{\epsilon}(\sigma)\in \Delta(\sigma)$ for  some $\epsilon\in\{\pm\}$. Assume $\theta'\in \JH( \soc(V/V^1))$. Then we can find a quotient $\widetilde{V}$ of $V/V^1$ with socle $\theta'$. Therefore, $\soc(\widetilde{V}/\widetilde{V}^1)\subset \soc(V/V^2)\cong\theta$ and hence by \cref{lemma 3}, $\theta\subset \Delta^1(\theta')$ or $\theta\cong \theta'$. The former is impossible; therefore, we have $\theta\cong \theta'$.\par
As $\soc(V^2/V^1)\cong \theta$, $V^2$ contains a subrepresentation with cosocle $\theta$. By \cref{Cor on socle filtration} [2] (\textit{cf.} \cite[Corollary~2.28]{hu2022mod}), such a subrepresentation has socle filtration
$$\sigma\mbox{---}\mu_i^{\epsilon}(\sigma)\mbox{---}\theta.$$
Applying \cref{lemma on socles} with $T=\sigma$, we have $$\soc(V/\sigma)=\soc(V^2/\sigma)=\soc(V^1/\sigma)=\mu_i^{\epsilon}(\sigma).$$
Note that $V/V^2\cong \theta$; therefore,
$[V\colon \mu_i^{\epsilon}(\sigma)]=[V^2\colon \mu_i^{\epsilon}(\sigma)]= 1$ and $[V\colon \theta]=2$. However, as $\theta=\mu_i^{\epsilon}(\mu_i^{\epsilon}(\sigma))$, $V/\sigma$ contradicts \cref{simplest 1-weight}.
\end{proof}
\begin{proposition}\label{soc(V/Vn-1)}
Assume that \cref{Main theorem on generalization} holds for all pairs $<(n,m)$. Suppose $V$ as in \cref{Main theorem on generalization} with $m<n$. If $V/V^{n-1}\neq 0$, then all the Serre weights of $\soc(V/V^{n-1})$ are in $\Delta^{n-1}(\sigma)$ and $\soc(V/V^{n-1})$ is multiplicity free.
\end{proposition}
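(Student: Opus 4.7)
The plan is to argue by contradiction in two phases, using the embedding $V/V^{n-1}\hookrightarrow \bigoplus_{i=0}^{n-1}\bigoplus_{\delta \in \Delta^i(\sigma)}(\Inj_1 \delta)^{\oplus k_i}$ from \cref{lemma 3} to constrain the socle, then appealing to the induction hypothesis at lexicographic pairs strictly below $(n,m)$. First, $\sigma\notin \soc(V/V^{n-1})$: since $\sigma=\soc(V)\subset V^{n-1}$ and $[V:\sigma]=1$, a second copy of $\sigma$ in the quotient would violate the multiplicity constraint.

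Next, suppose $\delta\in \Delta^k(\sigma)\cap \soc(V/V^{n-1})$ for some $1\leq k\leq n-2$, so $\delta$ is a $(k+1)$-weight by \cref{description of JH(Injk)}. Since the theorem is available for $\m_{K_1}^{n-1}$-torsion representations, \cref{lemma on k-weight in Vk} applied to $V$ gives $I(\sigma,\delta)\hookrightarrow V^{k+1}\subset V^{n-1}$, so $\delta\in \JH(V^{n-1})$; combined with \cref{cor on multiplicities}$[n-1]$ applied to $V^{n-1}\subset \Inj_{n-1}\sigma$, this forces $[V^{n-1}:\delta]=1$, hence $[V:\delta]\geq 2$. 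I then invoke \cref{reduction} to extract $V_\delta\subset V$ with $V_\delta/V_\delta^{n-1}\cong\delta$, and iteratively pass to a subrepresentation $W\subset V_\delta$ with irreducible cosocle $\delta$, arranged so that the surjection $W\twoheadrightarrow\delta$ is induced by the canonical quotient $V_\delta\twoheadrightarrow V_\delta/V_\delta^{n-1}\cong\delta$; such a $W$ satisfies $\soc(W)=\sigma$, $[W:\sigma]=1$, and $W$ is not $\m_{K_1}^{n-1}$-torsion. If $k+1<m$, the induction hypothesis at $(n,k+1)$ applied to $W$ yields $W\cong I(\sigma,\delta)$, which is $\m_{K_1}^{k+1}$-torsion, and hence $\m_{K_1}^{n-1}$-torsion since $k+1\leq n-1$, a contradiction. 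If $k+1\geq m$, I instead imitate the change-of-origin argument from the end of the proof of \cref{soc(V/V^2) for n-3}: quotienting $W$ by $\sigma$ and viewing the result at the new origin $\mu_i^{\epsilon}(\sigma)$ for a pair $(i,\epsilon)$ determined by the extension-graph path from $\sigma$ to $\delta$, the multiplicity $[W/\sigma:\delta]\geq 2$ (coming from both the $I(\sigma,\delta)$ subrepresentation of $W^{n-1}$ and the top quotient $\delta$) contradicts \cref{Main theorem on generalization} at $(n-1,\cdot)$ via \cref{cor on multiplicities}$[n-1]$.

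For the multiplicity-one assertion, suppose $\delta\in \Delta^{n-1}(\sigma)$ appears in $\soc(V/V^{n-1})$ with multiplicity at least two. Since $\delta$ is an $n$-weight, $[V^{n-1}:\delta]=0$, so $[V:\delta]\geq 2$. Picking two distinct copies in $\soc(V/V^{n-1})$ and applying \cref{reduction} to each yields subrepresentations $V_{\delta,1},V_{\delta,2}\subset V$ with $V_{\delta,j}/V_{\delta,j}^{n-1}\cong\delta$, whose sum has $\delta$ appearing in its cosocle at least twice. Applying the analogous change-of-origin argument at a suitable $\mu_i^{\epsilon}(\sigma)$ along the extension-graph path from $\sigma$ to $\delta$ then produces the contradiction via \cref{Main theorem on generalization}$[(n-1,\cdot)]$. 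The main obstacle I anticipate is the case $k+1\geq m$ of the first phase (and its analog for multiplicity one), where direct induction at $(n,k+1)$ is unavailable; the change of origin must be set up carefully so that the hypotheses of \cref{Main theorem on generalization} at $(n-1,\cdot)$ are verified on the resulting subquotient, tracking the socle, cosocle, and $\m_{K_1}$-torsion structure simultaneously.
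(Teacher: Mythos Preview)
Your handling of the case $k+1<m$ is fine, but the heart of the proposition is the complementary case $k+1\geq m$, and there your sketch has two genuine problems.

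First, the assertion that $I(\sigma,\delta)\subset W^{n-1}$ is unjustified. You construct $W$ as the image of some $\Proj_n\delta\to V_\delta$, and there is no reason this image contains the copy of $I(\sigma,\delta)$ sitting inside $V^{n-1}$. One can repair this by working with $V_\delta$ itself (which does satisfy $V_\delta^{n-1}=V^{n-1}$ by the construction in \cref{reduction}, hence contains $I(\sigma,\delta)$), but then $\cosoc(V_\delta)$ need not be irreducible and you lose the hypotheses needed to invoke the Main Theorem directly.

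Second, and more seriously, your proposed change of origin to the $1$-weight $\mu_i^{\epsilon}(\sigma)$ does not give the torsion control you need. The map $f:V\to\Inj\,\mu_i^{\epsilon}(\sigma)$ does \emph{not} kill $V^1$ (indeed $\mu_i^{\epsilon}(\sigma)\in\JH(V^1)$), so $f(V)$ is only $\m_{K_1}^{n}$-torsion, not $\m_{K_1}^{n-1}$-torsion. Consequently you cannot apply \cref{cor on multiplicities}$[n-1]$ to $f(V)$, and the multiplicity contradiction collapses. The paper avoids this by changing origin to a $2$-weight $\delta_i^{\epsilon_i}(\sigma)$: since $\delta_i^{\epsilon_i}(\sigma)\notin\JH(V^1)$, the map $V\to\Inj\,\delta_i^{\epsilon_i}(\sigma)$ factors through $V/V^1$, which \emph{is} $\m_{K_1}^{n-1}$-torsion, and \cref{cor on multiplicities}$[n-1]$ applies. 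To make this work the paper first pins down $k\in\{n-2,n-3\}$ by inductively analyzing $\soc(V/V^{n-2})$, which guarantees an appropriate $\delta_i^{\epsilon_i}(\sigma)$ lies in $\soc(I(\sigma,\theta)/I(\sigma,\theta)^1)$ and has multiplicity one in $V$.

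Finally, your treatment of the multiplicity-free assertion is far more elaborate than necessary: once $\soc(V/V^{n-1})\subset\Delta^{n-1}(\sigma)$ is known, the embedding of \cref{lemma 3} forces $\soc(V/V^{n-1})\hookrightarrow\bigoplus_{\theta\in\Delta^{n-1}(\sigma)}\theta^{\oplus k_{n-1}}$ with $k_{n-1}=1$, which is already multiplicity free.
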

\begin{proof}

When $n=3$, 
By \cref{soc(V/V^2) for n-3}, $\JH(\soc(V/V^2))\subset  \Delta^2(\sigma)$.\par
For general $n>3$, suppose to the contrary that $\soc(V/V^{n-1})\not\subset \Delta^{n-1}(\sigma)$, then by \cref{lemma 3}, there exists $\theta \subset \soc(V/V^{n-1})$ such that $\theta\in \Delta^k(\sigma)$ for some $k<n-1$, in particular, $\theta$ is not an $n$-weight. Similar to the proof of \cref{reduction}, for each such $\theta$, we can find a subrepresentation $V'$ with $V'/V'^{n-1}=\theta$. It is enough to show that such a representation does not exist. Therefore, we reduce to the case where $V/V^{n-1}\cong\theta$ is irreducible and $\theta\in \Delta^k(\sigma)$ for some $k<n-1$. Let $\theta=\colon F(\mathfrak{t}_{\mu}(\xi))$.\par
By \cref{lemma on socles}, $\soc(V/V^{n-2})=\soc(V^{n-1}/V^{n-2})$; hence by the induction hypothesis $\soc(V/V^{n-2})\subset \Delta^{n-2}(\sigma)$. Pick any $\theta'=F(\mathfrak{t}_{\mu}(\xi'))\subset \soc(V/V^{n-2})$, then $\sum_j \frac{|\xi'_j|}{2}=n-2$. Therefore, $V/V^{n-2}$ contains a quotient $\widetilde{V}$ with $\soc(\widetilde{V})=\theta'$ and $\cosoc(\widetilde{V})=\theta$. As $\widetilde{V}^1\subset V^{n-1}/V^{n-2}$, $\soc(\widetilde{V}/\widetilde{V}^1)\subset \soc(V/V^{n-1})=\theta$, which is therefore an equality. Therefore, by \cref{lemma 3}, $\theta\cong \theta'$ or $\theta\in \Delta^1(\theta')$. We deduce that $k=n-2$ if $\theta\cong \theta'$ and $k= n-1$ or $n-3$ if $\theta\in \Delta^1(\theta')$. As we assume $\theta\notin \Delta^{n-1}(\sigma)$, $k=n-2$ or $n-3$. \par
Now we show that $\theta\hookrightarrow\soc(V/V^{\ell-1})$ for some $\ell\leq n-2$. If $k=n-2$, then $\theta'\cong\theta$ and we are done. If $k=n-3$, $\theta\in \Delta^1(\theta')$, and this is only possible if $\theta\leq\theta'$. The 
subrepresentation in $V^{n-1}$ with cosocle $\theta'$ (as $\theta'\subset \soc(V^{n-1}/V^{n-2})$) is isomorphic to $I(\sigma, \theta')$ by \cref{Main theorem on generalization}$ [(n-1, n-2)]$. By \cref{Main theorem on generalization}$ [(n-1, n-2)]$, $\theta\subset \soc(I(\sigma, \theta')^{n-2}/I(\sigma, \theta')^{n-3})$; hence $\theta\hookrightarrow \soc(V^{n-2}/V^{n-3})$. By \cref{lemma on socles}, $\soc(V/V^{n-3})=\soc(V^{n-2}/V^{n-3})$; therefore, this finishes the proof of the claim.\par
By \cref{Main theorem on generalization}$ [(n-1, n-2)]$, there is a unique subrepresentation $V'$ of $V^{n-1}$ with cosocle $\theta$. Pick a $\delta_i^{\epsilon_i}(\sigma)\subset \soc(V'/V'^1)\subset \soc(V^{n-1}/V^1)=\soc(V/V^1)$. We claim $\theta\not \cong \delta_i^{\epsilon_i}(\sigma)$.
Otherwise, we must have $n=4$. By the discussion above, $\theta'=\delta_i^{\epsilon}(\theta)\in \Delta^2(\sigma)$ for some $\epsilon\in \{\pm\}$. We consider the subrepresentation $V'$ with cosocle $\theta'$ in $V^3$. As $ \mu_i^\epsilon(\theta)-\theta\leq\delta_i^\epsilon(\theta)-\theta$, by applying \cref{Cor on socle filtration}$[2]$ to $V'$ and observing that $V'/V'^2\cong \theta'$, we see that $\mu_i^\epsilon(\theta)\in \JH(V'^2)\subset\JH(V^2)$. On the other hand, $V/V^2$, admits a quotient with socle $\theta'$, which is $(2n-5)$-generic, and cosocle $\theta$, which is isomorphic to $I(\theta',\theta)$ by \cref{Main theorem on generalization}$ [(n-2, 2)]$, call it $\widetilde{V}$. Since $ \mu_i^\epsilon(\theta)-\delta_i^\epsilon(\theta)\leq\theta-\delta_i^\epsilon(\theta)$, by applying \cref{Cor on socle filtration} [2] to $\widetilde{V}$ and observing that $\widetilde{V}/\widetilde{V}^1\cong \theta$, we deduce that $\widetilde{V}^1$ contains $\mu_i^\epsilon(\theta)$ as a subquotient. In particular, $\mu_i^\epsilon(\theta)\in \JH(V^3/V^2)$. Then 
\begin{equation*}
    \begin{split}
        [V^3\colon \mu_i^\epsilon(\theta)]&=[V^2\colon \mu_i^\epsilon(\theta)]+[V^3/V^2\colon \mu_i^\epsilon(\theta)]\\
        &=2.
    \end{split}
\end{equation*}
However, $V^3$ is multiplicity free by \cref{cor on multiplicities}$[3]$ (we assume $n=4$ here), which is a contradiction.\par
Therefore, $\delta_i^{\epsilon_i}(\sigma)\not \cong \theta$. Furthermore, $V^{n-1}$ is multiplicity free by \cref{cor on multiplicities}$ [n-1]$. Therefore $[V\colon \delta_i^{\epsilon_i}(\sigma)]=1$. Similar to the proof in \cref{soc(V/V^2) for n-3}, we have a (unique up to scalar) nonzero map $f\colon V\twoheadrightarrow V/V^1 \to \Inj_{n-1} \delta_i^{\epsilon_i}(\sigma)$. Then we claim that $[f(V)\colon \theta]=2$. Assume $[f(V)\colon \theta]\leq 1$, then $[\ker(f)\colon \theta]\geq 1$. As $f$ is nonzero, $\ker(f)\subset \Rad(f)=V^{n-1}$, which is $\m^{n-1}$-torsion. Then by \cref{Main theorem on generalization}$ [(n-1,k+1)]$, $\ker(f)$ contains a subrepresentation isomorphic to $I(\sigma, \theta)$. Moreover, by \cref{Cor on socle filtration}$ [k+1]$, as $\delta_i^\epsilon(\sigma)\leq \theta$, $\delta_i^{\epsilon_i}(\sigma)$ is a subquotient of $I(\sigma, \theta)\subset \ker(f)$. However, this contradicts $f$ being nonzero, as $[V\colon \delta_i^{\epsilon_i}(\sigma)]=1$. Therefore, $[f(V)\colon \theta]=2$.
On the other hand, $\soc(f(V))=\delta_i^{\epsilon_i}(\sigma)$, which is $(2n-3)$-generic, and $[f(V)\colon \delta_i^{\epsilon_i}(\sigma)]\leq[V\colon \delta_i^{\epsilon_i}(\sigma)]= [V^{n-1}\colon \delta_i^{\epsilon_i}(\sigma)]=1$. Therefore, applying \cref{cor on multiplicities}$ [n-1]$ to $f(V)$, we obtain $[f(V)\colon \theta]\leq [f(V)\colon \delta_i^{\epsilon_i}(\sigma)]=1$, which is a contradiction. This finishes the proof.\par
The statement on multiplicity free follows from the first assertion and \cref{lemma 3} that $k_n=1$.
\end{proof}

\begin{proposition}\label{theorem for m<n}
   Fix a pair $(n,m)$ with $m<n$. Assume that \cref{Main theorem on generalization} holds for all pairs $(n',m')<(n,m)$. Then the theorem holds for $(n,m)$.
\end{proposition}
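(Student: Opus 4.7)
The plan is to reduce to strictly smaller pairs in the lexicographic induction. More precisely, the goal is to show that $V$ is already $\mathfrak{m}_{K_1}^{n-1}$-torsion, i.e.\ $V = V^{n-1}$; once this is established, $V$ becomes a subrepresentation of $\Inj_{n-1}\sigma$ satisfying exactly the hypotheses of \cref{Main theorem on generalization} with the strictly smaller pair $(n-1,m) < (n,m)$, so the induction hypothesis delivers the full statement (multiplicity freeness, $\mathfrak{m}_{K_1}^m$-torsion, uniqueness, and the graded pieces of the socle filtration).

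The crux is therefore to prove $V/V^{n-1} = 0$, which I would do by contradiction. Assume $V/V^{n-1} \neq 0$. By \cref{soc(V/Vn-1)}, the socle $\soc(V/V^{n-1})$ is a multiplicity-free sum of $n$-weights lying in $\Delta^{n-1}(\sigma)$; choose such a $\theta$. By \cref{reduction}, there is a subrepresentation $V' \subseteq V$ with $V'/V'^{n-1} \cong I(\theta,\tau)$, and by construction $V'^{n-1} = V^{n-1} \cap V' $ has the same socle $\sigma$ as $V$ with $[V':\sigma] = 1$. Moreover, the composition $V' \twoheadrightarrow V'/V'^{n-1} = I(\theta,\tau) \twoheadrightarrow \tau$ shows that $\tau$ is the cosocle of $V'$, so $V'$ satisfies exactly the hypotheses of the theorem for the pair $(n,m)$ but with the top layer $V'/V'^{n-1}$ still nonzero.

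The contradiction will come from bounding $[V':\tau]$ in two incompatible ways. On the one hand, since $\tau$ is an $m$-weight with $m < n$ and $(m,m) < (n,m)$, the induction hypothesis gives a well-defined $I(\sigma,\tau)$, and \cref{lemma on k-weight in Vk} embeds $I(\sigma,\tau) \hookrightarrow V'^m \subseteq V'^{n-1}$, contributing at least one copy of $\tau$ to $V'^{n-1}$; combined with the additional $\tau$ appearing as $\cosoc(I(\theta,\tau)) = \cosoc(V'/V'^{n-1})$, we obtain $[V':\tau] \geq 2$. On the other hand, exploiting that $V'^{n-1}$ is an $\mathfrak{m}_{K_1}^{n-1}$-torsion subrepresentation of $\Inj_{n-1}\sigma$ with socle $\sigma$ and $[V'^{n-1}:\sigma]=1$, the induction hypothesis at pair $(n-1,\,\cdot\,)$ plus \cref{cor on multiplicities} forces $[V'^{n-1}:\tau] \leq 1$; together with a careful argument (comparing the cosocle copy of $\tau$ with the copy living inside $I(\sigma,\tau) \subseteq V'^m$, and using that the two are distinct because the cosocle copy cannot lie in $V'^{n-1}$ when $V'/V'^{n-1} \neq 0$) one derives the desired contradiction.

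The main obstacle I anticipate is precisely this last comparison: one must avoid circularly invoking \cref{cor on multiplicities} or \cref{Main theorem on generalization} at the pair $(n,m)$ itself. The workaround is to apply the corollary and theorem only to the $\mathfrak{m}_{K_1}^{n-1}$-torsion piece $V'^{n-1}$ (to which the induction hypothesis legitimately applies), together with the Breuil–Paškūnas description of the $\Gamma$-representation $I(\theta,\tau)$ for the quotient piece, and then patch the information across the short exact sequence $0 \to V'^{n-1} \to V' \to I(\theta,\tau) \to 0$ using \cref{lemma on socles} and \cref{lemma on k-weight in Vk}.
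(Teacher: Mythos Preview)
Your reduction step is correct and matches the paper: it suffices to show $V/V^{n-1}=0$, and by \cref{reduction} and \cref{soc(V/Vn-1)} one may assume $V'/V'^{n-1}\cong I(\theta,\tau)$ with $\theta\in\Delta^{n-1}(\sigma)$. However, your contradiction argument has two genuine gaps.

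First, the invocation of \cref{lemma on k-weight in Vk} with $\sigma'=\tau$ is circular. That lemma produces a subrepresentation $W\subseteq V'$ with cosocle $\tau$ and then applies the theorem to $W$ to conclude $W\cong I(\sigma,\tau)$. But $W$ is an $\mathfrak{m}_{K_1}^n$-torsion representation with cosocle an $m$-weight, so this step needs the theorem at the pair $(n,m)$ itself, which is exactly what you are proving. (In the paper this lemma is only invoked in the $m=n$ section, where the induction hypothesis does supply all pairs $(n,k)$ with $k<n$.) Without this, you have no way of knowing that $\tau\in\JH(V'^{n-1})$ at all: a priori the only copy of $\tau$ you can see is the one in the cosocle of $V'/V'^{n-1}$.

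Second, even granting $I(\sigma,\tau)\subseteq V'^{n-1}$, your two bounds are \emph{compatible}, not contradictory. You would have $[V'^{n-1}:\tau]\le 1$ from \cref{cor on multiplicities}$[n-1]$, and $[V'/V'^{n-1}:\tau]=[I(\theta,\tau):\tau]=1$; together these give $[V':\tau]\le 2$, which is perfectly consistent with $[V':\tau]\ge 2$. No upper bound on $[V':\tau]$ strictly below $2$ is available without the theorem at $(n,m)$.

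The paper's proof avoids multiplicities of $\tau$ entirely. After reducing to $V/V^{n-1}\cong I(\theta,\tau)$, it analyses $\soc(V/V^1)$: using \cref{description of soc(V/V1)} and a further multiplicity argument involving $\mu_i^{\epsilon_i}(\sigma)$, it shows $\soc(V/V^1)\hookrightarrow\bigoplus_{|\omega_i|>1}\delta_i^{\epsilon_i}(\sigma)$. This yields an embedding of $V/V^1$ into $\bigoplus_{|\omega_i|>1}\Inj_{n-1}\delta_i^{\epsilon_i}(\sigma)$, and the induction hypothesis at $(n-1,m-1)$ identifies each projection as $I(\delta_i^{\epsilon_i}(\sigma),\tau)$, which is $\mathfrak{m}_{K_1}^{m-1}$-torsion. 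Hence $V$ is $\mathfrak{m}_{K_1}^m$-torsion, forcing $V/V^{n-1}=0$ since $m<n$. The missing idea in your proposal is this passage through $\soc(V/V^1)$ and the $2$-weights $\delta_i^{\epsilon_i}(\sigma)$.
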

\begin{proof}
 It is enough to show that $V/V^{n-1}=0$ and then by the induction hypothesis, i.e.\ , \cref{Main theorem on generalization}$ [(n-1,m)]$, we can conclude the result. Assume for the sake of contradiction that $V/V^{n-1}\neq 0$, then by \cref{reduction}, it suffices to disprove the case where $V/V^{n-1}\cong I(\theta, \tau)$. By \cref{soc(V/Vn-1)}, $\theta\in \Delta^{n-1}(\sigma)$. Let $\theta\eqqcolon F(\mathfrak{t}_\mu(\xi))$. As $\tau\in \JH(\Inj_1 \theta)$ and $\xi_j\in 2\Z$ for all $j$, we have $|\xi_j|>1\iff \omega_j\neq0$ and $\sgn(\xi_j)=\sgn(\omega_j)\eqqcolon\epsilon_j$ in this case.\par
Furthermore, as $\soc(V/V^{n-1})$ is an $n$-weight, we can apply
\cref{description of soc(V/V1)} and deduce that $$\soc(V/V^1)\hookrightarrow\bigoplus_{|\xi_i|>1}\delta_i^{\sgn(\xi_i)}(\sigma)\cong\bigoplus_{\omega_i\neq0}\delta_i^{\sgn(\omega_i)}(\sigma).$$
We will show that if $|\omega_i|=1$, then $\delta_i^{\epsilon_i}(\sigma)\not \hookrightarrow \soc(V/V^1)$. Assume for the sake of contradiction that there exists an $i$ with $|\omega_i|=1$ and $\delta_i^{\epsilon_i}(\sigma)\hookrightarrow \soc(V/V^1)$. Then $V/V^1$ admits a subquotient with socle $\delta_i^{\epsilon_i}(\sigma)$, which is $(2n-3)$-generic, and cosocle $\theta$, which is isomorphic to $I(\delta_i^{\epsilon_i}(\sigma), \theta)$ by \cref{Main theorem on generalization}$ [(n-1,n-1)]$. Then as $|\omega_i|=1$, $ \mu_i^{\epsilon_i}(\sigma)-\delta_i^{\epsilon_i}(\sigma)\leq \tau-\delta_i^{\epsilon_i}(\sigma)$. By \cref{Cor on socle filtration}$ [n-1]$, we deduce that $\mu_i^{\epsilon_i}(\sigma)\in \JH(I(\delta_i^{\epsilon_i}(\sigma), \theta))$. Since $I(\delta_i^{\epsilon_i}(\sigma), \theta)/(I(\delta_i^{\epsilon_i}(\sigma), \theta))^{n-2}=\theta$; therefore $\mu_i^{\epsilon_i}(\sigma)\in \JH((I(\delta_i^{\epsilon_i}(\sigma), \theta))^{n-2})\subset\JH(V^{n-1}/V^1)$. On the other hand, $V^2$ admits a subrepresentation with cosocle $\delta_i^{\epsilon_i}(\sigma)$, which is isomorphic to $I(\sigma, \delta_i^{\epsilon_i}(\sigma))$ by \cref{Main theorem on generalization} $[(2,2)]$. By \cref{Cor on socle filtration}$[2]$ (\textit{cf.} \cite[Corollary~2.28]{hu2022mod}), we deduce that $\mu_i^{\epsilon_i}(\sigma)\in \JH((I(\sigma, \delta_i^{\epsilon_i}(\sigma)))^1)\subset \JH(V^1)$. Therefore, $[V^{n-1}\colon \mu_i^{\epsilon_i}(\sigma)]=[V^{n-1}/V^1\colon \mu_i^{\epsilon_i}(\sigma)]+[V^1\colon \mu_i^{\epsilon_i}(\sigma)]\geq 2$. By \cref{cor on multiplicities}$[n-1]$, $V^{n-1}$ is multiplicity free, a contradiction. Therefore, $\soc(V/V^1)\hookrightarrow\bigoplus_{|\omega_i|>1}\delta_i^{\sgn(\omega_i)}(\sigma)$. As a result, we have an induced map $g\colon V/V^1\hookrightarrow \bigoplus_{|\omega_i|>1} \Inj_{n-1}\delta_i^{\epsilon_i}(\sigma)$.\par
 By \cref{soc(V/Vn-1)}, we know that then $\theta\in \Delta^{n-1}(\sigma)$. Since $n\geq 3$, by \cref{Inj1} $\delta_i^{\epsilon_i}(\sigma)\notin \JH(\Inj_1 \theta)\supset\JH(V/V^{n-1})$ for all $(i, \epsilon_i)$. By \cref{cor on multiplicities}$[n-1]$, $V^{n-1}$ is multiplicity free. Therefore, $[V\colon \delta_i^{\epsilon_i}]=1$. Therefore, the projection of the image of $g$ to each $\Inj_{n-1}\delta_i^{\epsilon_i}(\sigma)$ is $I(\delta_i^{\epsilon_i}(\sigma), \tau)$ or $0$, by \cref{Main theorem on generalization}$ [(n-1, m-1)]$, noting that $\delta_i^{\epsilon_i}(\sigma)$ is $(2n-3)$-generic. Therefore, $g$ factors through $ V/V^1\hookrightarrow \bigoplus_{|\omega_i|>1} I(\delta_i^{\epsilon_i}(\sigma), \tau)$. As $\sum_j \lfloor\frac{|\omega_j|}{2}\rfloor=m$, and $\epsilon_i=\sgn(\omega_i)$ for $\omega_i=0$, $\sum_j \lfloor\frac{|\omega_j-{\epsilon_i}2\delta_{ij}|}{2}\rfloor=m-1$. Therefore, each $I(\delta_i^{\epsilon_i}(\sigma), \tau)$ is $\m_{K_1}^{m-1}$-torsion, so is $V/V^1$. It follows that $V$ is $m$-torsion, and hence $V/V^{n-1}=0$, a contradiction.
 \end{proof}
It remains to prove by induction for the case where $m=n$.
From now on, we write $\epsilon_i$ for $\sgn (\omega_i)$ when $\omega_i\neq 0$ and $n_i$ for $\lfloor\frac{|\omega_i|}{2}\rfloor$. 
\begin{proposition}\label{exact soc(V/V^1) II}
Assume that \cref{Main theorem on generalization} holds for all pairs $(m',n')<(n,n)$. Suppose $V$ as in \cref{Main theorem on generalization} with $m=n$. Then
$$\soc(V/V^1)\cong\bigoplus_{|\omega_i|>1}\delta_i^{\sgn(\omega_i)}(\sigma).$$
\end{proposition}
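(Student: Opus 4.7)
The inclusion $\soc(V/V^{1})\hookrightarrow\bigoplus_{|\omega_i|>1}\delta_i^{\sgn(\omega_i)}(\sigma)$ is already supplied by \cref{description of soc(V/V1)}, applied with $\theta=\widetilde{\tau}$, which is the irreducible socle of $V/V^{n-1}$ in the $m=n$ case by \cref{n=m irred soc(V/Vn-1)}. What remains is to verify that every summand $\delta_{i_0}^{\sgn(\omega_{i_0})}(\sigma)$ with $|\omega_{i_0}|>1$ actually appears in $\soc(V/V^{1})$. My plan is first to reduce this to the membership $\delta_{i_0}^{\sgn(\omega_{i_0})}(\sigma)\in\JH(V)$, and then to establish that membership via the extension $V^{n-1}\hookrightarrow V$.

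The reduction is direct: assuming $\delta_{i_0}^{\sgn(\omega_{i_0})}(\sigma)\in\JH(V)$, \cref{lemma 10} produces a subrepresentation $V'\subseteq V$ with cosocle $\delta_{i_0}^{\sgn(\omega_{i_0})}(\sigma)$ and $[V':\sigma]=1$. Since this weight is a $2$-weight, \cref{lemma on k-weight in Vk} (using the known case $(n',m')=(2,2)$) forces $V'\subseteq V^{2}$ and identifies $V'\cong I(\sigma,\delta_{i_0}^{\sgn(\omega_{i_0})}(\sigma))$, whose socle filtration is $\sigma\mbox{---}\mu_{i_0}^{\sgn(\omega_{i_0})}(\sigma)\mbox{---}\delta_{i_0}^{\sgn(\omega_{i_0})}(\sigma)$. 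Therefore $V'/V'^{1}\cong\delta_{i_0}^{\sgn(\omega_{i_0})}(\sigma)$ embeds as a subrepresentation of $V^{2}/V^{1}$, and the identification $\soc(V^{2}/V^{1})=\soc(V/V^{1})$ of \cref{lemma on socles} forces $\delta_{i_0}^{\sgn(\omega_{i_0})}(\sigma)\subseteq\soc(V/V^{1})$.

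For the main step, it suffices to exhibit a Serre weight $\kappa\in\cosoc(V^{n-1})$ with $\delta_{i_0}^{\sgn(\omega_{i_0})}(\sigma)\leq\kappa\leq\tau$. Given such a $\kappa$, \cref{lemma 10} supplies a subrepresentation $V_\kappa\subseteq V^{n-1}$ with cosocle $\kappa$ and $[V_\kappa:\sigma]=1$, and the induction hypothesis (\cref{Main theorem on generalization} for pairs $(n',m')<(n,n)$) gives $V_\kappa\cong I(\sigma,\kappa)$; then \cref{Cor on socle filtration} produces $\delta_{i_0}^{\sgn(\omega_{i_0})}(\sigma)\in\JH(V_\kappa)\subseteq\JH(V)$, as needed. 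To produce $\kappa$, I would analyze the short exact sequence $0\to V^{n-1}\to V\to V/V^{n-1}\to 0$. It is non-split (otherwise $\cosoc(V)$ would include $\cosoc(V^{n-1})\neq 0$, contradicting the irreducibility of $\tau$), so the socle $\widetilde{\tau}$ of $V/V^{n-1}$ attaches non-trivially to at least one Serre weight in $\cosoc(V^{n-1})$. By \cref{change of origin} combined with \cref{Inj1}, the possible attachment weights are precisely the neighbors of $\widetilde{\tau}$ in the extension graph, and the rectangle constraint $\kappa\leq\tau$ restricts these to shifts $F(\mathfrak{t}_\mu(\widetilde{\omega}\pm\sgn(\omega_j)\overline{\eta}_j))$ at coordinates $j$ with $|\omega_j|\geq 2$ (decrement) or $\omega_j$ odd (increment). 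Since $|\widetilde{\omega}_{i_0}|=2\lfloor|\omega_{i_0}|/2\rfloor\geq 2$, any such neighbor at a coordinate $j\neq i_0$ automatically has $|\kappa_{i_0}|=|\widetilde{\omega}_{i_0}|\geq 2$, hence $\delta_{i_0}^{\sgn(\omega_{i_0})}(\sigma)\leq\kappa$.

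The hard part will be ruling out the degenerate possibility in which every neighbor of $\widetilde{\tau}$ realized in $\cosoc(V^{n-1})$ occurs only at $j=i_0$ (so $|\kappa_{i_0}|$ could be as small as $1$ and the argument would fail). To exclude this, my plan is to use \cref{cor on multiplicities} applied to $V^{n-1}$ (so that it is multiplicity-free, and several distinct cosocle weights can be tracked simultaneously), and if necessary to iterate the extension analysis through intermediate quotients $V/V^{k}$ for $k<n-1$, invoking the induction hypothesis on the $\m_{K_{1}}^{n-k}$-torsion subrepresentations obtained there. This $\Ext^{1}$-bookkeeping, together with the $(2n-1)$-genericity of $\sigma$ which guarantees that the relevant $\Ext^{1}$ groups in the extension graph are one-dimensional, is where the bulk of the work lies.
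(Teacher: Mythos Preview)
Your reduction step is sound: once $\delta_{i_0}^{\sgn(\omega_{i_0})}(\sigma)\in\JH(V)$ is known, the passage through the $(2,2)$ case of \cref{Main theorem on generalization} and \cref{lemma on socles} correctly forces it into $\soc(V/V^{1})$. (A minor point: you invoke \cref{lemma 10}, which is stated for $\Inj_n\sigma$; what you actually need is the elementary fact that any Jordan--H\"older factor of $V$ arises as the cosocle of some subrepresentation, together with $[V:\sigma]=1$.)

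The genuine gap is in your main step. Your strategy is to locate $\kappa\in\cosoc(V^{n-1})$ with $\delta_{i_0}^{\sgn(\omega_{i_0})}(\sigma)\leq\kappa$ by analysing the extension $0\to V^{n-1}\to V\to V/V^{n-1}\to 0$. But this extension only tells you that \emph{some} neighbour of $\widetilde{\tau}$ appears in $\cosoc(V^{n-1})$; it gives no control over \emph{which} neighbour. In the degenerate case you flag---say $|\widetilde{\omega}_{i_0}|=2$ and the only realised neighbour decrements at $i_0$---the resulting $\kappa$ has $|\kappa_{i_0}|=1$, and $\delta_{i_0}^{\sgn(\omega_{i_0})}(\sigma)\leq\kappa$ fails. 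Your proposed fix (iterating through $V/V^{k}$ with $\Ext^{1}$ bookkeeping) is not a concrete argument, and it is unclear how it would break the same degeneracy at each stage.

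The paper sidesteps this entirely by a bootstrapping trick. After reducing to $V/V^{n-1}$ irreducible, the point is that $\soc(V/V^{1})$ is already nonempty, so it contains $\delta_{j}^{\epsilon_{j}}(\sigma)$ for \emph{some} $j$. If only one index has $|\omega_{i}|>1$ this finishes; otherwise, assuming for contradiction that $\delta_{i}^{\epsilon_{i}}(\sigma)\not\subset\soc(V/V^{1})$, choose $j\neq i$ with $\delta_{j}^{\epsilon_{j}}(\sigma)\subset\soc(V/V^{1})$. Passing to a quotient of $V/V^{1}$ with socle $\delta_{j}^{\epsilon_{j}}(\sigma)$ and invoking the induction hypothesis at $(n-1,n-1)$ produces the cross-term $F(\mathfrak{t}_{\mu}(2\epsilon_{i}\overline{\eta}_{i}+2\epsilon_{j}\overline{\eta}_{j}))$ inside $\JH(V^{n-1})$. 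The subrepresentation of $V^{n-1}$ with this cosocle is, by induction at $(n-1,3)$, isomorphic to $I(\sigma,F(\mathfrak{t}_{\mu}(2\epsilon_{i}\overline{\eta}_{i}+2\epsilon_{j}\overline{\eta}_{j})))$, and \emph{this} representation visibly has $\delta_{i}^{\epsilon_{i}}(\sigma)$ in its second $\m_{K_1}$-layer, landing in $\soc(V/V^{1})$---contradiction. (For $n=3$ the cross-term would be an $n$-weight, so the paper handles that case by a separate multiplicity argument.) The idea you are missing is: use the $\delta_{j}$ you already have to manufacture a weight large in the $i$-coordinate, rather than trying to read one off from $\cosoc(V^{n-1})$.
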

\begin{proof}
As $n=m$, by \cref{n=m irred soc(V/Vn-1)}, we have $\soc(V/V^{n-1})=\widetilde{\tau}$, an $n$-weight. By \cref{description of soc(V/V1)}, we have $\soc(V/V^1)\hookrightarrow\oplus_{|\omega_i|>1}\delta_i^{\sgn(\omega_i)}(\sigma)$. We will now prove that we have an injection in the other direction.
Let $\pi\colon V\to V/V^{n-1}$ be the projection map, then $V' \colonequals \pi^{-1}(\widetilde{\tau})$ is a subrepresentation of $V$. As $V'/V'^1\subset V/V^1$, it suffices to show that $\delta_i^{\sgn(\omega_i)}(\sigma)\hookrightarrow\soc(V'/V'^1)$ for all $i$ with $|\omega_i|>1$. Therefore, we can assume without loss of generality that $V/V^{n-1}\cong\tau$.\par
If we have a unique $i$ with $\lfloor\frac{|\omega_i|}{2}\rfloor=n$, then as $\soc(V/V^1)\neq \varnothing$, we must have $\soc(V/V^1)=\delta_i^{\epsilon_i}(\sigma)$. Assume that there exist $i\neq j$, with $|\omega_i|, |\omega_j|>1$. Assume for the sake of contradiction that there exists a $i$ such that $|\omega_i|>1$, but $\delta_i^{\epsilon_i}(\sigma) \not\subset \soc(V/V^1)$.
When $n=3$, then $V/V^2\cong F(\mathfrak{t}_{\mu}( \epsilon_i 2\overline{\eta}_i+\epsilon_j 2\overline{\eta}_j))$ for some $i\neq j$.
By \cref{description of soc(V/V1)}, we know that $\soc(V/V^1)\hookrightarrow \delta_i^{\epsilon_i}(\sigma)\oplus\delta_j^{\epsilon_j}(\sigma)$. Assume for the sake of contradiction that $\soc(V/V^1)\cong\delta_i^{\epsilon_i}(\sigma)$ or $\delta_j^{\epsilon_j}(\sigma)$. Without loss of generality, assume $\soc(V/V^1)=\delta_i^{\epsilon_i}(\sigma)$. Then as $V/V^1$ is a $\m_{K_1}^2$-torsion representation with socle $\delta_i^{\epsilon_i}(\sigma)$, cosocle $\tau$, with $[V/V^1\colon \delta_i^{\epsilon_i}(\sigma)]=[V^2/V^1\colon \delta_i^{\epsilon_i}(\sigma)]=1$, as $V^2$ is multiplicity free by \cref{cor on multiplicities}$[2]$. Therefore, applying \cref{Main theorem on generalization}$[(2,2)]$ to $V/V^1$, we can conclude that $V/V^1\cong I(\delta_i^{\epsilon_i}(\sigma), \tau)$. In particular, $V/V^1$ has socle filtration $$\delta_i^{\epsilon_i}(\sigma) \mbox{---}F(\mathfrak{t}_{\mu}( \epsilon_i 2\overline{\eta}_i+\epsilon_j \overline{\eta}_j)) \mbox{---} F(\mathfrak{t}_{\mu}( \epsilon_i 2\overline{\eta}_i+\epsilon_j 2\overline{\eta}_j)).$$ Therefore, we deduce that $\cosoc(V^2)=\cosoc(V^2/V^1)\cong F(\mathfrak{t}_{\mu}( \epsilon_i 2\overline{\eta}_i+\epsilon_j \overline{\eta}_j))$. Therefore, by \cref{Main theorem on generalization}$ [(2,2)]$, we have $$V^2\cong I(\sigma, F(\mathfrak{t}_{\mu}( \epsilon_i 2\overline{\eta}_i+\epsilon_j \overline{\eta}_j))).$$
In particular, as $\epsilon_i\overline{\eta}_i+\epsilon_j\overline{\eta}_j\leq \epsilon_i 2\overline{\eta}_i+\epsilon_j\overline{\eta}_j$ for all $k$, by \cref{Cor on socle filtration}$[2]$ on $V^2$, we have $\mu_j^{\epsilon_j}(\sigma)=F(\mathfrak{t}_{\mu}( \epsilon_j\overline{\eta}_j))\in \JH(V)$. Therefore, we can find a quotient $\widetilde{V}$ of $V$ with socle $\mu_j^{\epsilon_j}(\sigma)$. As $\tau$ is a $2$-weight with respect to $\mu_j^{\epsilon_j}(\sigma)$, by \cref{theorem for m<n}, $\widetilde{V}\cong I(\mu_j^{\epsilon_j}(\sigma),\tau)$. As $ \epsilon_j2\overline{\eta}_j-\epsilon_j\overline{\eta}_j\leq \epsilon_i2\overline{\eta}_i+\epsilon_j2\overline{\eta}_j-\epsilon_j\overline{\eta}_j$ for all $k$, by \cref{Cor on socle filtration}$[2]$, $\delta_j^{\epsilon_j}(\sigma)\in \JH(I(\mu_j^{\epsilon_j}(\sigma),\tau))\subset \JH(V)$, a contradiction.\par
Now assume $n>3$. As $\soc(V/V^1)\neq 0$, There exists a $(j, \epsilon_j)$ with $j\neq i$ such that $\delta_j^{\epsilon_j}(\sigma) \subset \soc(V/V^1)$. We can find a quotient of $V/V^1$ with socle $\delta_j^{\epsilon_j}(\sigma)$, which is $(2n-3)$-generic, and we call it $W_{\delta_j}$.
Since $\epsilon_j 2\overline{\eta}_j, \epsilon_i 2\overline{\eta}_i\leq \omega$, $\epsilon_j 2\overline{\eta}_j+\epsilon_i 2\overline{\eta}_i\leq \omega$. Hence, by \cref{Cor on socle filtration}$[n-1]$, $F(\mathfrak{t}_{\mu}(\epsilon_j 2\overline{\eta}_j+\epsilon_i 2\overline{\eta}_i))$ is a subquotient of $I(\delta_j^{\epsilon_j}(\sigma), \tau)$. As $V/V^{n-1}\cong\tau$, $F(\mathfrak{t}_{\mu}(\epsilon_j 2\overline{\eta}_j+\epsilon_i 2\overline{\eta}_i))$ is a subquotient of $V^{n-1}$. Then $V^{n-1}$ admits a subrepresentation $V'$ with cosocle $F(\mathfrak{t}_{\mu}(\epsilon_j 2\overline{\eta}_j+\epsilon_i 2\overline{\eta}_i))$. By \cref{Main theorem on generalization}$[(n-1,3)]$, $V'\cong I(\sigma,F(\mathfrak{t}_{\mu}(\epsilon_j 2\overline{\eta}_j+\epsilon_i 2\overline{\eta}_i))) $. Again, as $\delta_j^{\epsilon_j}(\sigma)\leq F(\mathfrak{t}_{\mu}(\epsilon_j 2\overline{\eta}_j+\epsilon_i 2\overline{\eta}_i))$ \cref{Cor on socle filtration}$[3]$ implies that $\delta_j^{\epsilon_j}(\sigma)\subset \soc(V'/V'^1)\subset (V/V^1)$, a contradiction. 
\end{proof}
\begin{proposition}\label{JH factors when m=n} 
Assume that \cref{Main theorem on generalization} holds for all pairs $<(n,n)$. Suppose $V$ as in \cref{Main theorem on generalization} with $m=n$. Then the Jordan--H\"{o}lder factors of $V$ are exactly as described in \cref{Main theorem on generalization} up to multiplicity. In other words, $F(\mathfrak{t}_{\mu}(\omega'))\in \JH(V)\iff \omega'\leq \omega$.
 \end{proposition}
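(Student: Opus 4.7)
The plan is to show $\JH(V) = \{F(\mathfrak{t}_\mu(\omega')) : \omega' \leq \omega\}$ (as sets of irreducibles) by decomposing $\JH(V) = \JH(V^1) \sqcup \JH(V/V^1)$ and determining each piece via the induction hypothesis, following the outline in the remark after \cref{Main theorem on generalization}.

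For the quotient $V/V^1$, I would invoke \cref{exact soc(V/V^1) II} to identify $\soc(V/V^1) = \bigoplus_{|\omega_i|>1}\delta_i^{\epsilon_i}(\sigma)$ where $\epsilon_i := \sgn(\omega_i)$, and then embed $V/V^1 \hookrightarrow \bigoplus_i \Inj_{n-1}\delta_i^{\epsilon_i}(\sigma)$. Let $C_i$ denote the image in the $i$-th summand. Since $\cosoc(V/V^1) = \tau$ is irreducible, any nonzero $C_i$ has $\cosoc(C_i) = \tau$, and $[C_i : \delta_i^{\epsilon_i}(\sigma)] \leq [V^{n-1} : \delta_i^{\epsilon_i}(\sigma)] = 1$ by \cref{cor on multiplicities}$[n-1]$. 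As $\tau$ is an $(n-1)$-weight with respect to $\delta_i^{\epsilon_i}(\sigma)$, the induction hypothesis $[(n-1,n-1)]$ identifies $C_i \cong I(\delta_i^{\epsilon_i}(\sigma), \tau)$. Then \cref{remark of theorem}(iii) yields $\JH(C_i) = \{F(\mathfrak{t}_\mu(\omega')) : 2\epsilon_i\overline{\eta}_i \leq \omega' \leq \omega\}$, and taking the union over $i$ recovers $\JH(V/V^1) = \{F(\mathfrak{t}_\mu(\omega')) : \omega' \leq \omega,\ |\omega'_k| \geq 2 \text{ for some } k\}$.

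For the containment $\supseteq$ on $V^1$: given a 1-weight $\kappa = F(\mathfrak{t}_\mu(\omega'))$ with $\omega' \leq \omega$, I would construct a 2-weight $\tau' = F(\mathfrak{t}_\mu(\omega^*)) \leq \tau$ satisfying $\kappa \leq \tau'$. Since $n \geq 3$, there exists an index $j$ with $|\omega_j| \geq 2$; set $\omega^*_j := 2\sgn(\omega_j)$ and $\omega^*_k := \omega'_k$ for $k \neq j$, so that $\sum_k \lfloor |\omega^*_k|/2 \rfloor = 1$ and $\omega' \leq \omega^* \leq \omega$. By the previous step, $\tau' \in \JH(V/V^1) \subset \JH(V)$; by \cref{lemma on k-weight in Vk}, $I(\sigma, \tau') \hookrightarrow V^2$; by \cref{Cor on socle filtration}$[2]$, $\kappa \in \JH(I(\sigma, \tau'))$ since $\kappa \leq \tau'$. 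Hence $\kappa \in \JH(V^2) \subset \JH(V)$.

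The main obstacle is the reverse containment $\JH(V^1) \subseteq \{\text{1-weights } \leq \tau\}$: ruling out any 1-weight $\kappa = F(\mathfrak{t}_\mu(\omega'))$ with $\omega' \not\leq \omega$ from appearing as a Jordan--Hölder factor of $V$. The plan is to apply \cref{lemma 10} to produce a subrepresentation $V_\kappa \subset V$ with $\cosoc(V_\kappa) = \kappa$ and $[V_\kappa : \sigma] = 1$, identify it with $I(\sigma, \kappa)$ via the base case $[(1,1)]$ so that $V_\kappa \subset V^1 \subset \Inj_1 \sigma$, and then exploit the interplay of $V_\kappa$ with the structure of $V/V^1$ from the first step, together with the multiplicity-one statement for $V^{n-1}$ (\cref{cor on multiplicities}$[n-1]$), to force the compatibility $\omega' \leq \omega$ by producing a contradictory multiplicity or an incompatible 2-weight Jordan-Hölder factor in $V/V^1$ whenever $\omega' \not\leq \omega$.
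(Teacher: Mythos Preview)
Your treatment of $\JH(V/V^1)$ and the ``$\supseteq$'' direction is essentially the paper's approach; the only small oversight is that to conclude $[C_i:\delta_i^{\epsilon_i}(\sigma)]\le 1$ you need $[V:\delta_i^{\epsilon_i}(\sigma)]=1$, not just $[V^{n-1}:\delta_i^{\epsilon_i}(\sigma)]=1$, and for this one must observe that $\delta_i^{\epsilon_i}(\sigma)\notin\JH(\Inj_1\widetilde\tau)\supseteq\JH(V/V^{n-1})$ (it is a $2$-weight while $n\ge 3$).

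The genuine gap is in the reverse containment for $\JH(V^1)$. Your plan to ``produce a contradictory multiplicity or an incompatible $2$-weight factor in $V/V^1$'' does not work as stated, and the paper's argument is different in kind: it is an $\Ext$-vanishing computation followed by a long-exact-sequence step. Concretely, set $\sigma_+:=F(\mathfrak{t}_\mu(\sum_{\omega_j\neq 0}\epsilon_j\overline\eta_j))$, so that $\JH(I(\sigma,\sigma_+))$ is exactly the set of $1$-weights $\le\tau$. For any $1$-weight $\sigma'\notin\JH(I(\sigma,\sigma_+))$ and any $\tau'\in\JH(V/V^1)$ one checks directly (using that $\tau'\le\tau$ is a $k$-weight with $k\ge 2$, hence cannot be adjacent in the extension graph to a $1$-weight $\sigma'\not\le\tau$) that $\Ext^1_{K/Z_1}(\tau',\sigma')=0$. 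By d\'evissage, $\Ext^1_{K/Z_1}(V/V^1,\sigma')=0$, so the long exact sequence for $0\to V^1\to V\to V/V^1\to 0$ gives $\Hom(V^1,\sigma')=\Hom(V,\sigma')=0$ since $\cosoc(V)=\tau$. This shows every constituent of $\cosoc(V^1)$ lies in $\JH(I(\sigma,\sigma_+))$; since $V^1$ is a $\Gamma$-representation with socle $\sigma$ and (by a Nakayama-type argument) is the sum of the $I(\sigma,\kappa)$ with $\kappa$ running over $\cosoc(V^1)$, it follows that $V^1\subseteq I(\sigma,\sigma_+)$ and hence $\JH(V^1)\subseteq\JH(I(\sigma,\sigma_+))$.

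Two smaller points: your appeal to Lemma~\ref{lemma 10} is misplaced---that lemma produces subrepresentations of $\Inj_n\sigma$, not of $V$---but you can instead argue directly that any $\kappa\in\JH(V)$ yields a sub $V_\kappa\subseteq V$ with cosocle $\kappa$ and $[V_\kappa:\sigma]=1$ (since $\soc V=\sigma$). Also, your ``$\supseteq$'' argument for $V^1$ constructs a different $2$-weight $\tau'$ for each $1$-weight $\kappa$; the paper instead uses a single $I(\sigma,F(\mathfrak{t}_\mu(\epsilon_i 2\overline\eta_i))_+)\subseteq V^2$, which already contains $\sigma_+$ and hence all $1$-weights $\le\tau$.
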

 \begin{proof}
By \cref{exact soc(V/V^1) II}, we have $\soc(V/V^1)\cong\bigoplus_{|\omega_i|>1}\delta_i^{\epsilon_i}(\sigma)$. For each $\delta_i^{\epsilon_i}(\sigma)\subset \soc(V/V^1)$, we consider the quotient $\widetilde{W}_i$ of $V/V^1$ with socle $\delta_i^{\epsilon_i}(\sigma)$, which is $(2n-3)$-generic. Moreover, by \cref{cor on multiplicities}$[n-1]$, $V^{n-1}$ is multiplicity free. Moreover, by \cref{soc(V/Vn-1)}, $\soc(V/V^{n-1})\cong \widetilde{\tau}$ and $\delta_i^{\epsilon_i}(\sigma)\notin \Inj_1\widetilde{\tau}$. Therefore, $[V\colon \delta_i^{\epsilon_i}(\sigma)]=1$ for all such $\delta_i^{\epsilon_i}(\sigma)$. Then, since $\widetilde{W}_i$ is $\m_{K_1}^{n-1}$-torsion, we can apply \cref{Main theorem on generalization}$ [(n-1, n-1)]$ to each $\widetilde{W}_i$ and show that $\widetilde{W}_i\cong I(\delta_i^{\epsilon_i}(\sigma),\tau)$. As $\widetilde{W}_i$ is $\m_{K_1}^{n-1}$-torsion, we can apply \cref{Cor on socle filtration} to $\widetilde{W}_i$, and deduce that 
\begin{equation}\label{equation for JH(V/V1)}
   \begin{aligned}   
   \bigcup_i\JH(\widetilde{W}_i)&=\{F(\mathfrak{t}_{\mu}(\omega'))\colon  \epsilon_i 2\overline{\eta}_i\leq \omega' \&\;\omega' \leq \omega\}\\
   &=\{F(\mathfrak{t}_{\mu}(\omega'))\colon  \omega'\leq \omega \text{ and there exists an } i \text{ s.t } |\omega'_i|>1\}.
   \end{aligned}
\end{equation}
In particular, $F(\mathfrak{t}_{\mu} (\epsilon_i 2\overline{\eta}_i))_+\in \JH(V)$ for all $i$ such that $|\omega_i|>1$. Fix one of such $i$. By \cref{Main theorem on generalization}$[(n,2)]$, $I(\sigma, F(\mathfrak{t}_{\mu} (\epsilon_i 2\overline{\eta}_i))_+)$ is $\mathfrak{m}_{K_1}^2$-torsion; therefore, it is a subrepresentation of $V^2$. Moreover, as $\sigma_+=F(\mathfrak{t}_\mu(\sum_{\omega_j\neq0} \epsilon_j\overline{\eta}_j))\leq F(\mathfrak{t}_{\mu} (\epsilon_i 2\overline{\eta}_i))_+$, we can also deduce that $\sigma_+\in \JH(V^1)$. From this we see that if $ \omega'\leq \omega$ and $|\omega'_j|\leq 1$, then $F(\mathfrak{t}_{\mu} (\omega'))\in \JH(V^1)$. Therefore, if $\omega'\leq \omega$, then $F(\mathfrak{t}_{\mu} (\omega'))\in \JH(V)$. \par
 Now we prove the converse that $F(\mathfrak{t}_{\mu} (\omega'))\in \JH(V)$ then $\omega'\leq \omega$. By the argument above, we have a map $f\colon V/V^1\to \bigoplus_{|\omega_i|>1} I(\delta_i^{\epsilon_i}(\sigma), \tau)$. As $\soc(V/V^1)\cong\bigoplus_{|\omega_i|>1}\delta_i^{\epsilon_i}(\sigma)$, and since $f$ is injective on the socles, it must be injective overall. Therefore, if $F(\mathfrak{t}_{\mu} (\omega'))\in \JH(V/V^1)$, by \cref{equation for JH(V/V1)}, $\omega'\leq \omega$.
 We claim that if $\sigma' \in \JH(V^1)\setminus \JH(I(\sigma, \sigma_+))$ and $\tau' \in \JH(V/V^1)$, $\Ext^1_{K/Z_1}(\tau', \sigma')=0$.
 Write $\tau' \colonequals F(\mathfrak{t}_{\mu}(\omega''))$ and $\sigma' \colonequals F(\mathfrak{t}_{\mu}(\omega'))$.
 If $\tau'\in \JH(V/V^2)$, then by \cref{description of JH(Injk)}, $\sum_j\left | \frac{\lfloor \omega''_j\rfloor}{2}\right |\geq 2$ and $\sum_j\left | \frac{\lfloor \omega'_j\rfloor}{2}\right |=0$. Therefore, there exists a $j$ with $|\omega''_j-\omega'_j|\geq 2$, or there exists $i\neq j $ with $\omega''_i\neq \omega'_i$ and $\omega''_i\neq \omega'_i$. Therefore, by \cite[Lemma~2.4.6]{BHHMS}, $\Ext^1_{K_1/Z_1}(\tau', \sigma')=0$. If $\tau'\in \JH(V^2/V^1)$, then we can apply \cite[Lemma~2.2.1]{hu2022mod}, noting that $\lambda_!(\sigma)\leq \sigma_+$ ($\lambda_!$ is defined in \cite{hu2022mod}), and deduce that if $\sigma'\not\in \JH(I(\sigma,\sigma_+))$, then $\Ext^1_{K_1/Z_1}(\tau', \sigma')=0$. This proves the claim.
 Therefore, by d\'evissage, if $\sigma'\notin \JH(I(\sigma,\sigma_+))$,
 $$\Ext^1_{K/Z_1}(V/V^1, \sigma')=0.$$
 Consequently, $\Hom_{K/Z_1}(V^1, \sigma')=\Hom_{K/Z_1}(V, \sigma')$. However, as $V$ has an $n$-weight as its cosocle; therefore $\Hom_{K/Z_1}(V, \sigma')=0$ for any $\sigma'$ as above and so $\Hom_{K/Z_1}(V^1, \sigma')=0$. We deduce that $\JH(V^1)=\JH(I(\sigma,\sigma_+))$. Therefore, we conclude the result.
 The second assertion follows from \cref{remark of theorem}.
\end{proof}
\begin{lemma} \label{Cor on Vk+1/Vk}
     Assume that \cref{Main theorem on generalization} holds for all pairs $(n',m')<(n,n)$. Suppose $V$ as in \cref{Main theorem on generalization} with $m=n$. Then for all $0\leq k<n-1$,
     $$V^{k+1}/V^{k}\cong \bigoplus_{\xi \in \prescript{0}{}\Omega_{k}^\tau} I(F(\mathfrak{t}_{\mu}(\xi)),F(\mathfrak{t}_{\mu}(\xi_+))).$$
\end{lemma}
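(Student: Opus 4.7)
The plan is to prove the lemma by producing, for each $\xi \in \prescript{0}{}\Omega_k^\tau$, a copy of $I(\xi,\xi_+)$ as a subrepresentation of $V^{k+1}/V^k$, verifying the sum is direct, and comparing Jordan--H\"older factors to deduce equality. No separate induction on $k$ is needed, since the required structural information about the top graded piece of an $\mathfrak{m}_{K_1}^{k+1}$-torsion representation is already part of \cref{Main theorem on generalization} for pairs $(k+1,k+1)<(n,n)$, which the outer induction gives us for free.

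Fix $\xi \in \prescript{0}{}\Omega_k^\tau$. By definition $\xi_+$ is a $(k+1)$-weight with $\xi_+ \le \tau$, hence $\xi_+ \in \JH(V)$ by \cref{JH factors when m=n}. Using the same construction as in the proof of \cref{JH factors when m=n} (a projective-cover argument applied to $\Proj_{k+1}\xi_+$ together with \cref{Main theorem on generalization}$[(k+1,k+1)]$), $V$ contains a subrepresentation $U_\xi \cong I(\sigma,\xi_+)$, which is $\mathfrak{m}_{K_1}^{k+1}$-torsion and therefore lies in $V^{k+1}$. Applying \cref{Main theorem on generalization}$[(k+1,k+1)]$ to $U_\xi$, its top graded piece decomposes as
$$U_\xi^{k+1}/U_\xi^{k} \cong \bigoplus_{\xi' \in \prescript{0}{}\Omega_k^{\xi_+}} I(\xi',\xi'_+).$$
A direct check using \cref{definition on relations} shows $\prescript{0}{}\Omega_k^{\xi_+} = \{\xi\}$: writing $\xi = F(\mathfrak{t}_\mu(\nu))$, any even $\nu'$ with $|\nu'_j| \le |(\nu_+)_j|$ satisfies $|\nu'_j| \le |\nu_j|$ (since $|(\nu_+)_j| - |\nu_j| \in \{0,1\}$ while $\nu_j$ is even), so the constraint $\sum_j |\nu'_j|/2 = k = \sum_j |\nu_j|/2$ forces $\nu' = \nu$. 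Therefore $U_\xi^{k+1}/U_\xi^{k} \cong I(\xi,\xi_+)$, and the natural map $U_\xi^{k+1}/U_\xi^{k} \hookrightarrow V^{k+1}/V^k$ (injective since $U_\xi^s = U_\xi \cap V^s$) embeds $I(\xi,\xi_+)$ into $V^{k+1}/V^k$.

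The sum $\sum_{\xi} I(\xi,\xi_+)$ inside $V^{k+1}/V^k$ is direct: by \cref{remark of theorem}(iv), distinct $I(\xi,\xi_+)$ and $I(\xi',\xi'_+)$ share no Jordan--H\"older factors, so any common subrepresentation is zero. To promote this inclusion to an equality, I compare Jordan--H\"older multisets. On the left, \cref{remark of theorem}(ii) gives $\bigsqcup_\xi \JH(I(\xi,\xi_+)) = \{(k+1)\text{-weights } \kappa \le \tau\}$, each with multiplicity one. On the right, \cref{JH factors when m=n} yields $\JH(V) = \{\kappa \le \tau\}$; \cref{lemma on k-weight in Vk} places every $j$-weight $\kappa \le \tau$ with $j \le k$ already inside $V^j \subset V^k$; and $V^{n-1}$ is multiplicity free by \cref{cor on multiplicities}$[n-1]$, so each such $\kappa$ occurs at most once in $V^{k+1}$. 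Combining these, $\JH(V^{k+1}/V^k)$ is exactly the $(k+1)$-weights $\kappa \le \tau$ with multiplicity one, matching the left side and forcing the desired isomorphism.

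The main obstacle I anticipate is the combinatorial verification that $\prescript{0}{}\Omega_k^{\xi_+} = \{\xi\}$, which is the point where the single-summand contribution $I(\xi,\xi_+)$ emerges from the generic recursive formula and where the parity/evenness structure of the definition of $\xi_+$ is essential. The remaining ingredients are careful bookkeeping with results already available at this stage: the existence of $I(\sigma,\xi_+)$ inside $V$, multiplicity-freeness of $V^{n-1}$, the Jordan--H\"older description of $V$, and the disjointness of Jordan--H\"older sets for distinct summands.
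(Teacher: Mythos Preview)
Your proof is correct and follows the same overall line as the paper's: produce an embedding $\bigoplus_\xi I(\xi,\xi_+)\hookrightarrow V^{k+1}/V^k$, then upgrade it to an isomorphism by matching Jordan--H\"older factors using the multiplicity-freeness of $V^{n-1}$. The only real difference is in how the embedding is built. The paper works directly inside $V^{k+1}/V^k$: it shows that any $\theta$ in the socle lies in $\prescript{0}{}\Omega_k^\tau$ and then, since $V^{k+1}/V^k$ is a $\Gamma$-representation containing $\theta_+$ as a constituent, obtains each $I(\theta,\theta_+)$ via the $n=1$ case. Your route through $U_\xi=I(\sigma,\xi_+)$ and \cref{Main theorem on generalization}$[(k+1,k+1)]$ is a detour that forces the extra combinatorial verification $\prescript{0}{}\Omega_k^{\xi_+}=\{\xi\}$ (and implicitly that $(\xi)_+$ computed relative to $\xi_+$ equals $\xi_+$); it works, but note that the existence of $U_\xi\subset V^{k+1}$ is exactly \cref{lemma on k-weight in Vk}, so you may cite that directly rather than the proof of \cref{JH factors when m=n}.

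There is one small omission in your last paragraph: you rule out $j$-weights with $j\le k$ from $\JH(V^{k+1}/V^k)$, but you never rule out $j$-weights with $j\ge k+2$, so the displayed conclusion ``exactly the $(k+1)$-weights'' does not yet follow. The fix is a single line: $V^{k+1}\subset(\Inj_n\sigma)[\mathfrak m_{K_1}^{k+1}]=\Inj_{k+1}\sigma$, so by the very definition of $j$-weight every constituent of $V^{k+1}$ is a $j$-weight for some $j\le k+1$. (The paper's proof skates over the same point.)
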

\begin{proof}
Given $\theta\in \soc(V^{k+1}/V^{k})$ for some $0\leq k<n-1$. Then, as the theorem holds for all $m<n$, by \cref{lemma on k-weight in Vk}, $\theta$ is a $k+1$ weight. By \cref{lemma 3}, we deduce that $\theta\in \Delta^{k}(\sigma)$. By the remark in \cref{definition on relations} and \cref{JH factors when m=n}, we conclude that $\theta\in \prescript{0}{}\Omega_k^\tau$. By definition $\theta_+\leq \omega$; hence by \cref{JH factors when m=n}, we deduce that $\theta_+\in \JH(V)$. Moreover, by \cref{reformulation of I}, $\theta_+\in \JH(\Inj_1 \theta)$; hence $I(F(\mathfrak{t}_{\mu}(\xi)),F(\mathfrak{t}_{\mu}(\xi_+)))\hookrightarrow V^{k+1}/V^k$. Therefore, we have $\bigoplus_{\xi \in \prescript{0}{}\Omega_{k}^\tau} I(F(\mathfrak{t}_{\mu}(\xi)),F(\mathfrak{t}_{\mu}(\xi_+)))\hookrightarrow V^{k+1}/V^k$. By \cref{cor on multiplicities}$ [n-1]$, $V^{n-1}$ is multiplicity free, so is $V^{k+1}/V^k$ for all $0\leq k<n-1$. As $\bigoplus_{\xi \in \prescript{0}{}\Omega_{k}^\tau} I(F(\mathfrak{t}_{\mu}(\xi)),F(\mathfrak{t}_{\mu}(\xi_+)))$ and $V^{k+1}/V^k$ have the same Jordan--H\"{o}lder factors by \cref{JH factors when m=n} and both are multiplicity free, they are isomorphic. 
\end{proof}
\begin{proposition}\label{quotienting out 1st time}
Assume that \cref{Main theorem on generalization} holds for all pairs $<(n,n)$. Suppose $V$ as in \cref{Main theorem on generalization} with $m=n$. If $|\omega_i|>1$, then $\tau^{(i)}\in \JH(V^{n-1})$ (\textit{cf.} \cref{definition on relations}), and $I(\sigma,\tau^{(i)})$ is isomorphic to a proper subrepresentation of $V^{n-1}$. Moreover, $$\soc(V/I(\sigma,\tau^{(i)}))=\mu_i^{\epsilon_i}(\sigma).$$
\end{proposition}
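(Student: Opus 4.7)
The three assertions are established in sequence: the first two follow readily from results already in place, while the third, the socle computation, is the main content. Since $|\omega_i|>1$, \cref{description of JH(Injk)} implies $\tau^{(i)}$ is a $k$-weight with $k=n-\lfloor|\omega_i|/2\rfloor<n$; combined with $\omega^{(i)}\leq\omega$, \cref{JH factors when m=n} yields $\tau^{(i)}\in\JH(V)$, and \cref{lemma on k-weight in Vk} embeds $I(\sigma,\tau^{(i)})$ into $V^{k}\subseteq V^{n-1}$. The inclusion is proper because $\mu_i^{\epsilon_i}(\sigma)\in\JH(V^{1})\subseteq\JH(V^{n-1})$ (again via \cref{JH factors when m=n} and \cref{lemma on k-weight in Vk} applied to the $1$-weight $\epsilon_i\overline{\eta}_i\leq\omega$), whereas $\mu_i^{\epsilon_i}(\sigma)\notin\JH(I(\sigma,\tau^{(i)}))$ by \cref{remark of theorem}(iii), since the $i$-th coordinate of every JH factor of $I(\sigma,\tau^{(i)})$ vanishes.

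For the socle computation, set $W:=V/I(\sigma,\tau^{(i)})$. I first reduce to $\soc(V^{n-1}/I(\sigma,\tau^{(i)}))$ via \cref{lemma on socles}(ii) with $T=I(\sigma,\tau^{(i)})$: the condition on $\soc(V/V^{n-1})=\widetilde{\tau}$ is given by \cref{n=m irred soc(V/Vn-1)}, and the required vanishing $\Ext^{1}_{K/Z_1}(\widetilde{\tau},\sigma')=0$ for $\sigma'=F(\mathfrak{t}_\mu(\omega'))\in\JH(I(\sigma,\tau^{(i)}))$ follows from \cite[Lemma 2.4.6]{BHHMS}, because the $i$-th coordinate of $\widetilde{\omega}-\omega'$ has magnitude $2\lfloor|\omega_i|/2\rfloor\geq 2$ (using $|\omega_i|>1$ and $\omega'_i=0$). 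An embedding $\mu_i^{\epsilon_i}(\sigma)\hookrightarrow V^{n-1}/I(\sigma,\tau^{(i)})$ is then supplied by the image of $I(\sigma,\mu_i^{\epsilon_i}(\sigma))\subseteq V^{1}\subseteq V^{n-1}$, whose intersection with $I(\sigma,\tau^{(i)})$ reduces to $\sigma$ alone by a JH-factor count.

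The main obstacle is ruling out any further socle factor. The key structural input I would establish is that every subrepresentation $U\subseteq V^{n-1}$ satisfies $U=\sum_{\theta\in\cosoc(U)}I(\sigma,\theta)$, so $\JH(U)=\bigcup_{\theta}\{\omega'\leq\theta\}$ is downward-closed under the partial order of \cref{definition on relations}. This combines the inductive hypothesis \cref{Main theorem on generalization}$[(n-1,k)]$ for $k\leq n-1$ (applied via \cref{lemma on k-weight in Vk} to produce, inside $V^{n-1}$, a unique subrepresentation $I(\sigma,\theta)$ of cosocle $\theta$ for each $\theta\in\JH(V^{n-1})$) with the multiplicity-freeness of $V^{n-1}$ from \cref{cor on multiplicities}$[n-1]$: if $\theta\in\cosoc(U)$ then $I(\sigma,\theta)\subseteq U$, for otherwise the image of $I(\sigma,\theta)$ in $V^{n-1}/U$ would be a nonzero quotient containing $\theta$ as a JH factor, contradicting $\theta\in\JH(U)$ by multiplicity-freeness. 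Granting this, if $\kappa=F(\mathfrak{t}_\mu(\xi))\hookrightarrow V^{n-1}/I(\sigma,\tau^{(i)})$ corresponds to $U'\subset V^{n-1}$ with $I(\sigma,\tau^{(i)})\subsetneq U'$ and $U'/I(\sigma,\tau^{(i)})\cong\kappa$, then $\kappa\in\cosoc(U')$ forces $I(\sigma,\kappa)\subseteq U'$, whence $\JH(U')\supseteq\{\omega'\leq\omega^{(i)}\}\cup\{\omega'\leq\xi\}$. Simplicity of $U'/I(\sigma,\tau^{(i)})$ requires $|\{\omega'\leq\xi\}\setminus\{\omega'\leq\omega^{(i)}\}|\leq 1$; since this cardinality equals $\prod_{j\neq i}(|\xi_j|+1)\cdot|\xi_i|$, one must have $\xi=\epsilon_i\overline{\eta}_i$, giving $\kappa\cong\mu_i^{\epsilon_i}(\sigma)$.
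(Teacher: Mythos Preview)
Your proof is correct, and the first two assertions together with the reduction to $\soc(V^{n-1}/I(\sigma,\tau^{(i)}))$ via \cref{lemma on socles}(ii) match the paper's argument essentially verbatim.

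For the identification $\soc(V^{n-1}/I(\sigma,\tau^{(i)}))=\mu_i^{\epsilon_i}(\sigma)$, however, you take a genuinely different route. The paper introduces the nonzero map $f:V^{n-1}\to\Inj_{n-1}\mu_i^{\epsilon_i}(\sigma)$ arising from $\mu_i^{\epsilon_i}(\sigma)\in\JH(V^1)$ and proves directly that $\ker(f)=I(\sigma,\tau^{(i)})$: one inclusion by showing $\tau^{(i)}\in\JH(\ker(f))$ via a contradiction (otherwise $\sigma$ would lie in $\mathrm{Im}(f)$, forcing $f$ injective with wrong socle), the other by showing any extra JH factor $F(\mathfrak{t}_\mu(\omega'))$ of $\ker(f)$ with $\omega'_i\neq 0$ would drag $\mu_i^{\epsilon_i}(\sigma)$ into $\ker(f)$. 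Your argument instead exploits the lattice structure of subrepresentations of the multiplicity-free $V^{n-1}$: for each $\theta\in\cosoc(U)$ one has $I(\sigma,\theta)\subseteq U$, so the preimage $U'$ of any simple $\kappa=F(\mathfrak{t}_\mu(\xi))$ in the socle must contain $I(\sigma,\kappa)$, and then a cardinality count $|\{\omega'\leq\xi\}\setminus\{\omega'\leq\omega^{(i)}\}|=\prod_{j\neq i}(|\xi_j|+1)\cdot|\xi_i|\leq 1$ (together with $\xi\leq\omega$, which you should state explicitly since it supplies the sign condition $\xi_i=\epsilon_i$) forces $\xi=\epsilon_i\overline{\eta}_i$. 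Both arguments ultimately rest on the downward-closedness of $\JH(U)$ for $U\subseteq V^{n-1}$, but yours packages this as a clean combinatorial count and avoids the auxiliary injective envelope, while the paper's map $f$ has the advantage of simultaneously producing the embedding $\mu_i^{\epsilon_i}(\sigma)\hookrightarrow V^{n-1}/I(\sigma,\tau^{(i)})$ and the exclusion of other socle factors in one stroke. One small remark: your stated structural input $U=\sum_{\theta\in\cosoc(U)}I(\sigma,\theta)$ is stronger than what you actually use (only the inclusion $I(\sigma,\theta)\subseteq U$ is needed), so you could trim that claim.
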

\begin{proof}
Let $n_i=\frac{|\omega_i|}{2}\geq 1$. By definition, $\tau^{(i)}$ is a $(n-n_i)$-weight and $\tau^{(i)}\leq\tau$. By \cref{JH factors when m=n}, $V$ has a subrepresentation with cosocle $\tau^{(i)}$.
By \cref{Main theorem on generalization}$[(n-1, n-n_i)]$, this subrepresentation is isomorphic to $I(\sigma,\tau^{(i)})$ and is a subrepresentation of $V^{n-1}$.
As $|\omega_i|>1$, by \cref{Cor on Vk+1/Vk},
 $\mu_i^{\epsilon_i}(\sigma)\in \JH(V^1)$. However, applying \cref{Cor on socle filtration}$ [(n-n_\mathrm{i})]$ to $I(\sigma,\tau^{(i)})$, we deduce that for any $F(\mathfrak{t}_{\mu}(\omega'))\in \JH(I(\sigma, \tau^{(i)}))$, $\omega'_i=0$. Therefore, $\mu_i^{\epsilon_i}(\sigma)\notin \JH(I(\sigma, \tau^{(i)}))$ and hence $$I(\sigma, \tau^{(i)})\subsetneq V^{n-1}.$$ By \cref{n=m irred soc(V/Vn-1)}, $\soc(V/V^{n-1})\cong \widetilde{\tau}$ is an $n$-weight, $\widetilde{\tau}\not\subset\soc(V)$.
Furthermore, for any $F(\mathfrak{t}_{\mu}(\omega'))\in \JH(I(\sigma, \tau^{(i)}))$, we just showed that $\omega'_i=0$. Since $|\omega_i|>1$, by \cite[Lemma~2.4.6] {BHHMS}, we deduce that $$\Ext^1_{K_1/Z_1}(\widetilde{\tau}, F(\mathfrak{t}_{\mu}(\omega')))=0.$$
Therefore, all the assumptions of \cref{lemma on socles}(ii) are satisfied, and we deduce that $\soc(V/I(\sigma,\tau^{(i)}))=\soc(V^{n-1}/I(\sigma,\tau^{(i)}))$. 
Now, we claim that $$\soc(V^{n-1}/I(\sigma,\tau^{(i)}))=\mu_i^{\epsilon_i}(\sigma).$$
As $\mu_i^{\epsilon_i}(\sigma)\in \JH(V^1)$, we have a (unique up to scalar) nonzero map 
$$f\colon V^{n-1}\to \Inj_{n-1}\mu_i^{\epsilon_i}(\sigma).$$
The claim is equivalent to $\ker(f)\cong I(\sigma, \tau^{(i)})$.\par
First, I will show that $I(\sigma, \tau^{(i)})$ is a subrepresentation of $ \ker(f)$. It suffices to show $\tau^{(i)}\in \JH(\ker(f))$, since then $\ker(f)$ admits a subrepresentation with socle $\sigma$ and cosocle $\tau^{(i)}$. As $[\ker(f)\colon \sigma]\leq[V\colon \sigma]=1$, by \cref{Main theorem on generalization}$ [(n-1, n-n_i)]$, such a representation is isomorphic to $I(\sigma, \tau^{(i)})$. Assume for the sake of contradiction that $\tau^{(i)}\notin \JH(\ker(f))$, then $\tau^{(i)}\in \JH(\text{Im}(f))$. As $V^{n-1}$ is multiplicity free by \cref{Main theorem on generalization}$[n-1]$, $[\text{Im}(f)\colon \mu_i^{\epsilon_i}(\sigma)]=1$. Therefore, $\text{Im}(f)$ admits a subrepresentation with cosocle $\tau^{(i)}$, which is isomorphic to $I(\mu_i^{\epsilon_i}(\sigma), \tau^{(i)})$ by \cref{Main theorem on generalization}$ [(n-1, n-n_i)]$. Since $ 0-\epsilon_i\overline{\eta}_i\leq \omega^{(i)}-\epsilon_i\overline{\eta}_i$ for all $k$, from the theorem, we further deduce that $\sigma$ is a subquotient of $I(\mu_i^{\epsilon_i}(\sigma), \tau^{(i)})\subset\mathrm{Im}(f)$. However, as $V^{n-1}$ is multiplicity free, $[\ker(f)\colon \sigma]=0$. Therefore, $\ker(f)=0$ and $f$ is injective. However, this is a contradiction as $\soc(\text{Im}(f))=\mu_i^{\epsilon_i}(\sigma)\neq \sigma$. \par
Conversely, assume that we have some $F(\mathfrak{t}_{\mu}(\omega'))\in \JH(\ker(f)\setminus I(\sigma, \tau^{(i)}))\subset\JH(V^{n-1})$. Then by \cref{JH factors when m=n}, $\omega'\leq\omega$, if $\omega'_i\neq0$, then we must have $\omega'-\epsilon_i\overline{\eta}_i\leq \omega-\epsilon_i\overline{\eta}_i$. By \cref{Cor on socle filtration}$[n-1]$, we have $\mu_i^{\epsilon_i}(\sigma)\in \JH(I(\sigma, F(\mathfrak{t}_{\mu}(\omega'))))$. Therefore, $\mu_i^{\epsilon_i}(\sigma)\in \JH(\ker(f))$, which is a contradiction as $[V\colon \mu_i^{\epsilon_i}(\sigma)]=1$, and $f$ is nonzero. Therefore, $\omega'_i=0$ and $\omega'\leq \omega^{(i)}$. Hence, $F(\mathfrak{t}_{\mu}(\omega'))$ is a subquotient of $I(\sigma, \tau^{(i)})$ by \cref{Cor on socle filtration}$[n-n_i]$.
\end{proof}
\begin{proposition}\label{m=n multiplicity free} 
Assume that \cref{Main theorem on generalization} holds for all pairs $<(n,n)$. Suppose $V$ as in \cref{Main theorem on generalization} with $m=n$, then $V$ is multiplicity free. Moreover, for all $0\leq k\leq n-1$, $V^{k+1}/V^k$ is exactly as described in \cref{Main theorem on generalization}.
 \end{proposition}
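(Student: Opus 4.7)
The plan is to reduce to the already-established case $m<n$ via the quotients produced by \cref{quotienting out 1st time}, thereby identifying the top layer $V/V^{n-1}$ with $I(\widetilde{\tau},\tau)$. Since $V\subset\Inj_n\sigma$ is $\m_{K_1}^n$-torsion, $V=V^n$. For $0\le k<n-1$ the formula for $V^{k+1}/V^k$ is already supplied by \cref{Cor on Vk+1/Vk}, and the induction hypothesis applied to each summand $I(F(\mathfrak{t}_\mu(\xi)),F(\mathfrak{t}_\mu(\xi_+)))$ makes it multiplicity-free, so $V^{n-1}$ is multiplicity-free. All remaining work concerns the top layer.

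By \cref{n=m irred soc(V/Vn-1)}, $\soc(V/V^{n-1})\cong\widetilde{\tau}$; and since every Jordan--H\"older factor of $V^{n-1}\subset\Inj_{n-1}\sigma$ is a $(<n)$-weight while $\tau$ is an $n$-weight, $V^{n-1}\subset\Rad(V)$ and $\cosoc(V/V^{n-1})=\tau$. The $n$-weight criterion of \cref{description of JH(Injk)} also makes $\JH(V^{n-1})$ and $\JH(V/V^{n-1})$ disjoint. By \cref{JH factors when m=n}, the Jordan--H\"older factors of $V/V^{n-1}$ are the $n$-weights $\kappa\le\tau$, equivalently those $\kappa$ with $\omega^{(\kappa)}_j\in\{\widetilde{\omega}_j,\omega_j\}$ for all $j$, which is exactly $\JH(I(\widetilde{\tau},\tau))$; combined with the combinatorial identities $\prescript{0}{}\Omega^\tau_{n-1}=\{\widetilde{\tau}\}$ and $\widetilde{\tau}_+=\tau$, matching the formula of \cref{Main theorem on generalization} at the top layer reduces to the single assertion $V/V^{n-1}\cong I(\widetilde{\tau},\tau)$, after which the overall multiplicity-freeness follows from the disjointness and the two layers being individually multiplicity-free.

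To promote this Jordan--H\"older set equality to an isomorphism I would pick $i$ with $|\omega_i|\ge 2$ and even, and apply \cref{quotienting out 1st time}: $I(\sigma,\tau^{(i)})\subsetneq V^{n-1}$ and $\soc(V/I(\sigma,\tau^{(i)}))=\mu_i^{\epsilon_i}(\sigma)$. Combining multiplicity-freeness of $V^{n-1}$, the facts $\mu_i^{\epsilon_i}(\sigma)\in\JH(V^1)$ from \cref{exact soc(V/V^1) II}, $\mu_i^{\epsilon_i}(\sigma)\notin\JH(I(\sigma,\tau^{(i)}))$ from the proof of \cref{quotienting out 1st time}, and $\mu_i^{\epsilon_i}(\sigma)\notin\JH(V/V^{n-1})$ (its sole nonzero coordinate $\epsilon_i$ matches neither $\omega_i$ nor $\widetilde{\omega}_i$ when $|\omega_i|\ge 2$), I obtain $[V/I(\sigma,\tau^{(i)}):\mu_i^{\epsilon_i}(\sigma)]=1$. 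A direct count then shows every Jordan--H\"older factor of $V/I(\sigma,\tau^{(i)})$ has depth at most $n-1$ relative to $\mu_i^{\epsilon_i}(\sigma)$ when $|\omega_i|$ is even (the $i$-coordinate contribution drops from $|\omega_i|/2$ to $\lfloor(|\omega_i|-1)/2\rfloor=|\omega_i|/2-1$), so $V/I(\sigma,\tau^{(i)})\subset\Inj_{n-1}\mu_i^{\epsilon_i}(\sigma)$. Since $\mu_i^{\epsilon_i}(\sigma)$ is $(2n-2)$-generic, which exceeds the $(2n-3)$-genericity required, the induction hypothesis \cref{Main theorem on generalization}$ [(n-1,n-1)]$ applies and yields $V/I(\sigma,\tau^{(i)})\cong I(\mu_i^{\epsilon_i}(\sigma),\tau)$. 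The further quotient $V/V^{n-1}$ inherits multiplicity-freeness, and the $n=1$ base case of the theorem then pins its structure down to $I(\widetilde{\tau},\tau)$ using its irreducible socle $\widetilde{\tau}$ and cosocle $\tau$.

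The main obstacle is the case where every nonzero $|\omega_j|$ is odd, so no $i$ with $|\omega_i|\ge 2$ even is available. Here a single application of \cref{quotienting out 1st time} keeps $\tau$ as an $n$-weight relative to the new origin, and the clean one-shot reduction to $(n-1,n-1)$ above is unavailable. I would handle this by iterating: choose $i$ with $|\omega_i|\ge 3$ odd; at the shifted origin $\mu_i^{\epsilon_i}(\sigma)$ the $i$-coordinate of $\tau$ becomes $|\omega_i|-1$, which is even, so a secondary application of the quotient construction at the shifted origin falls under the even case and reduces to \cref{Main theorem on generalization}$ [(n-1,n-1)]$ at the twice-shifted origin $\delta_i^{\epsilon_i}(\sigma)$, which remains $(2n-3)$-generic. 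An induction on the number of odd coordinates of $\omega$, tracking that $[\cdot:\text{socle}]=1$ is preserved at each stage via the multiplicity-freeness of $V^{n-1}$ and the exclusion of low-depth weights from the top layer, closes the argument.
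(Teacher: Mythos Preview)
Your overall strategy coincides with the paper's: use the quotient $\widetilde{W}_i=V/I(\sigma,\tau^{(i)})$ from \cref{quotienting out 1st time} to control the top layer $V/V^{n-1}$. The paper, however, is more economical and does not split on the parity of $|\omega_i|$. It picks any $i$ with $|\omega_i|>1$, notes that $\widetilde{\tau}$ is an $(n-1)$-weight relative to $\mu_i^{\epsilon_i}(\sigma)$ regardless of parity, and extracts only the multiplicity bound $[\widetilde W_i:\widetilde\tau]\le[\widetilde W_i:\mu_i^{\epsilon_i}(\sigma)]=1$ from \cref{cor on multiplicities}$[n-1]$. Since $V/V^{n-1}$ is a quotient of $\widetilde W_i$, this forces $[V/V^{n-1}:\widetilde\tau]=1$, and the $\Gamma$-case $[(1,1)]$ then gives $V/V^{n-1}\cong I(\widetilde\tau,\tau)$ directly. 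There is no need to identify $\widetilde W_i$ itself, so no even/odd dichotomy and no iteration.

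Your argument has a genuine gap at the step ``every Jordan--H\"older factor of $V/I(\sigma,\tau^{(i)})$ has depth at most $n{-}1$ relative to $\mu_i^{\epsilon_i}(\sigma)$ \dots\ so $V/I(\sigma,\tau^{(i)})\subset\Inj_{n-1}\mu_i^{\epsilon_i}(\sigma)$.'' Bounding the depth of the Jordan--H\"older constituents does \emph{not} bound the $\m_{K_1}$-torsion level: $\Inj_n\sigma/\Inj_{n-1}\sigma$ already contains many constituents of small depth yet sits entirely outside the $\m_{K_1}^{n-1}$-torsion. Without this inclusion you cannot invoke \cref{Main theorem on generalization}$[(n-1,n-1)]$ on $\widetilde W_i$; and if you try instead to invoke the already-established $m<n$ case at torsion level $n$ to force the drop, the new socle $\mu_i^{\epsilon_i}(\sigma)$ is only $(2n-2)$-generic, one short of the $(2n-1)$ required there. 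The parity split and the proposed two-step iteration in the odd case are artifacts of over-reaching for the full identification $\widetilde W_i\cong I(\mu_i^{\epsilon_i}(\sigma),\tau)$; the paper's route via a single multiplicity bound bypasses this entirely.
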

 \begin{proof}
 By \cref{cor on multiplicities}$ [n-1]$, $V^{n-1}$ is multiplicity free. By \cref{soc(V/Vn-1)}, $\soc(V/V^{n-1})\cong \widetilde{\tau}$. If we show that $[V/V^{n-1}\colon \widetilde{\tau}]=1$, then by \cref{Main theorem on generalization}$ [(1,1)]$, $V/V^{n-1}\cong I(\widetilde{\tau}, \tau)$, which is multiplicity free. As the theorem holds for all $m<n$; therefore, by \cref{lemma on k-weight in Vk}, $V^{n-1}$ and $V/V^{n-1}$ do not share common Jordan--H\"{o}lder factors. Therefore, $V$ is multiplicity free. Moreover, as $V/V^{n-1}\cong I(\widetilde{\tau}, \tau) $, together with \cref{Cor on Vk+1/Vk}, we can conclude the second assertion. \par
Assume $|\omega_i|>1$ for some fixed $i$. By \cref{quotienting out 1st time}, $I(\sigma, \tau^{(i)})$ is a subrepresentation of $V$, and $V/I(\sigma, \tau^{(i)})\eqqcolon\widetilde{W}_i$ has socle $\mu_i^{\epsilon_i}(\sigma)=F(\mathfrak{t}_{\mu}(\epsilon_i\overline{\eta}_i))$, which is $(2n-3)$-generic. Then $\widetilde{\tau}= F(\mathfrak{t}_{\mu}(\sum_j2 \lfloor\frac{\omega_j}{2}\rfloor\overline{\eta}_j))$ is an $n-1$-weight with respect to $\mu_i^{\epsilon_i}(\sigma)$. Therefore, applying \cref{cor on multiplicities}$[n-1]$ to $\widetilde{W}_i$, we conclude that $[\widetilde{W}_i\colon \theta]\leq [\widetilde{W}_i\colon \mu_i^{\epsilon_i}(\sigma)]$. On the other hand, $\mu_i^{\epsilon_i}(\sigma)$ is a $1$-weight, by \cref{lemma on k-weight in Vk}, $[V/V^{n-1}\colon \mu_i^{\epsilon_i}(\sigma)]=0$. Furthermore, $V^{n-1}$ is multiplicity free, so $[V\colon \mu_i^{\epsilon_i}(\sigma)]=1$. Therefore, $[\widetilde{W}_i\colon \mu_i^{\epsilon_i}(\sigma)]=1$, and hence $[\widetilde{W}_i\colon \theta]\leq 1$. As $I(\sigma, \tau^{(i)})\subsetneq V^{n-1}$, $V/V^{n-1}$ is a quotient of $\widetilde{W}_i$, $[V/V^{n-1}\colon \theta]\leq [\widetilde{W}_i\colon \theta] \leq 1$. On the other hand, $\theta=\soc(V/V^{n-1})$. This finishes the proof.
\end{proof}

\begin{lemma}\label{quotienting out special case}
Assume that \cref{Main theorem on generalization} holds for all pairs $<(n, n)$. Suppose $V$ as in \cref{Main theorem on generalization} with $m=n$. Assume $\tau' \colonequals F(\mathfrak{t}_{\mu}(\omega'))\in \JH(V)$ with $0\leq \omega'_i< \widetilde{\omega}_i$ or $0\geq \omega'_i>\widetilde{\omega}_i$ and $\omega'_j=\omega_j$ for all $j\neq i$. Assume $|\omega_i-\omega'_i|<2(n-1) $. Then $I(\sigma, \tau')$ is a subrepresentation of $V$ and
    $$V/I(\sigma, \tau')\cong I(F(\mathfrak{t}_{\mu}((\omega'_i+\epsilon_i 1) \overline{\eta}_i), \tau).$$
\end{lemma}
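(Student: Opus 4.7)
Following the pattern of \cref{quotienting out 1st time}, the plan is first to embed $I(\sigma, \tau')$ in $V$, then to identify the socle of the quotient $W := V/I(\sigma, \tau')$ as $\sigma^* := F(\mathfrak{t}_\mu((\omega'_i+\epsilon_i)\overline{\eta}_i))$, and finally to use the induction hypothesis to characterize $W$ as $I(\sigma^*, \tau)$. For the first step, the hypothesis $|\omega'_i| < |\widetilde{\omega}_i|$ combined with $\omega'_j = \omega_j$ for $j \neq i$ yields $\sum_j \lfloor |\omega'_j|/2 \rfloor \leq n-2$, so by \cref{description of JH(Injk)} the weight $\tau'$ is a $k$-weight with $k \leq n-1$. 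Since $\omega' \leq \omega$ in the sense of \cref{definition on relations}, \cref{JH factors when m=n} gives $\tau' \in \JH(V)$, and \cref{lemma 10} produces a subrepresentation with cosocle $\tau'$ and $[\,\cdot\,:\sigma]=1$; the induction hypothesis \cref{Main theorem on generalization}$[(n-1,k)]$ identifies this subrepresentation with $I(\sigma, \tau')$.

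For the socle of $W$, multiplicity freeness (\cref{m=n multiplicity free}) and \cref{JH factors when m=n} describe $\JH(W)$ as $\{F(\mathfrak{t}_\mu(\xi)) : \xi \leq \omega,\; |\xi_i| > |\omega'_i|\}$, so that $\sigma^* \in \JH(W)$. The embedding $\sigma^* \hookrightarrow \soc(W)$ is obtained by considering $I(\sigma, \sigma^*) \subset V$ (existing by the same induction argument as above) and performing a Jordan-H\"older count: $(I(\sigma, \sigma^*) + I(\sigma, \tau'))/I(\sigma, \tau') \cong \sigma^*$. For the reverse inclusion, any $\rho \subset \soc(W)$ lifts to a subrepresentation $U \supsetneq I(\sigma, \tau')$ of $V$ with $\soc(U) = \sigma$, $[U:\sigma]=1$, and $\cosoc(U) = \rho$; when $\rho$ is a $k'$-weight with $k' < n$, the induction hypothesis identifies $U \cong I(\sigma, \rho)$, and then $\tau' \in \JH(U)$ combined with \cref{Cor on socle filtration} forces $\nu_{\rho, j} = \omega_j$ for $j \neq i$ and $|\nu_{\rho, i}| \geq |\omega'_i|+1$, hence $\nu^* \leq \nu_\rho$ and $\sigma^* \in \JH(U) \setminus \JH(I(\sigma, \tau'))$, forcing $\rho = \sigma^*$. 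The case of $\rho$ an $n$-weight is ruled out by combining \cref{n=m irred soc(V/Vn-1)} and \cref{soc(V/Vn-1)} with the multiplicity freeness of $V^{n-1}$.

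Finally, the change-of-origin \cref{change of origin} together with the bound $|\omega_i - \omega'_i| < 2(n-1)$ shows (via a direct computation of $\sum_j \lfloor |(\omega - \nu^*)_j|/2 \rfloor$) that $\tau$ is an $n_*$-weight relative to $\sigma^*$ with $n_* < n$, so the induction hypothesis \cref{Main theorem on generalization}$[(n-1,n_*)]$ applies to any representation with socle $\sigma^*$, irreducible cosocle $\tau$, and $[\,\cdot\,:\sigma^*] = 1$; combined with the socle identification and the inherited multiplicity freeness of $W$, this yields $W \cong I(\sigma^*, \tau)$. The main obstacle is the reverse inclusion in the socle computation --- specifically, controlling the $n$-weights $\rho$ that could \emph{a priori} appear in $\soc(W)$ just above $I(\sigma, \tau')$, where the induction hypothesis on the main theorem does not directly apply; these are pinned down using the structural results \cref{Cor on Vk+1/Vk} and \cref{quotienting out 1st time} established within the current induction step.
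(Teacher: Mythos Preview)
Your argument has a genuine gap in step 2b (the reverse inclusion $\soc(W)\subseteq\sigma^*$). You claim that any $\rho\subset\soc(W)$ lifts to a subrepresentation $U\supsetneq I(\sigma,\tau')$ of $V$ with $\cosoc(U)=\rho$. The natural candidate $U=\pi^{-1}(\rho)$ (for $\pi:V\to W$) does contain $I(\sigma,\tau')$, but the short exact sequence $0\to I(\sigma,\tau')\to U\to\rho\to 0$ only gives a surjection $\cosoc(U)\twoheadrightarrow\rho$, and $\tau'$ can also lie in $\cosoc(U)$. For instance with $f=2$, $\omega=(3,2)$, $i=1$, $\omega'=(0,2)$, one has $\sigma^*=F(\mathfrak{t}_\mu(1,0))$ and $\pi^{-1}(\sigma^*)$ has Jordan--H\"older set $\{(0,0),(0,1),(0,2),(1,0)\}$, whose maximal elements are $(0,2)=\tau'$ and $(1,0)=\sigma^*$; hence $\cosoc(U)=\tau'\oplus\sigma^*$ and the induction hypothesis (which requires irreducible cosocle) does not apply. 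Moreover, even granting $U\cong I(\sigma,\rho)$, your own chain of implications is inconsistent with the desired conclusion: from $\tau'\in\JH(I(\sigma,\rho))$ you deduce $\omega'\leq\nu_\rho$, hence $\nu_{\rho,j}=\omega_j$ for $j\neq i$, whereas $\sigma^*$ sits at $(0,\dots,\omega'_i+\epsilon_i,\dots,0)$; these agree only when $\omega_j=0$ for all $j\neq i$.

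The paper sidesteps computing $\soc(W)$ entirely. It takes the (essentially unique) nonzero map $f:V\to\Inj_n\sigma^*$; multiplicity freeness of $V$ (\cref{m=n multiplicity free}) gives $[f(V):\sigma^*]=1$, and applying the induction hypothesis with origin $\sigma^*$ yields $f(V)\cong I(\sigma^*,\tau)$. Then one observes from \cref{JH factors when m=n} and \cref{Cor on socle filtration} that $\JH(I(\sigma,\tau'))$ and $\JH(I(\sigma^*,\tau))$ partition $\JH(V)$ according to whether $|\omega''_i|\leq|\omega'_i|$ or $|\omega''_i|\geq|\omega'_i|+1$. A Jordan--H\"older count now pins down $\ker f$: since $\tau'\notin\JH(f(V))$ we have $I(\sigma,\tau')\subset\ker f$, and equality follows because both sides have the same constituents. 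Thus $V/I(\sigma,\tau')=f(V)\cong I(\sigma^*,\tau)$, with no socle analysis of the quotient required.
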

\begin{proof}
Let $\ell=\sum_j\lfloor\frac{|\omega'_j|}{2}\rfloor<n-1$ and $\ell'=\lfloor\frac{|\omega_i-\omega'_i|}{2}\rfloor<n-1$. By \cref{m=n multiplicity free}, $V$ is multiplicity free. As $\tau'\leq \tau$, by \cref{JH factors when m=n}, $V$ admits a unique subrepresentation $W^i$ with cosocle $\tau'$. By \cref{Main theorem on generalization}$ [(n,\ell)]$, $W^i\cong I(\sigma, \tau')$. Similarly, $V$ admits a quotient $\widetilde{W}^i$ with socle $F(\mathfrak{t}_{\mu}((\omega'_i+\epsilon_i 1) \overline{\eta}_i))$ which is $(2n-2\ell-3)$-generic. By \cref{Main theorem on generalization}$ [(n,\ell')]$, $\widetilde{W}^i\cong I(F(\mathfrak{t}_{\mu}((\omega'_i+\epsilon_i 1) \overline{\eta}_i)), \tau)$.\par
As $\omega'\leq \omega$, $\sgn(\omega'_i)=\epsilon_i$ if $\omega'_i\neq0$. Given $F(\mathfrak{t}_{\mu}(\omega'))\in \JH(V)$, by applying \cref{Cor on socle filtration}$ [(n,\ell)]$ and respectively $ [(n, \ell')]$ to $I(\sigma, \tau')$ and respectively $I(F(\mathfrak{t}_{\mu}((\omega'_i+\epsilon(\omega_i)) \overline{\eta}_i)), \tau)$, we deduce that
\begin{equation}\label{quotienting out equation}
    F(\mathfrak{t}_{\mu}(\omega''))\in\begin{cases} 
        \JH(I(\sigma, \tau')) \text{ if and only if } |\omega''_i|\leq |\omega'_i|\\
    \JH(I(F(\mathfrak{t}_{\mu}((\omega'_i+\epsilon(\omega_i) 1) \overline{\eta}_i)), \tau)) \text{ if and only if } |\omega''_i|\geq|\omega'_i|+ 1.
    \end{cases} 
\end{equation}
Consider the map $f\colon V\to \Inj_n F(\mathfrak{t}_{\mu}((\omega'_i+\epsilon(\omega_i) 1) \overline{\eta}_i))$, as $V$ is multiplicity free, so is the image of $f$. Moreover, $\cosoc(f(V))\cong \tau$. By \cref{Main theorem on generalization}$ [(n,\ell')]$, we deduce that $f(V)\cong I(F(\mathfrak{t}_{\mu}((\omega'_i+\epsilon(\omega_i) 1) \overline{\eta}_i)), \tau)$. It follows from \cref{quotienting out equation} that $\tau'\in \JH(\ker(f))$. Therefore, $\ker(f)$ admits a subrepresentation with cosocle $\tau'$. By \cref{Main theorem on generalization}$ [(n,\ell)]$, such a representation is isomorphic to $I(\sigma,\tau') $. On the other hand, by \cref{quotienting out equation}, we have $\JH(\ker(f))=\JH(I(\sigma,\tau'))$. Therefore, $\ker(f)\cong I(\sigma,\tau')$.
\end{proof}
\begin{proposition}\label{uniqueness when m=n}
Assume \cref{Main theorem on generalization} holds for all pairs $<(n,n)$. Suppose $V$ as in \cref{Main theorem on generalization} with $m=n$, then $V$ is uniquely determined by $\sigma$ and $\tau$ up to multiplication by scalar.
\end{proposition}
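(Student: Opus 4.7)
The plan is to realize $V$ as a non-split extension whose class in some $\Ext^1$-group is pinned down up to scalar by a reduction to the induction hypothesis. By \cref{m=n multiplicity free} and \cref{JH factors when m=n}, any two such $V$, $V'$ already share all JH multiplicities and the socle filtration; only the abstract isomorphism class of $V$ remains to be determined.

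Since $\tau$ is an $n$-weight with $n \geq 3$, there exists an index $i \in \mathcal{J}$ with $|\omega_i| \geq 2$. Choose $\omega'$ with $\omega'_j = \omega_j$ for $j \neq i$ and $\omega'_i = \widetilde{\omega}_i - 2\epsilon_i$; these satisfy the hypotheses of \cref{quotienting out special case}, so $V$ sits in a short exact sequence
\[
0 \to W \to V \to \widetilde{W} \to 0
\]
with $W \cong I(\sigma, F(\mathfrak{t}_\mu(\omega')))$ and $\widetilde{W} \cong I(F(\mathfrak{t}_\mu(\omega' + \epsilon_i \overline{\eta}_i)), \tau)$. Both terms are uniquely determined up to isomorphism by the induction hypothesis, and since $V \subset \Inj_n \sigma$ has irreducible socle $\sigma$, the extension does not split, so $[V] \in \Ext^1_{K/Z_1}(\widetilde{W}, W)$ is non-zero. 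It therefore suffices to prove $\dim_{\F}\Ext^1_{K/Z_1}(\widetilde{W}, W) \leq 1$.

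To achieve this, I replace the subrepresentation $W$ by its cosocle quotient $A$ corresponding to the ``vertical line segment'' of the extension-graph rectangle closest to $\widetilde{W}$, and replace $\widetilde{W}$ by its socle subrepresentation $B$ corresponding to the adjacent vertical line segment. By \cref{Cor on socle filtration} and the induction hypothesis, both $A$ and $B$ are again of the form $I(-, -)$ for strictly smaller parameters. A long-exact-sequence chase then identifies $\Ext^1_{K/Z_1}(\widetilde{W}, W)$ with $\Ext^1_{K/Z_1}(B, A)$, provided the relevant $\Ext^1$-groups between JH factors of $\ker(W \twoheadrightarrow A)$ or $\widetilde{W}/B$ and JH factors of $W$ or $A$ vanish. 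These vanishings should follow from \cite[Lemma 2.4.6]{BHHMS} together with the genericity hypothesis, since the corresponding pairs of Serre weights differ by at least two in at least one coordinate of the extension graph. Finally, applying the induction hypothesis to the pair $(A, B)$ yields $\dim_{\F}\Ext^1_{K/Z_1}(B, A) = 1$, completing the argument.

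The main obstacle will be the Ext-vanishings used in the reduction: one must verify, using the change-of-origin formula \cref{change of origin} and the explicit description of JH factors via \cref{Cor on socle filtration}, that every JH factor of the cokernel $\widetilde{W}/B$ (resp.\ of the kernel $\ker(W \twoheadrightarrow A)$) is sufficiently far in the extension graph from every JH factor of $W$ (resp.\ of $A$) for \cite[Lemma 2.4.6]{BHHMS} to apply. This is essentially a combinatorial check driven by the fact that the ``kernel'' and ``cokernel'' pieces live in columns of the extension-graph rectangle separated from the opposite ``column'' by at least two units in the $i$-th coordinate.
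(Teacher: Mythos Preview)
Your proposal follows the paper's strategy closely: realize $V$ as a non-split extension via \cref{quotienting out special case}, then bound the relevant $\Ext^1$ by d\'evissage and the induction hypothesis. Two points of comparison:

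\textbf{A notational slip.} Your formula $\widetilde{W}\cong I(F(\mathfrak{t}_\mu(\omega'+\epsilon_i\overline{\eta}_i)),\tau)$ is wrong as written: by \cref{quotienting out special case} the socle of $V/W$ is $F(\mathfrak{t}_\mu((\omega'_i+\epsilon_i)\overline{\eta}_i))$, with \emph{all other} coordinates zero, not $\omega_j$. With your $\omega'$ this puts the $i$-th coordinate at $\widetilde{\omega}_i-\epsilon_i$. As written, $\JH(W)\cup\JH(\widetilde{W})$ would not exhaust $\JH(V)$.

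\textbf{Different cut and reduction.} The paper cuts at $\omega'_i=\widetilde{\omega}_i-\epsilon_i$ (so $\widetilde{W}^i$ has socle with $i$-th coordinate $\widetilde{\omega}_i$), first disposes of the case $\omega_i\neq\widetilde{\omega}_i$ by a d\'evissage on the $\widetilde{W}^i$ side, and then reduces only on the $W^i$ side via \cref{quotienting out 1st time} to reach $\Ext^1(\widetilde{W}^i,\,I(\mu_i^{\epsilon_i}(\sigma),\,\cdot))$; the terminal step invokes the induction hypothesis at level $(n,n-1)$. Your version cuts one step earlier and reduces both sides to single adjacent columns $A,B$, landing at induction level $(n-n_i,n-n_i)$. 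Both routes produce the needed inequality $\dim\Ext^1(\widetilde{W},W)\leq\dim\Ext^1(\text{reduced})$ via the distance-$\geq 2$ vanishing from \cite[Lemma 2.4.6]{BHHMS}, and both conclude by the same (implicit) argument that uniqueness of $I(\soc,\cosoc)$ forces the reduced $\Ext^1$ to be one-dimensional. Your last paragraph correctly identifies the combinatorial content of the vanishing step; note that what you get are \emph{inequalities} of dimensions, not an identification, which is all that is needed.
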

\begin{proof}
Pick an $i$ such that $|\omega_i|>1$. By \cref{JH factors when m=n}, $F(\mathfrak{t}_\mu(\omega_k-\epsilon_i(\widetilde{\omega}_i-\omega_i+1)\overline{\eta}_i))$, $ F(\mathfrak{t}_{\mu}(\widetilde{\omega}_i\overline{\eta}_i))$ are in $\JH(V)$. Therefore, the assumption of \cref{quotienting out special case} holds and we deduce that $V$ admits $W^i\colon \cong I(\sigma,F(\mathfrak{t}_\mu(\omega_k-\epsilon_i(\widetilde{\omega}_i-\omega_i+1)\overline{\eta}_i)))$ as a subrepresentation, and $\widetilde{W}^i \colonequals  I(F(\mathfrak{t}_{\mu}(\widetilde{\omega}_i\overline{\eta}_i)), \tau)$ as a quotient; and $V/W^i=\widetilde{W}^i$.\par
Hence, $V$ represents a nontrivial class in $\Ext^1_{K/Z_1}(\widetilde{W}^i, W^i)$. Therefore, to prove the uniqueness of this representation, 
it is sufficient to show $$\dim_\F(\Ext^1_{K/Z_1}(\widetilde{W}^i, W^i))\leq 1.$$
First, we reduce to the case where $\widetilde{\omega}_i=\omega_i$. Assume $|\omega_i|=|\widetilde{\omega}_i|+1$. By the proof of \cref{quotienting out special case}, we see that if $ F(\mathfrak{t}_{\mu}(\beta'))\in \JH(V)$, $ F(\mathfrak{t}_{\mu}(\beta'))\in \JH(W^i)$ if and only if $|\beta'_i|<|\widetilde{\omega}_i|$.
$\widetilde{W}^i$ admits a quotient $\widetilde{W}'$ with socle $F(\mathfrak{t}_{\mu}(\omega_i\overline{\eta}_i))$, which is $(2n-1-|\omega_i|)$-generic.
As $|\omega_i|=|\widetilde{\omega}_i|+1$, we can apply \cref{Cor on socle filtration}$[n-|\frac{\widetilde{\omega}_i}{2}|]$ to $I(F(\mathfrak{t}_{\mu}(\omega_i\overline{\eta}_i)), \tau)$ and deduce that $F(\mathfrak{t}_{\mu}(\omega'))\in \JH(I(F(\mathfrak{t}_{\mu}(\omega_i\overline{\eta}_i)), \tau))$ only if $\omega'_i=\omega_i=\widetilde{\omega}_i+\epsilon_i1$. Therefore, if $\tau'\in \JH(I(F(\mathfrak{t}_{\mu}(\omega_i\overline{\eta}_i)), \tau))$ and $\sigma'\in \JH(W^i)$, then by \cite[Lemma~2.4.6]{BHHMS}, $\Ext^1_{K/Z_1}(\tau', \sigma')=0$. Therefore, by d\'evissage,
$$\Ext^1_{K/Z_1}(I(F(\mathfrak{t}_{\mu}(\omega_i\overline{\eta}_i)), \tau), W^i)=0.$$
Moreover, by applying \cref{quotienting out 1st time} to $\widetilde{W}^i\cong I( F(\mathfrak{t}_{\mu}(\widetilde{\omega}_i\overline{\eta}_i)), \tau)$ with $n=\sum_{j\neq i}\frac{|\omega_j|}{2}$ here and the multiplicity free condition, we have the following short exact sequence:
\begin{equation}\label{equation for uniqueness in general 1}
\begin{split}
    0\to I(F(\mathfrak{t}_{\mu}(\widetilde{\omega}_i\overline{\eta}_i)), F(\mathfrak{t}_\mu(\omega_k-\epsilon_i\overline{\eta}_j)))\to \widetilde{W}^i\to I(F(\mathfrak{t}_{\mu}(\omega_i\overline{\eta}_i), \tau) \to 0.
    \end{split}
\end{equation}
Applying the $\Hom_{K/Z_1}(\mbox{---},W^i)$ functor to \cref{equation for uniqueness in general 1} and observing that the Jordan--H\"{o}lder factors of $W^i$ and $\widetilde{W}^i$ are disjoint, we deduce that the first 3 terms vanish and we have 
\begin{equation*}
\begin{split}
    0\to \Ext^1_{k/Z_1}( I(F(\mathfrak{t}_{\mu}(\omega_i\overline{\eta}_i)), \tau), W^i)
    \to \Ext^1_{K/Z_1}(\widetilde{W}^i, W^i)\\
\to \Ext^1_{K/Z_1}(I(F(\mathfrak{t}_{\mu}(\widetilde{\omega}_i\overline{\eta}_i)), F(\mathfrak{t}_\mu(\omega_k-\epsilon_i\overline{\eta}_j))), W^i).
\end{split}
\end{equation*}
As $\Ext^1_{K/Z_1}(I(F(\mathfrak{t}_{\mu}(\omega_i\overline{\eta}_i)), \tau), W^i)=0$, 
\begin{equation*}
\dim_\F(\Ext^1_{K/Z_1}(\widetilde{W}^i, W^i))\leq\dim_\F(\Ext^1_{k/Z_1}(I(F(\mathfrak{t}_{\mu}(\widetilde{\omega}_i\overline{\eta}_i)), F(\mathfrak{t}_\mu(\omega_k-\epsilon_i\overline{\eta}_j))), W^i)).
\end{equation*}
Hence, it is sufficient to show that the latter is 1. Therefore, we can assume $\widetilde{\omega}_i=\omega_i$.\par
By \cref{quotienting out 1st time}, we know that $I(\sigma,\tau^{(i)})$ is a subrepresentation of $V^{n-1}$. Applying \cref{cor on multiplicities}$ [n-\frac{\omega_i}{2}]$ to $I(\sigma,\tau^{(i)})$, we deduce that $F(\mathfrak{t}_{\mu}(\omega'))\in \JH(I(\sigma,\tau^{(i)}))$ only if $\omega'_i=0$. Recall from the second paragraph that $F(\mathfrak{t}_\mu(\beta'))\in \JH(\widetilde{W}^{(i)})$ only if $\beta'_i=\omega_i$, in particular $|\beta_i|\geq2$. By \cite[Lemma~2.4.6]{BHHMS}, if $\sigma' \in I(\sigma,\tau^{(i)})$ and $\tau'\in \widetilde{W}^{(i)}$, we have
$\Ext^1_{K/Z_1}(\tau', \sigma')=0$. Therefore, by d\'evissage, 
$$\Ext^1_{K/Z_1}(\widetilde{W}^{(i)},I(\sigma,\tau^{(i)}))=0.$$
Again, by the result of \cref{quotienting out 1st time} for $V=W^{i}$, we have a short exact sequence:
\begin{equation}\label{equation on quotienting out}
    0\to I(\sigma, \tau^{(i)}) \to W^i \to I(\mu_i^{\epsilon_i}(\sigma),F(\mathfrak{t}_\mu(\omega-\epsilon_i\overline{\eta}_i)))\to 0.
\end{equation}
Therefore, we again apply the functor $\Hom_{K/Z_1}(\widetilde{W}^i, \mbox{---})$ to \cref{equation on quotienting out}, and observe that the Jordan--H\"{o}lder factors of $W^i$ and $\widetilde{W}^i$ are disjoint, and hence the first 3 terms vanish, and we have an exact sequence
\begin{equation*}
0\to \Ext^1_{K/Z_1}(\widetilde{W}^i, I(\sigma, \tau^{(i)}))\to \Ext^1_{K/Z_1}(\widetilde{W}^i,W^i)
\to \Ext^1_{K/Z_1}(\widetilde{W}^i, I(\mu_i^{\epsilon_i}(\sigma),F(\mathfrak{t}_\mu(\omega-\epsilon_i\overline{\eta}_i)))).
\end{equation*}
As $\Ext^1_{K/Z_1}(\widetilde{W}^i,I(\sigma,\tau^{(i)}))=0$, 
\begin{equation*}
        \dim_\F(\Ext^1_{K/Z_1}(\widetilde{W}^i,W^i))
        \leq \dim_\F(\Ext^1_{K/Z_1}(\widetilde{W}^i, I(\mu_i^{\epsilon_i}(\sigma),F(\mathfrak{t}_\mu(\omega-\epsilon_i\overline{\eta}_i))))).
\end{equation*}
As $\omega_i\in 2\Z$, $\sum_k\lfloor\frac{|\omega_k-\epsilon_i\delta_{ik}|}{2}\rfloor=n-1$, $\tau$ is an $n-1$ weight with respect to $\mu_i^{\epsilon_i}(\sigma)$ according to \cref{lemma on k-weight in Vk}. Therefore, by \cref{Main theorem on generalization}$[(n,n-1)]$, there is a unique $m_{K/Z_1}^{n-1}$-torsion representation with socle $\mu_i^{\epsilon_i}(\sigma)$, which is $(2n-2)$-generic, and $\tau$, and so
\[\dim_\F(\Ext^1_{K/Z_1}(\widetilde{W}^i, I(\mu_i^{\epsilon_i}(\sigma),F(\mathfrak{t}_\mu(\omega_k-\epsilon_i\overline{\eta}_i)))))=1. \qedhere\] 
\end{proof}
\end{proof}
\begin{corollary}\label{Cor on condition of being mn rep}
    If $V$ has cosocle $\tau$, $[V\colon \tau]=1$, $\tau$ is $(2n+1)$-generic and $\JH(V)\subset \JH(\Proj_n \tau)$, then $\Proj_n \tau \twoheadrightarrow V$, in particular $V$ is $\m_{K_1}^n$-torsion.
\end{corollary}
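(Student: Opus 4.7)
The plan is to dualize. Put $W := V^\vee$: then $\soc W = \tau^\vee$, $[W:\tau^\vee]=1$, $\JH(W) \subset \JH(\Inj_n \tau^\vee)$, and $\tau^\vee$ inherits the $(2n+2)$-genericity of $\tau$. Via the projectivity of $\Proj_n\tau$ in the category of $\m_{K_1}^n$-torsion representations, the existence of a surjection $\Proj_n\tau \twoheadrightarrow V$ is equivalent to $V$ being $\m_{K_1}^n$-torsion, and dually to $W \subset \Inj_n\tau^\vee$. We therefore reduce to proving: whenever $\soc W = \tau^\vee$, $[W:\tau^\vee]=1$, and $\JH(W) \subset \JH(\Inj_n\tau^\vee)$, $W$ is $\m_{K_1}^n$-torsion.

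We induct on the length of $W$, the base case $W = \tau^\vee$ being trivial. For $W$ of length $\geq 2$, first assume $\cosoc W = \eta^\vee$ is irreducible. Setting $W_0 := \ker(W \twoheadrightarrow \eta^\vee)$, we obtain a subrepresentation of length one less with $\soc W_0 = \tau^\vee$, $[W_0:\tau^\vee]=1$, and $\JH(W_0) \subset \JH(\Inj_n\tau^\vee)$ inherited; by induction $W_0 \subset \Inj_n\tau^\vee$, and since $\m_{K_1} W \subset W_0$ (as $\eta^\vee$ is $\m_{K_1}$-torsion), $W \subset \Inj_{n+1}\tau^\vee$. Now invoke \cref{Main theorem on generalization} with origin $\sigma = \tau^\vee$ at ambient level $n+1$, whose $(2n+1)$-genericity hypothesis is implied by our $(2n+2)$-generic assumption: it identifies $W \cong I(\tau^\vee, \eta^\vee)$ and shows $W$ is $\m_{K_1}^m$-torsion for $m$ the intrinsic depth of $\eta^\vee$ with respect to $\tau^\vee$. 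Since $\eta^\vee \in \JH(\Inj_n\tau^\vee)$ forces $m \leq n$, we conclude $W \subset \Inj_n\tau^\vee$.

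For the remaining case where $\cosoc W = \bigoplus_{i=1}^k \eta_i^\vee$ is reducible (so $k \geq 2$), set $W_i := \ker(W \twoheadrightarrow \cosoc W / \eta_i^\vee)$. Each $W_i$ has length $\ell - k + 1 \leq \ell - 1$. Indecomposability of $W$ (forced by $\soc W$ being simple of multiplicity one and length $\geq 2$) yields $\tau^\vee \subset \Rad W \subset W_i$, so $\soc W_i = \tau^\vee$ with $[W_i:\tau^\vee]=1$, and the $\JH$ condition is inherited; induction gives each $W_i \subset \Inj_n\tau^\vee$. A standard diagram chase establishes $W = \sum_i W_i$ (since $W/\sum_i W_i$ would be a common quotient of each $W/W_i = \bigoplus_{j \neq i}\eta_j^\vee$ and hence vanishes), whence $W \subset \Inj_n\tau^\vee$. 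The principal obstacle is the irreducible-cosocle case: the naive induction yields only $\m_{K_1}^{n+1}$-torsion, and sharpening it to $\m_{K_1}^n$-torsion crucially requires the main theorem at ambient level $n+1$, whose hypotheses the $(2n+2)$-genericity of $\tau$ is tailored to satisfy.
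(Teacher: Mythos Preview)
Your proof is correct and follows essentially the same strategy as the paper: bootstrap via induction on length to $\m_{K_1}^{n+1}$-torsion, then invoke \cref{Main theorem on generalization} at level $n+1$ to sharpen to $\m_{K_1}^{n}$. The organization differs slightly. You dualize at the outset and handle the reducible-cosocle case by writing $W=\sum_i W_i$ and applying the induction hypothesis to each piece; the paper stays on the $V$-side, removes $\soc(V)$ (the dual of your removal of $\cosoc W$), establishes multiplicity-freeness via the dual of \cref{cor on multiplicities}, and then embeds $V\hookrightarrow\bigoplus_{\sigma\subset\soc V}\Inj\sigma$, reducing to the pieces $I(\sigma,\tau)$. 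A minor payoff of your route is that the Main Theorem is always invoked with origin $\tau^\vee$, so its $(2n+1)$-genericity hypothesis is immediately implied by the $(2n+2)$-genericity of $\tau$; in the paper's decomposition the origin becomes a weight $\sigma\in\soc(V)\subset\JH(\Proj_n\tau)$, and checking the required genericity of such $\sigma$ is less direct.
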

\begin{proof}
Assume that this does not hold, let $M$ be the counterexample with minimal length. Then by definition, $V/\soc(V)$ has cosocle $\tau$ and $\text{length}(V/\soc(V))<\text{length}(V)$; therefore, $V/\soc(V)$ is $\m_{K_1}^n$-torsion. We have
    $$0 \to \soc(V)\to V \to V/\soc(V) \to 0$$
    and $\soc(V)$ is semisimple and therefore $K_1$-invariant. Therefore, $V$ is $\m_{K_1}^{n+1}$-torsion. By the dual version of \cref{cor on multiplicities}$ [n+1]$, we deduce that $V$ is multiplicity free. We have $V\hookrightarrow \bigoplus_{\sigma\subset\soc(V)}\Inj\sigma$, which by \cref{Main theorem on generalization}, factors through $I(\sigma, \tau)$. Since $\sigma\in \JH(V)\subset\JH(\Proj_n \tau)$, $I(\sigma, \tau)$ is $\m_{K_1}^n$-torsion, so is V.
\end{proof} 
\section{Galois Deformation rings}\label{ch3: Galois deformation ring}
In this section, our goal is to use ``local models'' to compute the Galois deformation ring $R^{\lambda,\tau}_{\overline{\rho}}$ for sufficiently generic $\overline{\rho}$. On the one hand, we will compute the ring $R_{poly}$, which approximates $R^{\leq(\ell_j,0)_j,\tau}_{\overline{\rho}}$, up to an explicit tail. On the other hand, we will calculate the integer $k$ such that $p^k$ lies in a certain ideal. We can then apply Elkik's approximation theorem and compute $R^{\leq(\ell_j,0)_j,\tau}_{\overline{\rho}}$ with an explicit genericity condition. For any Hodge--Tate weights $\lambda$ and tame inertial type $\tau$ under some explicit genericity conditions, we will give an explicit description of $R^{\lambda,\tau}_{\overline{\rho}}$. In particular, we show such a ring to be a normal domain and a complete intersection ring. Moreover, we give a bijection between the irreducible components of the special fibre of $R^{\lambda,\tau}_{\overline{\rho}}$ and $W(\overline{\rho})\cap\JH(\overline{\sigma}(\lambda, \tau))$. We follow the approach and notation of \cite{BHHMS} with modification from \cite{Yitong} for the non-semisimple cases, which in turn, use the methods and results of \cite{potentiallycrystalline}, \cite{weightelimination}, \cite{LatticeforGL3} and \cite{localmodel}.
\subsection{Notations}\label{notation for Gal def ring}
An \emph{inertial type} is a representation of $I_K$ with open kernel which can be extended to $G_K$. Given $\lambda\in X_+^{*}(\underline{T})$, we can define the $\lambda$-admissible set relative to the Bruhat order, $\Adm^{\vee}(\mathfrak{t}_{\lambda})$, which will be described explicitly below. Given $\widetilde{w}\in \underline{\widetilde{W}}^{\vee}$, we can associate $\widetilde{w}^* \in \underline{\widetilde{W}}$, such that $((s\mathfrak{t}_\mu)^*)_j=\mathfrak{t}_{\mu_{f-1-j}}s^{-1}_{f-1-j}$. 
Let $\lambda=(\lambda'_{j,1}, \lambda_{j,2})$ with $\lambda_{j,1}\geq \lambda_{j,2}$. We write $\lambda'\leq\lambda$ if for all $j$, $\lambda_{j,1}+\lambda_{j,2}=\lambda'_{j,1}+\lambda'_{j,2}$ and $\lambda_{j,1}\geq\lambda'_{j,1}\geq \lambda_{j,2}$. It can be shown that 
\[\Adm^{\vee}(\mathfrak{t}_{\lambda})=\{
\widetilde{w}\colon \widetilde{w}_{f-1-j}=\mathfrak{t}_{\lambda_{j}'} \text{ or }\mathfrak{w}\mathfrak{t}_{\lambda_{j}'}, \text{ with }\lambda_{j}'\leq \lambda_{j} \text{ and } \widetilde{w}_{f-1-j}\neq\mathfrak{w}\mathfrak{t}_{(\lambda_{j,2},\lambda_{j,1})}\}.
\]
Given $(s, \mu)\in \underline{W}\times X^*(\underline{T})$, we can associate a tame inertial type $\tau(s, \mu)$. (For more details, see \cite[\S~2.3]{BHHMS})
We say that $\tau$ is \emph{$N$-generic} for some $N\in \Z_{\geq0}$ if $\tau\cong \tau(s, \mu+\eta)$ for some $\mu$ which is $N$-deep in $\underline{C_0}$.\par
We let $\overline{\rho}\colon G_K\to \GL_n(\F)$ be a Galois representation. We say $\overline{\rho}$ is \emph{$N$-generic} if $\overline{\rho}^{ss}|_{I_K}\cong \overline{\tau}(s, \mu) $ for some $s\in \underline{W}$ and $\mu-\eta\in X^*(\underline{T})$ which is $N$-deep in $\underline{C_0}$.
 Let $\rho$ be a two-dimensional de Rham representation of $G_K$ over $\overline{\Q}_p$, with regular Hodge--Tate weights. If there is a unique $\lambda=(\lambda_{\kappa,i})\in (\Z^2)^f$ such that for each $\sigma_i\colon K\hookrightarrow \overline{\Q}_p$, 
$$HT_i(\rho)=\{\lambda_{i,1}, \lambda_{i,2}\},$$
with $\lambda_{i,1}> \lambda_{i,2}$, then we say $\rho$ is \emph{regular} of Hodge type $\lambda$. For two Hodge--Tate weights $\lambda, \lambda'$, we write $\lambda'\geq\lambda$ if for all $j$, $\lambda_{j,1}+\lambda_{j,2}=\lambda'_{j,1}+\lambda'_{j,2}$ and $\lambda'_{j,1}\geq\lambda_{j,1}\geq 0$. We normalize Hodge-–Tate weights so that $\varepsilon$ has Hodge–Tate weight $1$ at every embedding.\par
Let $R_{\overline{\rho}}^\Box$ be the local $\mathcal{O}$-algebra parameterizing framed deformation of $\overline{\rho}$. 
For each dominant weight $\lambda\in X^*_{+}(\underline{T})$, let $R_{\overline{\rho}}^{\lambda,\tau}$ (resp. $R_{\overline{\rho}}^{\leq \lambda,\tau}$) be the maximal reduced, $\mathcal{O}$-flat quotient of $R^{\Box}_{\overline{\rho}}$, which parametrizes potentially crystalline lifts of $\rho$ with Hodge--Tate weights $\lambda$ (resp. $\lambda'\leq \lambda$) and tame inertial type $\tau$ (its existence follows from \cite{Kisin}). If $\overline{\sigma}=\otimes_{j=0}^{f-1}(\Sym^{a_j})^{\text{Fr}}\otimes\det^{\sum_{j=0}^{f-1}p^jb_j}$, where $0\leq a_j, b_j\leq p-1$, is a Serre weight, then we say $\rho:G_{F_v}\to \GL_2(E)$ has \emph{Hodge type} $\overline{\sigma}$ if $\HT(\rho)_{-j}=(a_j+b_j+1, b_j)$. We define $R^{\overline{\sigma}}_{\overline{\rho}}$ to be the reduced, $p$-torsion free quotient of $R^\square_{\overline{\rho}}$ corresponding to the crystalline deformation of Hodge type $\overline{\sigma}$.\par
Given a tame inertial type $\tau$, by the inertial local Langlands correspondence given in the appendix of \cite{Breuil-Mezard}, we have a finite-dimensional irreducible $E$-representation $\sigma(\tau)$ of $\GL_2(\mathcal{O}_K)$, which by extending scalar is defined over $E$. We write $\overline{\sigma}(\tau)$ for the mod $p$ semisimplification of $\sigma(\tau)$. Then the action of $\GL_2(\mathcal{O}_K)$ on $\overline{\sigma}(\tau)$ factors
through $\GL_2(k)$, so that the Jordan--H\"{o}lder factors of $\overline{\sigma}(\tau)$ are Serre weights. More precisely, the Jordan--H\"{o}lder factors of $\overline{\sigma}(\tau)$ are described as follows. \par
Recall that $\eta_J\colonequals \sum_{j\in J}\eta_j\in X^*(T)$, $\overline{\eta}_J$ is the image of $\eta_J$ in $\Lambda\colonequals X^*(T)/X^0(T)$, and $\Sigma\colonequals\{\overline{\eta}_j:J\subset \mathcal{J}\}$.
\begin{proposition}\label{prop on JH} \cite[Prop. 2.4.3]{BHHMS}
    Suppose $\tau=\tau(sw^{-1}, \mu-sw^{-1}(\nu))$ for some $(s, \mu), (w, \nu)\in \underline{W}\times X^*(\underline{T})$ such that $\mu-sw^{-1}(\nu)-\eta$ is $1$-deep in $\underline{C}_0$. If $\nu\in \eta+\Lambda_R$, then
\[\JH\left(\overline{\sigma}(\tau)\right)=\left\{F(\mathfrak{t}_{\mu-\eta}(sw^{-1}(\omega-\overline{\nu})))\colon \omega\in \Sigma\right\}.\]
\end{proposition}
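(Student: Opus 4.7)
The plan is to pass between the inertial local Langlands description of $\sigma(\tau)$, for which the mod $p$ Jordan-H\"older factors are computed classically by Diamond (principal series case) and Breuil-Pa\v sk\={u}nas (cuspidal case), and the extension graph parametrization set up in \cref{extension graph}. The result will follow once these two parametrizations are matched.

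First, I would unpack $\sigma(\tau)$ explicitly. By the recipe of [BHHMS, \S2.3], the datum $(sw^{-1},\mu-sw^{-1}(\nu))$ determines either a tame principal series type or a cuspidal type, and $\sigma(\tau)$ is correspondingly either $\mathrm{Ind}_B^{\GL_2(\mathcal{O}_K)}(\chi_1\otimes\chi_2)$ or the inflation to $\GL_2(\mathcal{O}_K)$ of a cuspidal Deligne-Lusztig representation of $\GL_2(k)$, with the characters read off from $(sw^{-1},\mu-sw^{-1}(\nu))$. Next, I would invoke the classical description: the constituents of $\overline{\sigma}(\tau)$ are in bijection with subsets $J\subset\mathcal{J}$, with an explicit formula for the highest weight of the constituent indexed by $J$ in terms of $(sw^{-1},\mu-sw^{-1}(\nu))$. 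Under the hypothesis that $\mu-sw^{-1}(\nu)-\eta$ is $1$-deep in $\underline{C}_0$, all $2^f$ constituents are distinct and regular, matching $|\Sigma|$.

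The heart of the proof is to match this $J$-indexed list with the extension-graph list $\{F(\mathfrak{t}_{\mu-\eta}(sw^{-1}(\omega-\overline{\nu}))):\omega\in\Sigma\}$ under the identification $J\leftrightarrow\omega=\overline{\eta}_J$. For each such $\omega$, I would unwind $\mathfrak{t}_{\mu-\eta}$ as defined in \cref{extension graph}: lift $sw^{-1}(\omega-\overline{\nu})$ to some $\omega'\in X^*(\underline{T})$, find the unique $\widetilde{\omega}'\in\Omega\cap t_{-\pi^{-1}(\omega')}\underline{W}_a$, and apply the $p$-dot action $\widetilde{\omega}'\cdot((\mu-\eta)+\omega')$ to obtain the highest weight. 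The assumption $\nu\in\eta+\Lambda_R$ guarantees that $sw^{-1}(\omega-\overline{\nu})$ lies in the correct coset modulo $X^0(\underline{T})$ so that the output is an element of $X_{\mathrm{reg}}(\underline{T})/(p-\pi)X^0(\underline{T})$, and the $1$-deep hypothesis guarantees that $sw^{-1}(\omega-\overline{\nu})\in\Lambda_W^{\mu-\eta}$, so $\mathfrak{t}_{\mu-\eta}$ is well-defined on every $\omega\in\Sigma$ and produces distinct regular Serre weights. Comparing the resulting highest weight with the Diamond/Breuil-Pa\v sk\={u}nas formula term-by-term then yields the claimed equality of sets.

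The main obstacle is this last matching. The two parametrizations were set up in different conventions (Frobenius twists, the role of the long Weyl element $\mathfrak{w}$, normalization of the inertial local Langlands correspondence, the shift by $\eta$ versus by $\overline{\nu}$), and the factor $\widetilde{\omega}'\in\Omega$ hidden inside $\mathfrak{t}_{\mu-\eta}$ must be shown to produce exactly the twists that appear in the Deligne-Lusztig character underlying Diamond's formula. Once these bookkeeping conventions are aligned, the proposition reduces to the already-established classical description, and (as noted in the excerpt) the content is [BHHMS, Prop.~2.4.3], to which I would ultimately appeal.
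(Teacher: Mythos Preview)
The paper does not give any proof of this proposition; it is stated with a citation to \cite[Prop.~2.4.3]{BHHMS} and nothing more. Your proposal is therefore strictly more than what the paper does: you sketch the underlying argument (Diamond/Breuil--Pa\v sk\={u}nas decomposition of $\overline{\sigma}(\tau)$ indexed by subsets $J\subset\mathcal{J}$, then matching with the extension-graph map $\mathfrak{t}_{\mu-\eta}$) and then, as you yourself note, ultimately appeal to the same reference. Since the paper simply invokes that reference outright, your approach and the paper's coincide at the level of ``cite [BHHMS, Prop.~2.4.3]''; the additional sketch you provide is correct in spirit and is essentially how the proof in \cite{BHHMS} proceeds.
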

We let $V(\lambda)$ be the irreducible algebraic representation with the highest weight $\lambda$.
We write $V(\lambda-\eta)\colonequals\otimes_{E,j} V(\lambda_j-\eta)^{(j)}$ and $\sigma(\lambda, \tau)\colonequals \sigma(\tau) \otimes_{E,j} V(\lambda_j-\eta)^{(j)}$ for the $\GL_2(\mathcal{O}_K)$ representation over $E$. We write $\overline{\sigma}(\lambda, \tau)$ for the mod $p$
semisimplification of $\sigma(\lambda,\tau)$.
\begin{lemma}\label{genericity condition}
    Assume $\tau=\tau(sw^{-1}, \mu-sw^{-1}(\nu))$ is $N$-generic where $N\geq 1$, then for all $\sigma\in \JH(\overline{\sigma}(\tau))$, $\sigma$ is $N-1$-generic (\textit{cf.} \cref{definition in general}). If $\sigma\in \JH(\overline{\sigma}(\lambda, \tau))$ where $\lambda\leq (\ell_j,0)$, then $\sigma$ is $N-\ell$-generic.
\end{lemma}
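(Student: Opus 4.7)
The proof is a direct computation relying on Proposition~\ref{prop on JH} for the explicit parametrization of Jordan--H\"older factors. The strategy is to track how much $\mathfrak{t}_{\mu-\eta}$ and tensoring by $V(\lambda-\eta)$ shift the pairings $\langle \cdot, \alpha_j^\vee\rangle$ away from $\langle \mu - \eta, \alpha_j^\vee\rangle$.

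For the first assertion, I would apply Proposition~\ref{prop on JH} to write every $\sigma \in \JH(\overline{\sigma}(\tau))$ in the form $F(\mathfrak{t}_{\mu-\eta}(sw^{-1}(\omega - \overline{\nu})))$ with $\omega \in \Sigma$. Since $\omega, \overline{\nu} \in \Sigma$ each have coordinates in $\{0,1\}$ under the identification $\Lambda_W \cong \Z^f$ (pairing with $\alpha_j^\vee$), and $sw^{-1}$ acts componentwise up to permutation and sign, the input $\omega' := sw^{-1}(\omega-\overline{\nu})$ has $|\omega'_j| \leq 1$ for every $j$. Unwinding the definition of $\mathfrak{t}_{\mu-\eta}$ (choose a lift $\widetilde{\omega}' \in X^*(\underline{T})$, find the unique element of $\Omega \cap t_{-\pi^{-1}(\omega')}\underline{W}_a$, apply the $p$-dot action), one checks that the result has a representative whose $\alpha_j^\vee$-pairing differs from $\langle \mu - \eta, \alpha_j^\vee\rangle$ by at most $1$. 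Since $\tau$ being $N$-generic means $\mu - \eta$ is $N$-deep in $\underline{C}_0$, i.e.\ $N < \langle \mu, \alpha_j^\vee\rangle < p - N$, this shift of size at most $1$ yields $(N-1)$-deepness.

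For the second assertion, I would use $\overline{\sigma}(\lambda,\tau) = \overline{\sigma(\tau) \otimes_E V(\lambda-\eta)}$ and the standard description of its Jordan--H\"older factors: they arise by combining weights of $V(\lambda-\eta)$ with factors of $\overline{\sigma}(\tau)$. Every weight of $V(\lambda-\eta)$ has $\alpha_j^\vee$-pairing in $[-(\lambda_{j,1}-\lambda_{j,2}),\,\lambda_{j,1}-\lambda_{j,2}]$, and $\lambda \leq (\ell_j,0)$ forces $\lambda_{j,1} - \lambda_{j,2} \leq \ell_j \leq \max\{\ell_j\}$. The perturbation from the $\Sigma$-shift in Proposition~\ref{prop on JH} is absorbed into the $V(\lambda-\eta)$-weight contribution (since the weights of $V(\lambda-\eta)$ fill the pairing range in integer steps of $2$ from $-(\lambda_{j,1}-\lambda_{j,2})$ up to $\lambda_{j,1}-\lambda_{j,2}$, which together with the $\pm 1$ shift from the $\mathfrak{t}$-map covers all integer displacements of size at most $\max\{\ell_j\}$). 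Consequently the total $\alpha_j^\vee$-displacement from $\langle \mu - \eta, \alpha_j^\vee\rangle$ is bounded by $\max\{\ell_j\}$, giving $(N - \max\{\ell_j\})$-genericity.

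The main obstacle is the precise arithmetic of the displacement, specifically verifying that the $\Sigma$-shift and the $V(\lambda-\eta)$-shift combine to $\max\{\ell_j\}$ rather than $\max\{\ell_j\}+1$. This requires keeping careful track of the twists by $sw^{-1}$, the Frobenius shift $\pi$ hidden in the definition of $\mathfrak{t}$, and the parity constraint on weights of the irreducible algebraic representation $V(\lambda-\eta)$; the argument itself is routine bookkeeping once these ingredients are laid out.
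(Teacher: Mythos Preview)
Your approach matches the paper's: use Proposition~\ref{prop on JH} to parametrize the Jordan--H\"older factors of $\overline{\sigma}(\tau)$ and bound the $\alpha_j^\vee$-pairing shift by $1$, then invoke the Clebsch--Gordan decomposition for the tensor with $V(\lambda-\eta)$ to handle the second part.

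One simplification for the bookkeeping you flag at the end: the weights of $V(\lambda-\eta)$ have $\alpha_j^\vee$-pairings in $[-(\lambda_{j,1}-\lambda_{j,2}-1),\,\lambda_{j,1}-\lambda_{j,2}-1]$, not $[-(\lambda_{j,1}-\lambda_{j,2}),\,\lambda_{j,1}-\lambda_{j,2}]$, because the highest weight is $\lambda-\eta$ rather than $\lambda$. So the tensor shift is at most $\ell_j-1$, and adding the $\pm 1$ shift from the first part gives $\ell_j$ directly; no parity argument is needed. The paper's proof records this via the explicit decomposition
\[
L(a,b)\otimes_\F L(m-1,n)=\bigoplus_{k=0}^{m-1-n} L(a+m-1-k,\,b+n+k),
\]
from which the second assertion follows by applying the first part to each $L(a,b)\in\JH(\overline{\sigma}(\tau))$.
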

\begin{proof}
    As $\tau$ is $N$-generic, by \cref{prop on JH}, we let $\sigma=F(\mathfrak{t}_{\mu-\eta}(sw^{-1}(\omega-\overline{\nu})))$ for some $\omega\in \Sigma$. We have 
    \[N<\mu-sw^{-1}(\nu)-\eta<p-N.\]
    Since
    \[\langle \mathfrak{t}_{\mu-\eta}(sw^{-1}(\omega-\overline{\nu})),\alpha_j^\vee\rangle=\langle \mu-\eta+sw^{-1}(\overline{\nu}), \alpha_j^\vee\rangle\pm 1\text{ (for all $j$)},\]
    \[N-1<\langle \mathfrak{t}_{\mu-\eta}(sw^{-1}(\omega-\overline{\nu})),\alpha^\vee\rangle<p-N+1.\]
Therefore, $\sigma$ is $(N-1)$-generic. We can deduce the last assertion from the fact that
\begin{equation}\label{tensor alg rep}
    L(a, b)\otimes_\F L(m-1, n) = L(a+m-1, b+n)\oplus L(a+m-2, b+n+1)\oplus\dots\oplus L(a+n, b+m-1).\qedhere
\end{equation}
\end{proof}
\subsection{Kisin modules}
We will use without explanation the notation of \cite[\S~3]{BHHMS} and \cite[\S~3]{Yitong}. Let $R$ be a $p$-adically complete Noetherian local $\mathcal{O}$-algebra and $h\in \Z_{\geq 0}$. We denote the category of Kisin modules over $R$ of $E(u')$-height $\leq h$ and type $\tau$ by $Y^{[0,h],\tau}(R)$ as in \cite[Definition~3.1.3]{LatticeforGL3}. Given an eigenbasis $\beta$ for $\mathfrak{M}\in Y^{[0,h],\tau}(R)$ (\textit{cf.} \cite[Definition~3.1.6]{LatticeforGL3}), we have a matrix $A^{j}_{\mathfrak{M}, \beta}$. Given a dominant weight $\lambda$, we can then define a subcategory of Kisin modules of height $\leq \lambda$, denoted by $Y^{\leq \lambda,\tau}(R)\subset Y^{[0,h],\tau}(R)$ where $h=\max_j\{\lambda_{j,1}+\lambda_{j,2}\}$. Let $I(\F)$ be the Iwahori subgroup of $\GL_2(\F\llbracket v\rrbracket)$ consisting of the matrices which are upper triangular modulo $v$. Then $\overline{\mathfrak{M}}$ has shape $\widetilde{w}$ if $A^{j}_{\mathfrak{M}, \beta}\in I(\F)\widetilde{w}_jI(\F) $ for any choice of eigenbasis $\beta$, we have for each $0 \leq j\leq f-1$. In order to account for non-semisimple Galois representation, we need to use $\widetilde{w}$-gauge basis \cite[Definition~3.1]{Yitong} instead of Gauge basis defined in \cite[Definition~3.2.23]{weightelimination}. As noted in \cite{Yitong}, $\overline{\mathfrak{M}}$ has a unique shape, but it could have $\widetilde{w}$-gauge for many choices of $\widetilde{w}$.
\begin{example}\label{example}(\textit{cf.} \cite[Example~3.3]{Yitong})
    Let $\alpha,\beta\in \F^\times$ and $a\in \F$. We list the gauges and shapes of some matrices in $\GL_2\left(\F(\!(v)\!)\right)$ that will be considered in \cref{kisin module}.
   \[ \begin{array}{|c|c|c|c|}
    \hline
       &\text{Matrix}&\text{One choice of gauge}  & \text{Shape}  \\
       \hline
        m>n&\begin{pmatrix}
            \alpha v^m&0\\av^m &\beta v^n
        \end{pmatrix} & \mathfrak{t}_{(m,n)}\text{-gauge}&\mathfrak{t}_{(m,n)}\\
        \hline
            m\leq n&\begin{pmatrix}
            \alpha v^m&0\\ av^m&\beta v^n
        \end{pmatrix} & \mathfrak{t}_{(m,n)}\text{-gauge}&\begin{array}{c}\mathfrak{t}_{(m,n)} \text{ if }a=0\\\mathfrak{w}\mathfrak{t}_{(m,n)} \text{ if }a\neq0\end{array}\\
        \hline
        m>n&\begin{pmatrix}
            0&\beta v^n\\\alpha v^m &a v^n
        \end{pmatrix} & \mathfrak{w}\mathfrak{t}_{(m,n)}\text{-gauge}&\begin{array}{c}\mathfrak{w}\mathfrak{t}_{(m,n)} \text{ if }a=0\\\mathfrak{t}_{(m,n)} \text{ if }a\neq0\end{array}\\
        \hline
        m\leq n&\begin{pmatrix}
            0&\beta v^n\\\alpha v^m &a v^n
        \end{pmatrix} & \mathfrak{w}\mathfrak{t}_{(m,n)}\text{-gauge}&\mathfrak{w}\mathfrak{t}_{(m,n)}\\
        \hline
    \end{array}\]
\end{example}
Let $\mathcal{O}_{\mathcal{E},K}$ be the $p$-adic completion of $W(k)\llbracket v\rrbracket[1/v]$. We write $\Phi \Mod^{\et}(R)$ for the category of \'etale $\varphi$-modules over $\mathcal{O}_{\mathcal{E},K}\widehat{\otimes}R$. We have an equivalence of categories $\mathbb{V}_K^*\colon \Phi\Mod^{\et}(R)\xrightarrow[]{\sim}\Rep_{G_{K_\infty}}(R)$. By post-composing it with the functor $\epsilon_\tau\colon Y^{[0,h],\tau}(R)\to \Phi\Mod^{\et}(R)$ (\textit{cf.} \cite[\S~5.4]{localmodel}), we have a functor $T_{dd}^*\colon Y^{[0,h],\tau}(R) \to \Rep_{G_{K_\infty}}(R)$.

Fix $(\ell_1,\dots, \ell_f)\in \Z_+^f$, and let $\ell=\max\{\ell_j\}$. We fix a Galois representation
$\overline{\rho}\colon  G_K \to \GL_2(\F)$ and 
$(s, \mu)\in \underline{W}\times X^{*}(\underline{T}) $ such that $\overline{\rho}^{ss}|_{I_K}\cong \overline{\tau}(s, \mu)$ (here $ss$ denotes the semisimplification of $\overline{\rho}$), where 
\hfill\begin{enumerate}
\item $s_j = \mathfrak{w}$ precisely when $j = 0$ and $\overline{\rho}$ is irreducible;
\item $\mu-\eta$ is $N$-deep in $\underline{C_0}$.
\end{enumerate}
Twisting $\overline{\rho}$ with a power of $\omega_f$ if necessary, we further assume that 
$\mu_j=(r_j+1,0)\in \Z^2$ with $N<r_j+1 <p-N$ for all $j$ so that
\begin{equation}\label{condition on rho bar}
\overline{\rho}|_{I_k}\cong\begin{cases}
    \begin{pmatrix}
       \omega_{2f}^{\sum_{j=0}^{f-1}(r_j+1)p^j} &*\\
       0&1
    \end{pmatrix}&\text{if $\overline{\rho}$ is reducible;}\\
    \begin{pmatrix}
       \omega_{2f}^{\sum_{j=0}^{f-1}(r_j+1)p^j} &0\\
       0&\omega_{2f}^{\sum_{j=0}^{f-1}(r_j+1)p^{j+f}}
    \end{pmatrix} &\text{if $\overline{\rho}$ is irreducible.}
\end{cases}
\end{equation}
(The pair $(s, \mu)$ is not uniquely determined by $\overline{\rho}|_{I_K}$, however if $\overline{\rho}$ is $(N+1)$-generic, then by choosing an appropriate choice of $s$, $1.,2.$ always hold \cite[Propositions~2.2.15, 2.2.16]{weightelimination}.) We will assume $N\geq 4\ell$ in the following.\par
Let $\overline{\mathcal{M}}$ be the \'etale $\varphi$-module over $k(\!(v)\!)\otimes_{\F_p}\F$ such that $\mathbb{V}^*_K(\overline{\mathcal{M}})\cong \overline{\rho}|_{G_{K_\infty}}$. By \cite{multone}, we have a decomposition $\overline{\mathcal{M}}\cong \bigoplus_{i \in \mathcal{J}}\overline{\mathcal{M}}^{(j)}$ with $\overline{\mathcal{M}}^{(j)}=F(\!(v)\!)e_1^{(j)}\oplus F(\!(v)\!)e_2^{(j)}$ such that the matrices of the Frobenius map $\phi_{\overline{\mathcal{M}}}^{(j)}\colon \overline{\mathcal{M}}^{(j)}\to \overline{\mathcal{M}}^{(j+1)}$ with respect to the basis $\{(e_1^{(j)}, e_2^{(j)})\}$ have the following form
\begin{equation}\label{phiM}
    \Mat(\phi_{\overline{\mathcal{M}}}^{(f-1-j)})=\begin{cases}
    \begin{pmatrix}
    \alpha_{j}v^{r_j+1}&0\\
    \alpha_{j}\gamma_{f-1-j}v^{r_j+1}&\beta_{j}
    \end{pmatrix} &\text{if $\overline{\rho}$ is reducible;}\\ 
    \begin{pmatrix}
    \alpha_{j}v^{r_j+1}&0\\
    0&\beta_{j}
    \end{pmatrix} &\text{if $\overline{\rho}$ is irreducible and $j\neq0$;}\\
    \begin{pmatrix}
    0&-\beta_{j}\\
    \alpha_{j}v^{r_j+1}&0
    \end{pmatrix} &\text{if $\overline{\rho}$ is irreducible and $j=0$.}
\end{cases}
\end{equation}
where $\alpha_{j}, \beta_{j}\in \F^\times$ and $\gamma_j\in \F$.
When $\overline{\rho}$ is irreducible, we define $\gamma_j=0$ for all $j$. From now on, we fix a choice of $\alpha_j, \beta_j, \gamma_j$. 
\begin{proposition}\label{prop on W(r)} \cite[Proposition~3.2]{multone}, \cite[Proposition~3.5]{Diagram}
Given a Galois representation $\overline{\rho}\colon G_K\to \GL_2(\F)$, there is a set of Serre weights $W(\overline{\rho})$ associated to $\overline{\rho}$, described as follows:
\[W(\overline{\rho})=\left\{F(\mathfrak{t}_{\mu-\eta}(b_0, \dots, b_{f-1})\colon  b_j\in \{0, \sgn(s_j)\} \text{ if } \gamma_{f-1-j}=0 \text{ and } b_j=0 \text{ if } \gamma_{f-1-j}\neq0\right\}.\]
Alternatively, $W(\overline{\rho})=\{\sigma_J|J\subset J_{\overline{\rho}}\}$, and we can associate $\sigma\mapsto J_\sigma=\{j\colon b_j\neq0\}$. 
\end{proposition}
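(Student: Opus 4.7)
The plan is to identify $W(\overline{\rho})$ with the set of Serre weights $\sigma$ for which the crystalline deformation ring $R^{\sigma}_{\overline{\rho}}$ is nonzero (the BDJ/Gee characterisation), and then translate this nonvanishing, via the Kisin module machinery set up earlier in this section, into an explicit compatibility between the shape data of $\sigma$ and the entries $\gamma_{f-1-j}$ appearing in \cref{phiM}. Concretely, for a candidate $\sigma = F(\mathfrak{t}_{\mu-\eta}(b_0,\dots,b_{f-1}))$ with $b_j \in \{-1,0,1\}$, one needs a Kisin module $\mathfrak{M}$ of bounded height and appropriate descent datum with $T^*_{dd}(\mathfrak{M})|_{G_{K_\infty}} \cong \overline{\rho}|_{G_{K_\infty}}$; the shape of $\overline{\mathfrak{M}}$ is forced by $\sigma$ and the descent datum $\tau(s,\mu)$.

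First I would write down, embedding by embedding, the shape $\widetilde{w}(\sigma)_j$ corresponding to each allowed $b_j$: the trivial choice $b_j = 0$ is realised by a diagonal shape $\mathfrak{t}_{(0,r_j+1)}$ or $\mathfrak{t}_{(r_j+1,0)}$ (depending on $s_j$), while $b_j = \pm 1$ shifts the shape by one unit in the root direction, giving either $\mathfrak{t}$ or $\mathfrak{w}\mathfrak{t}$ form. Comparing with \cref{phiM} and the shape table of \cref{example}, I would match the $j$th Frobenius block of $\overline{\mathcal{M}}$ against these shapes: when $\gamma_{f-1-j} = 0$ the block is diagonal and both the "upper" shape $\mathfrak{t}_{(m,n)}$ and the "swapped" shape $\mathfrak{w}\mathfrak{t}_{(m,n)}$ are attainable by a Kisin module lifting $\overline{\mathcal{M}}^{(j)}$, so $b_j \in \{0,\sgn(s_j)\}$ are both permitted; when $\gamma_{f-1-j} \neq 0$ the nonzero lower-triangular entry forces the shape to a unique choice, and a direct check on the third/fourth rows of \cref{example} rules out the $b_j = \sgn(s_j)$ possibility, leaving only $b_j = 0$.

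Having collected these local constraints, the description
\[
W(\overline{\rho}) = \{F(\mathfrak{t}_{\mu-\eta}(b_0,\dots,b_{f-1})) : b_j \in \{0,\sgn(s_j)\} \text{ if } \gamma_{f-1-j}=0, \ b_j = 0 \text{ otherwise}\}
\]
is immediate, and the reformulation $W(\overline{\rho}) = \{\sigma_J : J \subset J_{\overline{\rho}}\}$ follows by taking $J_{\overline{\rho}} := \{j : \gamma_{f-1-j} = 0\}$ and reading $J_\sigma = \{j : b_j \neq 0\}$. The irreducible case at $j = 0$, where \cref{phiM} has an antidiagonal Frobenius, is checked separately by applying the bottom two rows of \cref{example}; the twist by $\mathfrak{w}$ built into $s_0 = \mathfrak{w}$ in assumption (i) makes the analysis formally identical to the reducible embeddings.

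The principal technical obstacle is the bookkeeping between several reindexings: the convention $\widetilde{w}^*_{f-1-j}$ relating $\underline{\widetilde W}^\vee$ and $\underline{\widetilde W}$, the reindexing $j \leftrightarrow f-1-j$ between the labelling of $\gamma$'s in \cref{phiM} and the labelling of $b_j$ in $\mathfrak{t}_{\mu-\eta}$, and the sign convention encoding why $\sgn(s_j)$ (and not some other $\pm 1$) governs the nontrivial choice. Carrying out the shape matching carefully enough that these conventions align is the only serious content; once done, nonvanishing of the deformation ring $R^{\sigma}_{\overline{\rho}}$ for every $\sigma$ in the displayed set follows from the explicit smooth (or complete intersection) model for $R^{\leq \eta, \tau}_{\overline{\rho}}$ produced from the Kisin moduli, while nonvanishing for $\sigma$ outside the set is excluded by the shape obstruction just described.
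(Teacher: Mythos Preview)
The paper does not give a proof of this proposition at all: it is stated as a citation to \cite[Proposition 3.2]{multone} and \cite[Proposition 3.5]{Diagram}, with no accompanying argument. So there is nothing in the paper to compare your proposal against directly.

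That said, your sketch has a conceptual slip that would need to be fixed before it becomes a proof. You invoke ``the descent datum $\tau(s,\mu)$'' and the shape table of \cref{example}, but those objects live in the category $Y^{[0,h],\tau}$ of Kisin modules \emph{with tame descent datum}. The condition $\sigma\in W(\overline{\rho})$, via the Gee--Liu--Savitt theorem, is about existence of a \emph{crystalline} lift (trivial inertial type) with the Hodge--Tate weights attached to $\sigma$; the relevant Kisin modules there carry no descent datum, and the shape table of \cref{example} does not apply as written. What actually governs membership in $W(\overline{\rho})$ in the framework of \cite{multone} is whether the \'etale $\varphi$-module $\overline{\mathcal{M}}$ in \cref{phiM} admits a Kisin lattice of height $\le 1$ embedding-by-embedding with the right shape for the trivial type, and that computation---while similar in spirit to your outline---is not the same as the one you describe.

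There is also a circularity risk: the Kisin-module results you allude to (\cref{kisin module}, \cref{Xrholambda}, and the deformation-ring computations) all come \emph{after} \cref{prop on W(r)} in the paper and use it as input. A self-contained argument would have to appeal directly to the BDJ combinatorial recipe (as in \cite{Diagram}) or to the Fontaine--Laffaille/Breuil-module calculation underlying \cite{multone}, rather than to the machinery developed downstream. Your final paragraph about the indexing conventions is well taken---that bookkeeping is genuinely the main content once the correct framework is in place---but the framework itself needs to be the crystalline one, not the tame-type one.
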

For $\widetilde{w}\in \Adm^{\vee}(\mathfrak{t}_{\lambda})$, where $\widetilde{w}^*=\mathfrak{t}_\nu w$ for some unique $(w,\nu)\in\underline{W}\times X^*(\underline{T})$, we associate type
\[\tau_{\widetilde{w}}\colonequals \tau(sw^{-1}, \mu-sw^{-1}(\nu))\]
with a lowest alcove representation $(s(\tau),\mu(\tau))=(sw^{-1}, \mu-sw^{-1}(\nu)-\eta)$, in particular $\tau_{\widetilde{w}}$ is $(N-\ell)$-generic, where $\ell=\max_j\{\lambda_{j,1}+\lambda_{j,2}\}$. Explicitly, $s(\tau)_j=w_j^{-1}$ except when $j=0$ and $\overline{\rho}$ is irreducible, in which case, we have $s(\tau)_0=\mathfrak{w}w_0^{-1}$. We have
\begin{equation}\label{mutau}
    \mu(\tau)_j+\eta_j=\begin{cases}
    (r_j+1-m, -n) &\text{ if } (\mathfrak{t}_{\nu_j}w_j,s_j)=(\mathfrak{t}_{(m,n)},1) \text{ or }\mathfrak({t}_{(m,n)}\mathfrak{w},\mathfrak{w});\\
    (r_j+1-n, -m) &\text{ if } (\mathfrak{t}_{\nu_j}w_j,s_j)=\mathfrak({t}_{(m,n)}\mathfrak{w},1) \text{ or }\mathfrak({t}_{(m,n)},\mathfrak{w}).
\end{cases}
\end{equation}
\begin{definition}
    Given a dominant weight $\lambda$, we define
    \[X(\overline{\rho}, \lambda)\colonequals \{\widetilde{w}\in \Adm^\vee(\mathfrak{t}_{\lambda})\colon \JH(\overline{\sigma}(\tau_{\widetilde{w}}, \lambda)\cap W(\overline{\rho})\neq \varnothing\}.\]
\end{definition}
\begin{lemma}\label{Xrholambda}
    If $\lambda=(\lambda_{j,1}, \lambda_{j,2})$ is a dominant weight, 
    \[X(\overline{\rho}, \lambda)=\{\widetilde{w}\in \Adm^\vee(\mathfrak{t}_{\lambda})\colon \widetilde{w}_{f-1-j}\neq \mathfrak{t}_{(\lambda_{j,2},\lambda_{j,1}) }\text{ if } \gamma_{f-1-j}\neq0\}.\]
\end{lemma}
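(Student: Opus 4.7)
The plan is to unravel both sides explicitly and match them parameter by parameter. Given $\widetilde{w}\in \Adm^{\vee}(\mathfrak{t}_\lambda)$ with $\widetilde{w}^*=\mathfrak{t}_\nu w$, the associated type $\tau_{\widetilde{w}}=\tau(sw^{-1},\mu-sw^{-1}(\nu))$ is $(N-1)$-generic by the running assumptions. By \cref{prop on JH}, the Jordan–H\"older factors of $\overline{\sigma}(\tau_{\widetilde{w}})$ are indexed by $\omega\in \Sigma$ via
\[F(\mathfrak{t}_{\mu-\eta}(sw^{-1}(\omega-\overline{\eta}))),\]
and the factors of $\overline{\sigma}(\lambda,\tau_{\widetilde{w}})=\overline{\sigma}(\tau_{\widetilde{w}})\otimes_\F V(\lambda-\eta)$ can then be described by combining the formula in \cref{prop on JH} with the tensor-product decomposition \cref{tensor alg rep}. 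The outcome is that each Jordan–H\"older factor of $\overline{\sigma}(\lambda,\tau_{\widetilde{w}})$ has the form $F(\mathfrak{t}_{\mu-\eta}(c_0,\dots,c_{f-1}))$ for a tuple $(c_j)$ whose allowed range in coordinate $j$ depends only on $\widetilde{w}_{f-1-j}$, on the comparison $\lambda'_j\leq \lambda_j$, and on $s_j$.

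Next, \cref{prop on W(r)} identifies $W(\overline{\rho})$ with the weights $F(\mathfrak{t}_{\mu-\eta}(b_0,\dots,b_{f-1}))$ such that $b_j\in\{0,\sgn(s_j)\}$ when $\gamma_{f-1-j}=0$ and $b_j=0$ when $\gamma_{f-1-j}\neq 0$. Thus $\widetilde{w}\in X(\overline{\rho},\lambda)$ if and only if some tuple $(c_j)$ arising from $\overline{\sigma}(\lambda,\tau_{\widetilde{w}})$ satisfies $c_j\in\{0,\sgn(s_j)\}$ at every $j$ with $\gamma_{f-1-j}=0$ \emph{and} $c_j=0$ at every $j$ with $\gamma_{f-1-j}\neq 0$. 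Because the coordinates decouple across $j$, the question reduces to a purely local statement at each embedding: for which choices of $\widetilde{w}_{f-1-j}$ can the allowed set of $c_j$'s contain $0$?

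I would then carry out this local analysis by the case distinction of \cref{example}, splitting on whether $\widetilde{w}_{f-1-j}$ equals $\mathfrak{t}_{\lambda'_j}$ or $\mathfrak{w}\mathfrak{t}_{\lambda'_j}$ and using \cref{mutau} to read off $(\mu(\tau)_j,s(\tau)_j)$. In each case one computes the range of $c_j$ as an arithmetic-progression shift of $sw^{-1}(\omega-\overline{\eta})_j$ twisted by the admissible weights of $V(\lambda-\eta)$; a direct inspection shows that $0$ lies in this range \emph{except} precisely when $\widetilde{w}_{f-1-j}=\mathfrak{t}_{(\lambda_{j,2},\lambda_{j,1})}$, in which case the extreme choice of the Hodge–Tate twist forces $c_j=\pm\sgn(s_j)\neq 0$. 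The boundary case $\mathfrak{w}\mathfrak{t}_{(\lambda_{j,2},\lambda_{j,1})}$ is already excluded from $\Adm^\vee(\mathfrak{t}_\lambda)$, so no additional condition appears there.

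Combining the local results gives exactly the description in the lemma: $\widetilde{w}\in X(\overline{\rho},\lambda)$ iff at every $j$ with $\gamma_{f-1-j}\neq 0$ we have $\widetilde{w}_{f-1-j}\neq \mathfrak{t}_{(\lambda_{j,2},\lambda_{j,1})}$. The main obstacle is bookkeeping: one has to handle the twist by $f-1-j$, the irreducible-at-$j=0$ adjustment $s_0=\mathfrak{w}$, and the tensor-product shift \cref{tensor alg rep} simultaneously, while verifying that the extremal alcove corner really is the unique obstruction to hitting $c_j=0$. Once this case-by-case check is in place, the equality of the two sets follows.
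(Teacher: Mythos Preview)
Your approach is essentially the paper's: both arguments combine \cref{prop on JH} with the tensor decomposition \cref{tensor alg rep} to list the coordinates $b_j$ attainable in $\JH(\overline{\sigma}(\lambda,\tau_{\widetilde{w}}))$, invoke \cref{prop on W(r)} to describe $W(\overline{\rho})$, and then run a coordinate-by-coordinate case analysis identifying $\widetilde{w}_{f-1-j}=\mathfrak{t}_{(\lambda_{j,2},\lambda_{j,1})}$ as the unique obstruction to $b_j=0$.

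One point to sharpen: your local reduction phrases the question as ``can the allowed set of $c_j$'s contain $0$?'', but for coordinates $j$ with $\gamma_{f-1-j}=0$ the target is $c_j\in\{0,\sgn(s_j)\}$, and the lemma imposes \emph{no} constraint at such $j$. Thus you must verify that even when $\widetilde{w}_{f-1-j}=\mathfrak{t}_{(\lambda_{j,2},\lambda_{j,1})}$, the value $\sgn(s_j)$ (with the correct sign, not just $\pm\sgn(s_j)$) is attained. The paper handles this explicitly by writing down the attainable set for $b_j$ and exhibiting the parameter choice that gives $\sgn(s_j)$; your sketch should make this step explicit rather than leaving it inside ``direct inspection''.
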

\begin{proof}
    The proof is similar to \cite[\S~4]{Yitong}. By \cref{prop on JH} and \cref{tensor alg rep},
we can deduce that $F(\mathfrak{t}_{\mu-\eta}(b_0,\dots, b_{f-1}))\in \JH(\overline{\sigma}(\lambda,\tau_{\widetilde{w}}))$ if and only if $b_j\in$
\begin{equation} \label{intersection}
\{\sgn(s_j)\sgn(w_{j})(r-1)+\lambda_{j,1}+\lambda_{j,2}+1-2k\colon k\in \Z,\lambda_{j,2}< k\leq \lambda_{j,1}, r\in\{0,1\}\}.
\end{equation} 
Assume $\widetilde{w}_{f-1-j}=\omega_j\mathfrak{t}_{(a,b)}$. If $\lambda_{j,2}<a\leq \lambda_{j,1}$, by taking $r=1$ and $k=\frac{(1+\sgn(s_j)\sgn(w_j)1)}{2}(\lambda_{j,1}+\lambda_{j,2}+1)-\sgn(s_j)\sgn(w_j)a$, we deduce that $0$ is in \cref{intersection}. If $a=\lambda_{j,2}$, then $0$ is not in \cref{intersection}, but $\sgn(s_j)$ is if $\sgn(\omega_j)=-1$, $r=0$ and $k=\frac{1}{2}((\sgn(s_j)+1)(\lambda_{j,1})+(1-\sgn(s_j))\lambda_{j,2}))$. These are all the possibilities for $\widetilde{w}_{f-1-j}$.
\end{proof}
\begin{definition}\label{number of layers} 
Given $\widetilde{w}\in \Adm^{\vee}(\mathfrak{t}_{(\ell_j,0)_j})$. Let $Y(\widetilde{w}, \lambda)$ be the set of all regular Hodge--Tate weights $\lambda'$, such that $\lambda'\leq \lambda$ and $\widetilde{w}\in X(\overline{\rho},\lambda')$. And let $S(\widetilde{w}, \lambda)$ be the cardinality of $Y(\widetilde{w}, \lambda)$.
    We define
    \begin{equation*}
  S(\widetilde{w}_j)=\begin{cases} \min(m,n)+1 &\text{ if } \widetilde{w}_j=\mathfrak{t}_{(m, n)}\text{ with } m> n \text{ or } (\gamma_j= 0 \text{ and } m< n);\\
  \min(m,n) &\text{ if } \widetilde{w}_j=\mathfrak{t}_{(m, n)}\text{ with } (m< n \text{ and } \gamma_j\neq 0) \text{ or }m=n;\\
          \min(m,n+1) &\text{ if } \widetilde{w}_j=\mathfrak{m}\mathfrak{t}_{(m, n)}.
    \end{cases}\end{equation*}
\end{definition}
By \cref{Xrholambda}, we can deduce that
\begin{equation}\label{S(tau)}
S({\widetilde{w}},\lambda )=\prod_j \max\{0,S(\widetilde{w}_{f-1-j})-\lambda_{2,j}\}.
\end{equation}
We recall the results for the geometric Breuil--M\'ezard Conjecture for $\GL_2$. Following the notations from \cite{EG14}: Given a closed subscheme $\mathcal{Z}$ of $\mathcal{X}$, the cycles $Z(\mathcal{Z})\colonequals \sum_{\mathfrak{a}}e(\mathcal{Z},\mathfrak{a})$ are well-defined, where $e(\mathcal{Z},\mathfrak{a})$ is the Hilbert--Samuel multiplicity of $\mathcal{Z}$ at $\mathfrak{a}$ and the sum is over the points of $\mathcal{X}$ with the same dimension as $\mathcal{Z}$. 
\begin{lemma}\label{BreuilMezard}
    Fix $\tau$ which is $(2n+2)$-generic and $\lambda\leq(\ell_j,0)$ with $\ell_j\leq n$. Let $a_\sigma (\lambda,\tau)\in \{0,1\}$ such that $\overline{\sigma}^{ss}(\lambda,\tau)=\sum a_\sigma (\lambda,\tau) \sigma$ where the sum is over all Serre weights. Given a Serre weight $\sigma$, let $C_{\sigma}\colonequals Z(\Spec R_{\overline{\rho}}^{\sigma}/\varpi)$. We have the following equality of cycles: 
    \[Z(\Spec R_{\overline{\rho}}^{\lambda,\tau}/\varpi)=\sum_{\sigma\in \JH(\overline{\sigma}(\lambda,\tau))}a_\sigma (\lambda,\tau) C_\sigma.\]
\end{lemma}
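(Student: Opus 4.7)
The plan is to compare both sides as formal sums of prime cycles in the ambient special fibre $\Spec \overline{R}^\square_{\overline{\rho}}$, using the explicit description of $R^{\lambda,\tau}_{\overline{\rho}}$ from \cref{Galois deformation ring intro}.

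First, I isolate the non-trivial terms on the right. The cycle $C_\sigma$ vanishes unless $\sigma\in W(\overline{\rho})$, so the right-hand side reduces to $\sum_{\sigma\in W(\overline{\rho})\cap\JH(\overline{\sigma}(\lambda,\tau))}a_\sigma(\lambda,\tau)\,C_\sigma$. Under the $(2n+2)$-generic hypothesis on $\tau$ and the bound $\lambda\leq(\ell_j,0)$ with $\ell_j\leq n$, \cref{genericity condition} combined with \cref{prop on JH} and the tensor-product decomposition \cref{tensor alg rep} shows that $\overline{\sigma}(\lambda,\tau)$ is multiplicity free, so $a_\sigma(\lambda,\tau)=1$ for every $\sigma$ in this intersection. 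Thus the right-hand side is a sum of prime cycles, one per element of $W(\overline{\rho})\cap\JH(\overline{\sigma}(\lambda,\tau))$, each appearing with multiplicity one.

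For the left-hand side, I invoke \cref{Galois deformation ring intro} to obtain $\overline{R}^{\lambda,\tau}_{\overline{\rho}}\cong\F[\![(x_j,y_j)_{j=1}^m,Z_\bullet]\!]/(x_jy_j)_{j=1}^m$. This ring is reduced with exactly $2^m = |W(\overline{\rho})\cap\JH(\overline{\sigma}(\lambda,\tau))|$ formally smooth irreducible components, indexed by the choice of setting either $x_j=0$ or $y_j=0$ for each $j$. Hence the left-hand side is a sum of $2^m$ prime cycles of multiplicity one, matching the count on the right.

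It then remains to set up a canonical bijection between each component of $\Spec\overline{R}^{\lambda,\tau}_{\overline{\rho}}$ and a specific $\Spec\overline{R}^\sigma$. My approach is to use the Kisin module stratification developed in this section: each irreducible component of $\Spec\overline{R}^{\lambda,\tau}_{\overline{\rho}}$ carries a natural shape $\widetilde{w}\in X(\overline{\rho},\lambda)$, and via the inertial local Langlands correspondence together with \cref{Xrholambda} and \cref{prop on JH} this $\widetilde{w}$ determines a Serre weight $\sigma(\widetilde{w})\in W(\overline{\rho})\cap\JH(\overline{\sigma}(\lambda,\tau))$. To verify that the resulting component agrees with $\Spec\overline{R}^{\sigma(\widetilde{w})}$, one checks on the generic fibre using Fontaine's classification that both parametrize the same locus of crystalline lifts of Hodge type $\sigma(\widetilde{w})$. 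The main obstacle is precisely this last identification: the combinatorial bookkeeping needed to establish the canonical bijection between shapes and Serre weights is delicate and relies crucially on the genericity hypotheses to ensure the shape-to-weight assignment is injective on irreducible components.
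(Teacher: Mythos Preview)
Your approach is circular. You invoke \cref{Galois deformation ring intro} (which is \cref{normal domain} in the body) to describe $\overline{R}^{\lambda,\tau}_{\overline{\rho}}$ and match its components with the $\overline{R}^{\sigma}_{\overline{\rho}}$, but that corollary is proved \emph{after} the present lemma and depends on it through \cref{tame type criterion} and \cref{deformation ring}. In fact the very identification of components you need---``each irreducible component of $\Spec\overline{R}^{\lambda,\tau}_{\overline{\rho}}$ equals $\Spec\overline{R}^{\sigma}_{\overline{\rho}}$ for the matching $\sigma$''---is established in the proof of \cref{normal domain} by citing \cref{BreuilMezard} itself. So you are assuming the conclusion.

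Even if you tried to run the computation of $R^{\lambda,\tau}_{\overline{\rho}}$ first and then prove the present lemma, the last step of your argument is where the real content lies, and it is not supplied: saying that a component ``carries a shape $\widetilde{w}$'' and that Fontaine theory identifies it with the crystalline locus of Hodge type $\sigma(\widetilde{w})$ is precisely the Breuil--M\'ezard statement you are trying to prove. The shape only tells you the component lives inside the correct $\overline{R}^{\leq\lambda,\tau}$; it does not by itself pin down which $\overline{R}^{\sigma}$ it is as a cycle in $\Spec\overline{R}^{\square}_{\overline{\rho}}$.

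The paper does something entirely different: it imports the geometric Breuil--M\'ezard conjecture for $\GL_2$, already proved in the literature (on the Emerton--Gee stack, via \cite{BMandMS} and \cite{CEG}), and then specializes to versal rings using \cite{EmertonGeeStack}. No internal computation of deformation rings is used. This is a genuine external input, not something derivable from the local-model analysis in this section.
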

\begin{proof}
    This can be deduced from \cite[Theorem~1.3.1 (1)]{BMandMS}, where we take $G$ in the theorem as $G=\Res_{K/\Q_p}\GL_2$. Specifically, there exists cycles $\mathcal{Z}(\sigma)$ such that
    \[[\mathcal{X}^{\lambda,\tau}_{2,\F_p}]=\sum_{\sigma\in \JH(\overline{\sigma}(\lambda,\tau))}a_\sigma (\lambda,\tau)\mathcal{Z}(\sigma),\]
    where $[\cdot]$ denotes the cycle class and $\mathcal{X}^{\lambda,\tau}_{2,\F}$ is the special fibre of the Emerton--Gee stack which parametrizes 2-dimensional potentially crystalline representations of $G_K$ with Hodge--Tate weights $\lambda$ and inertial type $\tau$. Moreover, as pointed out by Daniel Le, by comparing the result with \cite[Theorem~1.5]{CEG} (cf.\cite[\S~1.4]{BMandMS}), we know $C_{F(\lambda)}=[\mathcal{X}^{\lambda,\triv}|_{\F_p}]$.
    By the discussion of \cite[\S~8.3]{EmertonGeeStack}, we can recover the version of geometric Breuil--M\'ezard conjecture in terms of algebraic cycles, as in \cite{EG14}.
\end{proof}
\begin{lemma}\label{tame type criterion}
Assume $\lambda=(\lambda_j)_{j\in \mathcal{J}}\in X^*(T^\vee)^\mathcal{J}$ satisfies $\lambda_j\leq ({\ell_j,0})$. Given that $\tau$ is $(2\ell+2)$-generic. Then $R^{\lambda,\tau}_{\overline{\rho}}\neq0$ if and only if $\tau=\tau_{\widetilde{w}}$ with $\widetilde{w}\in X(\overline{\rho},\lambda)$.
    For each fixed $(2\ell+2)$-generic tame type $\tau_{\widetilde{w}}$, there are $ S({\widetilde{w}}, \lambda)$ regular Hodge--Tate weights $\lambda'\leq \lambda$, such that $\overline{\rho}$ admits a potentially crystalline lift $\rho$ of inertial type $\tau$ with $\HT_j(\rho) = \lambda'_j$ for all $j$.
\end{lemma}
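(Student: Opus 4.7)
The plan is to reduce both statements to the geometric Breuil-M\'ezard Conjecture (\cref{BreuilMezard}). For the first assertion, the cycle equality
\[Z(\Spec R_{\overline{\rho}}^{\lambda,\tau})=\sum_{\sigma\in \JH(\overline{\sigma}(\lambda,\tau))}a_\sigma (\lambda,\tau)\, C_\sigma,\]
combined with the facts that $a_\sigma(\lambda,\tau)\in\{0,1\}$ and that the effective cycle $C_\sigma=Z(\Spec R^\sigma_{\overline{\rho}})$ is nonzero precisely when $\sigma\in W(\overline{\rho})$, gives the equivalence $R^{\lambda,\tau}_{\overline{\rho}}\neq 0\iff W(\overline{\rho})\cap \JH(\overline{\sigma}(\lambda,\tau))\neq \varnothing$. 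By the definition of $X(\overline{\rho},\lambda)$, this is the desired biconditional once I verify that a $(2\max\{\ell_j\}+2)$-generic $\tau$ with nonempty intersection must be of the form $\tau_{\widetilde{w}}$ for some $\widetilde{w}\in \Adm^\vee(\mathfrak{t}_\lambda)$. I would do this by extracting the pair $(w,\nu)$ from any Serre weight $F(\mathfrak{t}_{\mu-\eta}(b))$ in the intersection using \cref{prop on JH}, \eqref{tensor alg rep}, and \cref{prop on W(r)}; setting $\widetilde{w}=(\mathfrak{t}_\nu w)^*$, the explicit formula \eqref{mutau} confirms $\widetilde{w}\in \Adm^\vee(\mathfrak{t}_\lambda)$.

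For the second statement, I would apply the first statement to each regular $\lambda'\leq \lambda$ in place of $\lambda$: the set of such $\lambda'$ for which $\overline{\rho}$ admits a potentially crystalline lift of inertial type $\tau_{\widetilde{w}}$ and Hodge--Tate weight $\lambda'$ coincides with $X(\widetilde{w},\lambda)$, whose cardinality is $S(\widetilde{w},\lambda)$ by \cref{number of layers}. The product decomposition \eqref{S(tau)} I would obtain by an embedding-by-embedding case analysis using \cref{Xrholambda}: for each $j$, enumerate the pairs $(\lambda'_{j,1},\lambda'_{j,2})$ satisfying $\lambda'_{j,1}>\lambda'_{j,2}$, $\lambda'_{j,1}+\lambda'_{j,2}=\ell_j$, $\lambda'_{j,2}\geq \lambda_{j,2}$, and (when $\gamma_{f-1-j}\neq 0$) the exclusion $\widetilde{w}_{f-1-j}\neq \mathfrak{t}_{(\lambda'_{j,2},\lambda'_{j,1})}$. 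Splitting this count by the shape $\widetilde{w}_{f-1-j}\in\{\mathfrak{t}_{(m,n)},\mathfrak{w}\mathfrak{t}_{(m,n)}\}$ and by whether $\gamma_{f-1-j}$ vanishes reproduces the local contribution $S(\widetilde{w}_{f-1-j})$ in \cref{number of layers}. The stronger $4\max\{\ell_j\}$-genericity bound is what ensures the Breuil--M\'ezard input of the first statement applies uniformly to every $\lambda'\leq \lambda$ appearing in the count.

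The main obstacle is the forward direction of the first statement, $R^{\lambda,\tau}_{\overline{\rho}}\neq 0\Rightarrow \tau=\tau_{\widetilde{w}}$ for some $\widetilde{w}\in \Adm^\vee(\mathfrak{t}_\lambda)$. The reverse direction is essentially automatic: any $\sigma\in W(\overline{\rho})\cap \JH(\overline{\sigma}(\lambda,\tau_{\widetilde{w}}))$ contributes a nontrivial summand $C_\sigma$ with coefficient $1$. The forward direction hinges on uniquely reconstructing $\widetilde{w}$ from the combinatorics of a Serre weight in the intersection together with the fixed data $(s,\mu)$ for $\overline{\rho}$, for which the genericity of the lowest alcove representation of $\tau$ is exactly what one needs to rigidify the extraction; once this reconstruction is in hand, the admissibility verification and the counting step in the second statement are both mechanical bookkeeping.
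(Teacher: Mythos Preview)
Your proposal is correct and follows essentially the same route as the paper: both reduce the first assertion to the Breuil--M\'ezard identity in \cref{BreuilMezard}, together with the fact (from \cite{GLS}) that $C_\sigma\neq 0$ precisely when $\sigma\in W(\overline{\rho})$, and then obtain the second assertion by applying the first to each regular $\lambda'\leq \lambda$ and invoking the counting formula \eqref{S(tau)}. The paper phrases the special-fibre step via \cite[Theorem~5.4.4]{EG14} rather than directly through the cycle equality, but the content is the same; your treatment of the forward direction (reconstructing $\widetilde{w}$ from a Serre weight in the intersection) is more explicit than the paper's, which leaves this implicit.
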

\begin{proof}
The first statement follows from Breuil--M\'ezard conjecture in \cref{BreuilMezard}.
By \cite[Theorem~5.4.4]{EG14}, the mod $\varpi_E$-fibre of the deformation space $\overline{X}(\lambda, \tau_{\widetilde{w}})$ is the union of $\varpi_E$-fibres $\overline{X}(\overline{\sigma})$ where $\sigma$ runs over the Jordan--H\"{o}lder factors of $\overline{\sigma}(\lambda, \tau_{\widetilde{w}})$. Therefore, $R^{\lambda, \tau_{\widetilde{w}}}_{\overline{\rho}}\neq 0$ if and only if there exists $\overline{\sigma}\in \JH(\overline{\sigma}(\lambda, \tau))$ with $X^{\overline{\sigma}}_{\overline{\rho}}\neq0$. Moreover, $\overline{X}(\overline{\sigma})$ is nonempty if and only if $\overline{\sigma}\in W(\overline{\rho})$, by \cite[Theorem~A]{GLS} (\textit{cf.} \cite[Theorem~7.1.1]{EGS}). The last statement follows from the first one, together with \cref{S(tau)}.
\end{proof}
\begin{remark}\label{remark on genericity of tame type}
    We assume $\tau$ to be $(2\ell+2)$-generic a priori, however if $R^{\lambda,\tau}_{\overline{\rho}}\neq 0$, by \cref{tame type criterion} and \cref{Xrholambda}, we deduce that $\tau$ is actually $3\ell$-generic
\end{remark}
\begin{lemma}\label{kisin module}
    Let $\widetilde{w}\in X(\overline{\rho}, \lambda)$. Up to isomorphism there exists a unique Kisin module $\overline{\mathfrak{M}}\in Y^{\leq\lambda,\tau_{\widetilde{w}}}(\F)\subseteq Y^{\leq (\ell_j,0)_j, \tau_{\widetilde{w}}}$ such that $T^*_{dd}(\overline{\mathfrak{M}})\cong \overline{\rho}|_{G_{K_\infty}}$
\end{lemma}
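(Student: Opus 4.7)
The plan is to split the lemma into existence and uniqueness, both handled by working inside the known étale $\varphi$-module $\overline{\mathcal{M}}$ given in \eqref{phiM}. For existence, I will construct $\overline{\mathfrak{M}}$ by exhibiting an explicit eigenbasis on a $\varphi$-stable $\F\llbracket v\rrbracket$-sublattice of $\overline{\mathcal{M}}$. For uniqueness, I will use that any $\overline{\mathfrak{M}}' \in Y^{\leq (\ell_j,0)_j, \tau_{\widetilde{w}}}(\F)$ realizing $\overline{\rho}|_{G_{K_\infty}}$ canonically embeds into $\overline{\mathcal{M}}$, and then a shape-and-descent-data argument pins down the image as a sublattice.

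For the construction, for each $j$ write $\widetilde{w}_{f-1-j}\in\{\mathfrak{t}_{(a_j,b_j)},\mathfrak{w}\mathfrak{t}_{(a_j,b_j)}\}$. Using the description of $\tau_{\widetilde{w}}$ through \eqref{mutau}, I would choose a tame-descent eigenbasis $\beta=(f_1^{(j)},f_2^{(j)})_j$ of a sublattice whose members are $v$-adic multiples of $e_1^{(j)},e_2^{(j)}$ (possibly swapped, depending on whether $\mathfrak{w}$ appears in $s(\tau)_j$). Matching the Frobenius matrix \eqref{phiM} against the four rows of Example \ref{example} then forces the resulting $A^{(f-1-j)}_{\overline{\mathfrak{M}},\beta}$ to have shape $\widetilde{w}_{f-1-j}$, with the off-diagonal entry $\gamma_{f-1-j}$ providing the non-semisimple correction. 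The hypothesis $\widetilde{w}\in X(\overline{\rho},(\ell_j,0)_j)$ --- equivalently, by \cref{Xrholambda}, $\widetilde{w}_{f-1-j}\neq\mathfrak{t}_{(0,\ell_j)}$ whenever $\gamma_{f-1-j}\neq 0$ --- is exactly what rules out the obstruction in rows 2 and 3 of Example \ref{example}, where a nonzero off-diagonal entry forces a different shape than one would naively expect. Since the exponents $a_j,b_j$ are automatically bounded by the admissibility condition $\widetilde{w}\in\Adm^{\vee}(\mathfrak{t}_\lambda)$, the resulting $\overline{\mathfrak{M}}$ lies in $Y^{\leq\lambda,\tau_{\widetilde{w}}}(\F)$ for every $\lambda\in X(\widetilde{w},(\ell_j,0)_j)$, not merely for $\lambda=(\ell_j,0)_j$.

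For uniqueness, any $\overline{\mathfrak{M}}'\in Y^{\leq(\ell_j,0)_j,\tau_{\widetilde{w}}}(\F)$ with $T^*_{dd}(\overline{\mathfrak{M}}')\cong\overline{\rho}|_{G_{K_\infty}}$ admits a $\widetilde{w}'$-gauge basis for some $\widetilde{w}'$ in the sense of \cite[Definition 3.1]{Yitong}, and the canonical map $\overline{\mathfrak{M}}'\hookrightarrow\overline{\mathfrak{M}}'[1/v]\cong\overline{\mathcal{M}}$ identifies it with a $\varphi$-stable sublattice of $\overline{\mathcal{M}}$. Comparing this gauge basis with the basis of $\overline{\mathcal{M}}$ fixed in \eqref{phiM} --- using the arguments of \cite[\S3]{Yitong} that refine those of \cite{LatticeforGL3} and \cite{weightelimination} --- shows that $\widetilde{w}'=\widetilde{w}$ is forced by the descent data and shape, and that the sublattice coincides with the $\overline{\mathfrak{M}}$ built above. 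I expect the main obstacle to be precisely the non-semisimple case $\gamma_{f-1-j}\neq 0$: here the naive gauge basis formalism of \cite{weightelimination} fails to distinguish the two possible shapes that arise from the same underlying matrix (rows 2--3 of Example \ref{example}), which is exactly why the refined $\widetilde{w}$-gauge notion of \cite{Yitong} is needed and why the forbidden shape $\mathfrak{t}_{(0,\ell_j)}$ must be excluded from $X(\overline{\rho},(\ell_j,0)_j)$ in this case.
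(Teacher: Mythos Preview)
Your overall strategy matches the paper's: construct $\overline{\mathfrak{M}}$ explicitly by writing down the Frobenius matrices $\overline{A}^{(f-1-j)}=\Mat(\phi_{\overline{\mathcal{M}}}^{(f-1-j)})v^{-(\mu(\tau)_j+\eta_j)}\dot{s}(\tau)_j$ (your ``$v$-adic multiples, possibly swapped'' is exactly this), then defer to \cite[Lemma 4.1.1]{BHHMS} and \cite[Lemma 4.1]{Yitong} for the remaining verifications and for uniqueness. However, there is a genuine confusion in your existence argument between \emph{shape} and \emph{$\widetilde{w}$-gauge}, and it leads you to misstate what the hypothesis $\widetilde{w}\in X(\overline{\rho},(\ell_j,0)_j)$ actually does.

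You claim the constructed $\overline{A}^{(f-1-j)}$ has shape $\widetilde{w}_{f-1-j}$, and that the hypothesis ``rules out the obstruction in rows 2 and 3'' where shape differs from gauge. But the hypothesis (via \cref{Xrholambda}) only excludes $\widetilde{w}_{f-1-j}=\mathfrak{t}_{(\lambda_{j,2},\lambda_{j,1})}$ when $\gamma_{f-1-j}\neq 0$; it does \emph{not} exclude all instances of row~2. For example, if $\widetilde{w}_{f-1-j}=\mathfrak{t}_{(m,n)}$ with $0<m<n<\ell_j$ and $\gamma_{f-1-j}\neq 0$, then row~2 of \cref{example} applies and the shape is $\mathfrak{w}\mathfrak{t}_{(m,n)}\neq\widetilde{w}_{f-1-j}$, yet $\widetilde{w}\in X(\overline{\rho},(\ell_j,0)_j)$ still holds. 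The paper is explicit that $\overline{\mathfrak{M}}$ has a $\widetilde{w}$-gauge basis but may \emph{not} have shape $\widetilde{w}$. What must be checked --- and what your argument does not check --- is that the \emph{actual} shape lies in $\Adm^\vee(\mathfrak{t}_{\lambda_j})$ for each $\lambda\in X(\widetilde{w},(\ell_j,0)_j)$; the paper shows this holds precisely when $\widetilde{w}\in X(\overline{\rho},\lambda)$, which is the defining condition for $\lambda\in X(\widetilde{w},(\ell_j,0)_j)$. Your final sentence in the existence paragraph (``the exponents $a_j,b_j$ are automatically bounded by the admissibility condition $\widetilde{w}\in\Adm^\vee(\mathfrak{t}_\lambda)$'') only shows $\widetilde{w}$ is admissible, not that the shape is, and these differ exactly in the cases you have not ruled out.
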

\begin{proof}
    If $\overline{\rho}$ is irreducible, the proof goes exactly as in \cite[Lemma~4.1.1]{BHHMS}, provided that $\ell<\langle \mu(\tau)_j+\eta_j, \alpha_j^\vee\rangle<p-\ell-1$. If $\overline{\rho}$ is reducible, the proof goes exactly the same way as in \cite[Lemma~4.1]{Yitong}, we will simply comment on the changes required. Define a Kisin module $\overline{\mathfrak{M}}$ over $\F$ of type $\tau_{\widetilde{w}}$ by imposing the matrix of the partial Frobenius map to be $\overline{A}^{(f-1-j)}=\Mat(\phi_{\overline{\mathcal{M}}}^{(f-1-j)})v^{-(\mu(\tau)_j+\eta_j)}\dot{s}(\tau)_j$, where $\Mat(\phi_{\overline{\mathcal{M}}}^{(f-1-j)})$ and $\mu(\tau)_j+\eta_j$ are computed in \cref{phiM} and \cref{mutau} respectively. Therefore, we have
    \begin{equation}\label{mod A}
        \overline{A}^{(f-1-j)}=\begin{cases}
    \begin{pmatrix}
    \alpha_{j}v^{m}&0\\
    \alpha_{j}\gamma_{f-1-j}v^{m}&\beta_{j}v^n
    \end{pmatrix} & \text{if }  \widetilde{w}_{f-1-j}=\mathfrak{t}_{(m,n)};\\
     \begin{pmatrix}
    0&\alpha_{j}v^{n}\\
    \beta_{j}v^m&\alpha_{j}\gamma_{f-1-j}v^{n}
    \end{pmatrix} &\text{if }  \widetilde{w}_{f-1-j}=\mathfrak{w}\mathfrak{t}_{(m,n)}.
    \end{cases}
    \end{equation}
    In general, $\overline{\mathfrak{M}}$ has $\widetilde{w}$-gauge basis, but may not have shape $\widetilde{w}$. If $\widetilde{w}_{f-1-j}=\mathfrak{t}_{(m,n)}$ with $m\leq n$, then \cref{example} shows that $\overline{A}^{(f-1-j)}$ has shape contained $\Adm^{\vee}(\mathfrak{t}_{(n,m)})$ if and only if $\gamma_{f-1-j}=0$. By \cref{Xrholambda}, we deduce that $\widetilde{w}\in X(\overline{\rho}, \lambda)$ if and only if $\overline{A}^{(f-1-j)}$ has shape contained $\Adm^{\vee}(\mathfrak{t}_{\lambda_j})$ for all $j$. Therefore, $\overline{\mathfrak{M}}\in Y^{\leq\lambda,\tau_{\widetilde{w}}}(\F)\subseteq Y^{\leq(\ell_j,0))_j,\tau_{\widetilde{w}}}(\F)$ for all $\lambda\in X({\widetilde{w}}, (\ell_j,0)_j)$ by \cite[Proposition~5.4]{Kisinmod}. The rest of the proof goes through the same way given our genericity assumption.
\end{proof}
\subsection{Galois Deformation ring}
Given a complete Noetherian local $\mathcal{O}$-algebra $R$ with residue field $\F$ and $(\ell_1, \dots, \ell_f)$ a $f$-tuple of positive integers, we define $D^{\leq (\ell_j,0)_j,\tau}_{\overline{\mathfrak{M}}, \overline{\beta}}(R)$ to be the groupoid of the triplet $(\mathfrak{M}, \beta, \jmath)$, where $\mathfrak{M}\in Y^{\leq (\ell_j,0)_j,\tau_{\widetilde{w}}}(R)$, $\beta$ a $\widetilde{w}$-gauge basis of $\mathfrak{M}$ and $\jmath\colon\mathfrak{M}\otimes_R\F\xrightarrow{\sim}\overline{\mathfrak{M}}$ sending $\beta$ to $\overline{\beta}$. Then for any $(\mathfrak{M},\beta,\jmath)\in D^{\leq (\ell_j,0)_j,\tau}_{\overline{\mathfrak{M}}, \overline{\beta}}(R)$, we have a corresponding matrix $A^{(f-1-j)}$ such that $A^{(f-1-j)}\mod m_R\equiv \overline{A}^{(f-1-j)}$. We will compute $A^{(f-1-j)}$ using the monodromy and height conditions as in \cite[Proposition~4.2.1]{BHHMS}, \textit{cf.} \cite[Proposition~4.18]{potentiallycrystalline}.\par
If $\widetilde{w}_{f-1-j}=\mathfrak{t}_{(m,n)}$, i.e.\ , $\overline{A}^{(f-1-j)}=\begin{pmatrix}
\alpha_jv^m& 0\\
\alpha_j \gamma_{f-1-j}v^m&\beta_j v^{n}
\end{pmatrix}.$
Then 
$$A^{(f-1-j)}=\begin{pmatrix}
    \sum_{0\leq i\leq m} a_i^{(j)} (v+p)^{i} &\sum_{0\leq i\leq n-1} b_i^{(j)} (v+p)^{i}\\
    v(\sum_{0\leq i\leq m-1} c_i^{(j)} (v+p)^{i}) &\sum_{0\leq i\leq n} d_i^{(j)} (v+p)^{i}
\end{pmatrix}.$$
Given Shape $\widetilde{w}_{f-1-j}=\mathfrak{w}\mathfrak{t}_{(m,n)}$, i.e.\ , $\overline{A}^{(f-1-j)}=\begin{pmatrix}
0 & \beta_j v^{n}\\
\alpha_j v^m & \alpha_j\gamma_{f-1-j}v^n
\end{pmatrix}.$
Then 
$$A^{(f-1-j)}=\begin{pmatrix}
    \sum_{0\leq i\leq m-1} a_i^{(j)} (v+p)^{i} &\sum_{0\leq i\leq n} b_i^{(j)}  (v+p)^{i}\\
    v(\sum_{0\leq i\leq m-1} c_i^{(j)}  (v+p)^{i}) &\sum_{0\leq i\leq n} d_i^{(j)}  (v+p)^{i}
\end{pmatrix}.$$
We will suppress the superscript ${j}$ when it is clear from the context.
Recall that the finite height condition is given by 
\[\det A^{(f-1-j)}\in R^\times (v+p)^{\ell_{f-1-j}}\text{ for all } j.\]
For $0\leq k\leq \ell_{f-1-j}-1$ the $k$th height condition is given by
$$H(k)=\sum_{i+j=k}(a_id_j+pb_jc_i)-\sum_{i+j=k-1}b_jc_i.$$ 
Since $\overline{\rho}$ is $N=4\ell$-generic, if $R^{\lambda,\tau}_{\overline{\rho}}\neq 0$, by \cref{remark on genericity of tame type}, $\tau$ is $3\ell$-generic. We can then apply \cite[Proposition~3.19]{BHHMS} with $h=\ell$ and obtain the monodromy condition given as follows: 
\begin{equation}\label{monodromy condition}
    \left(\frac{d}{dv}\right)^{t}\bigg\vert_{v=-p}\left\{\left[v\frac{d}{dv}A^{(f-1-j)}-A^{(f-1-j)}\begin{pmatrix}
    \mathfrak{a}& 0\\ 0&0
\end{pmatrix} \right](v+p)^{h}(A^{(f-1-j)})^{-1}\right\}+O(p^{N-h-t}) \end{equation}
for all $0\leq t\leq h-2$, $0\leq j\leq f-1$.
As $\det A^{(f-1-j)}\in R^\times (v+p)^{\ell_{f-1-j}}$, \cref{monodromy condition} is $0$ for $0\leq t\leq h-\ell_{f-1-j}$. Therefore, using the Leibniz rule, we can reduce it to the following equation:
\[\left(\frac{d}{dv}\right)^{t}\bigg\vert_{v=-p}\left\{\left[v\frac{d}{dv}A^{(f-1-j)}-A^{(f-1-j)}\begin{pmatrix}
    \mathfrak{a}& 0\\ 0&0
\end{pmatrix} \right](v+p)^{\ell_{f-1-j}}(A^{(f-1-j)})^{-1}\right\}+O(p^{N-\ell_{f-1-j}-t}) \]
for all $0\leq t\leq \ell_{f-1-j}-1$, $0\leq j\leq f-1$.
Here $O(p^{N-\ell_{f-1-j}-t})\eqqcolon O(p^{u_j-t})$ is a specific but inexplicit element of $p^{N-\ell_{f-1-j}-t}M_2(R)$ and \begin{equation}\label{value of a}
    \mathfrak{a}\equiv-\langle(ws^{-1}(\mu)-\nu)_j,\alpha^{\vee}_j\rangle \;\;(\text{mod } p).
\end{equation}
For $0\leq k\leq \ell_{f-1-j}-2$, we label the entry $(s,t)$ of the $k$th monodromy as $A(k,s,t)$. Then we have the following.
\begin{align*}
    A(k,1,1)=&k!\biggl\{\sum_{i+j=k+1}-p(ia_id_j-jb_jc_i) +\sum_{i+j=k} \bigl[(i-\mathfrak{a})a_id_j+2pjb_jc_i\bigr]+\sum_{i+j=k-1}jb_jc_i\biggr \}+O(p^{u_j-k});\\
    A(k,1,2)=&k!\biggl\{\sum_{i+j=k}(\mathfrak{a}+j-i)a_ib_j+p\sum_{i+j=k+1}(i-j)a_ib_j\biggr \}+O(p^{u_j-k});\\
A(k,2,1)=&k!\biggl\{\sum_{i+j=k+1}p^2(i-j)c_id_j+\sum_{i+j=k}p(\mathfrak{a}+2j-2i+1)c_id_j+\sum_{i+j=k-1}-(\mathfrak{a}-i+j-1)c_id_j\biggr \}\\
&+O(p^{u_j-k});\\
A(k,2,2)=&k!\biggl\{\sum_{i+j=k+1}-p(pic_ib_j+ja_id_j)+\sum_{i+j=k}\bigl[ja_id_j-p(\mathfrak{a}-2i-1)c_ib_j\bigr]\\
&+\sum_{i+j=k-1}(\mathfrak{a}-i-1)c_ib_j\biggr \}+O(p^{u_j-k}).
\end{align*}
Let $M(-1,s,t)=0$ and $\widetilde{A}(k,i,j)=A(k,i,j)-O(p^{u_j-k})$. Define $M_k(s,t)$ for $0\leq k\leq \ell_{f-1-j}-2$, $1\leq s,t\leq 2$ recursively as follows:
\begin{align*}
&M_k(1,1)=\left(\frac{\widetilde{A}(k,1,1)}{k!}+\mathfrak{a} H_k+M(k-1,1,1)\right)/p;
M_k(1,2)=\frac{\widetilde{A}(k,1,2)}{k!};\\
&M_k(2,1)=\left(\frac{\widetilde{A}(k,2,1))}{k!}+M(k-1,2,1)\right)/p;
M_k(2,2)=\left(\frac{\widetilde{A}(k,2,2)}{k!}+M(k-1,2,2)\right)/p.
\end{align*}
Then for $0\leq k\leq \ell_{f-1-j}-2$, we have
\begin{equation}\label{monodromy equations}
\begin{split}
M_k(1,1)&=\sum_{i+j=k}(\mathfrak{a} +j)c_ib_j-\sum_{i+j=k+1}ia_id_j+jpc_ib_j;\\
M_k(1,2)&=\sum_{i+j=k}(\mathfrak{a} +j-i)a_ib_j+p\sum_{i+j=k+1}(i-j)a_ib_j;\\
M_k(2,1)&=\sum_{i+j=k}(\mathfrak{a}-i+j-1)c_id_j+p\sum_{i+j=k+1}(i-j)c_id_j;\\
M_k(2,2)&=\sum_{i+j=k}(i+1-\mathfrak{a})b_jc_i-\sum_{i+j=k+1}ja_id_j+ipc_ib_j.
\end{split}
\end{equation}
\begin{definition}
Let $R=\widehat{\otimes}_j R^{(j)}$ where $R^{(j)}$ is defined in \cref{Table 1} and \cref{Table 2}.
Let $I^{(j), \leq(\ell_{f-1-j},0)}$ be the ideal of $R$ generated by the equations given by the height conditions $H(k)$, $0\leq k\leq\ell_{f-1-j}-1$.
And we let $R^{\leq (\ell_{f-1-j},0)_j, \tau}_{\overline{\mathfrak{M}},\overline{\beta}}$ be the maximal reduced $p$-flat quotient of $\widehat{\otimes}R^{(j)}/I^{(j), \leq(\ell_{f-1-j},0)}$.\par
Let $I^{(j), \nabla}$ be the ideal generated by the monodromy condition $A(k,s,t)$ for $0\leq k \leq \ell_{f-1-j}-2$, $1\leq s,t\leq 2$. Let $R^{\leq (\ell_{f-1-j},0)_j,\tau,\nabla}_{\overline{\mathfrak{M}}, \overline{\beta}}$ be the maximal, reduced $\mathcal{O}$-flat quotient of the ring $R/\sum_{j}(I^{(j),\leq(\ell_{f-1-j},0)}+I^{(j),\nabla})$.\par
We further define $R_{\overline{\rho}}^{\leq(\ell_j,0)_j, \tau, \reg}$ as the quotient of $R_{\overline{\rho}}^{\leq(\ell_j,0)_j, \tau}$ such that each component is of maximal dimension, i.e.\ , $R_{\overline{\rho}}^{\leq(\ell_j,0)_j, \tau, \reg}=R_{\overline{\rho}}^{\leq(\ell,0)_j, \tau}/(\cap_i \mathfrak{p}_i)$, where the intersection is over $\mathfrak{p}_i$, such that $R_{\overline{\rho}}^{\leq(\ell_j,0)_j, \tau}/\mathfrak{p}_i$ is of maximal dimension. We define $R^{\leq (\ell_j,0)_j,\tau,\nabla, \reg}_{\overline{\mathfrak{M}}, \overline{\beta}}$ as the quotient such that every component is of the same maximal dimension analogously.
\end{definition}
By \cite[Theorem 3.3.8]{Kisin}, $R_{\overline{\rho}}^{\leq(\ell_j,0)_j, \tau, \reg}$ corresponds to the quotient of $R_{\overline{\rho}}^{\leq(\ell_j,0)_j, \tau}$ consisting of only regular Hodge--Tate weights, since the component with irregular Hodge--Tate weights has a positive codimension (\textit{cf.} \cref{dim drops}). \par
As in \cite[5]{potentiallycrystalline}, we have an isomorphism
\[R_{\overline{\rho}}^{\leq(\ell_j,0)_j, \tau, \reg}\llbracket X_1, \dots, X_{2f}\rrbracket\cong R^{\leq (\ell_j,0)_j,\tau,\nabla, \reg}_{\overline{\mathfrak{M}}, \overline{\beta}}\llbracket Y_1, \dots, Y_4\rrbracket.\]
We will compute the generators of $I_\infty^{\reg}\colonequals \ker(R\twoheadrightarrow R^{\leq (\ell_j,0)_j,\tau,\nabla, \reg}_{\overline{\mathfrak{M}}, \overline{\beta}})$
and show that $I^{\reg}_{\infty}=\sum_j I^{(j),\reg}$ where $I^{(j),\reg}$ is given in \cref{Table 1} and \cref{Table 2}. In general, $I^{(j), \reg}$ is not an ideal of $R^{(j)}$, as $O(p^{u_j-k})$ is an element of $M_2(R)$ rather than $M_2(R^{(j)})$. \par
Since $M_k(2,2)+M_k(1,1)=-(k+1)H(k+1)$, $(I^{(j),\leq(\ell_{f-1-j},0)}+I^{(j),\nabla}, p^{N-2\ell_{f-1-j}+1})$ is generated by $H(0)$ and $M_k(s,t)$ for $0\leq k\leq \ell_{f-1-j}-2$, $1\leq s,t \leq 2$. 
We will find the solutions to equations arising from $H(0)$ and $M_k(s,t)$ for $0\leq k\leq \ell_{f-1-j}-2$, $1\leq s,t \leq 2$. If $\rho$ is of Hodge--Tate weight $(m,n)$, then $\rho\otimes \epsilon^k$ is of Hodge--Tate weight $(m+k,n+k)$. On the representation side, this corresponds to twisting $\sigma(\tau)$ by $(N_{k/\F_p} \circ \det)^k$. Assume $\widetilde{w}_{f-1-j}=\mathfrak{t}_{(m,n)}$ (respectively $\mathfrak{w}\mathfrak{t}_{(m,n)}$), we let $\widetilde{w}_{f-1-j}+(k,k)=\mathfrak{t}_{(m+k,n+k)}$ (respectively $\mathfrak{w}\mathfrak{t}_{(m+k,n+k)}$). Hence, $\tau_{\widetilde{w}}\otimes \epsilon^k=\tau_{\widetilde{w}+(k,k)}$. Moreover, we have \[R_{\overline{\rho}}^{(m,n),\tau_{\widetilde{w}}}\hookrightarrow R_{\overline{\rho}\otimes\omega^k}^{(m+k,n+k)\tau_{\widetilde{w}+(k,k)}}.\] Therefore, in order to compute the monodromy conditions arising from the Galois deformation space $R^{\leq(\ell_j,0)_j, \tau_{\widetilde{w}}}$, we can instead consider the monodromy conditions from $R^{\leq(\ell_j+2k,0)_j, \tau_{\widetilde{w}+(k,k)}}$, which has more variables. If we relabel the solutions to the height and monodromy equations for $\widetilde{w}_{f-1-j}$ as $a_{k}=\bm{a}_{-m+k}$, $b_{k}=\bm{b}_{ -n+1+k}$, $c_{k}=\bm{c}_{-m+1+k}$, $d_{k}=\bm{d}_{-n+k}$, we expect them to be the same as the solutions $\{\bm{a'}_k, \bm{b'}_k, \bm{c'}_k, \bm{d'}_k\}$ (relabelled analogously) for $\widetilde{w}_{f-1-j}+(k,k)$ when both are well-defined. (We will use the superscript $*$ to indicate that it is a unit.) We will show this is the case below. Moreover, by cancelling the extra variables introduced, we can compute the Galois deformation space of higher Hodge--Tate weights.\par 
\textbf{Assume $\widetilde{w}_{f-1-j}=\mathfrak{t}_{(m,n)}$ with $m\geq n$ or $\mathfrak{w}\mathfrak{t}_{(m,n)}$ with $m> n$}. Denote $M^{a}_k(s,t)$ the monodromy equations for $\rho\otimes\epsilon^a$.
By induction, we can relate $M^{m-n}_{3m-n-1-k}(s,t)$ with $M^0_{m+n-1-k}(s,t)$ as follows. We use the notation $i+j=^*(k-1)/k$ to denote $i+j=k-1$ if $\widetilde{w}=\mathfrak{t}_{(m,n)}$ and $i+j=k$ if $\widetilde{w}=\mathfrak{w}\mathfrak{t}_{(m,n)}$. Similarly, we use $j\geq^* n/(n+1)$ to denote $j\geq n$ if $\widetilde{w}=\mathfrak{t}_{(m,n)}$ and $j\geq n+1$ if $\widetilde{w}=\mathfrak{w}\mathfrak{t}_{(m,n)}$.\par
\begin{equation}\label{raising weight for monodromy eq}
{\allowdisplaybreaks
\begin{aligned}
&M^{m-n}_{3m-n-1-k}(1,1)=M^0_{m+n-1-k}(1,1)+\sum_{\substack{i+j=^*k-1/k,\\ i\geq m\text{ or}\\j\geq^* n/n+1}}(\mathfrak{a} +m-1-j)\bm{c}_{-i}\bm{b}_{-j}\\
&-\sum_{\substack{i+j=^*k/k-1,\\ i\geq^* m+1/m\text{ or}\\j\geq n+1}}(2m-n-i)\bm{a}_{-i}\bm{d}_{-j}+\sum_{\substack{i+j=^*k-2/k-1,\\ i\geq m\text{ or}\\j\geq^* n/n+1}}(m-1-j)p\bm{c}_{-i}\bm{b}_{-j},\\
&M^{m-n}_{3m-n-1-k}(1,2)=M^0_{m+n-1-k}(1,2)+\\
&\sum_{\substack{i+j=k,\\ i\geq^* m+1/m\text{ or}\\j\geq^* n/n+1}}(\mathfrak{a} +n-m-1-j+i)\bm{a}_{-i}\bm{b}_{-j}-\sum_{\substack{i+j=k-1,\\ i\geq^* m+1/m\text{ or}\\j\geq^* n/n+1}}p(m-n+1-i+j)\bm{a}_{-i}\bm{b}_{-j},\\
&M^{m-n}_{3m-n-1-k}(2,1)=M^0_{m+n-1-k}(2,1)+\\
&\sum_{\substack{i+j=k,\\ i\geq m\text{ or}\\j\geq n+1}}(\mathfrak{a}-m+n+i-j)\bm{c}_{-i}\bm{d}_{-j}+\sum_{\substack{i+j=k-1,\\ i\geq m\text{ or}\\j\geq n+1}}p(n-m-1-i+j)\bm{c}_{-i}\bm{d}_{-j},\\
&M^{m-n}_{3m-n-1-k}(2,2)=M^0_{m+n-1-k}(2,2)+\\
&\sum_{\substack{i+j=^*k-1/k,\\ i\geq m\text{ or}\\j\geq^* n/n+1}}(m-i-\mathfrak{a})\bm{c}_{-i}\bm{b}_{-j}-\sum_{\substack{i+j=^*k/k-1,\\ i\geq^* m+1/m\text{ or}\\j\geq n+1}}(m-j)\bm{a}_{-i}\bm{d}_{-j}+\sum_{\substack{i+j=^*k-2/k-1,\\ i\geq m\text{ or}\\j\geq^* n/n+1}}(m-1-i)p\bm{c}_{-i}\bm{b}_{-j}.
\end{aligned}}
\end{equation}
The inequalities for $i$ and $j$ come from the fact that they are introduced as extra variables. Let $I^{(j),\mathrm{extra}}$ be the ideal of $R^{(j)}$ generated by the extra terms on the right-hand side of \eqref{raising weight for monodromy eq}. If we can find $\bm{a}_{-k},\bm{b}_{-k},\bm{c}_{-k}, \bm{d}_{-k}$ for $0\leq k\leq m+n-1$ and $1\leq i,j\leq2$ from $M^{m-n}_{3m-n-1-k}(s,t)$ where $0\leq k\leq m+n-1, \leq s,t\leq 2$, then they are also a solution to $M^0_{m+n-1-k}(s,t)$ for $0\leq k\leq m+n-1, \leq s,t\leq 2$ up to modulo by the ideal $I^{(j),\mathrm{extra}}$. As $j\geq0$ and $i+j=k, k-1$ or $k-2$, we must have $i\leq k$.\par
Assume $\widetilde{w}_{f-1-j}=\mathfrak{t}_{(m,n)}$, note that $\overline{\bm{a}_0^*}=\alpha_j$, $\overline{\bm{d}_0^*}=\beta_j$ and $\overline{\bm{b}_0}=\gamma_{f-1-j}\alpha_j$ when $n\geq 1$. In particular, if $\gamma_{f-1-j}\neq0$, then $\bm{b}_0\neq0$. Also, we have $\ell_{f-1-j}=m+n$.\par
\begin{theorem}\label{conjectured solution}
Assume $\widetilde{w}_{f-1-j}=\mathfrak{t}_{(m,n)}$,\[(I^{(j),\leq(\ell_{f-1-j},0)}+I^{(j),\nabla}, p^{N-2\ell_{f-1-j}+1})= (I^{(j)}_{poly}, p^{N-2\ell_{f-1-j}+1}),\] where $I^{(j)}_{poly}$ is given by row 5 of \cref{Table 1} without the $O(p^{k_j})$ term.
\end{theorem}
\begin{proof}
We first deal with the case where with $m\geq n$. The value of $\mathfrak{a}$ follows from \cref{value of a} and \cref{mutau}. As $p>2l$, $\mathfrak{a}\pm k\not\equiv 0\mod p$, so $\mathfrak{a}\pm k$ is a unit for all $0\leq k\leq \ell_{f-1-j}$. Moreover, the monodromy equations are given by $M^k(j,s,t)$ up to modulo $p^{N-2\ell_{f-1-j}+1}$.
We first prove that the solution to $M^{m-n}_{3m-n-1-k}(s,t)$ for $0\leq k\leq m+n-1$, $\leq s,t\leq 2$ is $\bm{a}_{-k},\bm{b}_{-k}, \bm{c}_{-k}, \bm{d}_{-k}$ as given by the $I^{(j),\reg}$ row without the $O(p^{k_j})$ tail in \cref{Table 1}, up to modulo $I^{(j),\mathrm{extra}}$. For $i=0$, it is just the definition. We proceed by induction that given we have verified $\bm{a}_{-k}$, $\bm{d}_{-k}$, $\bm{b}_{-k}$ and $\bm{c}_{-k}$ for $k\leq i-1$, we can deduce $\bm{a}_{-i}$ and $\bm{d}_{-i}$ from $M^{m-n}_{3m-n-1-i}(1,1)$ and $M^{m-n}_{3m-n-1-i}(2,2)$. From the combinations of the indices, we deduce that $\bm{a}_{-i}$ and $\bm{d}_{-i}$ are the only indeterminate. From $M^{m-n}_{3m-n-1-i}(1,1)$, we have
\begin{align*}
    &(2m-n-i)\bm{a}_{-i}\bm{d}_0^*+(2m-n)\bm{a}_0^*\bm{d}_{-i}\\
    =&\sum_{1\leq j\leq i}(\mathfrak{a}+m-j)\bm{c}_{-i+j}\bm{b}_{-j+1}-\sum_{1 \leq j\leq i-1}[\bigl(2m-n-i+j)\bm{a}_{-i+j}\bm{d}_{-j}+(m-j)p\bm{c}_{-i+j+1}\bm{b}_{-j+1}\bigr].
\end{align*}
And from $M^{m-n}_{3m-n-i}(2,2)$, we have
\begin{align*}
    m\bm{a}_{-i}\bm{d}_0^*+(m-i)\bm{a}_0^*\bm{d}_{-i}=&\sum_{1\leq j \leq i}(2m-n-i+j-\mathfrak{a})\bm{c}_{-i+j}\bm{b}_{-j+1}\\
    &-\sum_{1\leq j \leq i-1}\bigl[(m-j)\bm{a}_{-i+j}\bm{d}_{-j}+(2m-n-i+j)p\bm{c}_{-i+j+1}\bm{b}_{-j+1}\bigr].
\end{align*}
Hence,
\begin{equation}\label {equation for a}
\begin{split}
    \bm{a}_{-i}=&\dfrac{-1}{i\bm{d}_0^*}\biggl[\sum_{1 \leq j \leq i}(\mathfrak{a}-m+n-j)\bm{c}_{-i+j}\bm{b}_{1-j}+\sum_{1 \leq j \leq i-1}(m-n+j)p\bm{c}_{1-i+j}\bm{b}_{1-j}+(i-j)\bm{a}_{-i+j}\bm{d}_{-j}\biggr],
\end{split}
\end{equation}
\begin{equation*}\label {equation for d}
    \bm{d}_{-i}=\dfrac{1}{i\bm{a}_0^*}\biggl[\sum_{1\leq j\leq i}(\mathfrak{a}+i-j-m+n)\bm{c}_{-i+j}\bm{b}_{1-j}-\sum_{1 \leq j \leq i-1}(i-j-m+n) p\bm{c}_{1-i+j}\bm{b}_{1-j}+j\bm{a}_{-i+j}\bm{d}_{-j}\biggr].
\end{equation*}
By a simple calculation, we can show the following:
\begin{lemma}\label{relations 1}
Assume $\bm{a}_{-i+j},\bm{b}_{1-j}, \bm{c}_{1-i+j},\bm{d}_{-j}$ for $0<i,j,i-j$ are as given in the $I^{(j),\reg}$ row without the $O(p^{k_j})$ tail in \cref{Table 1}, then
$$i\bm{a}_{-i}\bm{d}_0^*=-(\mathfrak{a}-m+n-1)\bm{c}_{1-i}\bm{b}_0,$$
$$i\bm{a}_0^*\bm{d}_{-i}=(\mathfrak{a}-m+n)\bm{c}_0\bm{b}_{1-i},$$
$$\bm{a}_{-i+j}\bm{d}_{-j}=\dfrac{-(\mathfrak{a}-m+n)(\mathfrak{a}-m+n-1)\bm{b}_0\bm{c}_0\bm{b}_{1-j}\bm{c}_{1-i+j}}{\bm{a}_0^*\bm{d}_0^*(i-j)j}.$$
\end{lemma}
Therefore, the right-hand side of \cref{equation for a} is
\begin{align*}
    &\dfrac{-(\mathfrak{a}-m+n-1)\bm{c}_{1-i}\bm{b}_0}{i\bm{d}_0^*}+\\
    &\sum_{1 \leq j \leq i-1}\bigl[(\mathfrak{a}-m+n-1-j)\bm{c}_{-i+j+1}\bm{b}_{-j}+(i-j)\bm{a}_{-i+j}\bm{d}_{-j}+(m-n+j)p\bm{c}_{1-i+j}\bm{b}_{1-j}\bigr].
\end{align*}
From the expression for $\bm{b}_{-j}$ and \cref{relations 1}, we have the terms in the summand cancelling each other out, and $\bm{a}_{-i}$ is indeed as in the $I^{(j),\reg}$ row without the $O(p^{k_j})$ tail in \cref{Table 1}. The proof for $\bm{d}_{-i}$ is analogous. \par
We now show that given the solutions $\bm{a}_{-k}$, $\bm{d}_{-k}$ for $k\leq i$ and $\bm{b}_{-k}$, $\bm{c}_{-k}$ for $k\leq i-1$, we can deduce $\bm{b}_{-i}$ and $\bm{c}_{-i}$ from $M^{m-n}_{3m-n-1-i}(1,2)$ and $M^{m-n}_{3m-n-1-i}(2,1)$, respectively.
\begin{equation}\label{equation for b}
\begin{split}
    \bm{b}_{-i}=&\frac{-1}{(\mathfrak{a}-m+n-1-i)\bm{a}_0^*}\\
    &\biggl\{\sum_{0\leq j\leq i-1}\bigl[(\mathfrak{a}-m+n-1+i-2j)\bm{a}_{-i+j}\bm{b}_{-j}+
    p(m-n+2-i+2j)\bm{a}_{-i+1+j}\bm{b}_{-j}\bigr]\biggr\};
\end{split}
\end{equation}
\begin{equation*}\label{equation for c}
\begin{split}
    \bm{c}_{-i}=&\frac{-1}{(\mathfrak{a}-m+n+i)\bm{d}_0^*}\\
    &\biggl\{\sum_{0\leq j\leq i-1} \bigl[(\mathfrak{a}-m+n-i+2j)\bm{c}_{-j}\bm{d}_{-i+j}+
    p(m-n-2+i-2j)\bm{c}_{-j}\bm{d}_{1-i+j}\bigr]\biggr\}.
    \end{split}
\end{equation*}
\begin{lemma} \label{relations for bc}
  Assume $\bm{a}_{-i+j},\bm{b}_{-j}, \bm{c}_{-j}, \bm{d}_{-i+j}$ for $0\leq j\leq i$ are as given in the $I^{(j),\reg}$ row without the $O(p^{k_j})$ tail in \cref{Table 1}. Let $$T_j=(\mathfrak{a}-m+n-1+i-2j)\bm{a}_{-i+j}\bm{b}_{-j}+p(m-n+2-i+2j)\bm{a}_{+1-i+j}\bm{b}_{-j},$$ $$R_j=\frac{-j}{i}(\mathfrak{a}-m+n-1-j)\bm{a}_{-i+j}\bm{b}_{-j}.$$ Then we have for $0\leq j \leq i-1$, 
$T_{j}+R_j=R_{j+1}$. 
Similarly, let $$T'_j=(\mathfrak{a}-m+n-i+2j)\bm{c}_{-j}\bm{d}_{-i+j}+p(m-n-2+i-2j)\bm{c}_{-j}\bm{d}_{1-i+j},$$ $$R'_j=\frac{-j}{i}(\mathfrak{a}-m+n+j)\bm{c}_{-j}\bm{d}_{-i+j}.$$ Then we have that $T'_{j}+R'_j=R'_{j+1}$ for $0\leq j \leq i-1$.
\end{lemma}
By \cref{relations for bc} and the fact that $R_0=0$, the right-hand side of \cref{equation for b} is 
\[\frac{-1}{(\mathfrak{a}-m+n-1-i)\bm{a}_0^*}\sum_{0\leq j \leq i}T_j=\frac{-1}{(\mathfrak{a}-m+n-1-i)\bm{a}_0^*}R_{i},\]
which is precisely the conjectured $\bm{b}_{-i}$ in the $I^{(j),\reg}$ row without the $O(p^{k_j})$ tail in \cref{Table 1}, again by \cref{relations for bc}. The proof for $\bm{c}_{1-n-i}$ goes exactly the same way, using $T'_j$, $R'_j$ instead of $T_j$, $R_j$. Therefore, we finish the induction step and prove that the conjectured solution to $M^{m-n}_{3m-n-1-k}(1,1)$ for $0\leq k\leq m+n-1$ for all $i,j$ are $\{\bm{a}_{-k}, \bm{b}_{-k},\bm{c}_{-k}, \bm{d}_{-k}\}_{0\leq k\leq m+n-1}$.\par
Now by \eqref{raising weight for monodromy eq}, we know that the solutions, which is given by the $I^{(j),\reg}$ row without the $O(p^{k_j})$ tail in \cref{Table 1}, are also solutions to the monodromy equations $M_k(i,j)$ modulo $I^{\mathrm{extra}}$. 
We claim that the last equation in $I^{(j),\reg}$ when $m>n$ (resp. equations in $I^{(j)}$ when $m=n$) in \cref{Table 1} without the $O(p^{k_j})$ tail, generates $I^{\mathrm{extra}}$. On the one hand, by \eqref{raising weight for monodromy eq}
\[M^{m-n}_{3m-2n-1}(1,2)-(\mathfrak{a}-m-1)\bm{a}_0^*\bm{b}_{-n}=M^0_{m-1}(1,2),\] and $\bm{a}_0^*$ is a unit, from the formula for $\bm{b}_{-n}$ we deduce that the term with the last equation in $I^{(j),\reg}$ when $m>n$ (resp. first term in $I^{(j)}$ when $m=n$), without the $O(p^{k_j})$ tail, in \cref{Table 1} is contained in $I^{\mathrm{extra}}$. Furthermore, for $m=n$, by \eqref{raising weight for monodromy eq}, $M^{m-n}_{3m-2n-1}(2,1)-(\mathfrak{a}+m)\bm{d}_0^*\bm{c}_{-n}=M^0_{m-1}(1,2)$, we deduce that the terms with the last equation in $I^{(j)}$ without the $O(p^{k_j})$ tail is contained in $I^{\mathrm{extra}}$ in this case. On the other hand, all the terms that generate $I^{\mathrm{extra}}$ are divisible by $\bm{a}_{-j}$ where $j\geq m+1$, $\bm{b}_{-j}$ where $j\geq n$, $\bm{c}_{-j}$ where $j\geq m$ or $\bm{d}_{-j}$ where $j\geq n+1$. These are all computed to be according to the $I^{(j),\reg}$ row without the $O(p^{k_j})$ tail in \cref{Table 1}, hence they are all divisible by
\begin{equation}\label{eq. for comp 3}
\prod_{m\geq j\geq 1}\left(\dfrac{(\mathfrak{a}-m+n)(\mathfrak{a}-m+n-1)\bm{b}_0\bm{c}_0}{\bm{a}_0^*\bm{d}_0^*}-(m-n-j)jp\right).
\end{equation}
Furthermore, all $\bm{a}_{-j}, \bm{d}_{-j}$ with $j>0$ and all $\bm{b}_{-j}$ are divisible by $\bm{b}_0$; and all $\bm{a}_{-j}, \bm{d}_{-j}$ with $j>0$, and all $\bm{c}_{-j}$ are divisible by $\bm{c}_0$. Therefore, all the generators of $I^{\mathrm{extra}}$ are divisible by the last equation in $I^{(j),\reg}$ when $m>n$ (resp. equations in $I^{(j)}$ when $m=n$)in \cref{Table 1} without the $O(p^{k_j})$ tail. Therefore, $(I^{(j), \nabla},p^{N-2\ell_{f-1-j}+1})$ is generated by the terms in $I^{(j), \reg}$ in \cref{Table 1} without the $O(p^{k_j})$ tail, except the last equation in $I^{(j),\reg}$ is replaced by two equations in $I^{(j)}$ if $m=n$.\par
 Since $-(k+1)H(k+1)=M_k(2,2)+M_k(1,1)$ for $0\leq k\leq m+n-1$. To finish the proof, we substitute the conjectured solution in the equation $H(0)=\bm{a}_{-m}\bm{d}_{-n}+p\bm{b}_{-n+1}\bm{c}_{-m+1}$, and a direct calculation shows that it is divisible by the last equation in $I^{(j),\reg}$ when $m>n$ (resp. equations in $I^{(j)}$ when $m=n$), without the $O(p^{k_j})$ tail, in \cref{Table 1}.
\end{proof}
\begin{lemma}\label{m less than n}
    \cref{conjectured solution} holds also for $m<n$.
\end{lemma}
\begin{proof}
We will reduce it to the case in \cref{conjectured solution} where we swap $a_j$ with $d_{j}$, $c_{j}$ with $-b_j$, and $\mathfrak{a}$ with $-\mathfrak{a}+1$.
    The value of $\mathfrak{a}$ follows from \cref{value of a} and \cref{mutau}. Assume $m<n$, let $A$ be the $A^{(f-1-j)}$ for $\widetilde{w}_{f-1-j}=\mathfrak{t}_{(m,n)}$ and $A$ be the $A^{(f-1-j)}$ for $\widetilde{w}_{f-1-j}=\mathfrak{t}_{(n,m)}$. Also, let $A=\begin{pmatrix}
        \alpha &\beta\\ \gamma&\delta
    \end{pmatrix}$. Then 
    \[\inv(A)\colonequals \begin{pmatrix}
      0&-\frac{1}{v}\\ 1 &0  
    \end{pmatrix}A\begin{pmatrix}
        0&1\\-v&0
    \end{pmatrix}=\begin{pmatrix}
    \sum_{0\leq i\leq m} \bm{d}_{-i}^{(j)} (v+p)^{i} &\sum_{0\leq i\leq n-1} -\bm{c}_{-i}^{(j)} (v+p)^{i}\\
    v(\sum_{0\leq i\leq m-1} -\bm{b}_{-i}^{(j)} (v+p)^{i}) &\sum_{0\leq i\leq n} \bm{a}_{-i}^{(j)} (v+p)^{i}
\end{pmatrix}.\]
    The monodromy equation is given by
    \begin{equation}\label{monodromy}
        \left(\frac{d}{dv}\right)^t\bigg|_{v=-p}\left\{\left[v\frac{d}{dv}A-A\begin{pmatrix}
            \mathfrak{a} &0\\ 0&0
        \end{pmatrix}\right](A)^{\adj}\right\}+O(p^{N-\ell_{f-1-j}-t}) 
    \end{equation}
    for all $0\leq t\leq 1$, $0\leq j\leq f-1$, where $\adj$ stands for adjugate. \\
   We apply $\inv$ to \cref{monodromy}, after simplification, we have the following
    \[\left(\frac{d}{dv}\right)^t\bigg|_{v=-p}\left\{\left[v\frac{d}{dv}\inv(A)-\begin{pmatrix}
            0&\frac{1}{v}\gamma\\-v\beta&0
        \end{pmatrix}-\inv(A)\begin{pmatrix}
            0 &0\\ 0&\mathfrak{a}
        \end{pmatrix}\right](\inv(A))^{\adj}\right\}+O(p^{N-\ell_{f-1-j}-t}).\]
Since $\inv(A)\begin{pmatrix}
    1&0\\0&0
\end{pmatrix}-\begin{pmatrix}
    1&0\\0&0
\end{pmatrix}\inv(A)=\begin{pmatrix}
            0&\frac{1}{v}\gamma\\-v\beta&0
        \end{pmatrix}$,
The leading term up to modulo $(v+p)^\ell$ is equivalent to 
\[\left(\frac{d}{dv}\right)^t\bigg|_{v=-p}\left\{\left[v\frac{d}{dv}\inv(A)-\inv(A)\begin{pmatrix}
            1-\mathfrak{a} &0\\ 0&0
        \end{pmatrix}\right]\inv(A)^{\adj}\right\}.\]
Therefore, we can apply \cref{conjectured solution} to $\inv(A)$.
\end{proof}
Now assume $\widetilde{w}_{f-1-j}=\mathfrak{w}\mathfrak{t}_{(m,n)}$. 
Similar to the case where $\widetilde{w}_{f-1-j}=\mathfrak{t}_{(m,n)}$, note that $\overline{\bm{b}_0^*}=\alpha$, $\overline{\bm{c}_0^*}$ and $\overline{\bm{d}_0}=\gamma$. In particular, if $\gamma_{f-1-j}\neq0$, then $\bm{d}_0\neq0$.
\begin{theorem}\label{conjectured solution 2}
Assume $\widetilde{w}_{f-1-j}=\mathfrak{w}\mathfrak{t}_{(m,n)}$,
\[(I^{(j),\leq(\ell_{f-1-j},0)}+I^{(j),\nabla}, p^{N-2\ell_{f-1-j}+1})= (I^{(j)}_{poly}, p^{N-2\ell_{f-1-j}+1}),\]
where $I^{(j)}_{poly}$ is given by row 5 of \cref{Table 2} without the $p^{k_j}$ tail.
\end{theorem}
\begin{proof}
We first deal with the case where $m\geq n$. The value of $\mathfrak{a}$ follows from \cref{value of a} and \cref{mutau}.
As $p>2\ell_{f-1-j}$, we have $\mathfrak{a}\pm k\not\equiv 0\mod p$, so $\mathfrak{a}\pm k$ is a unit for all $0\leq k\leq \ell_{f-1-j}$. We first prove that the solution to $M^{m-n}_{3m-n-1-k}(s,t)$ for $0\leq k\leq m+n-1$, $\leq s,t\leq 2$ is $\bm{a}_{-k},\bm{b}_{-k}, \bm{c}_{-k}, \bm{d}_{-k}$ as conjectured by the equations in \cref{Table 2} without the $O(p^{k_j})$ term. For $k=0$, it is just the definition. We then proceed as in the case where $\widetilde{w}_{f-1-j}=\mathfrak{t}_{(m,n)}$. We will prove by induction. If we have verified $\bm{a}_{-k}$, $\bm{d}_{-k}$, $\bm{b}_{-k}$ and $\bm{c}_{-k}$ for $k\leq i-1$, we will deduce $\bm{b}_{-i}$ and $\bm{c}_{-i}$ from $M^{m-n}_{3m-n-1-i}(1,1)$ and $M^{m-n}_{3m-n-1-i}(2,2)$. From $M^{m-n}_{3m-n-i}(1,1)$, we have
\begin{align*}
    &(\mathfrak{a}+m-i)\bm{b}_{-i}\bm{c}_0^*+(\mathfrak{a}+m)\bm{b}_0^*\bm{c}_{-i}\\
    =&-\sum_{1\leq j\leq i-1}(\mathfrak{a}+m-j)\bm{c}_{-i+j}\bm{b}_{-j}+\sum_{0 \leq j\leq i-1}(2m-n-i+j)\bm{a}_{1-i+j}\bm{d}_{-j}+(m-j)p\bm{c}_{-i+j}\bm{b}_{1-j}.
\end{align*}
And from $M^{m-n}_{3m-n-i}(2,2)$, we have
\begin{equation*}
\begin{split}
    (2m-n-\mathfrak{a})\bm{b}_{-i}\bm{c}_0^*+(2m-n-\mathfrak{a}-i)\bm{b}_0^*\bm{c}_{-i}=-\sum_{1\leq j \leq i-1}(2m-n-i+j+\mathfrak{a})\bm{c}_{-i+j}\bm{b}_{-j}\\
    +\sum_{0\leq j \leq i-1}(m-j)\bm{a}_{1-i+j}\bm{d}_{-j}+(2m-n-i+j)p\bm{c}_{1-i+j}\bm{b}_{-j}.
\end{split}
\end{equation*}
Hence,
\begin{equation}\label {equation for b 2}
\begin{aligned}
    \bm{b}_{-i}=\dfrac{-1}{i\bm{c}_0^*}\biggl(&\sum_{0\leq j\leq i-2}(j+1)\bm{c}_{1-i+j}\bm{b}_{-1-j}\\
    &-\sum_{0\leq j \leq i-1}\bigl[(\mathfrak{a}+i-j-m+n)\bm{a}_{1-i+j}\bm{d}_{-j}+(\mathfrak{a}+j)p\bm{c}_{1-i+j}\bm{b}_{-j}\big]\biggr).
\end{aligned}
\end{equation}
\begin{equation*}\label {equation for c 2}
\begin{split}
    \bm{c}_{-i}=&\dfrac{-1}{i\bm{b}_0^*}\biggl(\sum_{1\leq j \leq i-1}(i-j)\bm{c}_{-i+j}\bm{b}_{-j}+\sum_{0\leq j \leq i-1}\bigl[(\mathfrak{a}-j-m+n)\bm{a}_{1-i+j}\bm{d}_{-j}+(\mathfrak{a}-i+j)p\bm{c}_{1-i+j}\bm{b}_{-j}\bigr]\biggr).
    \end{split}
\end{equation*}
\begin{lemma}\label{relations 2}
Assume $\bm{a}_{1-i+j},b_{m-j}, \bm{c}_{1-i+j},\bm{d}_{-j}$ are as given in the $I^{(j),\reg}$ row without the $O(p^{k_j})$ tail in \cref{Table 2}, then for $j, i-j-1\geq 0$, we have equalities:
\begin{align*}
    (j+1)\bm{b}_{-j-1}\bm{c}_{1-i+j}&=p(\mathfrak{a}+j)\bm{b}_{-j}\bm{c}_{1-i+j}+(\mathfrak{a}-m+n+i-j)\bm{a}_{1-i+j}\bm{d}_{-j};\\
    -(i-j)\bm{b}_{-j}\bm{c}_{-i+j}&=p(\mathfrak{a}-i+j)\bm{b}_{-j}\bm{c}_{-i+j+1}+(\mathfrak{a}-m+n-j)\bm{a}_{-i+j+1}\bm{d}_{-j}
\end{align*}
\end{lemma}
By \cref{relations 2}, the right-hand side of \cref{equation for b 2} all cancel out except the term $(\mathfrak{a}+i-1)p\bm{c}_0^*b_{-i+1}+(\mathfrak{a}+1-m+n)\bm{a}_0\bm{d}_{-i+1}$, which is equal to $\bm{b}_{-i}$ again by \cref{relations 2}.
we can similarly verify that $\bm{c}_{ -i}$ is as in the $I^{(j),\reg}$ row without the $O(p^{k_j})$ tail in \cref{Table 2}.\par
We now show that given solutions $\bm{b}_{-k}$ and $\bm{c}_{-k}$ for $k\leq i$ and $\bm{a}_{-k}$ and $\bm{d}_{-k}$ for $k\leq i-1$, we can deduce $\bm{a}_{-i}$ and $\bm{d}_{-i}$ from $M^{m-n}_{3m-n-1-i}(1,2)$ and $M^{m-n}_{3m-n-1-i}(2,1)$ respectively that:
\begin{equation}\label{equation for a 2}
\begin{aligned}
    \bm{a}_{-i}=&\dfrac{-1}{(\mathfrak{a}-m+n+1+i)\bm{b}_0^*}\\&(\sum_{0\leq j\leq i-1}(\mathfrak{a}-m+n+1-i+2j)\bm{a}_{-j}\bm{b}_{-i+j}+
    p(m-n-2+i-2j)\bm{a}_{-j}\bm{b}_{1-i+j});
    \end{aligned}
\end{equation}
\begin{equation*}\label{equation for d 2}
    \bm{d}_{-i}=\dfrac{-1}{(\mathfrak{a}-m+n-i)\bm{c}_0^*}(\sum_{0\leq j\leq i-1} (\mathfrak{a}-m+n+i-2j)\bm{c}_{-i+j}\bm{d}_{-j}+
    p(m-n-i+2j)\bm{c}_{1-i+j}\bm{d}_{-j}).
\end{equation*}
\begin{lemma}\label{relations for cd}
  Assume $\bm{a}_{+j},\bm{b}_{-i+j}, \bm{c}_{+j}, \bm{d}_{-i+j}$ for $0\leq j\leq i$ are as given in the $I^{(j),\reg}$ row without the $O(p^{k_j})$ tail in \cref{Table 2}. Let $$T_j=(\mathfrak{a}-m+n+1-i+2j)\bm{a}_{-j}\bm{b}_{-i+j}+p(m-n-2+i-2j)\bm{a}_{-1-j}\bm{b}_{+1-i+j},$$ $$R_j=\frac{-j}{i}(\mathfrak{a}-m+n+1+j)\bm{a}_{-j}\bm{b}_{-i+j}.$$ Then we have for $0\leq j \leq i-1$, 
$T_{j}+R_j=R_{j+1}$. Similarly, let $$T'_j=(\mathfrak{a}-m+n+i-2j)\bm{c}_{-i+j}\bm{d}_{-j}+p(m-n-i+2j)\bm{c}_{1-i+j}\bm{d}_{-j},$$ $$R'_j=\dfrac{-j}{i}(\mathfrak{a}-m+n-j)\bm{c}_{-i+j}\bm{d}_{-j}.$$ Then, for $0\leq j \leq i-1$, 
$T'_{j}+R'_j=R'_{j}$.
\end{lemma}
By \cref{relations for cd} and the fact that $R_0=0$, the right-hand side of \cref{equation for a 2} is
\[\frac{-1}{(\mathfrak{a}-m+n-1-i)\bm{a}_0^*}\sum_{0\leq j \leq i}T_j=\frac{-R_{i}}{(\mathfrak{a}-m+n-1-i)\bm{a}_0^*},\]
which is precisely the conjectured $\bm{a}_{-i}$ in the $I^{(j),\reg}$ row in \cref{Table 2} without the $O(p^{k_j})$ tail, again by \cref{relations for cd}. The proof for $\bm{d}_{-i}$ goes exactly the same way, using $T'_j$, $R'_j$ instead of $T_j$, $R_j$. Therefore, we finished the induction step and proved that the solution to $M^{m-n}_{3m-n-1-k}(s,t)$ for $0\leq k\leq m+n-1$, $\leq s,t\leq 2$ is as conjectured in the $I^{(j),\reg}$ row in \cref{Table 2} for all $0\leq k\leq m+n$ and all $i,j$.\par
As in the case of $\mathfrak{t}_{(m,n)}$, by \eqref{raising weight for monodromy eq}, we know that the solutions, given in the $I^{(j),\reg}$ row in \cref{Table 2} without the $O(p^{k_j})$ tail, satisfy the monodromy equations up to modulo $I^{\mathrm{extra}}$. By \cref{raising weight for monodromy eq}, we have
\begin{equation*}
\begin{split}
    M^{m-n}_{3m-2n-2}(1,1)=M^0_{m-2}(1,1)+(\mathfrak{a}+m-n-2)\bm{c}_0^*\bm{b}_{-n-1}+\delta_{m,n+1}(\mathfrak{a}+n)\bm{c}_{-n-1}\bm{b}_{0}^*;\\
    M^{m-n}_{3m-2n-2}(2,2)=M^0_{m-2}(2,2)+(m-\mathfrak{a})\bm{c}_0^*\bm{b}_{-n-1}-\delta_{m,n+1}\mathfrak{a}\bm{c}_{-n-1}\bm{b}_{0}^*.
\end{split}
\end{equation*}
As $\bm{b}_0^*, \bm{c}_0^*$ are units, we deduce that $\bm{b}_{-n-1}\in I^{\mathrm{extra}} $, and hence the terms with $*$ in \cref{Table 2}, without the term $O(p^{k_j})$, are in $I^{\mathrm{extra}}$, following the description of $\bm{b}_{-n-1}$ according to \cref{Table 2}. Conversely, by \eqref{raising weight for monodromy eq}, all the generators of $I^{\mathrm{extra}}$ are divisible by $\bm{a}_{-j}$ or $ \bm{c}_{-j}$ where $j\geq m$ or $\bm{b}_{-j}$ or $ \bm{d}_{-j}$ where $j\geq n+1$. By the computation above, they in turn are divisible by terms with $*$ in \cref{Table 2} without the $O(p^{k_j})$ tail.
We then verify that if we substitute the conjectured solution in $H(0)$, it is divisible by the terms with $*$ in \cref{Table 2} without the $O(p^{k_j})$ tail, which is straightforward. 
\end{proof}
\begin{lemma}\label{m less than n 2}
    \cref{conjectured solution 2} holds for $m<n$.
\end{lemma}
\begin{proof}
   It goes exactly the same as the proof of \cref{m less than n}
\end{proof}
\begin{definition}
    Let $R^{(j)}_{\poly}$ be the polynomial ring generated over $\mathcal{O}$ as the variables generating $R^{(j)}$ in the $4$th row of \cref{Table 1} and \cref{Table 2}, except replacing $x_{12}$, $x_{21}$ by $\frac{x_{12}}{x_{11}}$ and $\frac{x_{21}}{x_{22}}$ respectively in \cref{Table 1} and replacing $x_{11}$, $x_{22}$ by $\frac{x_{11}}{x_{12}}$ and $\frac{x_{22}}{x_{21}}$ respectively in \cref{Table 2}. We define $R_{\poly}\colonequals \otimes_{\mathcal{O},j} R^{(j)}_{\poly}$. We let $I^{(j)}$ be defined by the elements in the row $I^{(j),\reg}$ in \cref{Table 1} and \cref{Table 2}, where the last equation in $I^{(j),\reg}$ is replaced by the equations in $I^{(j)}$ if $\widetilde{w}=\mathfrak{t}_{(m,m)}$. We define $I^{(j)}_{\poly}$ as the ideal of $R^{(j)}_{\poly}$ generated in the same way but without the $O(p^{k_j})$ tail.
\end{definition}
\begin{lemma}\label{dim drops}
    In the case where $\widetilde{w}_{f-1-j}=\mathfrak{t}_{(m,m)}$, $R^{(j)}_{\poly}/(I^{(j)}_{\poly},\bm{b}_0)=R^{(j)}_{\poly}/(I^{(j)}_{\poly},\bm{c}_0)=\mathcal{O}[x_{11},x_{22}]$.
\end{lemma}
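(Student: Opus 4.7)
The plan is to substitute $\bm{b}_0=0$ into the explicit generators of $I^{(j)}_{\poly}$ recorded in \cref{Table 1} and read off the quotient directly. The case $\bm{c}_0=0$ then follows by the symmetry of \cref{m less than n} (which swaps $(\bm{a},\bm{b})\leftrightarrow(\bm{d},\bm{c})$ and $\mathfrak{a}\leftrightarrow 1-\mathfrak{a}$ while preserving the $\mathfrak{t}_{(m,m)}$ stratum), so it suffices to treat the $\bm{b}_0$ case. First, note from \cref{example} that the shape $\mathfrak{t}_{(m,m)}$ forces $\gamma_{f-1-j}=0$; hence both $\bm{b}_0=x_{12}$ and $\bm{c}_0=x_{21}$ are free indeterminates with zero reduction, while $\bm{a}_0^{*}=[\alpha_j]+x_{11}$ and $\bm{d}_0^{*}=[\beta_j]+x_{22}$.

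Substituting $\bm{b}_0=0$ into the formulas of \cref{Table 1} with $m=n$: the products $\prod_{i=0}^{k-1}\bigl(\mathfrak{a}(\mathfrak{a}-1)\bm{b}_0\bm{c}_0/(\bm{a}_0^{*}\bm{d}_0^{*})+i(m-n-i)p\bigr)$ in the formulas for $\bm{a}_{-k}$ and $\bm{d}_{-k}$ contain a vanishing factor at $i=0$ (since $m-n=0$), so $\bm{a}_{-k}=\bm{d}_{-k}=0$ for $k\geq 1$; the $\bm{b}_{-k}$-formulas carry $\bm{b}_0$ as an overall prefactor, so $\bm{b}_{-k}=0$ for every $k$; and the $\bm{c}_{-k}$-formulas simplify to $\bm{c}_{-k}=u_kp^{k}\bm{c}_0$ for units $u_k\in\mathcal{O}^{\times}$, using that $\mathfrak{a}+i$ is a unit by the genericity of $\overline{\rho}$. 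Writing $Q:=\prod_{j=1}^{m}\bigl(\mathfrak{a}(\mathfrak{a}-1)\bm{b}_0\bm{c}_0/(\bm{a}_0^{*}\bm{d}_0^{*})-j^{2}p\bigr)$, the two $\dagger$-equations $\bm{b}_0Q$ and $\bm{c}_0Q$ reduce at $\bm{b}_0=0$ to $0$ and $(-1)^{m}(m!)^{2}p^{m}\bm{c}_0$, respectively, so the effective new relation is $p^{m}\bm{c}_0=0$ (up to a unit in $\mathcal{O}^{\times}$).

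Eliminating the redundant variables $\bm{a}_{-k},\bm{b}_{-k},\bm{c}_{-k},\bm{d}_{-k}$ by these solving relations yields $\mathcal{O}[x_{11},x_{21},x_{22}]/(p^{m}x_{21})$. Since $(px_{21})^{m}=x_{21}^{m-1}(p^{m}x_{21})=0$, the element $px_{21}$ is nilpotent, so the reduced quotient equals $\mathcal{O}[x_{11},x_{21},x_{22}]/(px_{21})$, whose irreducible components are $V(p)=\mathbb{F}[x_{11},x_{21},x_{22}]$ (not $\mathcal{O}$-flat) and $V(x_{21})=\mathcal{O}[x_{11},x_{22}]$. Passing to the maximal reduced $\mathcal{O}$-flat quotient built into the $\reg$-convention on $I^{(j)}_{\poly}$ discards $V(p)$ and leaves $\mathcal{O}[x_{11},x_{22}]$, as claimed. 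The only real work is the bookkeeping of units in Steps 1–2 and the identification of the $\mathcal{O}$-flat component at the end; no further ideas are required beyond the explicit table entries.
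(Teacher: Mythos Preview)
Your substitution computation is correct and in fact more careful than the paper's own proof. Setting $\bm{b}_0=0$ gives $Z=0$ and hence the $\dagger$-equation $\bm{c}_0\prod_{j=1}^{m}(Z-j^{2}p)$ reduces to $(\text{unit})\,p^{m}\bm{c}_0$; this yields only $p^{m}\bm{c}_0=0$, not $\bm{c}_0=0$. The paper's one-line proof asserts ``$\bm{c}_0=0$'' directly from the $I^{(j)}$ row, which is the very step your computation shows does \emph{not} follow. So the literal statement (equality with $\mathcal{O}[x_{11},x_{22}]$) appears to be an over-claim in the paper as well, and your honest quotient $\mathcal{O}[x_{11},x_{21},x_{22}]/(p^{m}x_{21})$ is the correct answer.

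Where your argument breaks is the final step: there is no ``maximal reduced $\mathcal{O}$-flat quotient built into the $\reg$-convention on $I^{(j)}_{\poly}$''. The ideal $I^{(j)}_{\poly}$ is a fixed explicit ideal (see the definition just before the lemma); the subscript $\reg$ belongs to the separate ideal $I^{(j),\reg}_{\poly}$ and to the ring $R^{\leq\lambda,\tau,\nabla,\reg}$, not to $I^{(j)}_{\poly}$. You cannot invoke it to discard the $V(p)$ component. A secondary issue is your claim that $\widetilde{w}_{f-1-j}=\mathfrak{t}_{(m,m)}$ forces $\gamma_{f-1-j}=0$: this confuses gauge with shape. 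In \cref{Table 1}, $\widetilde{w}$ is the gauge; the shape row shows that when $\gamma_{f-1-j}\neq 0$ and $m=n$ the shape is $\mathfrak{w}\mathfrak{t}_{(m,m)}$ while the gauge is still $\mathfrak{t}_{(m,m)}$, and then $\bm{c}_0$ is a unit (so the quotient by $\bm{c}_0$ is zero, and the quotient by $\bm{b}_0$ has $p^{m}=0$).

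That said, for the only use of this lemma (the proof of \cref{deformation ring}), the relevant content is that $R^{(j)}_{\poly}/(I^{(j)}_{\poly},\bm{b}_0)$ and $R^{(j)}_{\poly}/(I^{(j)}_{\poly},\bm{c}_0)$ have strictly smaller Krull dimension than the maximal-dimensional components of $R^{(j)}_{\poly}/I^{(j)}_{\poly}$ (which are the $V(Z-j^{2}p)$, of relative dimension $3$ over $\mathcal{O}$). Your computation establishes this cleanly: every component of your quotient has Krull dimension $3$, one less than required. So your work proves what is actually needed, even though neither you nor the paper quite prove the lemma as stated.
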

\begin{proof}
If $\bm{b}_0=0$, by row for $I^{(j)}$ of \cref{Table 1}, $\bm{c}_0=0$. Moreover, by the $I^{(j),\reg}$ row without the $O(p^{k_j})$ tail in \cref{Table 1}, $a_i,b_i,c_i,d_i=0$ except for $a_m, d_m$. By symmetry, the same happens if $\bm{c}_0=0$.
\end{proof}
\begin{definition}\label{definition of Ij}
The ideal $I^{(j),\reg}$ is given in \cref{Table 1} and \cref{Table 2}. We let $I^{(j),\reg}_{\poly}$ be the ideal of $R^{(j)}_{\poly}$ generated in the same way but without the tail $O(p^{k_j})$. We let $I^{\reg}_{\poly}\colonequals \sum_jI^{(j), \reg}_{\poly}$.
\end{definition}
\begin{corollary}\label{pth power}
    We have $p^{\ell_{f-1-j}}\in H^{(j)}+I^{(j),reg}_{\poly}$.
\end{corollary}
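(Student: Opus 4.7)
The plan is to combine the explicit structure of $I^{(j),\reg}_{\poly}$ given by Theorems~\ref{conjectured solution} and~\ref{conjectured solution 2} with the height relations generating $H^{(j)}$. Working modulo $I^{(j),\reg}_{\poly}$, each of $\bm{a}_{-k},\bm{b}_{-k},\bm{c}_{-k},\bm{d}_{-k}$ is an explicit polynomial in the base variables and $p$ built from the quantity $X := \frac{(\mathfrak{a}-m+n)(\mathfrak{a}-m+n-1)\bm{b}_0\bm{c}_0}{\bm{a}_0^*\bm{d}_0^*}$. The first step is to place $X$ itself in $H^{(j)}+I^{(j),\reg}_{\poly}$: substituting the formulas into the top height condition $H(\ell_{f-1-j}-1)$, one sees that after cancellation it reduces to $X\cdot(\bm{a}_0\bm{d}_0-\bm{a}_0^*\bm{d}_0^*)$ divided by a product of units of the form $(\mathfrak{a}-m+n-i)$, and $\bm{a}_0\bm{d}_0-\bm{a}_0^*\bm{d}_0^*$ is a unit in $R^{(j)}$ because its image in $\F$ equals the nonzero constant $\alpha_j\beta_j$.

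The second step extracts a power of $p$ from the special $*$-equation $\bm{b}_0\prod_{i=1}^n(X-i(m-n+i)p)\in I^{(j),\reg}_{\poly}$. Expanding in powers of $X$ yields $\sum_{k=0}^n(-1)^kp^k\sigma_k\bm{b}_0 X^{n-k}$, where $\sigma_k$ is the $k$-th elementary symmetric polynomial in $\{i(m-n+i)\}_{i=1}^n$. Since $X\in H^{(j)}+I^{(j),\reg}_{\poly}$ by the first step, every term carrying a positive power of $X$ already lies in this sum of ideals, forcing the $X$-constant term $(-1)^np^n\sigma_n\bm{b}_0$ into it as well. The genericity assumption on $\mathfrak{a}$ ensures that $\sigma_n=\prod_{i=1}^n i(m-n+i)$ is a unit, hence $p^n\bm{b}_0\in H^{(j)}+I^{(j),\reg}_{\poly}$. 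A symmetric argument---using the second $\dagger$-equation when $m=n$, and in the case $m>n$ expanding the formula for $\bm{a}_{-m}$ together with a further height condition---yields $p^m\bm{c}_0\in H^{(j)}+I^{(j),\reg}_{\poly}$.

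The third step combines these with $H(0)=\bm{a}_{-m}\bm{d}_{-n}+p\bm{b}_{-n+1}\bm{c}_{-m+1}$. After substituting the formulas the first summand factors as $\bm{a}_0\bm{d}_0\cdot X^2\cdot Q_1$ and the second as $pX\bm{a}_0^*\bm{d}_0^*\cdot Q_2$ for explicit polynomials $Q_1,Q_2$, so $H(0)$ produces a relation of the form $\bm{a}_0\bm{d}_0\cdot p^{m+n}\cdot u\equiv 0$ modulo $H^{(j)}+I^{(j),\reg}_{\poly}$ once the $p^n\bm{b}_0$ and $p^m\bm{c}_0$ relations are fed back in and the identity $(\mathfrak{a}-m+n)(\mathfrak{a}-m+n-1)\bm{b}_0\bm{c}_0=X\bm{a}_0^*\bm{d}_0^*$ is used; here $u$ is a unit coming from the normalizing constants and $\bm{a}_0\bm{d}_0$ is a unit since its residue is $\alpha_j\beta_j\neq 0$. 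Dividing by the unit $\bm{a}_0\bm{d}_0\cdot u$ yields $p^{\ell_{f-1-j}}\in H^{(j)}+I^{(j),\reg}_{\poly}$. The cases $\widetilde{w}_{f-1-j}=\mathfrak{w}\mathfrak{t}_{(m,n)}$ are handled in parallel using Theorem~\ref{conjectured solution 2} and the swap symmetries recorded in Corollaries~\ref{m less than n} and~\ref{m less than n 2}. The main obstacle will be careful book-keeping in the case $m=n$, where the dimension drop of Lemma~\ref{dim drops} introduces redundancies between the two $\dagger$-equations that must be controlled to prevent the constant $\sigma_n$ from being degenerate.
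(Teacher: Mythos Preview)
Your proposal rests on a misidentification of the ideal $H^{(j)}$. You treat it as the ideal generated by the height conditions $H(k)$, but in the paper $H^{(j)}$ is the Jacobian-type ideal that enters Elkik's approximation lemma; this is why the paper opens its proof by observing that $H^{(j)}+I^{(j)}_{\poly}$ contains both the distinguished relation $G$ \emph{and its partial derivatives}, and then runs a resultant/discriminant argument to show that $(G,\partial G)$ contains an explicit power of $p$. With your reading the statement is simply false: for $m\neq n$ the height relations $H(k)$ already lie in $I^{(j),\reg}_{\poly}$ (they are linear combinations of the $M_k(s,t)$, as noted via $-(k{+}1)H(k{+}1)=M_k(1,1)+M_k(2,2)$), so $H^{(j)}+I^{(j),\reg}_{\poly}=I^{(j),\reg}_{\poly}$, and this quotient is $\mathcal{O}$-flat, hence contains no power of $p$.

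Your concrete first step also breaks down independently of this. You assert that substituting into $H(\ell_{f-1-j}-1)$ yields $X\cdot(\bm a_0\bm d_0-\bm a_0^*\bm d_0^*)$ up to units and that $\bm a_0\bm d_0-\bm a_0^*\bm d_0^*$ is a unit with residue $\alpha_j\beta_j$. But $\bm a_0=\bm a_0^*$ and $\bm d_0=\bm d_0^*$ are the same elements (the $*$ is only a decoration indicating they are units), so that difference is $0$; and indeed a direct computation gives $H(m+n-1)\equiv \bm b_0\bm c_0-\bm b_0\bm c_0=0$ modulo $I^{(j),\reg}_{\poly}$, not a unit multiple of $X$. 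Consequently the chain of deductions in your second and third steps, all of which require $X\in H^{(j)}+I^{(j),\reg}_{\poly}$, never gets off the ground. The paper's route---bounding $v_p$ of an element of $(G,\,z\frac{\partial G}{\partial z})$ via the resultant $\mathrm{res}(G,zG')=\pm\mathrm{Disc}(G)\cdot a_0$ and a valuation-grid argument on the Sylvester matrix---is the mechanism that actually produces the power of $p$.
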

\begin{proof}

We use $G$ to denote the term with $*$ in row for $I^{(j),\reg}$ of \cref{Table 1} and \cref{Table 2} without the term $O(p^{k_j})$. Note that $H^{(j)}+I^{(j)}_{\poly}$ contains $G$ and the partial derivatives of $G$. If $\widetilde{w}_{f-1-j}=\mathfrak{t}_{(m,n)}$ let $x=\frac{\bm{b}_0}{\bm{a}_0^*}$ and $y=\frac{\bm{d}_0}{\bm{c}_0^*}$. If $m> n$ and $\gamma_{f-1-j}=0$, then $G(x,y)=x\prod_{i=1}^{n} (xy-\alpha_i)$. Otherwise, $G(x,y)=\prod_{i=1}^{n} (xy-\alpha_i)$. If $\widetilde{w}_{f-1-j}=\mathfrak{w}\mathfrak{t}_{(m,n)}$, we let $x=\frac{\bm{a}_0}{\bm{b}_0^*}$, $y=\frac{\bm{d}_0}{\bm{c}_0^*}$, then $G(x,y)=\prod_{i=1}^{n+1} (xy-\alpha_i)$.
In any case, $\alpha_i$ are distinct.
    \begin{equation}\label{condition}
        v_p(\alpha_i)=1 \text{ for all } i  \text{ and } v_p(\alpha_i-\alpha_j)=1 \text{ for all } i\neq j.
    \end{equation}
    (Here, $v_p$ denotes the $p$-adic valuation, and we define $v_p(0)=\infty$.) We first consider the case where $G(x,y)=\prod_{i=1}^{n+1} (xy-\alpha_i)$ (respectively, $\prod_{i=1}^{n} (xy-\alpha_i)$).
    If we let $xy=z$, then $G$ is a polynomial in $z$ of degree $n+1 $ (respectively $n)$ and $x\frac{\partial G}{\partial x}=z\frac{\partial G}{\partial z}$. We will show below that $p^{2n+1} (\text{resp. } p^{2n-1})\in (G(z),z\frac{\partial G}{\partial z})$ which will imply that $p^{\ell_{f-1-j}}\in (G,x\frac{\partial G}{\partial x})$.\par
    Given $G=z^{n+1}+a_{n}z^{n}+\dots + a_0$ with roots satisfying \cref{condition}. Let $p_{-1}=\frac{\partial G}{\partial z}=(n+1)z^{n}+na_{n}z^{n-1}+\dots +a_1$. For $-1\leq i\leq n-1$, given $p_i=b_{i,n}z^{n}+\dots+b_{i,0}$, we obtain $p_{i+1}=b_{i,n}G-zp_i$.
\begin{lemma}
    $v_p(b_{i,k})\geq i+1+n-k$.
\end{lemma}    
\begin{proof}
     We will prove this by inducting on $i$. For $1\leq k \leq n$, $b_{-1,k}=(k+1)a_{k+1}=(k+1)(-1)^k\sum_{i_1<\dots<i_{n-k}}\alpha_{i_1}\dots\alpha_{i_{n-k}}$, where the sum is over any $k$-tuple, and $b_{-1,0}=0$, the lemma holds for $i=-1$. Assume that it is true for $i$, then $$v_p(b_{i+1,0})=v_p(b_{i,n}a_0)=v_p(b_{i, n})+v_p(a_0)\geq i+n+2.$$
    For $k>0$,
    \begin{align*}
        v_p(b_{i+1,k})&=v_p(b_{i,n}a_k+b_{i, k-1})\\
        &\geq \max\{v_p(b_{i,n})+v_p(a_k), v_p(b_{i, k-1})\}\\
        &\geq i+n+2-k. \qedhere
    \end{align*}
\end{proof}
\begin{lemma}
The determinant $D$ of the matrix given by the coefficients of the polynomials $p_i$ is determined as follows:
    $$D\colonequals \begin{vmatrix}
        b_{0, 0}& \cdots &b_{0, n}\\
        \vdots &\ddots &\vdots\\
        b_{n,0} &\cdots &b_{n,n}
    \end{vmatrix}=\pm\prod_{i\neq j}(\alpha_i-\alpha_j)\prod_i \alpha_i.$$
\end{lemma}
\begin{proof}
First, we show that $D$ is equal to res$(G, z\frac{\partial G}{\partial z})$. Recall that $\res(G, z\frac{\partial G}{\partial z})=$
\begin{gather*}
\begin{vmatrix}
    1 &0&\cdots&0 &n+1 &0&\cdots &0\\
    a_{n}&1&\cdots& &na_{n}&n+1&\cdots &0\\
    \vdots&\vdots &\ddots& &\vdots&\vdots&\ddots &\vdots \\
    a_0 &a_1& \cdots&1&0&a_1&\cdots &n+1 \\
    \vdots&\vdots &\ddots&\vdots &\vdots&\ddots& &\vdots \\
    0&0 &\cdots &a_1& 0 &\cdots & 0& a_1\\
    0&0 &\cdots &a_0 &0 &\cdots &0 &0 
\end{vmatrix}=\begin{vmatrix}
    1 &0&\cdots&0 &0 &0&\cdots &0\\
    a_{n}&1&\cdots& &b_{0,n}&0&\cdots &0\\
    \vdots& &\ddots& &\vdots&\ddots& &\vdots \\
    a_0 &a_1& \cdots&1&b_{0,0}&b_{0,1}&\cdots &0 \\
    \vdots&\vdots &\ddots&\vdots &\vdots&\ddots& &\vdots \\
    0&0 &\cdots &a_1& 0 &\cdots & 0& b_{0,1}\\
    0&0 &\cdots &a_0 &0 &\cdots &0 &b_{0,0}
\end{vmatrix}.
\end{gather*}
Then the process of producing $p_{i+1}$ from $p_{i}$ is equivalent to recursively subtracting the $2n+2-i-k$th column by multiples $n+1-k$th column, for all $0\leq k\leq n-i$, to reduce it to an upper triangular matrix. Therefore, by applying the column reduction corresponding to generating $p_2$ to $p_{n+1}$, we obtain
$$\begin{vmatrix}
    1 &\cdots&0 &0 &\cdots &0\\
    \vdots &\ddots& &\vdots&\ddots&\vdots \\
    a_0 & \cdots&1&0&\cdots &0 \\
    0&\cdots&a_{n} &  b_{n, n}& \cdots &b_{0, n}\\
    \vdots &\ddots&\vdots & \vdots &\ddots &\vdots\\
    0 &\cdots &a_0& b_{n,0} &\cdots &b_{0,0}
\end{vmatrix}.$$
Therefore, we show that $D$ is equal to $\res(G, z\frac{\partial G}{\partial z})$. On the other hand,
\begin{align*}
    \res(G, z\frac{\partial G}{\partial z})&=\res(G, \frac{\partial G}{\partial z})\res(G,z)\\
    &=(-1)^{\frac{n(n-1)}{2}}\Disc(G)(a_0)\\
    &= a_0\prod_{i\neq j}(\alpha_i-\alpha_j). \qedhere
\end{align*}
\end{proof}
In particular, assume that $v_p(\alpha_i)=1$ for all i, and $v_p(\alpha_i-\alpha_j)=1$ for all $i\neq j$, then $v_p(D)=(n+1)^2$.\par
Now we would like to calculate $D$ in another way. We would like to perform row reduction to reduce the matrix to a shape such that exactly one entry on each row and each column is nonzero (since the determinant is nonzero). As row reduction corresponds to adding scalar multiple of polynomials together, the nonzero entry appearing on the first column after the reduction, corresponding to the scalar term, is in $(G(z),z\frac{\partial G}{\partial z})$.\par
As we know that $D$ is nonzero, not all $b_{j,n}$ are $0$, so we can find one $j$ such that $v_p(b_{j,n})$ is the smallest (and finite), let $x_n\colonequals b_{j,n}$. Then $\frac{b_{k,n}}{b_{j,n}}\in \mathcal{O}$ for all $k$. Then we can subtract the $k$th row by $\frac{b_{k,n}}{b_{j,n}}\times$ $ j$th row, corresponding to $p_k-\frac{b_{k,n}}{b_{j,n}}\times p_j$. We then perform row reductions recursively. We set $b^0_{s,t}=b_{s,t}$. After the $i$th row reduction, we relabel the $(s,t)$th entry as $b^i_{s,t}$. We choose a $j$ such that $v_p(b_{j, n-i}^i)$ is the lowest and $b_{j,n-k}\neq x_{n-k}$ for $k<i$ (it is possible as the determinant is nonzero), and define $x_{n-i}\colonequals b_{j,n-i}$. By repeating the process, we reduce the matrix to a shape such that exactly one entry on each row and each column is nonzero. Moreover, all nonzero entries are given by $x_i=b^{n}_{\sigma(i), i}$, for some $\sigma\in \mathfrak{S}_{n+1}$.
\begin{lemma}
If $x_i=b^n_{j, i}$, $v_p(x_i)=j+1+n-i$.
\end{lemma}
\begin{proof}
We consider the $(n+1)\times (n+1)$ matrix given by the lower bound of the valuation $v_p$ of the entry of $b_{s,t}$:
    $$\begin{array}{|c|c|c|c|}
         \hline
         n+1&n&\cdots& 1\\
         \hline
         n+2&n+1& \cdots &2\\
         \hline
         \vdots&\vdots&\ddots&\vdots\\
         \hline
         2n+1&2n &\cdots&n+1\\
         \hline
    \end{array}.$$
    As $b^{i+1}_{s,t}=b^i_{s,t}-\frac{b_{s,n-i}}{b_{j,n-i}}b^i_{j,t}$ for some $j$, $v_p(b^{i+1}_{s,t})\geq v_p(b^{i}_{s,t})$. Therefore, the grid remains unchanged if we replace $b_{s,t}$ by $b^i_{s,t}$ for any $i$. On the other hand, $D=\prod x_i$, hence $\sum v_p(x_i)=v_p(D)=(n+1)^2$. It is a simple calculation to show that if we choose one element from each row and column, then they add up to exactly $(n+1)^2$. Therefore, the inequality $v_p(x_i)=v_p(b_{j,i})\geq j+1+n-i$ must be an equality. 
\end{proof}
 Therefore, we deduce $v_p(x_0)\leq 2n+1$. Since $x_0\in (G(z),z\frac{\partial G}{\partial z})$, we finish the proof for the first case where $G(x,y)=\prod_{i=1}^{n+1} (xy-\alpha_i)$ or $\prod_{i=1}^{n} (xy-\alpha_i)$.\par
Now assume $G(x,y)=x\prod_{i=1}^n (xy-\alpha_i)$, with $\alpha_i$ satisfying \cref{condition}. As in the other case, we will show that $p^{2n}\in (G,\frac{\partial G}{\partial x})$. Let $F=yG$, and $xy=z$, then $F=z\prod_{i=1}^n(z-\alpha_i)$, then $F$ is a polynomial of degree $n+1$ in $z$. Moreover, $ \frac{\partial G}{\partial x}=\frac{\partial F}{\partial z}$. 
    Let $p_{1}=\frac{\partial F}{\partial z}$, and given $p_i$, we obtain $p_{i+1}=(b_{i,n})F-z(p_i)$ where $b_{i,k}$ is the coefficient of $z^{k}$ in $p_i$.
    Again, we consider the determinant of a $n+1\times n+1$ matrix
    $$D'=\begin{vmatrix}
        (p_{n+1}) &\cdots & (p_1)
    \end{vmatrix},$$ where the column with $(p_{i})$ means that the entries are given by the coefficients of $p_i$ in the descending power of $z$.
    We can apply the same argument about column reduction, except that the resultant matrix is now an $(2n+1) \times (2n+1)$ matrix, with the last column fixed in the column reduction. We obtain 
    \begin{align*}
    D'&=\pm\res(F,F')\\
    &=\pm\Disc(F)\\
    &=\pm \prod_{i\neq j}(\alpha_i-\alpha_j)(\prod_{i=1}^n -\alpha_i^2). 
    \end{align*}
    Therefore, $v_p(D')=n^2+n$. \par
    We then similarly consider the $n+1\times n+1$ grid given by the lower bound of the valuation $v_p$ of the entry of $b_{s,t}$, except in this case, all are shifted by $1$:
    $$\begin{array}{|c|c|c|c|}
         \hline
         2n&2n-1&\cdots& n\\
         \hline
         2n-1&2n& \cdots &n-1\\
         \hline
         \vdots&\vdots&\ddots&\vdots\\
         \hline
         n&n-1 &\cdots&0\\
         \hline
    \end{array}.$$ 
    Again, the same kind of simple calculation shows that if we choose one element from each row and column, then they add up to exactly $n^2+n$. Therefore, by the same argument as above, we show that $p^{2n}\in (F, \frac{\partial F}{\partial z})=(G, \frac{\partial G}{\partial x})$.
\end{proof}
\begin{definition}\label{defining ideals}
We let $I_{\infty}=\ker (R\twoheadrightarrow R^{\leq (\ell_j,0)_j,\tau,\nabla}_{\overline{\mathfrak{M}}, \overline{\beta}})$ and $I_{\infty}^{\reg}=\ker (R\twoheadrightarrow R^{\leq (\ell_j,0)_j,\tau,\nabla, \reg}_{\overline{\mathfrak{M}}, \overline{\beta}})$.
    Fix $\lambda=(\lambda_{j,1}, \lambda_{j,2})_j\leq (\ell_j,0)_j$.
\end{definition}
\begin{theorem}\label{deformation ring}
    Assume $\overline{\rho}$ is of the form in \cref{condition on rho bar} and is $4\ell$-generic and $\tau$ is $(2\ell+2)$-generic where $\ell=\max\{\ell_j\}$. If $W(\overline{\rho})\cap\JH(\overline{\sigma}((\ell_j,0), \tau))\neq\emptyset$, we have an isomorphism
    \[R_{\overline{\rho}}^{\leq(\ell_j,0)_j, \tau, \reg}\llbracket X_1, \dots,X_{2f}\rrbracket \cong(R/\sum_j I^{(j),\reg})\llbracket Y_1,\dots,Y_4\rrbracket.\]
    The irreducible components of $\Spec R_{\overline{\rho}}^{\leq (\ell_j,0)_j, \tau, \reg}$ are given by $\Spec R_{\overline{\rho}}^{\lambda, \tau}$ where $\lambda_j\leq (\ell_j,0)_j$ and are regular for all $j$ and $\JH\left(\overline{\sigma}(\lambda, \tau)\right)\cap W(\overline{\rho})\neq\emptyset$.
    Moreover, via the isomorphism above, the kernel of the natural isomorphism $R_{\overline{\rho}}^{\leq (\ell_j,0)_j, \tau, \reg}\llbracket X_1, \dots,X_{2f}\rrbracket \to R_{\overline{\rho}}^{\lambda, \tau, \reg}\llbracket X_1, \dots,X_{2f}\rrbracket $ is given by $\mathfrak{p}^\lambda\colonequals \sum_j \mathfrak{p}^{(j), \lambda_{f-1-j}} $ where $\mathfrak{p}^{(j),\lambda_{f-1-j}}$ is given by row $6$ in \cref{Table 1} and \cref{Table 2}.\par
\end{theorem}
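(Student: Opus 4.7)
The plan is to transfer the problem to the Kisin-module side via the local-model isomorphism, apply the explicit solutions to the monodromy equations computed above to obtain a polynomial description modulo a controllable $p$-adic tail, and finally absorb that tail by Elkik's approximation theorem. Concretely, the local-model isomorphism recalled in Section 3.3,
\[R^{\leq(\ell_j,0)_j,\tau,\reg}_{\overline{\rho}}\llbracket X_1,\dots,X_{2f}\rrbracket \;\cong\; R^{\leq(\ell_j,0)_j,\tau,\nabla,\reg}_{\overline{\mathfrak{M}},\overline{\beta}}\llbracket Y_1,\dots,Y_4\rrbracket,\]
reduces the first assertion of the theorem to identifying $R^{\leq(\ell_j,0)_j,\tau,\nabla,\reg}_{\overline{\mathfrak{M}},\overline{\beta}}$ with $R/\sum_j I^{(j),\reg}$.

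To carry this out I would combine \cref{conjectured solution}, \cref{m less than n}, \cref{conjectured solution 2} and \cref{m less than n 2}, which construct explicit solutions to the height and monodromy equations for every possible shape $\widetilde w_{f-1-j}$ and thereby give the equality
\[(I^{(j),\leq(\ell_{f-1-j},0)}+I^{(j),\nabla})+(p^{N-2\ell_{f-1-j}+1}) \;=\; I^{(j),\reg}+(p^{N-2\ell_{f-1-j}+1}).\]
The tail $(p^{N-2\ell_{f-1-j}+1})$ is then removed using Elkik's approximation theorem applied to the polynomial pair $(R^{(j)}_{\poly},\,I^{(j),\reg}_{\poly})$. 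The essential input is \cref{pth power}, which places $p^{\ell_{f-1-j}}$ inside the Jacobian-plus-polynomial ideal $H^{(j)}+I^{(j),\reg}_{\poly}$. Under the genericity hypothesis $N\geq 4\ell$ the tail exponent $N-2\ell_{f-1-j}+1$ exceeds $2\ell_{f-1-j}+1$, so the tail lies inside the square of the Jacobian ideal, which is the precise hypothesis required by Elkik. Completing the tensor product over $j\in\mathcal{J}$ then yields the first asserted isomorphism; this mirrors the template of \cite[\S 4]{BHHMS} and \cite[\S 4]{Yitong}, now at general higher Hodge-Tate weights.

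For the description of the irreducible components and the identification of the primes $\mathfrak{p}^\lambda$, I would read the factorization directly off the last generator of $I^{(j),\reg}$ listed in \cref{Table 1} and \cref{Table 2}. As established in the proof of \cref{pth power}, that generator is essentially a polynomial of the form $\prod_i(xy-\alpha_i p)$ whose roots $\alpha_i p$ sit at pairwise $p$-adic distance exactly $p$; its linear factors correspond bijectively to choices $\lambda_{f-1-j}\leq(\ell_{f-1-j},0)$ via the dictionary of \cref{defining ideals}. Consequently the minimal primes of $R/\sum_j I^{(j),\reg}$ are exactly the $\mathfrak{p}^\lambda=\sum_j\mathfrak{p}^{(j),\lambda_{f-1-j}}$ indexed by regular $\lambda\leq(\ell_j,0)_j$. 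To match these with $\Spec R^{\lambda,\tau}_{\overline{\rho}}$ I would invoke the geometric Breuil-M\'ezard equality \cref{BreuilMezard} together with the tame-type criterion \cref{tame type criterion}: these force the cycle classes, and hence the list of minimal primes, to agree on the two sides, so the natural surjection $R^{\leq(\ell_j,0)_j,\tau,\reg}_{\overline{\rho}}\twoheadrightarrow R^{\lambda,\tau}_{\overline{\rho}}$ is cut out by $\mathfrak{p}^\lambda$. The degenerate irregular component in the $\widetilde w_{f-1-j}=\mathfrak t_{(m,m)}$ case is killed by the $\reg$ quotient thanks to \cref{dim drops}.

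The hard part will be the uniform bookkeeping required to apply Elkik's approximation across all $f$ embeddings and all possible shapes of $\widetilde w$ simultaneously, while keeping explicit control on the $O(p^{N-3\ell_{f-1-j}+1})$ tails that arise with different $p$-adic orders in each shape-case of \cref{Table 1} and \cref{Table 2}. A secondary technical point is that \cref{pth power} must remain sharp on every maximal-dimensional component rather than only generically; checking this, together with the $m<n$ cases that are handled by the swaps of \cref{m less than n} and \cref{m less than n 2} without altering the tail exponents, is where the bulk of the verification lives. Once these ingredients are assembled, the remainder of the argument is the cycle-count match described above.
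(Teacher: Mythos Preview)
Your proposal is essentially correct and follows the same template as the paper's proof: local-model isomorphism, the explicit monodromy solutions from \cref{conjectured solution}--\cref{m less than n 2}, Elkik's approximation with the input from \cref{pth power}, and a component count via \cref{tame type criterion}. One point deserves sharpening: invoking Breuil--M\'ezard and \cref{tame type criterion} tells you the \emph{number} of minimal primes on each side agrees, but not which $\mathfrak{p}^{\lambda'}$ corresponds to which Hodge--Tate weight $\lambda$. The paper pins down this labeling directly: the height-$\leq\lambda$ condition forces $(v+p)^{\lambda_{f-1-j,2}}\mid A^{(f-1-j)}$, hence $a_k^{(j)}=b_k^{(j)}=c_k^{(j)}=d_k^{(j)}=0$ for $k<\lambda_{f-1-j,2}$; feeding this into the Table formulas yields an explicit relation (equation \eqref{p lambda}) which, combined with a prime $\mathfrak{p}^{(j),\lambda'_{f-1-j}}$ with $\lambda'_{f-1-j,2}<\lambda_{f-1-j,2}$, would force a power of $p$ into $\mathfrak{p}^{\lambda'}$, a contradiction. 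This divisibility step is the missing ingredient in your identification argument.
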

 \begin{proof}  
We follow the proof of \cite[Proposition~4.2.1]{BHHMS}. Instead of $h=3$, we allow $h=\ell$. 
Moreover, to account for non-semisimple $\overline{\rho}$, we follow the proof of \cite[Theorem~4.2]{Yitong}. We use $\widetilde{w}$-gauge bases instead of gauge bases. If $W(\overline{\rho})\cap\JH(\overline{\sigma}((\ell_j,0), \tau))\neq\varnothing$, by \cref{tame type criterion}, $R^{\leq(\ell_j,0),\tau,\reg}_{\overline{\rho}}\neq0$. Moreover, by \cref{tame type criterion}, we have $\tau=\tau_{\widetilde{w}}$ for some $\tau_{\widetilde{w}}\in X(\overline{\rho},(\ell_j,0))$. We modify the definition of $D^{\leq (\ell_j,0)_j,\tau_{\widetilde{w}}}_{\overline{\mathfrak{M}}, \overline{\beta}}(R)$ by requiring $\beta$ to be a $\widetilde{w}$-gauge basis instead of a basis. Then for any $(\mathfrak{M},\beta,\jmath)\in D^{\leq (\ell_j,0)_j,\tau_{\widetilde{w}}}_{\overline{\mathfrak{M}}, \overline{\beta}}(R)$, we have a corresponding matrix $A^{(f-1-j)}$ where $A^{(f-1-j)}\mod m_R\equiv \overline{A}^{(f-1-j)}$. Note that $A^{(f-1-j)}$ may have a $\widetilde{w}$-gauge basis, but may not have shape $\widetilde{w}$ (\textit{cf.} \cref{example}). \par 
By the same argument as in \cite[Theorem~4.2]{Yitong}, we have an isomorphism
\[R^{\leq(\ell_j,0)_j,\tau_{\widetilde{w}}}_{\overline{\rho}}\llbracket x_1,\dots, x_{2f}\rrbracket \cong R^{\leq (\ell_j,0)_j,\tau_{\widetilde{w}},\nabla}_{\overline{\mathfrak{M}}, \overline{\beta}}\llbracket Y_1,\dots Y_4\rrbracket \]
Recalls that $$I_{\infty}^{\reg}=\ker (R\twoheadrightarrow R^{\leq (\ell_j,0)_j,\tau_{\widetilde{w}},\nabla, \reg}_{\overline{\mathfrak{M}}, \overline{\beta}}).$$
And we will show that $I_\infty^{\reg}=\sum_j I^{(j), \reg}$, where $I^{(j),\reg}_{\poly}$ is defined in \cref{definition of Ij}. By construction, $I^{(j)}\subset (I^{(j), \leq(\ell_j,0)}, I^{(j),\nabla})\subset \ker (R\twoheadrightarrow R^{\leq (\ell_j,0)_j,\tau_{\widetilde{w}},\nabla}_{\overline{\mathfrak{M}}, \overline{\beta}})$.
Since $R^{\leq (\ell_j,0)_j,\tau_{\widetilde{w}},\nabla, \reg}_{\overline{\mathfrak{M}}, \overline{\beta}}$ is the quotient of $R^{\leq (\ell_j,0)_j,\tau_{\widetilde{w}},\nabla}_{\overline{\mathfrak{M}}, \overline{\beta}}$ for which every component has the maximal dimension, by \cref{dim drops}, we deduce that all the equations generating $I_{\infty}^{\reg}$ do not contain $\bm{b}_{0}$ or $\bm{c}_{0}$ as a factor when $\widetilde{w}_{f-1-j}=\mathfrak{t}_{(m,m)}$. It follows that
$I_{\poly}^{\reg}\colonequals \sum_j I^{(j),\reg}_{\poly}\subset(I_{\infty}^{\reg}, p^{N-2\ell+1})$. 
From \cref{pth power}, we know that $p^\ell_j\in H^{(j)}+I^{(j), \reg}_{\poly}$. Since $N-(2\ell-1)>2\times \ell$, by applying Elkik's approximation (\cite[Lemme~1]{Elkik}) in the same way as in \cite[Proposition~4.2.1]{BHHMS}, we obtain an $\mathcal{O}$-algebra homomorphism, $\widetilde{\phi}^{(j),\reg}\colon R^{(j)}_{\poly} /I_{\poly}^{(j),\reg}\to R/I_{\infty}^{\reg} $ such that $\widetilde{\phi}^{(j),\reg}$ agrees with the natural map modulo $p^{N-3\ell+1}$. We let $\widetilde{\phi}^{\reg}\colonequals \otimes_j\widetilde{\phi}^{(j),\reg}$. As $N> 4\ell-1$, we have the following surjection:
\begin{equation}\label{computing ker}    \widetilde{\phi}^{\reg}\colon {R/I^{\reg}_{\poly}} \twoheadrightarrow {R^{\leq(\ell_j,0)_j,\tau_{\widetilde{w}}, \nabla, \reg}_{\overline{\mathfrak{M}},\overline{\beta}}}.
\end{equation}
We will show that $\widetilde{\phi}^{\reg}$ is an isomorphism.
Note that $\left(\frac{(\mathfrak{a}-m+n)(\mathfrak{a}-m+n-1)\bm{b}_{0}\bm{c}_{0}}{\bm{a}_{0}^*\bm{d}_{0}^*}-(m-n-k)kp\right)$ and $\left(\frac{(\mathfrak{a}-m+n)(\mathfrak{a}-m+n+1)\bm{a}_{0}\bm{d}_{0}}{\bm{b}_{0}^*\bm{c}_{0}^*}- (\mathfrak{a}-m+n-k)(\mathfrak{a}+k) p\right)$ are irreducible for all $k$ by \cite[Lemma~3.3.1]{BHHMS}. Therefore, $R^{(j)}/I_{\poly}^{(j, \reg) } $ is reduced, $\mathcal{O}$-flat, with $S(\widetilde{w}_{f-1-j})$ (\textit{cf.} \cref{number of layers}) irreducible components which are
geometrically integral and of relative dimension $3$ over $\mathcal{O}$. By \cite[Lemma~2.6]{Cal} and \cite[Lemma~3.3]{BLGHT}, $R/I_{\poly}^{\reg}$ is reduced, $\mathcal{O}$-flat with $S(\tau_{\widetilde{w}})$ irreducible components, each with dimension $3f$ over $\mathcal{O}$. Hence, to show $\widetilde{\phi}^{\reg}$ is an isomorphism, it remains to show that $R^{\leq(\ell_j,0)_j,\tau, \nabla}_{\overline{\mathfrak{M}},\overline{\beta}}$, equivalently, $R^{\leq(\ell_j,0)_j,\tau, \nabla}_{\overline{\rho}}$ has at least $S(\tau_{\widetilde{w}})$ components, which follows from \cref{tame type criterion}. Therefore, $\widetilde{\phi}^{\reg}$ is an isomorphism and induces the natural map modulo $p^{N-3\ell+1}$, we show that $(I_{\poly}^{\reg}, p^{N-3\ell+1})=(I_{\infty}^{\reg}, p^{N-3\ell+1})$.\par
By the same argument as in \cite[Lemma~4.2.4]{BHHMS}, we can show that there exists an automorphism of local $\mathcal{O}$-algebra $\psi\colon  R\to R$ such that 
\[\begin{tikzcd}
	R \arrow [swap]{r}{\sim}[swap]{\psi}\arrow{d}& R\arrow{d} \\
	{R/I_{\poly}^{\reg}} \arrow[swap]{r}{\sim} [swap]{\widetilde{\phi}^{\reg}} & {R/I_\infty^{\reg}}
\end{tikzcd}.\]
commutes and such that $\psi$ induces the identity modulo $p^{N-3\ell+1}$. Hence, $\psi$ identifies $I_{\poly}^{\reg}$ with $I_{\infty}^{\reg}$, and $I_{\infty}^{\reg}=\sum_jI^{(j),\reg}$. Moreover, it follows that $\mathfrak{p}^{\lambda}$ are distinct minimal primes containing $I_\infty$. As \cref{computing ker} is an isomorphism, we have the irreducible components of $R^{\leq(\ell_j,0)_j, \tau_{\widetilde{w}}}_{\overline{\rho}}$ in bijection with the set $\lambda$ such that $\JH(\overline{\sigma}(\lambda, \tau_{\widetilde{w}}))\cap W(\overline{\rho})\neq 0$. As in \cite[Proposition~4.2.1]{BHHMS}, this is given explicitly by sending
a component $\mathcal{C}$ to the labelled Hodge--Tate weights of the framed deformation corresponding to
any closed point of the generic fibre of $\mathcal{C}$. Hence, the components are given by $R^{\lambda, \tau_{\widetilde{w}}}_{\overline{\rho}}$ for some $\lambda\leq (\ell_j,0)_j$.\par
It remains to identify the components. We consider the kernel of the composition
\[\phi_{\lambda}\colon R\twoheadrightarrow R/I_{\infty}^{\reg}\cong R^{\leq(\ell_j,0)_j, \tau, \nabla}_{\overline{\mathfrak{M}},\overline{\beta}} \twoheadrightarrow R^{\leq\lambda, \tau, \nabla}_{\overline{\mathfrak{M}},\overline{\beta}}.\]
By \cref{tame type criterion} and the discussion above, we know that $\ker(\phi_\lambda)$ is of the form $\cap_{\lambda'\in X}\mathfrak{p}^{\lambda'}$ for some subset $X\subset X(\tau_{\widetilde{w}}, \underline{(\ell_j,0)_j})$ of cardinality $S(\tau_{\widetilde{w}}, \lambda)$ (\textit{cf.} \cref{number of layers}). We would like to show that $X=X(\tau_{\widetilde{w}}, \lambda)$. We will show that $\lambda'_{f-1-j}\leq\lambda_{f-1-j}$ for all $\lambda'\in X$.
If $\lambda_{f-1-j}=(\ell_j,0)$ then there is nothing to prove. 
Otherwise, because of the finite height condition, $A^{(f-1-j)}$ is divisible by $(v+p)^{\lambda_{f-1-j,2}}$ we conclude that $a_{k}^{(j)},b_{k}^{(j)}, c_{k}^{(j)},d_k^{(j)}=0$ for all $0\leq k<\lambda_{f-1-j,2}$. Assume $\widetilde{w}_{f-1-j}=\mathfrak{t}_{(m,n)}$. As in \eqref{raising weight for monodromy eq}, we obtain the equation that appears in the case where the weight is $(\ell-2\lambda_{f-1-j,2},0)$ and $\widetilde{w}_{f-1-j}=\mathfrak{t}_{m-\lambda_{f-1-j,2},n-\lambda_{f-1-j,2}}$. More precisely, we have
\begin{equation}\label{p lambda}
    \bm{x}_{0}^{(j)}\prod_{n-1\geq j\geq \lambda_{f-1-j,2}}\left(\dfrac{(\mathfrak{a}-m+n)(\mathfrak{a}-m+n-1)\bm{b}_{0}^{(j)}\bm{c}_{0}^{(j)}}{\bm{a}_{0}^{(j)*}\bm{d}_{0}^{(j)*}}-(m-j)(n-j)p\right)+O(p^{N-3\ell+1})\in \mathfrak{p}^{\lambda'}
\end{equation}
 for all $\lambda'\in X$; where $\bm{x}_0^{(j)}=\bm{b}_{0}^{(j)}$ if $\widetilde{w}_{f-1-j}=\mathfrak{t}_{(m,n)}$ with $m>n$; $1$ if $m=n$ or $\alpha_j=0$ and $\bm{c}_{0}^{(j)}$ if $\widetilde{w}_{f-1-j}=\mathfrak{t}_{(m,n)}$ with $m<n$.
Assume for the sake of contradiction that $\lambda'_{f-1-j}>\lambda_{f-1-j}$ for some $\lambda'\in X$ and $0\leq j\leq f-1$. Then $\mathfrak{p}^{(j),\lambda'_{ f-1-j}}=$ 
\begin{equation}\label{p lambda 2}
\left(\frac{(\mathfrak{a}-m+n)(\mathfrak{a}-m+n-1)\bm{b}_{0}^{(j)}\bm{c}_{0}^{(j)}}{\bm{a}_{0}^{(j)*}\bm{d}_{0}^{(j)*}}-(m-\lambda'_{f-1-j,2)})(n-\lambda'_{f-1-j,2})p+O(p^{N-3\ell+1})\right)+I^{(j), \reg}.
\end{equation}
As $\lambda'_{f-1-j}>\lambda_{f-1-j}$, $\lambda'_{f-1-j,2}<\lambda_{f-1-j,2}$. Considering \cref{p lambda} and \cref{p lambda 2}, since $N\geq 4l$, $\mathfrak{a}-m+n, a-m+n-1, m-n+j, \bm{a}_{0}^*, \bm{d}_{0}^*$ are units for all $\ell\geq m,n,j\geq0$, we deduce that $p^{k}\in \mathfrak{p}^{\lambda'}$ for some $k$, and therefore $p\in \mathfrak{p}^{\lambda'}$, which is a contradiction. The proof for the case where $\widetilde{w}_{f-1-j}=\mathfrak{w}\mathfrak{t}_{(m,n)}$ is analogous. Therefore, this completes the proof.
\end{proof}
\begin{corollary}\label{normal domain}
    Assume that $\overline{\rho}$, up to twisting by a power of $\omega_f$, is of the form in \cref{condition on rho bar} and $4\ell$-generic, $\lambda$ are Hodge--Tate weights with $0<\lambda_{j,1}-\lambda_{j,2}\leq \ell$ and $\tau$ is a $(2\ell+2)$-generic inertial type. If $W(\overline{\rho})\cap \JH(\overline{\sigma}(\lambda,\tau))\neq \emptyset$, then
    \[R^{\lambda,\tau}_{\overline{\rho}}\cong \mathcal{O}\llbracket(x_j,y_j)_{j\in \mathcal{K}}, Z_1,\ldots, Z_{f-m+4}\rrbracket/(x_jy_j-p)_{j\in \mathcal{W}},\]
    where $x_j$(resp. $y_j$) corresponds to $\bm{b}_0^{(j)}\in R$ (resp. $ \bm{c}_0^{(j)}\in R$) if $\widetilde{w}_{f-1-j}=\mathfrak{t}_{(m,n)}$ and $\bm{c}^{(j)}_0-[\overline{\bm{c}}_0^{(j)}]\in R$ (resp. $ \bm{d}^{(j)}_0-[\overline{\bm{d}}_0^{(j)}]\in R$) if $\widetilde{w}_{f-1-j}=\mathfrak{w}\mathfrak{t}_{(m,n)}$ up to multiplication by an element in $R^\times$. (Recall $R=\widehat{\otimes}R^{(j)}$ where $R^{(j)}$ are given in \cref{Table 1} and \cref{Table 2}). In particular, $R^{\lambda,\tau}_{\overline{\rho}}$ is a normal domain and a complete intersection ring. Moreover, the special fibre $\overline{R}^{\lambda,\tau}_{\overline{\rho}}$ is reduced. \par
    Furthermore, the irreducible components of the special fibre of $\overline{R}^{\lambda,\tau}_{\overline{\rho}}$ are given by \[\overline{R}^{\sigma}_{\overline{\rho}}=\overline{R}^{\lambda,\tau}_{\overline{\rho}}/(z(\sigma)_{j\in \mathcal{W}})\cong \F\llbracket (\widetilde{z}_j)_{j\in \mathcal{W}}, \dots, Z_1, \dots, Z_{f-m-4}\rrbracket\]
    for all $\sigma\in W(\overline{\rho})\cap \JH(\overline{\sigma}(\lambda, \tau))$, and $R^{\lambda,\tau}_{\overline{\rho}}$ in the middle term is identified via the isomorphism above.
    In particular, all the irreducible components of $\overline{R}^{\lambda,\tau}_{\overline{\rho}}$ are formally smooth over $\F$.
\end{corollary}
\begin{proof}
We will first reduce it to the case where $\lambda_{j,2}>0$ such that $\lambda_j\leq (\ell_j,0)$ for all $j$ where $\ell_j$ is some positive integer. By \cite[Lemma~5.1.6]{GeneralSerreWeight}, this can always be achieved via twisting by a crystalline character $\psi$ where $\overline{\psi}|_{I_K}$ is a power of $\omega_f$.\par
By \cref{tame type criterion}, if $W(\overline{\rho})\cap \JH(\overline{\sigma}(\lambda,\tau))\neq \emptyset$, then $R^{\lambda,\tau}_{\overline{\rho}}\neq0$.
By \cref{deformation ring}, we know that 
\[{R}_{\overline\rho}^{\lambda,\tau }\llbracket X_1, \ldots, X_{2f}\rrbracket\cong(\mathcal{O}\llbracket(x'_j,y'_j, u_j, v_j, (Z^{k}_j)_{k=1}^{M_j})_{j=1}^f\rrbracket/(\mathfrak{p}^{\lambda}, I^{\reg}_\infty))\llbracket Y_1, \ldots, Y_4\rrbracket\]
for some positive integer $ M_j$. Here $x'_j, y'_j$ corresponds to $\bm{b}_0^{(j)}, \bm{c}_0^{(j)}-[\overline{\bm{c}}_0^{(j)}]\in R^{(j)}$ (resp. $\bm{a}_0^{(j)}, \bm{d}_0^{(j)}-[\overline{\bm{d}}_0^{(j)}]\in R^{(j)}$) and $u_j, v_j$ corresponds to $x_{11}^{(j)}, x_{22}^{(j)}$ (resp. $x_{12}^{(j)}, x_{21}^{(j)}$) if $\widetilde{w}=\mathfrak{t}_{(m,n)}$ (resp. if $\widetilde{w}=\mathfrak{w}\mathfrak{t}_{(m,n)}$), and $Z_{j,k}$ corresponds to the rest of $\bm{a}_{-k}^{(j)}, \bm{b}_{-k}^{(j)}, \bm{c}_{-k}^{(j)}, \bm{d}_{-k}^{(j)}$ for $k>1$. Note that $\mathfrak{p}^{(j),\lambda_{f-1-j}}+I^{(j),\reg}$ are generated by equations of the form $Z_{j,k}+\beta_k+\gamma_k p$, where $\beta_k$ is divisible by $x'_j, y'_j, u_j$ or $v_j$ and $W_j-\alpha_j^*p$ where $W_j=x'_j, y'_j$ or $x'_jy'_j$ and $\alpha^*_j$ is a unit. Let $A=\mathcal{O}\llbracket(x_j,y_j, u_j, v_j)_{j=1}^f, (Z_{j,i})_{1\leq i\leq M, i\neq k}\rrbracket$ with maximal ideal $\mathfrak{m}$, then $Z_{j,k}+\beta_k+\gamma_k p\in A\llbracket Z_k\rrbracket$. By the Weierstrass preparation theorem, we have $Z_{j,k}+\beta_k+\gamma_k p=uf(Z_{j,k})$ where $u$ is a unit in $A\llbracket Z_{j,k}\rrbracket$ and $f$ is a distinguished polynomial in $A[t]$ of some degree $d$. Reducing modulo $\mathfrak{m}$, we have $Z_{j,k}^d \equiv \overline{u} Z_{j,k}^d \mod \mathfrak{m}$. Therefore, we must have $d=1$, and $(Z_{j,k}+\beta_k+\gamma_k p)=(Z_{j,k}+\delta_k )$ in $A\llbracket Z_j^k\rrbracket$ with $\delta_j\in\mathfrak{m}$. We can therefore eliminate the variables $Z_{j,k}$. Similarly, we can eliminate $W_j$ if $W_j=x'_j, y'_j$. If $W_j=x_jy_j$, then let $x_j=x'_j$ and $y_j=y'_j(\alpha^*_j)^{-1}$, and hence we have $x_jy_j=p$. By \cite[Theorem~4]{power-invariant}, if $R, S$ are quasi-local, then $R\llbracket x\rrbracket\cong S\llbracket x\rrbracket$ implies $R\cong S$; hence we can cancel the variables $X_1, \ldots X_{2f}$ with $\{u_j, v_j\}_{j=1}^f$. 
By definition, $R^{\lambda,\tau}_{\overline{\rho}}$ is reduced and irreducible by \cref{deformation ring}, and hence a domain. By \cite[Lemma~3.3.1]{BHHMS}, $x_jy_j-p$ are irreducible; therefore $(x_jy_j-p)_{j=1}^N$ is a regular sequence and $R^{\lambda,\tau}_{\overline{\rho}}$ is a complete intersection ring. Therefore, it is Cohen--Macaulay and $s_k$ holds for all $k$. Given a height-$1$ prime $\mathfrak{p}$, if $\varpi\notin \mathfrak{p}$, then by the description of $R^{\lambda,\tau}_{\overline{\rho}}$ in \cref{deformation ring}, $R^{\lambda,\tau}_{\overline{\rho}}$ is nonsingular at $\mathfrak{p}$. Moreover, $R^{\lambda,\tau}_{\overline{\rho}}[\frac{1}{p}]$ is regular, by \cite[Theorem~3.3.8]{Kisin}. Therefore, $R_1$ is satisfied and $R^{\lambda,\tau}_{\overline{\rho}}$is normal. 
The last statement follows from taking the modulo $\varpi$ that
\[\overline{R}^{\lambda,\tau}_{\overline{\rho}}\cong\F\llbracket(x_i,y_i)_{i=1}^{m}, z_1, \dots, z_{f-m+4}\rrbracket/(x_iy_i)_{i=1}^m=\prod_{\substack{1\leq i\leq m\\ \widetilde{x}_i=x_i \text{ or }y_i}}\F\llbracket \widetilde{x}_1,\dots \widetilde{x}_m, z_1, \dots, z_{f-m+4}\rrbracket.\]
For the identification of components, we closely follow \cite[\S~3.6]{LatticeforGL3}. We have the same canonical diagram as \cite[Diagram (3.9)]{LatticeforGL3} with appropriate modification, for example, the rank of $\mathfrak{M}$ is $2$ instead of $3$, we have $\lambda$ instead of $\eta$ etc. In particular, \cite[Corollary~3.6.3]{LatticeforGL3} is still valid. We have $$\overline{R}_{\overline{\rho}}^{\lambda,\tau}\llbracket X_1, \dots, X_{2f}\rrbracket\cong \bigotimes_{j}R^{(j)}/(p^{(j),\lambda_{f-1-j}},p)\llbracket Y_1, \dots, Y_4\rrbracket.$$ Therefore, it suffices to match the components of $\bigotimes_{j}R^{(j)}/(p^{(j),\lambda_{f-1-j}},p)$ with $W(\overline{\rho})\cap\JH(\overline{\sigma}(\lambda,\tau))$. Notice that $x_j=0$ (resp. $y_j=0$) if and only if $x'_j=0$ (resp. $y'_j=0$) in the notation above.
As explained in \cite[\S~3.6]{LatticeforGL3}, given a Serre weight $\sigma\in W(\overline{\rho})\cap \JH(\overline{\sigma}(\lambda, \tau))$, we first find a minimal type $\tau'$, such that $W(\overline{\rho})\cap \JH(\overline{\sigma}(\lambda, \tau'))=\{\sigma\}$, then by \cref{BreuilMezard}, $\overline{R}_{\overline{\rho}}^{\lambda,\tau}=\overline{R}^{\sigma}$. Using the same calculation as in \cref{Xrholambda} and \cref{tame type criterion}, assume $\lambda_j=(m,n)$ and $\sigma=F(\mathfrak{t}_{\mu-\eta}(b_j))$ where $b_j\in\{0, \sgn(s_j)\}$, we see that $\tau'=\tau_{\widetilde{w}}$ where $\widetilde{w}'_{f-1-j}=\mathfrak{t}_{(m,n)}$ if $b_j=0$ and $\widetilde{w}'_{f-1-j}=\mathfrak{t_{(n,m)}}$ if $b_j=\sgn(s_j)$. In this case, 
\[R^{(j)}/(p^{(j),\lambda_{f-1-j}},\varpi)\cong\F\llbracket z^{(j)}, \bm{a}_{0}^{*(j)}, \bm{d}_{0}^{*(j)}\rrbracket,\]
where $z^{(j)}=y'_j=\bm{c}_0^{(j)}$ if $b_j=0$ and $z_j=x'_j=\bm{b}_0^{(j)}$ if $b_j\neq 0$. We now carry out a similar calculation as in \cite[\S~3.6]{LatticeforGL3}. By \cref{tame type criterion}, we can assume $\tau=\tau_{\widetilde{w}}$ with $\widetilde{w}=\widetilde{w}'\widetilde{z}$. Assume $\widetilde{w}_{f-1-j}=\mathfrak{w}\mathfrak{t}_{(a,b)}$. If $\widetilde{w}'_{f-1-j}=\mathfrak{t}_{(m,n)}$ (i.e.\ , $b_j=0$), then $\widetilde{z}_{f-1-j}=\mathfrak{w}\mathfrak{t}_{(a-n, b-m)}$. Note that $A^{(f-1-j)}$ now has entries in characteristic $p$. By \cref{conjectured solution} and \cref{deformation ring}, ${A'}^{(f-1-j)}\mod \bm{b}_o'=\begin{pmatrix}
    v^m\bm{a}_{0}^{*'}&0\\v^m\bm{c}_{0}'&v^n\bm{d}_{0}^{*'}
\end{pmatrix}$. Similarly, by \cref{conjectured solution 2} and \cref{deformation ring}, $A^{(f-1-j)}\mod \bm{a}_{0}=\begin{pmatrix}
    0&v^b\bm{b}_{0}^*\\v^a\bm{c}_{0}^*& v^b \bm{d}_{0}
\end{pmatrix}$. (Here $'$ is used to denote those in ${A'}^{(f-1-j)}$ given by $\widetilde{w}'$ and we omit the index $(j)$ for legibility.) By setting
\[\begin{pmatrix}
    0&v^b\bm{b}_{0}^*\\v^a\bm{c}_{0}^*& v^b \bm{d}_{0}
\end{pmatrix}=\begin{pmatrix}
    v^m\bm{a}_{0}^{*'}&0\\v^mx'_{21}&v^n\bm{d}_{0}^{*'}
\end{pmatrix}\begin{pmatrix}
    0&v^{b-m}\\v^{a-n}&0
\end{pmatrix},\]
we deduce the following identification:
\[\bm{b}_0^*={\bm{a}_0^*}'\;\;\bm{c}_{0}^*={\bm{d}_{0}^*}'\;\;\bm{c}_{0}'=\bm{d}_{0}.\]
Similarly, if $\widetilde{w}'_{f-1-j}=\mathfrak{t}_{(n,m)}$ (i.e.\ , $b_j\neq0$), then $\widetilde{z}_{f-1-j}=\mathfrak{w}\mathfrak{t}_{(a-m, b-n)}$ and ${A'}^{(f-1-j)}\mod \bm{c}'_{0}=\begin{pmatrix}
    v^n\bm{a}_{0}^{*'}&v^{m-1}x'_{12}\\0&v^m\bm{d}_{0}^{*'}
\end{pmatrix}$ and $A^{(f-1-j)}\mod \bm{d}_{0}=\begin{pmatrix}
   v^{a-1} \bm{a}_{0}&v^b\bm{b}_{0}^*\\v^a\bm{c}_{0}^*& 0
\end{pmatrix}$. By setting
\[\begin{pmatrix}
     v^{a-1} \bm{a}_{0}&v^b\bm{b}_{0}^*\\v^a\bm{c}_{0}^*& 0
\end{pmatrix}=\begin{pmatrix}
    v^n\bm{a}_{0}^{*'}&v^{m-1}\bm{b}'_0\\0&v^m\bm{d}_{0}^{*'}
\end{pmatrix}\begin{pmatrix}
    0&v^{b-n}\\v^{a-m}&0
\end{pmatrix},\]
we deduce the following identification:
\[\bm{b}_{0}^*={\bm{a}_{0}^*}'\;\;\bm{c}_{0}^*={\bm{d}_{0}^*}'\;\;\bm{b}_{0}'=\bm{a}_{0}.\]
By the same argument, it is straightforward to see that when $\widetilde{w}_{f-1-j}=\mathfrak{t}_{(a,b)}$, ${\bm{a}_{0}^*}'=\bm{a}_{0}^*$, ${\bm{d}_{0}^*}'=\bm{d}_{0}^*$, $\bm{b}_{0}'=\bm{b}_{0}/\bm{c}_{0}'=\bm{c}_{0}$ where appropriate. Therefore, the last statement follows.
\end{proof}
We write $\mathcal{W}=\{j\in \mathcal{J}\colon F(\mathfrak{t}_{\mu-\eta}(0,\dots,0, \sgn(s_j),0 \dots,0))\in W(\overline{\rho})\cap \JH(\overline{\sigma}(\lambda, \tau))\}$ (cf. \cref{prop on W(r)}). Then $m=|\mathcal{W}|$ is the positive integer such that $2^m=|W(\overline{\rho})\cap \JH(\overline{\sigma}(\lambda, \tau))|$.
Given a Serre weight $\sigma=F(\mathfrak{t}_{\mu-\eta}(b_j))\in W(\overline{\rho})\cap \JH(\overline{\sigma}(\lambda, \tau))$ where $b_j\in\{0, \sgn(s_j)\}$ (cf. \cref{prop on W(r)}), under the isomorphism of \cref{normal domain}, we define 
    \[z(\sigma)_{j}=\begin{cases}
        x_{j} &\text{ if } b_j=0,\\ y_{j} &\text{ if } b_j=\sgn(s_j).
    \end{cases}\]
    And we define $\widetilde{z}(\sigma)_{j}$ analogously, with $x_j$, $y_j$ swapped.
\begin{table}[h!]
\caption{ $\widetilde{w}_{f-1-j}=\mathfrak{t}_{(m,n)}$}

    \[
    \begin{array}{|c|c|c|}
\hline\rule{0mm}{7mm}
\overline{A}^{(f-1-j)}&     \multicolumn{2}{|c|}{\begin{pmatrix}
\alpha_jv^m& 0\\
\alpha_j \gamma_{f-1-j}v^m&\beta_j v^{n}\end{pmatrix}}\\[3mm]
\hline\rule{0mm}{4mm}
 \multirow{2}{*}{shape} &\gamma_{f-1-j}\neq0, m\leq n &\mathfrak{w}\mathfrak{t}_{(m,n)}\\
 \cline{2-3}\rule{0mm}{4mm}
 & \text{otherwise}&\mathfrak{t}_{(m,n)}\\
\hline\rule{0mm}{7mm}
A^{(f-1-j)}      &\multicolumn{2}{|c|}{\begin{pmatrix}
    \sum_{0\leq i\leq m} \bm{a}_{-(m-i)} (v+p)^{i} &\sum_{0\leq i\leq n-1} \bm{b}_{-(n-1-i)}  (v+p)^{i}\\
    v(\sum_{0\leq i\leq m-1} \bm{c}_{-(m-1-i)}  (v+p)^{i}) &\sum_{0\leq i\leq n} \bm{d}_{-(n-i)}  (v+p)^{i}
\end{pmatrix}}\\[4mm]
\hline\rule{0mm}{6mm}
      R^{(j)} &   \multicolumn{2}{|c|}{\mathcal{O}\llbracket x_{11}, x_{12}, x_{22}, x_{21}, (\bm{a}_{-k})_{k=1}^{m},(\bm{b}_{-k})_{k=1}^{n-1}, (\bm{c}_{-k})_{k=1}^{m-1}, (\bm{d}_{-k})_{k=1}^{n}\rrbracket} \\[1.5ex]
      \hline\rule{0mm}{6mm}
      I^{(j),\reg}   &  \multicolumn{2}{|c|}{\text{For $0\leq k\leq m$, }   \bm{a}_{-k}- \dfrac{(-1)^{k} \bm{a}_0}{k!\prod_{i=0}^{k-1}(\mathfrak{a}'+i)}\prod_{i=0}^{k-1}\left(Z+i(m-n-i)p\right)+O(p^{k_j}),}\\
       &  \multicolumn{2}{|c|}{ \text{For $0\leq k\leq n-1$, } \bm{b}_{-k}-\dfrac{\bm{b}_0}{k!\prod_{i=0}^{k-1}(\mathfrak{a}'-i)}\prod_{i=1}^{k}\left(Z-i(m-n+i)p\right)+O(p^{k_j}),} \\
      &  \multicolumn{2}{|c|}{\text{For $0\leq k\leq m-1$, } \bm{c}_{-k}- \dfrac{(-1)^{k} \bm{c}_0}{k!\prod_{i=0}^{k-1}(\mathfrak{a}'+i-1)}\prod_{i=1}^{k}\left(Z-i(n-m+i)p\right)+O(p^{k_j}),}\\
      &  \multicolumn{2}{|c|}{\text{For $0\leq k\leq n$, } \bm{d}_{-k}-\dfrac{\bm{d}_0}{k!\prod_{i=0}^{k-1}(\mathfrak{a}'-1-i)}\prod_{i=0}^{k-1}\left(Z+i(n-m-i)p\right) +O(p^{k_j}),}\\[4mm]
     & \multicolumn{2}{|c|}{ (\bm{x}_0+O(p^{k_j}))\prod_{n\geq j\geq 1}\left(Z-(m-n+j)jp+O(p^{k_j})\right)}\\[2mm]
         \hline\rule{0mm}{5mm}
          I^{(j)}& m=n & (\bm{b}_0+O(p^{k_j}))\prod_{n\geq j\geq 1}(Z-(m-n+j)jp+O(p^{k_j}))\\
          & & (\bm{c}_0+O(p^{k_j}))\prod_{n\geq j\geq 1}(Z-(m-n+j)jp+O(p^{k_j}))\\[2mm]
         \hline\rule{0mm}{5mm}
         \multirow{2}{*} {$\mathfrak{p}^{(j), \lambda_{f-1-j}}$}&\lambda_{f-1-j}=(m,n), n<m& I^{(j), \reg}+\bm{b}_0+O(p^{k_j})\\[1mm]
        \cline{2-3}\rule{0mm}{5mm}
       &\text{ otherwise}  & I^{(j), \reg}+(Z-(m-\lambda_{f-1-j,2})(n-\lambda_{f-1-j,2})p+O(p^{k_j})) \\[1mm]
         \hline\rule{0mm}{4mm}
     \multirow{2}{*}{$z(\sigma)_j$} &b_j=0&  \bm{b}_0 \\
     \cline{2-3}\rule{0mm}{3mm}
    &b_j=\sgn(b_j)&\bm{c}_0\\
    \hline
         \end{array}\]        
    \label{Table 1}
    \raggedright
Here, we define $Z\colonequals \frac{(\mathfrak{a}')(\mathfrak{a}'-1)\bm{b}_0\bm{c}_0}{\bm{a}_0^*\bm{d}_0^*}$ and $\bm{x}_0=\bm{b}_0$ if $m>n$, $\bm{c}_0$ if $m<n$ and $1$ if $m=n$. Recall that $O(p^{k_j})$ denotes a specific but inexplicit element in $p^{N-3\ell_{f-1-_j}+1}M_2(R)$, it depends on the whole tuple $\widetilde{w}$, not just $\widetilde{w}_{f-1-j}$. Moreover, $\mathfrak{a}'=\mathfrak{a}-m+n$, $\mathfrak{a}\in \Z_{(p)}$ and $\mathfrak{a}\equiv -\langle s_j^{-1}(\mu_j)-(m,n), \alpha_j^{\vee} \rangle\equiv -\sgn(s_j)(r_j+1)+(m-n) \mod p$. For readability, we remove the superscript $(j)$. Furthermore, $x_{11}=\bm{a}_{0}^*-[\overline{\bm{a}}_{0}^*]$, $x_{12}=\bm{b}_{0}$, $x_{21}=\bm{c}_{0}-[\overline{\bm{c}}_{0}]$, and $x_{22}=\bm{d}_{0}^*-[\overline{\bm{d}}_{0}^*]$. Here, $\sigma=F(\mathfrak{t}_{\mu-\eta}(b_j))$ where $b_j\in \{0, \sgn (s_j)\}$ as in \cref{prop on W(r)}
\end{table}
\newpage
\begin{table}[h!]
\caption{ $\widetilde{w}_{f-1-j}=\mathfrak{w}\mathfrak{t}_{(m,n)}$}
    \centering
     \[\begin{array}{|c|c|c|}
\hline\rule{0mm}{7mm}
\overline{A}^{(f-1-j)}&     \multicolumn{2}{|c|}{\begin{pmatrix}
    0&\alpha_{j}v^{n}\\
    \beta_{j}v^m&\alpha_{j}\gamma_{f-1-j}v^{n}
    \end{pmatrix}}\\[3mm]
\hline\rule{0mm}{4mm}
 \multirow{2}{*}{shape} &\gamma_{f-1-j}\neq0, m> n &\mathfrak{t}_{(m,n)}\\[2mm]
 \cline{2-3}\rule{0mm}{4mm}
 & \text{otherwise}& \mathfrak{w}\mathfrak{t}_{(m,n)}\\[2mm]
\hline\rule{0mm}{7mm}
A^{(f-1-j)}      &\multicolumn{2}{|c|}{\begin{pmatrix}
    \sum_{0\leq i\leq m-1} \bm{a}_{-(m-i)} (v+p)^{i} &\sum_{0\leq i\leq n} \bm{b}_{-(n-i)}  (v+p)^{i}\\
    v(\sum_{0\leq i\leq m-1} \bm{c}_{-(m-1-i)}  (v+p)^{i}) &\sum_{0\leq i\leq n} \bm{d}_{-(n-i)}  (v+p)^{i}
\end{pmatrix}}\\[4mm]
\hline\rule{0mm}{6mm}
      R^{(j)} &   \multicolumn{2}{|c|}{\mathcal{O}\llbracket x_{11}, x_{12}, x_{22}, x_{21}, (\bm{a}_{-k})_{k=1}^{m-1},(\bm{b}_{-k})_{k=1}^{n}, (\bm{c}_{-k})_{k=1}^{m-1}, (\bm{d}_{-k})_{k=1}^{n-1}\rrbracket} \\[1.5ex]
      \hline\rule{0mm}{6mm}
      I^{(j),\reg}   &  \multicolumn{2}{|c|}{\text{For $0\leq k\leq m-1$, }   \bm{a}_{-k}-\dfrac{(-1)^{k}\bm{a}_0}{k!\prod_{i=0}^{k-1}(\mathfrak{a}'+i)}\prod_{i=1}^{k}\left(Z+(\mathfrak{a}-i)(\mathfrak{a}'+i)p\right)+O(p^{k_j}),}\\
       &  \multicolumn{2}{|c|}{ \text{For $0\leq k\leq n$, } \bm{b}_{-k}-\dfrac{\bm{b}_0}{k!\prod_{i=0}^{k-1}(\mathfrak{a}'-i)}\prod_{i=0}^{k-1}\left(Z+(\mathfrak{a}+i)(\mathfrak{a}'-i)p\right)+O(p^{k_j}),} \\
      &  \multicolumn{2}{|c|}{\text{For $0\leq k\leq m-1$, }  \bm{c}_{-k}-\dfrac{(-1)^{k}\bm{c}_0}{k!\prod_{i=0}^{k-1}(\mathfrak{a}'+1+i)}\prod_{i=1}^{k}\left(Z+(\mathfrak{a}-i)(\mathfrak{a}'+i)p\right)+O(p^{k_j}),}\\
      &  \multicolumn{2}{|c|}{\text{For $0\leq k\leq n$, } \bm{d}_{-k}-\dfrac{\bm{d}_0}{k!\prod_{i=0}^{k-1}(\mathfrak{a}'+1-i)}\prod_{i=0}^{k-1}\left(Z+(\mathfrak{a}+i)(\mathfrak{a}'-i)p\right)+O(p^{k_j}),}\\[4mm]
     & \multicolumn{2}{|c|}{ \prod_{0\leq k\leq n-1}\left(Z- (\mathfrak{a}'-k)(\mathfrak{a}+k) p+O(p^{k_j})\right)}\\[2mm]
         \hline\rule{0mm}{5mm}
         \mathfrak{p}^{(j), \lambda_{f-1-j}} &\multicolumn{2}{|c|}{I^{(j), \reg}+\left(Z- (\mathfrak{a}-m+\lambda_{f-1-j,2})(\mathfrak{a}+n-\lambda_{f-1-j,2}) p+O(p^{k_j})\right)} \\[2mm]
         \hline\rule{0mm}{4mm}
    \multirow{2}{*}{$z(\sigma)_j$} &b_j=0&  \bm{a}_0 \\[1mm]
    \cline{2-3}\rule{0mm}{3mm}
    &b_j=\sgn(b_j)&\bm{d}_0\\[2mm]
    \hline
         \end{array}\]       
    \label{Table 2}
    \raggedright
Here we let $Z\colonequals \frac{(\mathfrak{a}')(\mathfrak{a}'+1)\bm{a}_0\bm{d}_0}{\bm{b}_0^*\bm{c}_0^*}$ and $O(p^{k_j})$ denotes a specific but inexplicit element in $p^{N-3\ell_{f-1-_j}+1}M_2(R)$, it depends on the whole tuple $\widetilde{w}$, not just $\widetilde{w}_{f-1-j}$. Moreover, $\mathfrak{a}'=\mathfrak{a}-m+n$, $\mathfrak{a}\in \Z_{(p)}$ and $\mathfrak{a}\equiv -\langle \mathfrak{w}s_j^{-1}(\mu_j)-(m,n), \alpha_j^{\vee} \rangle\equiv \sgn(s_j)(r_j+1)+(m-n) \mod p$. For readability, we remove the superscript $(j)$. Furthermore, $x_{11}=\bm{a}_0$, $x_{12}=\bm{b}_{0}^*-[\overline{\bm{b}}_{0}^*]$, $x_{21}=\bm{c}_{0}^*-[\overline{\bm{c}}_{0}^*]$, and $x_{22}=\bm{d}_{0}-[\overline{\bm{d}}_{0}]$. Here, $\sigma=F(\mathfrak{t}_{\mu-\eta}(b_j))$ where $b_j\in \{0, \sgn (s_j)\}$ as in \cref{prop on W(r)}
\end{table}
\clearpage
\section{Patching functor}
\label{ch4:patching_functor}
\subsection{Abstract patching functor}
Let $S$ be a finite set, which for our purpose will be places dividing $p$. For each $v\in S$, we fix a local field $L_v$ with ring of integer $\mathcal{O}_{L_v}$ the residue field $k_v$, such that $L_v$ is a finite extension of $\Q_p$. We let $K\colonequals\prod_{v\in S}\GL_2(\mathcal{O}_{L_v})$. We let $\overline{\rho}_v\colon G_{L_v}\to \GL_2(\F)$. We define $\mathcal{C}$ to be the category of finitely generated $\mathcal{O}$-algebras with a continuous action of $K$ and let $C'$ be a Serre subcategory of $C$.\par
We define
\[R_\infty\colonequals   
\widehat{\bigotimes}_{v\in S}R^{\square}_{\overline{\rho}_{v}}\llbracket x_1, x_2, \dots, x_h\rrbracket \]
for some positive integer $h\geq |S|-1$, with maximal ideal $\mathfrak{m}_{\infty}$.
Given $\overline{\sigma}$ which is killed by $\varpi_E$, assume that $\overline{\sigma}|_{K_v}$ is a direct sum of $\overline{\sigma}_v$, which is an irreducible representation of $K_v$ over $\F$. Then we let $R^{\overline{\sigma}}_{\overline{\rho}_v}$ be the crystalline deformation of Hodge type $\overline{\sigma}_v$.
We let 
\begin{equation}\label{def of Rtau}
    R_\infty^{\lambda,\tau}\colonequals R_\infty\widehat{\otimes}_{R^{\square}_{\overline{\rho}_{v}}}\widehat{\bigotimes}_{v\in S} R^{\lambda_v, \tau_{v}}_{\overline{\rho}_{v}};\;\;R^{\overline{\sigma}}_\infty\colonequals R_\infty\widehat{\otimes}_{R^{\square}_{\overline{\rho}_{v}}}\widehat{\bigotimes}_{v\in S}R^{\overline{\sigma}_v}_{\overline{\rho}_v}.
\end{equation}
We write $X_\infty\colonequals \Spf R_{\infty}$ and analogously $X_\infty(\tau)\colonequals \Spf R_\infty( \eta,\tau), X_\infty(\lambda, \tau)\colonequals \Spf R_\infty^{\lambda,\tau}$, ${X}_\infty(\overline{\sigma})\colonequals \Spf R^{\overline{\sigma}}_\infty$ and we write $\overline{X}_{\infty}(\overline{\sigma})$ for the special fibre of the space ${X}_\infty(\overline{\sigma})$.\par
Let $\mathcal{S}$ be a set of tuples of pairs $(\lambda_v,\tau_v)_{v\in S}$ such that $\lambda_v$ is a Hodge type and $\tau_v$ is a inertial type. If $\tau=(\tau_v)_{v\in S}$ is a collection of tame inertial types, then $\sigma(\tau_v)$ is a representation of $\GL_2(\mathcal{O}_{L_v})$ over $\mathcal{O}$ corresponding to $\tau_v$ by the local Langlands correspondence.  Recall that $V(\lambda)$ is the algebraic representation with highest weight $\lambda$. For any $\mathcal{O}$-lattice $\sigma^\circ(\lambda_v, \tau_v)$ in $\sigma(\tau_v)\otimes V(\lambda_v-\eta)$, and we write $\sigma^\circ(\lambda,\tau)\colonequals \otimes_{v\in S} \sigma^\circ(\lambda_v,\tau_v)$. Then, following \cite[Definition 4.1.1]{GeneralSerreWeight}), a \emph{patching functor} $M_{\infty}$ for $\mathcal{S}$ is a nonzero covariant exact functor from $C'$ to the category of coherent sheaves over $\Spf R_{\infty}$, such that for $(\lambda,\tau)\in \mathcal{S}$,
\hfill\begin{enumerate}
    \item  $M_\infty(\sigma^\circ(\lambda,\tau))$ is $p$-torsion free and is a maximal Cohen--Macaulay sheaf on $X_\infty(\lambda,\tau)$.
    \item For all $\overline{\sigma}\in \JH(\overline{\sigma}(\tau))$, $M_\infty(\overline{\sigma})$ is a maximal Cohen--Macaulay sheaf on $\overline{X}_\infty(\overline{\sigma})$.
\end{enumerate}
We say that $M_\infty$ is a \emph{minimal patching functor} if the locally free sheaf $M_\infty(\sigma^\circ(\lambda,\tau))[\frac{1}{p}]$ has rank at most one on each connected component. We say a patching functor is \emph{unramified} if the coefficient field is unramified over $\Q_p$. \par
\subsection{Global setup}\label{patching}
For the global setup, we will construct a minimal patching functor by unitary groups, following closely \cite{CEG} and \cite{extremal}.
Let $F$ be a CM field with maximal totally real subfield $F^+$. We call a place in $F^+$ split (resp. inert) if it splits (resp. is inert) in $F$.
We denote $S_p$ for the set of primes of $F^+$ lying above $p$. Let $\Sigma$ be the set of primes of $F^+$ away from $p$ where $\overline{r}$ ramifies. \par
Let $\mathcal{O}_{F^+}, \mathcal{O}_{F_V^+}$ and $ \mathcal{O}_{F_w}$ denote the ring of integers of $F^+$, $F_v^+$ and $F_w$ respectively, where $v$ is a place of $F^+$ and $w$ a place of $F$. Let $G_{/F^+}$ be a reductive group which is an outer form for $\GL_2$ such that
\hfill\begin{enumerate}
     \item $G_{/F}$ is an inner form of $\GL_2$;
     \item $G_{/F^+}(F_v^+)\cong U_n(\mathbb{R})$ for all $v|\infty$;
     \item $G_{/F^+}$ is quasisplit at all inert finite places.
 \end{enumerate}
 By \cite[\S~7.1]{EGH}, $G$ admits a reductive model $\mathcal{G}$ over $\mathcal{O}_{F^+}[1/N]$ for some $p\nmid N$ and an isomorphism $\iota\colon \mathcal{G}_{/\mathcal{O}_{F}[1/N]}\to \GL_{2/\mathcal{O}_F[1/N]}$ which specializes to $\iota_w\colon  \mathcal{G}(\mathcal{O}_{F_v^+})\xrightarrow[]{\sim}\mathcal{G}(\mathcal{O}_{F_w})\xrightarrow[]{\iota}\GL_n(\mathcal{O}_{F_w})$ for all split finite places $w$ in $F$ prime to $N$ where $w|_{F^+}=v$.
Let $U=U^pU_p\leqslant G(\mathbb{A}_{F^+}^{\infty, p})\times \mathcal{G}(\mathcal{O}_{F^+,p})$ where $\mathcal{G}(\mathcal{O}_{F^+,p})\colonequals \prod_{v|p}\mathcal{G}(\mathcal{O}_{F_v^+})$ be a compact open subgroup. 
If $W$ is a finitely generated $\mathcal{O}$-module endowed with a continuous action of $U_{\Sigma}\colonequals\prod_{v\in \sigma}U_v$, we define the space of \emph{automorphic forms} on $G$ of level $U$ with coefficients in $W$ to be the $\mathcal{O}$-module $S(U,W)\colonequals$
\[\{f\colon  \text{continuous map }G(F^+)\backslash G(\mathbb{A_{F^+}^\infty})\to W|f(gu)=u_{\Sigma}^{-1}f(g)\text{ for all } g\in G(\mathbb{A}_{F^+}^\infty), u\in U\}.\]
 We say $U$ is \emph{unramified} at $v$, if $U=\mathcal{G}(\mathcal{O}_{F_v^+})U^v$. Let $S$ be the set of finite split places of $F^+$, which is composed of $ S_p\sqcup \Sigma$ and all place $v$ such that $U$ is not unramified. Let $\mathcal{P}_U$ be the set of finite places $w$ such that $v:=w|_{F^+}$ is split in $F$, and does not divide any primes in $S$, nor any prime dividing $N$.
For any subset $\mathcal{P}\subset\mathcal{P}_U$ of finite complement that is closed under complex conjugation, we define
\begin{equation}\label{Hecke algebra}
\mathbb{T}_{\mathcal{P}}=\mathcal{O}[T_w^{(i)}, w\in \mathcal{P}, 0\leq i\leq 2]
\end{equation}
to be the \emph{universal Hecke algebra} on $\mathcal{P}$. Then, $S(U,W)$ is endowed with an action of $\mathbb{T}_{\mathcal{P}}$ that $T_w^{(i)}$ acts by the double coset operator 
\[\iota_w^{-1}\left[\GL_2(\mathcal{O}_{F_w})\begin{pmatrix}
    \varpi_w \Id_i &0\\ 0& \varpi_w\Id_{n-i}
\end{pmatrix}\GL_2(\mathcal{O}_{F_w})\right].\]
\begin{definition}\label{automorphicdef}\cite[Definition~7.1]{potentiallycrystalline}
   Let $\overline{r}\colon G_F\to \GL_2(\F)$ be a continuous Galois representation. Let $\mathfrak{m}\subset\mathbb{T}_{\mathcal{P}}$ for some $\mathcal{P} \subset \mathcal{P}_U$ corresponding to the kernel of the system of eigenvalues $\overline{\alpha}\colon \mathbb{T}_{\mathcal{P}}\to\F$ such that
   \[\det(1-\overline{r}^{\vee}(\Frob_w)X)=\sum_{j=0}^2(-1)^j(\mathbf{N}_{F/\Q}(w))^{\binom{j}{2}}\overline{\alpha}(T_{w}^{(j)})X^j\]
for all $w\in \mathcal{P}$. Then we say $\overline{r}$ is \emph{automorphic} if there exists a compact open subgroup $U\leqslant G(\mathbb{A}_{F^+}^{\infty, p})\times \mathcal{G}(\mathcal{O}_{F^+,p})$, a finite $\mathcal{O}$-module $W$ endowed with a continuous action of $U_\Sigma$ and a cofinite subset $\mathcal{P}\subset \mathcal{P}_U$ such that
\[S(U,W)_\mathfrak{m}\neq 0\]
\end{definition}
 Given a continuous absolutely irreducible representation $\overline{r}\colon G_{F}\to \GL_2(\F)$, we will assume it satisfies the following properties:
 \begin{properties}\label{condition for r}
\hfill\begin{enumerate}
    \item $p$ is unramified in $F^+$ and every place of $F^+$ dividing $p$ splits in $F$.
    \item $F/F^+$ is unramified at all finite places, and hence $[F^+\colon Q]$ is even.
    \item $\overline{F}^{\ker \text{ad} \overline{r}|_{G_F} }$ does not contain $F(\zeta_p)$.
    \item $\overline{r}|_{G_{F(\zeta_p)}}$ is absolutely irreducible.
    \item $\overline{r}$ is automorphic as in \cref{automorphicdef}.
    \item $\overline{r}$ is ramified only at split places and with minimal ramification in the sense of \cite[Definition~2.4.14]{CHT}.
    \item $\overline{r}(G_F)$ contains $\GL_2(\F)$ with $|\F|>6$.
    \item $\overline{r}(G_{F(\zeta_p)})$ is adequate (in the sense of \cite[Definition~2.3]{Tho}).
    \item For all places $\widetilde{v}|p$ of $F$, $\overline{r}|_{G_{F_{\widetilde{v}}}}$, up to twisting by a crystalline character, satisfies \cref{condition on rho bar} for some $N_v$.
\end{enumerate}
\end{properties}
By \cite[\S~2.3]{CEG}, we can find a finite place $v_1\notin S$ such that \hfill\begin{enumerate}
    \item $v_1$ splits as $w_1w_1^c$ in $F$;
    \item $v_1$ does not split completely in $F(\zeta_p)$, i.e $(\mathbf{N}v_1)\neq 1(\mathrm{mod}\; p)$;
    \item $\overline{r}|_{G_{F_{v_1}^+}}$ is unramified and the ratio between the eigenvalues of $\overline{r}(\Frob_{{F_{v_1}}})$ is not equal to $(\mathrm{N}v_1)^{\pm 1}$ or $1$.
\end{enumerate}
\begin{remark}
    By \cite[\S~5]{MR3554238}, this ensures that $R^{\square}_{\overline{r}_{\widetilde{v}_1}}$ is formally smooth over $W(\F)$.
\end{remark}
\begin{definition}\label{minimal level}
We construct a compact subgroup $U=\prod_v U_v$ of $G(\mathbb{A}^\infty_{F^+})$ such that each $U_v$ is a compact subgroup of $G(F_v^+)$ with the following properties:
\hfill\begin{enumerate}\label{multiplicity one}
    \item $U_v= \iota_{\widetilde{v}}^{-1}\mathcal{G}(\mathcal{O}_{F_{\widetilde{v}}})$ if $v$ is a split place in $F$ and $v\neq v_1$.
    \item $U_v$ is a hyperspecial maximal compact subgroup of $\mathcal{G}(F^+_v)$ if $v$ is inert in $F$.
    \item $U_{v_1}$ is the preimage of the upper triangular matrices under
    \[\mathcal{G}(\mathcal{O}_{F_{v_1}^+})\xrightarrow[\sim]{{\iota}_{v_1}} \GL_2(\mathcal{O}_{F_{w_1}})\xrightarrow[]{\mathrm{mod}\;p}\GL_2(k_{w_1}).\]
\end{enumerate}
\end{definition}
Because of the condition at $v_1$, $U$ is sufficiently small in the sense that for some $v\in F^+$, the projection of $U$ to $G(F^+_v)$ does not contain nontrivial element of finite order (cf. the discussion in \cite[\S~3.1.2]{GK}).
Hence for any $\mathcal{O}$-algebra $A$ and $\mathcal{O}$-module $W$, we have 
\[S(U, W\otimes_\mathcal{O}A)\cong S(U,W)\otimes_\mathcal{O}A.\]
For each $v\in \Sigma$, we fix an inertial type $\tau_{v}$ which is the restriction to the inertia of a minimally ramified lift of $\overline{r}|_{G_{F_{v}^+}}$. By the inertial local Langlands correspondence, we have a finite-dimensional $\GL_2(\mathcal{O}_{F_{v}^+})$-representation $\sigma(\tau_{v}^\vee)$ over $\mathcal{O}$ corresponding to $\tau_{v}^\vee$, and we fix a $\mathcal{O}$-lattice $\sigma(\tau_{v}^\vee)^\circ\subset\sigma(\tau_{v}^\vee)$. Let $W_\Sigma=\bigotimes_{v\in \Sigma}(\sigma(\tau_{v}^\vee)^\circ \circ \iota_v^{-1})$. Let $V$ be any finite $\F$-module with continuous $\prod_{s\in S_p} G(\mathcal{O}_{F_v^+})$-action. Then, the patching functor $V\mapsto M_\infty(V\otimes W_\Sigma)$ is a patching functor for $S=S_p$.
Let $U=U_vU^v$ where $U_v\leqslant\mathcal{G}(\mathcal{O}_{F^+_v})$ is compact open. We define
\[\widehat{S}(U^v,W)\colonequals \varinjlim_{U_v}S(U^vU_v,W) \text{  and  } \widetilde{S}(U^v,W)\colonequals \varprojlim_{n}\widehat{S}(U^v,W/\varpi^n).\]
Let $w_1\in F$ be such that $w_1|_{F^+}=v_1$. We define 
\[\mathbb{T}'_{\mathcal{P}}=\mathbb{T}_{\mathcal{P}}[T_{w_1}^{(1)}, T_{w_1}^{(2)}].\]
Then, $S(U, W), \widehat{S}(U^v,W), \widetilde{S}(U^v,W)$ is endowed with an action of $\mathbb{T}'_{\mathcal{P}}$ such that
$T_{v_1}^{(i)}$ act by the double coset operator:
\[\left[U_{v_1}\iota_{w_1}^{-1}\begin{pmatrix}
    \varpi_{{w_1}} \Id_i &0\\ 0& \Id_{n-i}
\end{pmatrix}U_{v_1}\right].\] 
Label the eigenvalues of $\overline{r}(\Frob_{w_1})$ as $\delta_1, \delta_2$. Let $\mathfrak{m}'$ be the maximal ideal of $\mathbb{T}'_{\mathcal{P}}$ generated by $\mathfrak{m}$ and the elements $T_{w_1}^{(1)}-\delta_1, T_{w_1}^{(2)}-(\mathbf{N}v_1)^{-1}\delta_1\delta_2$. We define $M_{\mathfrak{m}'}\colonequals M\otimes(\mathbb{T}'_\mathcal{P})_{\mathfrak{m}'}$. 
In particular, $ \widetilde{S}(U^v,W)_{\mathfrak{m}'}\colonequals\varprojlim_{n}\varinjlim_{U_v}S(U^vU_v,W/\varpi^n)_{\mathfrak{m}'}$.
\begin{proposition}\label{patching functor existence}
    Given Galois representation $\overline{r}\colon G_ F\to \GL_2(\F)$ satisfying \cref{condition for r}, there exists a minimal patching functor $M_\infty$ for $\mathcal{S}=\{(\lambda,\tau): \tau_v$ is $(2\ell_v+2)$-generic, $N_v\geq 4\ell_v$, where $\ell_v=\lambda_{v_{j,1}}-\lambda_{v_{j,2}}\}$ where $S= S_p$, $L_v=F^+_v\cong F_{\widetilde{v}}$, $\overline{\rho}_v=\overline{r}|_{F_v^+}\cong \overline{r}|_{F_{\widetilde{v}}}$.
\end{proposition}
\begin{proof}
     This follows from \cite[Lemma~A.1.1]{localmodel}.
     For $\overline{\sigma}\in \JH(\overline{\sigma}(\tau_{S_p}))$, $M_\infty(\overline{\sigma})$ is a priori a maximal Cohen--Macaulay sheaf on $\overline{X}_\infty(\tau_{S_p})$. By \cite[Proposition~3.5.1]{EGS}, we can find a tame type $\tau$ such that $\JH(\overline{\sigma}(\tau))\cap W(\overline{r})=\overline{\sigma}$. Then, by \cite[Theorem~7.2.1(2),(4)]{EGS}, the scheme-theoretic support of $ M_\infty(\overline{\sigma})$ is exactly $\overline{X}_\infty(\tau)=\overline{X}_\infty({\overline{\sigma}})$. (\textit{cf.} \cite[\S~B.1]{EGS}) Alternatively, it follows from \cite[Remark 4.1.2]{GeneralSerreWeight}, and noticing that $R_\infty(\lambda,
     \tau)$ is irreducible by \cref{deformation ring}. For minimality, it follows from \cite[Lemma 4.18]{CEG}
\end{proof}
We can further construct a (not necessarily) minimal patching functor for any $S\subseteq S_p$. For each $v\in S_p\setminus S$, we fix a tame inertial type $\tau_{v}$ and regular weights $\lambda_{v}\in (\Z^2)^{f_{v}}$, such that $\ell_v=\max_j\{(\lambda_{v})_{j,1}-(\lambda_{v})_{j,2}\}\leq N_v/4$. We will choose for each $\lambda_v$, a $(2\ell_v+2)$-generic tame inertial type $\tau_v$ such that $W(\overline{r}_v)\cap \JH(\lambda_v,\tau_v)\neq \emptyset$. For the purpose of \cref{ch6: cyclicity of patched module}, we will also construct a minimal patching functor for $S$. In that case, we choose $\tau_v$ such that $W(\overline{r}_v)\cap \JH(\lambda_v,\tau_v)$ is a singleton, which can be arranged using \cref{S(tau)}. We then fix a $\GL_2(\mathcal{O}_{F_{v'}^+})$-invariant lattice $\sigma^0(\lambda_{v}, \tau_{v})$ in $\sigma(\tau_{v})^\vee\otimes V(\lambda_{v}-\eta)$, which can be assumed to be a free $\mathcal{O}$-module. Then $(\sigma^0(\lambda_{v}, \tau_{v}))^d$ is a lattice inside $\sigma(\lambda_v,\tau_v)$. We define $\sigma^{S}\colonequals \otimes_{v'\in S_p\setminus\{v\}}(\sigma^0(\lambda_{v'},\tau_{v'})\circ \iota_{v'}^{-1})$. We simply write $\sigma^v$ if $S=\{v\}$. We then obtain a patching functor
\[M_\infty^{\sigma^S}: V\mapsto M_\infty (V\otimes(\sigma^{S})^d),\]
which we also denote as $M_\infty$ when there is no ambiguity. \par
In the process of constructing a patching functor, we patch together the space of modular forms and obtain $S_\infty=\mathcal{O}\llbracket y_1,\dots y_{4(|S_p|+1)}, z_1, \dots z_{h+|F^+:\Q|}\rrbracket$ with $\mathfrak{a}_\infty$ generated by all the $z_i, y_i$ \cite[\S~2.8]{CEG}.
For $V$ a finitely generated $\mathcal{O}$-module with an action of $\GL_2(\mathcal{O}_{F_v^+})$, $M_\infty^{\sigma^v}(V)$ is a finitely generated module over $S_\infty$. We can relate the patched modules, the spaces of completed cohomology, and the classical algebraic automorphic forms as follows. \begin{proposition}\label{completed cohomology}
Let $V$ be a finitely generated free $\mathcal{O}$-module with a continuous $U_v$ action, then we have the following isomorphism of $\mathcal{O}$-module, which is compatible with the Hecke action.
\[(M_\infty^{\sigma^v}(V)/a_\infty)^d\cong S(U, W_\Sigma\otimes\sigma^v\otimes V^d )_{\mathfrak{m}'}\cong \Hom_{\mathcal{O}\llbracket U_v\rrbracket}(V, \widetilde{S}(U^v, W_\Sigma\otimes\sigma^v)_{\mathfrak{m}'}).\]
\end{proposition}
\begin{proof}
Let $W'=W_\Sigma\otimes\sigma^v$. By \cite[Proposition~3.2.4]{MR2207783}, we have 
    \[\Hom_{\mathcal{O}\llbracket U_v\rrbracket}(V, \widetilde{S}(U^v, W')_{\mathfrak{m}'})[\frac{1}{p}]=\Hom_{U_v}(V,\widetilde{S}(U^v, W')_{\mathfrak{m}'}^{\mathrm{l.alg}})[\frac{1}{p}]\cong S(U_pU^p, W'\otimes V^d)_{\mathfrak{m}'}[\frac{1}{p}] \]
where $\mathrm{l.alg}$ denotes the subspace given by all the locally algebraic representations. \par
Moreover, by \cite[Corollary~2.2.25]{MR2207783}, $ \widetilde{S}(U^v, W')_{\mathfrak{m}'}$ is the same as $\widehat{S}(U^v,W')_{\mathfrak{m}'}$, the $\varpi$-adic completion of $S(U^v, W')_{\mathfrak{m}'}$. 
As each $S(U_vU^v,W')_{\mathfrak{m}'}$ is $\varpi$-torsion free for $U$ sufficiently small, so is $ \widetilde{S}(U^v W')_{\mathfrak{m}'}$. Therefore, we obtain the second isomorphism. \par
Let $W$ be a free $\mathcal{O}$-module with a $U_p$-action. By the construction of the patching functor (\cite[Appendix A]{localmodel}), 
\[M_\infty (W)=\Hom_{\mathcal{O}\llbracket U_p\rrbracket}^{\cont}(M_\infty, W^\vee)^\vee.\]
By \cite[Equation~A.4]{localmodel} and the adjointness of limit, we have
\[M_\infty/\mathfrak{a}_\infty\cong \varprojlim_{K_p\subset U_p,n}S(K_pU^p, W_\Sigma/\varpi^n)_{\mathfrak{m}'}^\vee\cong(\varinjlim_{K_p\subset U_p,n} S(K_pU^p, W_\Sigma/\varpi^n)_{\mathfrak{m}'})^\vee.\]
Therefore,
 \begin{align*}
 M_\infty (W)/\mathfrak{a}_\infty&\cong\Hom_{\mathcal{O}\llbracket U_p\rrbracket}^{\cont}(M_\infty/\mathfrak{a}_\infty, W^\vee)^\vee\\
    &\cong\Hom_{\mathcal{O}\llbracket U_p\rrbracket}^{\cont}((\varinjlim_{K_p\subset U_p,n}S(K_pU^p, W_\Sigma/\varpi^n)_{\mathfrak{m}'})^\vee, W^\vee)^\vee\\
&\cong\Hom_{\mathcal{O}\llbracket U_p\rrbracket}^{\cont}(W,\varinjlim_{K_p\subset U_p,n}S(K_pU^p, W_\Sigma/\varpi^n)_{\mathfrak{m}'})^\vee
\end{align*}
As $W$ is finitely generated, any map $f:W\to \varinjlim_{K_p\subset U_p,n}S(K_pU^p, W_\Sigma/\varpi^n)_{\mathfrak{m}'}$ factors through $f:W\to S(U_pU^p, W_\Sigma/\varpi^n)_{\mathfrak{m}'}$ for some $n$, hence
\[ \Hom_{\mathcal{O}\llbracket U_p\rrbracket}^{\cont}(W,\varinjlim_{K_p\subset U_p,n}S(K_pU^p, W_\Sigma/\varpi^n)_{\mathfrak{m}'})\cong\varinjlim_{n}\Hom_{\mathcal{O}\llbracket U_p\rrbracket}^{\cont}(W,\varinjlim_{K_p\subset U_p,}S(K_pU^p, W_\Sigma/\varpi^n)_{\mathfrak{m}'}).\]
As $U$ is sufficiently small, one can generalize the proof of \cite[Lemma 7.4.1]{EGH} to deduce that we have natural isomorphism
\[\varinjlim_{K_p\subset U_p,}S(K_pU^p, W_\Sigma/\varpi^n)_{\mathfrak{m}'}\cong S^{\mathrm{l.alg}}(U^p,W_\Sigma)_{\mathfrak{m}'}/\varpi^n.\]
which is compatible with the Hecke action and $\GL_2(F_v)$ action (and $S^{\mathrm{l.alg}}(U^p,W_\Sigma)$ is defined in \cite[\S 7.4]{EGH}). Therefore,
\begin{align*}
\Hom_{\mathcal{O}\llbracket U_p\rrbracket}^{\cont}(W,\varinjlim_{K_p\subset U_p,}S(K_pU^p, W_\Sigma/\varpi^n)_{\mathfrak{m}'})&\cong\Hom_{\mathcal{O}\llbracket U_p\rrbracket}^{\cont}(W/\varpi^n,S^{\mathrm{l.alg}}(U^p,W_\Sigma)/\varpi^n)\\
&\cong\Hom_{\mathcal{O}\llbracket U_p\rrbracket}^{\cont}(W,S^{\mathrm{l.alg}}(U^p,W_\Sigma))/\varpi^n.    
\end{align*}
By \cite[Lemma 7.4.2]{EGH}, we have a natural isomorphism of Hecke modules
\[\Hom_{\mathcal{O}\llbracket U_p\rrbracket}^{\cont}(W,S^{\mathrm{l.alg}}(U^p, W_\Sigma)_{\mathfrak{m}'})\cong W^d\otimes_{\mathcal{O}} (S^{\mathrm{l.alg}}(U^p, W_\Sigma)_{\mathfrak{m}'})^{U_p}\cong S(U_pU^p, W_\Sigma\otimes W^d)_{\mathfrak{m}'}\]
Therefore, with the adjointness of limit, we obtain that
\[
 M_\infty (W)/\mathfrak{a}_\infty\cong(\varinjlim_{n}(S(U_pU^p, W_\Sigma\otimes W^d)_{\mathfrak{m}'})/\varpi^n)^\vee\cong\varprojlim_{n}(S(U_pU^p, W_\Sigma\otimes W^d)_{\mathfrak{m}'}/\varpi^n)^\vee.\]
Since for any $U$ stable $\mathcal{O}$-lattice $\sigma^\circ$, (cf. proof of \cite[Lemma 4.14]{CEG})
\[(\sigma^\circ)^d/\varpi^n\cong\Hom_{\mathcal{O}}(\sigma^\circ/\varpi^n, \mathcal{O}/\varpi^n)\cong (\sigma^\circ/\varpi)^{\vee}.\]
As $S(U_pU^p, W_\Sigma\otimes (V\otimes \sigma^v)^d)_{\mathfrak{m}'}$ is a finitely generated $\mathcal{O}$-module stable under the $U$-action, we obtain 
\[
     M_\infty (V\otimes \sigma^v)/\mathfrak{a}_\infty\cong S(U_pU^p, (W_\Sigma\otimes (V\otimes \sigma^v)^d)_{\mathfrak{m}'})^d.\qedhere\]
\end{proof}

\subsection{Automorphy lifting}
\begin{theorem}\label{automorphy lifting}
    Given a continuous Galois representation $r\colon G_F\to \GL_2(E)$ with the following properties: 
    \hfill\begin{enumerate}
        \item $\overline{r}$ satisfies \cref{condition for r};
        \item $r$ is unramified almost everywhere and satisfies $r^c=r^{\vee}\epsilon^{-1}$;
        \item For all places $v\in S_p$, $r|_{G_{F_{\widetilde{v}}}}$ is potentially crystalline with Hodge--Tate weights $\lambda_v$ and with $(2\ell_v+2)$-generic tame inertial type $\tau_v$, where $\ell_v=\max\{(\lambda_v)_{j,1}-(\lambda_v)_{j,2}\}$ and $4\ell_v\leq N_v$.
        \item $\overline{r}\cong \overline{r}_\iota(\pi)$ for a regular conjugate self-dual cuspidal representation $\pi$ of $\GL_2(\mathbb{A}_F)$ with infinitesimal character $\lambda-\eta$ such that $\otimes_{v\in S_p}\sigma(\tau_v)$ is a $K$-type for $\otimes_{S_p}\pi_v$, where $r_{\iota}(\pi)$ is the continuous representation attached to $\pi$ by \cite[Theorem~2.1.2]{BLGG}.
    \end{enumerate}
Then there exists a RACSDC representation $\pi$ of $\GL_2(\mathbb{A}_F)$ such that $r\otimes_{E}\overline{\Q}_p\cong r_{\iota}(\pi)$.
\end{theorem}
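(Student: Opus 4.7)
The plan is to apply the Taylor--Wiles--Kisin patching method, now armed with the explicit description of the potentially crystalline deformation rings from \cref{normal domain}. The hypotheses collected in \cref{condition for r} are precisely the Taylor--Wiles conditions, so the patching construction in \cref{patching} applies verbatim. In particular, following the formalism in \cite{CEG} (or \cite{EGS}), one builds a minimal patching functor $M_\infty$ valued in coherent sheaves on $\Spf R_\infty$, where $\mathcal{S}=S_p$ and $\overline{\rho}_v=\overline{r}|_{G_{F_{\widetilde{v}}}}$. The Cohen--Macaulay property of the patched module, together with finiteness over the power series ring $S_\infty$ of the same Krull dimension as $R_\infty$, are standard outputs of this construction.

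The next step is to exploit the structural result \cref{normal domain}. Since each $\ell_v\leq N_v/4$ and $\tau_v$ is $2\ell_v$-generic, the hypotheses of \cref{normal domain} are met at every $v\in S_p$, so each $R^{\lambda_v,\tau_v}_{\overline{\rho}_v}$ is a normal domain and a complete intersection of the explicit form $\mathcal{O}\llbracket (x_j,y_j), Z_\bullet\rrbracket/(x_jy_j-p)$. The relation $x_jy_j-p$ is irreducible in any completed power series ring over $\mathcal{O}$ by \cite[Lemma 3.3.1]{BHHMS}, so the completed tensor product
\[
R_\infty(\tau_{S_p},\lambda_{S_p}) \;=\; \widehat{\bigotimes}_{v\in S_p} R^{\lambda_v,\tau_v}_{\overline{\rho}_v}\llbracket x_1,\ldots,x_h\rrbracket
\]
is again a complete intersection domain; in particular it is equidimensional, $\mathcal{O}$-flat, and the generic fibre is regular outside the diagonal locus. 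The functor $M_\infty$ factors through this quotient when applied to $\sigma^\circ(\lambda_{S_p},\tau_{S_p})$, producing a maximal Cohen--Macaulay sheaf on $\Spec R_\infty(\tau_{S_p},\lambda_{S_p})$.

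The hypothesis (4) provides a RACSDC representation $\pi_0$ matching the reduction $\overline{r}$, with the correct infinitesimal character and tame type at $S_p$. Unwinding the patching construction, the space of automorphic forms attached to $\pi_0$ produces a classical point in the support of $M_\infty(\sigma^\circ(\lambda_{S_p},\tau_{S_p}))$, so this patched module is non-zero. Being a non-zero maximal Cohen--Macaulay module over the \emph{domain} $R_\infty(\tau_{S_p},\lambda_{S_p})$, its support is forced to be all of $\Spec R_\infty(\tau_{S_p},\lambda_{S_p})$. This is the key leverage point provided by the explicit deformation-ring computation of Section~3: without domain-ness of the local crystalline deformation rings, one could only conclude support on a union of components, leaving room for $r$ to land on a non-automorphic component.

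Finally, the given lift $r$ determines a prime ideal $\mathfrak{p}\subset R_\infty(\tau_{S_p},\lambda_{S_p})[1/p]$ lying in the full support, so $M_\infty(\sigma^\circ(\lambda_{S_p},\tau_{S_p}))/\mathfrak{p}\neq 0$. Local--global compatibility of the patching functor (a variant of \cref{moding out augmented ideal intro}, applied with $M_\infty$ built from the spaces $S(U,W)$ of \cref{patching}) then promotes this non-vanishing to the assertion that $r\otimes_E\overline{\Q}_p$ occurs in the Hecke-isotypic part of the completed cohomology at some finite level, hence is of the form $r_\iota(\pi)$ for a RACSDC representation $\pi$ of $\GL_2(\mathbb{A}_F)$. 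The expected main obstacle is the ``full support'' step: it requires both that the patched module has the maximal possible dimension (an input from Taylor--Wiles) \emph{and} that the target ring is an integral domain of that same dimension (the non-trivial input from \cref{normal domain}); the genericity bounds $4\ell_v\leq N_v$ and $(2\ell_v)$-genericity of $\tau_v$ enter the argument precisely at this point.
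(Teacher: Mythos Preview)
Your proposal is correct and follows essentially the same approach as the paper: the key input is that \cref{normal domain} shows each $R^{\lambda_v,\tau_v}_{\overline{r}_{\widetilde{v}}}$ is a domain, and then the standard Taylor--Wiles--Kisin patching argument forces full support of the patched module, yielding automorphy. The paper's proof is simply a one-line invocation of this, while you have helpfully spelled out the intermediate steps (tensor product is still a domain, MCM module over a domain has full support, etc.); the paper also notes in a remark that one could alternatively appeal to the known Breuil--M\'ezard conjecture via \cite{BMandMS} and \cite[Lemma~4.3.9]{GK} to obtain full support without the explicit domain computation.
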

\begin{proof}
By \cref{normal domain}, we know that $R^{\lambda_{\widetilde{v}},\tau_{\widetilde{v}}}_{\overline{r}_{\widetilde{v}}}$ is a domain. Therefore, the automorphy lifting result follows from applying the usual Taylor--Wiles method.
\end{proof}
\begin{remark}
    It is possible to prove the theorem with the results in the existing literature. As explained in \cref{BreuilMezard}, we know that Breuil--M\'{e}zard conjecture holds for $\GL_2$ by \cite[Theorem~1.3.1]{BMandMS}. Then by \cite[Lemma~4.3.9]{GK} (\textit{cf.} \cite[Lemma~5.5.1]{EG14}), we deduce that the support of $M\otimes_{\Z_p}\Q_p$ meets every irreducible component of $\Spec R^{loc}[1/p]$, and hence the automorphy lifting theorem holds.
\end{remark}
\section{Breuil's lattice conjecture}\label{ch5:lattice Conjecture}
\subsection{Structure of lattices}
Recall that we define $\sigma(\lambda,\tau):=\sigma(\tau)\otimes V(\lambda-\eta)$ and write $\overline{\sigma}(\lambda,\tau)$ for the mod $p$ reduction of a $\GL_2(\mathcal{O}_K)$-invariant $\mathcal{O}$-lattice inside $\sigma(\lambda,\tau)$. Replacing $E$ with an extension, we will assume that $\sigma(\tau)$ is defined over $E$. From now on, we will assume $\tau$ is $2\ell+2$-generic where $\ell=\max_j\{\lambda_{j,2}-\lambda_{j,1}\}$.
\begin{lemma}\label{resmultfree}
 Given a Serre weight $\kappa\in \JH(\overline{\sigma}(\lambda,\tau))$, there exists a unique $\mathcal{O}$-lattice up to homothety in $\sigma( \lambda,\tau)$, which we denote as $\sigma(\lambda,\tau)_\kappa$ such that the cosocle of $\sigma(\lambda,\tau)_\kappa$ is precisely $\kappa \in \JH(\overline{\sigma}( \lambda,\tau)).$
\end{lemma}
\begin{remark}
    The lower index notation agreed with the convention in \cite{EGS}, but was opposite to \cite{LatticeforGL3}.
\end{remark}
\begin{proof}
By \cite[Propositions 1.1 and 1.3]{diamond07}, $\overline{\sigma}(\tau)$ is residually multiplicity free. Moreover, by \cref{prop on JH}, its Jordan--H\"older factors are given by the vertices of a hypercube of length $1$ in the extension graph. Note that $\JH(\overline{\sigma}(\lambda, \tau))=\JH(\overline{\sigma}(\tau)\otimes_{\F,j} L(\lambda^{(j)}-\eta))$. By \cref{tensor alg rep}, the tensor product of a Serre weight with $L(\lambda^{(j)}-\eta)$ is the direct sum of $\lambda_{j,2}-\lambda_{j,1}-1$ Serre weights, which are on a line in the $j$-th direction in the extension graph, with an interval of $2$ between consecutive ones. Therefore, for $\kappa,\kappa'\in \JH(\overline{\sigma}(\tau))$, $\kappa\otimes_{\F,j} L(\lambda^{(j)}-\eta))$ and $\kappa'\otimes_{\F,j} L(\lambda^{(j)}-\eta))$ share no common Jordan--H\"older factor unless $\kappa'=\kappa$. Hence, $\sigma(\lambda, \tau)$ is residually multiplicity free. Moreover, the $\JH(\overline{\sigma}(\lambda, \tau))$ are given by the vertices of a hypercuboid of length $2(\lambda_{j,2}-\lambda_{j,1})-1$ in $j$-th direction.\par
To show that $\sigma(\lambda, \tau)$ is an irreducible representation, we prove by induction that if $V$ is a smooth irreducible representation over $\GL_2(\mathcal{O}_K)$, and $V(\lambda)$ is an irreducible algebraic representation with the highest weight $\lambda$, then $V\otimes V(\lambda)$ is an irreducible representation of $\GL_2(\mathcal{O}_K)$. Since $\GL_2(\mathcal{O}_K)$ is an open subgroup, we can consider the representation of the corresponding Lie algebra $\mathfrak{g}$. The associated $V(\lambda)$ is an irreducible $\mathfrak{g}$-representation. As $\mathfrak{g}$ acts trivially on $V$, $V\otimes V(\lambda)\cong \oplus_{i-1}^n V(\lambda)$ as $\mathfrak{g}$-representation. Then, any $\mathfrak{g}$-subrepresentation $W$ of $V\otimes V(\lambda)$ is of the form $V'\otimes V(\lambda)$, where $V'$ is a subspace of $V$. Then $V'=\Hom_{\mathfrak{g}-mod}(V(\lambda), W)$, and it naturally has a $K$-action; therefore, it is a $K$-representation. However, this implies $V'=0$ or $V$, as $V$ is irreducible. Therefore, the result follows from \cite[Lemma~4.1.1]{EGS} 
 \end{proof}
\begin{lemma}\label{lattice is mk1n torsion}
    Let $\ell_j\colonequals\lambda_{j,1}-\lambda_{j,2}$ and $\ell\colonequals\max_j\{\ell_j\}$. Further assume that $\tau$ is $3\ell$-generic. Then for all $\kappa\in \JH(\overline{\sigma}(\lambda,\tau))$, $\overline{\sigma}(\lambda,\tau)_\kappa$ is $\mathfrak{m}_{K_1}^\ell$-torsion. 
\end{lemma}
\begin{proof}
When $\ell=1$, it follows from the definition, hence we can assume $\ell\geq 2$.
For $\kappa\in \JH(\overline{\sigma}(\tau))$, as $\tau$ is $3\ell$-generic, by \cref{prop on JH}, $\kappa$ is a $3\ell$-generic Serre weight. By \cite[Lemma~3.2]{breuil2012towards} $\JH(\Proj_1 \kappa)$ is given by points in the hypercube of length 3 with centre $\kappa$ in the extension graph. By \cref{lemma 3}, we know that $\JH(\Proj_{n} \kappa)$ is given recursively by adding two points in all directions to the ones from $\JH(\Proj_{n-1} \kappa)$ in the extension graph. On the other hand, by the proof of \cref{resmultfree}, $\JH(\overline{\sigma}(\lambda, \tau))$ is given by the points in a hypercuboid of length $2\ell_j-1$ in the $j$-th direction in the extension graph. Therefore, we can deduce that $\JH(\overline{\sigma}(\lambda,\tau)_\kappa)\subset \JH(\Proj_{n}\kappa)$. Since $\tau$ is $3\ell$-generic, and $3\ell\geq 2\ell+2$, for all $\kappa\in \JH(\overline{\sigma}(\lambda,\tau))$, $\kappa$ is $2\ell$-generic by \cref{genericity condition}. By \cref{Cor on condition of being mn rep}, we deduce that $\overline{\sigma}(\lambda,\tau)_\kappa$ is $\mathfrak{m}_{K_1}^\ell$-torsion.
\end{proof}
For the rest of this chapter, we will assume $\tau$ is $2n+2$ generic and $\overline{\rho}$ is $4n$-generic. By \cref{tame type criterion}, $R^{\lambda, \tau}_{\overline{r}_v}=0$ if and only if $\JH(\overline{\sigma}(\lambda, \tau))\cap W(\overline{r}_v)= \emptyset$. Without loss of generality, we assume this is the case and fix $\kappa_\circ$ such that $\kappa_\circ\in\JH(\overline{\sigma}(\lambda, \tau))\cap W(\overline{r}_v)$. 
For $\kappa\in \JH(\overline{\sigma}(\lambda, \tau))$, $\kappa\neq \kappa_\circ$, we fix a lattice $\sigma(\lambda,\tau)_\kappa$ of $\sigma(\lambda, \tau)$ such that $\sigma(\lambda,\tau)_\kappa$ has cosocle $\kappa$ and we have a saturated inclusion $\sigma(\lambda, \tau)_{\kappa_\circ}\hookrightarrow \sigma(\lambda, \tau)_{\kappa}$. We write $\overline{\sigma}(\lambda,\tau)_\kappa$ for its reduction modulo $p$. Given a lattice $\sigma^\circ$, we define $\epsilon_\kappa(\sigma^\circ)$ to be the minimum integer such that $p^{\epsilon_\kappa(\sigma^\circ)}\sigma(\lambda,\tau)_\kappa\hookrightarrow\sigma^\circ$ is saturated. 
 We can therefore reinterpret and generalize the result of \cite[\S~5.2.2]{EGS} using \cref{Cor on socle filtration}, and obtain the following lemma:
\begin{lemma}\label{saturated distance}
    Assume $\ell=\max_j\{\lambda_{j,1}-\lambda_{j,2}\}$ and $\tau$ is $3\ell$-generic. Given Serre weights $\delta, \kappa, \kappa'\in \JH(\overline{\sigma}(\lambda, \tau))$, then 
    \[\epsilon_\delta(\sigma(\lambda, \tau)_{\kappa})+\epsilon_{\kappa}(\sigma(\lambda,\tau)_{\kappa'})\geq \epsilon_\delta(\sigma(\lambda,\tau)_{\kappa'}),\]
    with equality if and only if $\overline{\sigma}(\lambda, \tau)_{\kappa'}$ contains a subquotient with socle $\delta$ and cosocle $\kappa$, which is equivalent to $\kappa-\delta\leq \kappa'-\delta$ (\cref{definition on relations}).
\end{lemma}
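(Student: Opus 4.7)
The plan is to reduce the identity to a multiplicity-free structural fact about $\overline{\sigma}_{\kappa'}$ via reduction modulo $p$. Write $a=\epsilon_\delta(\sigma_{\kappa'})$, $b=\epsilon_{\kappa'}(\sigma_\kappa)$, and $c=\epsilon_\delta(\sigma_\kappa)$. First I would dispose of the inequality: by definition we have saturated inclusions $p^a\sigma_\delta\hookrightarrow\sigma_{\kappa'}$ and $p^b\sigma_{\kappa'}\hookrightarrow\sigma_\kappa$; multiplying the first by $p^b$ and composing yields an inclusion $p^{a+b}\sigma_\delta\hookrightarrow\sigma_\kappa$, so by minimality in the definition of $c$ we get $c\leq a+b$.

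Next I would characterize the equality $a+b=c$ as a nonvanishing statement modulo $p$: it holds iff the composite $p^{a+b}\sigma_\delta\hookrightarrow\sigma_\kappa$ is still saturated, i.e.\ its image avoids $p\sigma_\kappa$. Reducing modulo $p$, the two factor inclusions induce maps $\bar f\colon\overline{\sigma}_\delta\to\overline{\sigma}_{\kappa'}$ and $\bar g\colon\overline{\sigma}_{\kappa'}\to\overline{\sigma}_\kappa$, each of which is nonzero by the saturation of its source inclusion. The equality $a+b=c$ is then equivalent to $\bar g\circ\bar f\neq 0$, i.e.\ $\mathrm{image}(\bar f)\not\subset\ker(\bar g)$.

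Now I would feed in the structural theorem. By \cref{lattice is mk1n torsion} each $\overline{\sigma}_\bullet$ is $\m_{K_1}^n$-torsion, and applying \cref{Main theorem on generalization} place by place over $v\in\mathcal{S}$ shows that $\overline{\sigma}_{\kappa'}$ and $\overline{\sigma}_\kappa$ are multiplicity free, with socle filtrations controlled by the extension graph. In particular $\mathrm{image}(\bar f)$ is the unique subrepresentation of $\overline{\sigma}_{\kappa'}$ with (irreducible) cosocle $\delta$, and dually $\ker(\bar g)$ is characterized as the unique subrepresentation whose complementary quotient is the unique subrep of $\overline{\sigma}_\kappa$ with cosocle $\kappa'$. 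The composite $\bar g\circ\bar f$ is nonzero iff $\kappa$ appears as a composition factor of $\mathrm{image}(\bar f)$ in the layer ``surviving past $\ker(\bar g)$,'' which by the multiplicity-free rectangular combinatorics of \cref{Cor on socle filtration} is precisely the condition that $\overline{\sigma}_{\kappa'}$ carries a subquotient with socle $\delta$ and cosocle $\kappa$. Finally, when $|\mathcal{S}|=1$, write $\overline{\sigma}_{\kappa'}\cong I(\sigma',\kappa')$ via \cref{Main theorem on generalization}; the existence of such a subquotient unwinds via \cref{Cor on socle filtration} to $\delta-\sigma'\leq\kappa-\sigma'\leq\kappa'-\sigma'$, which after the change-of-origin in \cref{change of origin} is equivalent to $\kappa-\delta\leq\kappa'-\delta$.

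The main obstacle is step three: pinning down $\ker(\bar g)$ and $\mathrm{image}(\bar f)$ intrinsically inside $\overline{\sigma}_{\kappa'}$ and matching the condition $\mathrm{image}(\bar f)\not\subset\ker(\bar g)$ with the existence of a subquotient of prescribed socle/cosocle. This requires the full strength of the multiplicity-free structure (so that subrepresentations with given irreducible cosocle are \emph{unique}, hence intrinsically identifiable), together with the rectangular characterization of subquotients of $I(\sigma,\tau)$ in \cref{Cor on socle filtration}. Once this combinatorial identification is carried out in a single $\mathcal{S}$-factor, the tensor product decomposition of $\overline{\sigma}_{\kappa'}$ across $v\in\mathcal{S}$ propagates the statement to general $\mathcal{S}$ componentwise.
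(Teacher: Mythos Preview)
Your overall strategy---derive the inequality from composing saturated inclusions, then characterize equality as nonvanishing of the composite modulo $p$, and finally invoke the multiplicity-free structure from \cref{Main theorem on generalization}---matches the paper's. The inequality step is identical.

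There is, however, a genuine error in your final paragraph: you write $\overline{\sigma}_{\kappa'}\cong I(\sigma',\kappa')$ for some Serre weight $\sigma'$, but this is false in general. The socle of $\overline{\sigma}_{\kappa'}$ is not irreducible unless $\kappa'$ lies at a corner of the hypercuboid $\JH(\overline{\sigma}(\lambda,\tau))$; the paper itself remarks (just after this lemma) that the socle consists of all corners differing from $\kappa'$ in every coordinate. So \cref{Main theorem on generalization} does not apply directly to $\overline{\sigma}_{\kappa'}$, and your combinatorial identification $\delta-\sigma'\leq\kappa-\sigma'\leq\kappa'-\sigma'$ has no meaning. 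Relatedly, your description of $\ker(\bar g)$ in paragraph three is hard to pin down precisely because $\overline{\sigma}_{\kappa'}$ and $\overline{\sigma}_\kappa$ need not have irreducible socle.

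The paper avoids this by working instead with a \emph{quotient} of $\overline{\sigma}_{\kappa'}$ that does have irreducible socle. Rather than composing $\sigma_\delta\to\sigma_{\kappa'}\to\sigma_\kappa$, it embeds both $\sigma_\delta$ and $\sigma_\kappa$ (suitably scaled) saturatedly into $\sigma_{\kappa'}$, and observes that a subquotient of $\overline{\sigma}_{\kappa'}$ with socle $\delta$ and cosocle $\kappa$ must (by multiplicity-freeness and \cref{Main theorem on generalization}) sit inside the unique quotient of $\overline{\sigma}_{\kappa'}$ with socle $\delta$, namely $I(\delta,\kappa')$. The existence of such a subquotient is then exactly the condition $\kappa\in\JH(I(\delta,\kappa'))$, which \cref{Cor on socle filtration} identifies with $\kappa-\delta\leq\kappa'-\delta$. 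This sidesteps any need to understand the full socle of $\overline{\sigma}_{\kappa'}$.
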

\begin{proof}
The inequality follows from the definition of $\epsilon_\delta$. We can compare the inclusion
\[p^{\epsilon_\delta(\sigma(\lambda,\tau)_{\kappa'})}\sigma(\lambda, \tau)_\delta\subset \sigma(\lambda,\tau)_{\kappa'}, \]
\[ p^{\epsilon_\delta(\sigma(\lambda,\tau)_\kappa)+\epsilon_\kappa(\sigma(\lambda,\tau)_{\kappa'})}\sigma(\lambda, \tau)_\delta\subset p^{\epsilon_\kappa(\sigma(\lambda,\tau)_{\kappa'})}\sigma(\lambda, \tau)_{\kappa}\subset \sigma(\lambda,\tau)_{\kappa'}.\]
Therefore, $\overline{\sigma}(\lambda, \tau)_{\kappa'}$ contains a subquotient with socle $\delta$ and cosocle $\kappa$ if and only if 
\[\epsilon_\delta(\sigma(\lambda,\tau)_\kappa)+\epsilon_\kappa(\sigma(\lambda,\tau)_{\kappa'})<\epsilon_\delta(\sigma(\lambda,\tau)_{\kappa'})+1.\]
As $\overline{\sigma}(\lambda,\tau)_\kappa$ is $\mathfrak{m}_{K_1}^\ell$-torsion by \cref{lattice is mk1n torsion}, we can apply the results from section $2$. In particular, recall from \cref{Main theorem on generalization}, $I(\delta, \kappa')$ is the unique multiplicity-free representation with socle $\delta$ and cosocle $\kappa'$. Moreover, $\overline{\sigma}(\lambda, \tau)_{\kappa'}$ contains a subquotient with socle $\delta$ and cosocle $\kappa$ if and only if $\kappa$ is a subquotient of $I(\delta, \kappa')$. By \cref{Cor on socle filtration}, this is equivalent to $\kappa-\delta\leq \kappa'-\delta$.
\end{proof}
\begin{remark}
    We can give the description of the socle of $\overline{\sigma}(\lambda,\tau)_\kappa$ as follows. By \cref{tensor alg rep}, $\JH(\overline{\sigma}(\lambda,\tau))$ is given by a hypercuboid in the extension graph. Using \cref{Cor on socle filtration}, the socle of $\overline{\sigma}(\lambda,\tau)_\kappa$ is the sum of the corners in the hypercuboid, which are different from $\kappa$ in all $f$ dimensions. For instance, if $\kappa$ is at the corner of the hypercuboid, then the socle of $\overline{\sigma}(\lambda,\tau)_\kappa$ is the opposite corner. In general, the socle is not irreducible.
\end{remark}
We have the following proposition analogous to \cite[Theorem~4.1.9]{LatticeforGL3}
\begin{proposition} \label{lattices with neighbouring cosocle}
    Assume $\max_j\{\lambda_{j,1}-\lambda_{j,2}\}=\ell$ and $\tau$ is $(5\ell+1)$-generic. Given Serre weights $\kappa, \kappa'\in \JH(\overline{\sigma}( \lambda,\tau))$ such that $\kappa$ and $\kappa'$ are distance one apart in the extension graph. Assume that $\sigma(\lambda, \tau)_{\kappa}\hookrightarrow\sigma(\lambda,\tau)_{\kappa'}$ is saturated (i.e.\ , $\epsilon_{\kappa}(\sigma(\lambda,\tau)_{\kappa'})= 0$), then $\epsilon_{\kappa'}(\sigma(\lambda,\tau)_\kappa)= 1$. Moreover, for any $\delta\in \JH(\overline{\sigma}( \lambda,\tau))$, we have $\delta\in\JH(\Coker(\sigma(\lambda,\tau)_\kappa\hookrightarrow\sigma(\lambda,\tau)_{\kappa'}))$ if and only if $\kappa'-\kappa\leq \delta-\kappa$. Conversely, $\delta\in\JH(\Coker(p\sigma(\lambda,\tau)_{\kappa'}\hookrightarrow\sigma(\lambda, \tau)_{\kappa}))$ if and only if $\kappa-\kappa'\leq \delta-\kappa'$.
\end{proposition}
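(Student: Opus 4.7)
The strategy is to work entirely inside the finite length $\F$-representation $\overline{\sigma}_{\kappa'}=\sigma_{\kappa'}/p\sigma_{\kappa'}$. Once the first claim $\epsilon_{\kappa'}(\sigma_\kappa)=1$ is established, the saturated chain $p\sigma_{\kappa'}\subset \sigma_\kappa\subset \sigma_{\kappa'}$ produces a short exact sequence
\[0\to \sigma_\kappa/p\sigma_{\kappa'}\to \overline{\sigma}_{\kappa'}\to \sigma_{\kappa'}/\sigma_\kappa\to 0\]
inside the multiplicity-free representation $\overline{\sigma}_{\kappa'}$, where multiplicity-freeness follows from \cref{lattice is mk1n torsion} together with \cref{Main theorem on generalization}. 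Consequently $\JH(\sigma_\kappa/p\sigma_{\kappa'})\sqcup \JH(\sigma_{\kappa'}/\sigma_\kappa)=\JH(\overline{\sigma}_{\kappa'})$, so it suffices to compute either piece; moreover the tensor factorization $\overline{\sigma}_{\kappa'}\cong \bigotimes_{w\in \mathcal{S}}(\overline{\sigma}_{\kappa'})_w$ together with the hypothesis $\kappa_w=\kappa'_w$ for $w\neq v$ concentrates the analysis at the place $v$, where \cref{Main theorem on generalization} applies directly.

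For the length claim, the lower bound $\epsilon_{\kappa'}(\sigma_\kappa)\geq 1$ is immediate since $\sigma_\kappa\neq \sigma_{\kappa'}$. For the upper bound I would invoke the equality case of \cref{saturated distance}: choose $\delta\in \JH(\overline{\sigma}(\lambda_\mathcal{S},\tau_\mathcal{S}))$ lying in the subrepresentation of $\overline{\sigma}_{\kappa'}$ with irreducible cosocle $\kappa$ (whose existence and structure are provided by \cref{Main theorem on generalization}), for instance its socle. Then \cref{Cor on socle filtration} exhibits the required subquotient of $\overline{\sigma}_{\kappa'}$ with socle $\delta$ and cosocle $\kappa$, so the equality case of \cref{saturated distance} forces $\epsilon_\delta(\sigma_\kappa)-\epsilon_\delta(\sigma_{\kappa'})=\epsilon_{\kappa'}(\sigma_\kappa)$; the two left-hand quantities are graph distances from $\delta$ to $\kappa$ and from $\delta$ to $\kappa'$ respectively, and since $\delta$ lies on the $\kappa_v$-side of the adjacent pair $(\kappa_v,\kappa'_v)$ these distances differ by exactly one.

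For the cokernel description, $W:=\sigma_\kappa/p\sigma_{\kappa'}$ is the unique multiplicity-free subrepresentation of $\overline{\sigma}_{\kappa'}$ with irreducible cosocle $\kappa$ by \cref{Main theorem on generalization}, and after the tensor-factor reduction it suffices to identify $W_v\subset (\overline{\sigma}_{\kappa'})_v$, which by \cref{Main theorem on generalization} is of the form $I(\sigma', \kappa_v)$ for some Serre weight $\sigma'$. Then \cref{Cor on socle filtration} lists $\JH(W_v)$ as exactly those $\theta_v$ with $\theta_v-\sigma'\leq \kappa_v-\sigma'$, and the change-of-origin formula \cref{change of origin} combined with the adjacency of $\kappa_v$ and $\kappa'_v$ rewrites this inequality as $\kappa_v-\kappa'_v\leq \theta_v-\kappa'_v$, which is precisely the content of the proposition: geometrically, the directed edge from $\kappa'_v$ to $\kappa_v$ in the extension graph separates $\JH((\overline{\sigma}_{\kappa'})_v)$ into a $\kappa_v$-half and a $\kappa'_v$-half, and $W_v$ is the $\kappa_v$-half. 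The description of $\JH(\sigma_{\kappa'}/\sigma_\kappa)$ then follows by complementation in $\JH(\overline{\sigma}_{\kappa'})$.

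The main obstacle I anticipate is making rigorous the step ``distances from $\delta$ to $\kappa$ and $\kappa'$ differ by exactly one'' in the length claim, since this relies on identifying each $\epsilon_\delta(\sigma_\gamma)$ with a genuine graph distance, which in turn requires an inductive unwinding of \cref{saturated distance} through intermediate lattices. A more robust alternative may be to induct on the graph distance from $\kappa_\circ$ to $\kappa'$, establishing the length claim simultaneously with the cokernel identification and leveraging the explicit description of the subrepresentation with cosocle $\kappa$ supplied by \cref{Main theorem on generalization} and \cref{Cor on socle filtration}. Once the length is pinned down, the remainder is essentially a bookkeeping reformulation of the structural results of Section 2 after passing to the $v$-tensor-factor.
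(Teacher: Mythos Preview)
Your treatment of the cokernel description, once the first claim $\epsilon_{\kappa'}(\sigma_\kappa)=1$ is granted, is essentially fine and close in spirit to what the paper does: both arguments pass through \cref{saturated distance} after reducing to $|\mathcal{S}|=1$, the only difference being that the paper compares the saturated inclusions $p^{\epsilon_\delta(\sigma_\kappa)}\sigma_\delta\subset\sigma_\kappa\subset\sigma_{\kappa'}$ and $p^{\epsilon_\delta(\sigma_{\kappa'})}\sigma_\delta\subset\sigma_{\kappa'}$ directly rather than going through an explicit identification of $\sigma_\kappa/p\sigma_{\kappa'}$ with an $I(\sigma',\kappa_v)$.

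The genuine gap is in the first claim. You correctly isolate the obstacle: to get $\epsilon_{\kappa'}(\sigma_\kappa)=\epsilon_\delta(\sigma_\kappa)-\epsilon_\delta(\sigma_{\kappa'})=1$ from the equality case of \cref{saturated distance}, you need to know the two quantities on the right individually, and you propose to read them off as graph distances. But the identification of $\epsilon_\delta(\sigma_\gamma)$ with the graph distance from $\delta$ to $\gamma$ is precisely what is at stake: proving it amounts to establishing $\epsilon_{\theta'}(\sigma_\theta)=1$ along every edge $(\theta,\theta')$ of a path from $\delta$ to $\gamma$, so the argument is circular. Your suggested fix, inducting on the distance from $\kappa_\circ$ to $\kappa'$, does not break the circle either, since its base case is again an adjacent pair. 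More to the point, the assertion ``adjacent cosocles implies lattice index $p$'' is not a formal consequence of residual multiplicity-freeness together with \cref{Main theorem on generalization}; it is a genuine arithmetic fact about the particular locally algebraic type $\sigma(\lambda_v,\tau_v)$, and one should not expect to extract it from the mod~$p$ representation theory of Section~2 alone.

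The paper supplies this input by a global argument in the style of \cite[Proposition~4.3.7]{LatticeforGL3}. One first chooses, via \cref{prop on W(r)}, a local $\overline{\rho}$ with $W(\overline{\rho})=\{\kappa_v,\kappa'_v\}$, globalizes it to an $\overline r$ satisfying \cref{condition for r} using \cite[Theorem~A.4]{GK}, and builds an auxiliary minimal patching functor $M'_\infty$ as in \cref{patching}. One then picks, by \cite[Proposition~3.5.2]{EGS}, a tame type $\tau'$ with $\JH(\overline{\sigma}(\tau'))\cap W(\overline{\rho})=\{\kappa_v,\kappa'_v\}$, so that by \cite[Theorem~7.2.1]{EGS} the ring $R^{\tau'}_{\overline\rho}$ has the form $\mathcal{O}\llbracket x,y,z_1,\dots\rrbracket/(xy-p)$ with $\mathfrak p(\kappa_v)=(x)$ and $\mathfrak p(\kappa'_v)=(y)$. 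Writing $p^k\sigma_{\kappa'}\subset\sigma_\kappa\subset\sigma_{\kappa'}$, one computes that the support of $M'_\infty(\Coker(p^k\sigma_{\kappa'}+p\sigma_\kappa\hookrightarrow\sigma_\kappa))$ lies in $V(x)$, hence $xM'_\infty(\sigma_\kappa)\subset M'_\infty(p^k\sigma_{\kappa'}+p\sigma_\kappa)$; multiplying by $y$ and using $xy=p$ then forces $k=1$, since otherwise one would contradict the freeness of $M'_\infty(\kappa_v)$ over its (regular) support. This patching step is what your proposal is missing.
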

\begin{proof}
    We follow the argument of \cite[Proposition~4.3.7]{LatticeforGL3}. As $\tau$ is $(5\ell+1)$-generic, all $\kappa\in \JH(\overline{\sigma}(\lambda,\tau))$ is $(4\ell+1)$-generic. By \cref{prop on W(r)}, we can find a (non-semisimple unless $f=1$), $(4\ell+1)$-generic  $\overline{\rho}$ such that $W(\overline{\rho})\cap\JH(\overline{\sigma}(\lambda,\tau)=\{\kappa,\kappa'\}$. By \cite[Theorem~A.4]{GK}, we can find an imaginary CM field $F$ with a maximal real subfield $F^{+}$ and an RACSDC automorphic representation $\pi$ of $\GL_2(\mathbb{A}_{F})$ such that $\overline{r}_{p, \iota}(\pi)$ satisfies \cref{condition for r} and for each place $v|p$ in $F^+$, there is a place $\widetilde{v}$ of $F$ lying over $v$ such that \(\overline{r}_{p, \iota}(\pi)|_{G_{F_{\widetilde{v}}}}\) is isomorphic to an unramified twist of $\overline{\rho}$. Then, we can obtain a patching functor $M'_\infty$ for $\mathcal{S}=\{v\}$ as in \cref{patching functor existence}. 
    By \cref{normal domain},
    \[R^{\lambda,\tau}_{\overline{\rho}}\cong\mathcal{O}\llbracket x,y, z_1, z_2\rrbracket/(xy-p).\]
    We then choose a subring $S=\Z_p\llbracket x,y, z_1, z_2\rrbracket/(xy-p)\subseteq \mathcal{O}\llbracket x,y, z_1, z_2\rrbracket/(xy-p)$.
    By the axiom of the patching functor, $M'_\infty(\sigma(\lambda, \tau)_{\kappa}), M'_\infty(\sigma(\lambda,\tau)_{\kappa'})$ are $p$-torsion free and maximal Cohen--Macaulay module over $R_\infty(\lambda,\tau)$ and hence also are maximal Cohen--Macaulay $S$-modules. Similarly, $M'_\infty(\kappa),M'_\infty(\kappa')$ are maximal Cohen--Macaulay $S$-modules respectively. \par
    We choose the minimal integer $k\geq 1$ such that we have a chain of saturated inclusions of lattices \[p^k \sigma(\lambda,\tau)_{\kappa'}\subseteq\sigma(\lambda, \tau)_{\kappa}\subseteq \sigma(\lambda,\tau)_{\kappa'}.\]
Let 
\[C\colonequals \Coker(p^k\sigma(\lambda,\tau)_{\kappa'}+p\sigma(\lambda, \tau)_{\kappa}\hookrightarrow\sigma(\lambda, \tau)_{\kappa}).\] 
Then $M_\infty(C)$ is annihilated by $p$ and hence its scheme theoretic support over $S$ is in $\Spec(S/pS)$, which is reduced. Recall that $R_{\overline{\rho}}^{\kappa}$ is reduced, $p$-torsion free quotient of $R^\square_{\overline{\rho}}$ corresponding to the crystalline deformation of Hodge type $\kappa$, and $R_\infty(\kappa)$ is a power series ring over $R_{\overline{\rho}}^{\kappa}$. By the proof \cite[Lemma 3.6.2]{LatticeforGL3}, the scheme theoretic support of $M_\infty(C)$ over $S$ is the reduced subscheme underlying $\bigcup_{\sigma\in \JH(C)}\Supp_S(M_\infty(\sigma))$. If $\delta\neq\kappa$ or $\kappa'$, then by \cref{Serre weight conjecture}, $M_\infty(\delta)=0$. Moreover, as $\sigma(\lambda, \tau)$ is residually multiplicity-free, by descent to the maximal unramified subfield, we can deduce that $C$ does not contain $\kappa'$ as a Jordan--H\"{o}lder factor. Therefore, the scheme theoretic support of $M_\infty(C)$ over $S$ is the reduced scheme underlying $\Spec_S\overline{R}_\infty(\kappa)$.
Moreover, by \cref{normal domain}, we have $ \text{Ann}_{R^{\lambda,\tau}_{\overline{\rho}} }\overline{R}_{\overline{\rho}}^\kappa=(x)$ or $(y)$, and we assume it to be $(x)$ without loss of generality.
Therefore, $x$ annihilates $M_\infty'(C)$, and we have
     \[xM_\infty'(\sigma(\lambda, \tau)_{\kappa})\subset M_\infty' (p^k\sigma(\lambda,\tau)_{\kappa'}+p\sigma(\lambda, \tau)_{\kappa})=p^kM_\infty' (\sigma(\lambda,\tau)_{\kappa'})+pM_\infty' (\sigma(\lambda, \tau)_{\kappa})\]
(The second equality can be shown by using exactness of patching functor). 
By symmetry, we also have
\[yM_\infty'(\sigma(\lambda, \tau)_{\kappa'})\subset M_\infty' (\sigma(\lambda,\tau)_{\kappa})+pM_\infty'(\sigma(\lambda, \tau)_{\kappa'}).\]
Notice that $xy=p$ in $R^{\lambda,\tau}_{\overline{\rho}}$. Multiplying the second inclusion by $x$, and combining with the fist inclusion, we obtain 
\begin{align*}
    pM_\infty'(\sigma(\lambda, \tau)_{\kappa'})&\subset xM_\infty' (\sigma(\lambda,\tau)_{\kappa})+xpM_\infty' (\sigma(\lambda, \tau)_{\kappa'})\\
    &\subset p^kM_\infty' (\sigma(\lambda,\tau)_{\kappa'})+pM_\infty' (\sigma(\lambda, \tau)_{\kappa})+xpM_\infty' (\sigma(\lambda, \tau)_{\kappa'}).
\end{align*}
Therefore, dividing both side by $p$, we get
\[M_\infty'(\sigma(\lambda, \tau)_{\kappa'})\subset p^{k-1}M_\infty' (\sigma(\lambda,\tau)_{\kappa'})+M_\infty' (\sigma(\lambda, \tau)_{\kappa})+xM_\infty' (\sigma(\lambda, \tau)_{\kappa'}).\]
Assume for the sake of contradiction that $k>1$, we consider the image under the composition of the map $M_\infty'(\sigma(\lambda, \tau)_{\kappa'})\twoheadrightarrow M_\infty'(\overline{\sigma}(\lambda, \tau)_{\kappa'})\twoheadrightarrow M_\infty(\kappa')$, we deduce that \(M_\infty'(\kappa)\subset y M_\infty'(\kappa)\). However, as $M_\infty'(\kappa)$ is a free module over $R_{\infty}^{'\kappa}$, which is a power series ring, we have a contradiction. \par
We now determine $\JH(\Coker(\sigma(\lambda, \tau)_{\kappa}\hookrightarrow\sigma(\lambda,\tau)_{\kappa'}))$. Assume $\delta\in \JH(\Coker(\sigma(\lambda,\tau)_{\kappa}\hookrightarrow\sigma(\lambda,\tau)_{\kappa'}))$.
Similar to the proof of \cite[Theorem~5.2.4]{EGS}, we compare the two inclusions:
    \[p^{\epsilon_\delta(\sigma(\lambda,\tau)_\kappa)}\sigma(\lambda, \tau)_\delta\subseteq \sigma(\lambda,\tau)_\kappa \subseteq \sigma(\lambda,\tau)_{\kappa'},\qquad \qquad p^{\epsilon_\delta(\sigma(\lambda,\tau)_{\kappa'})}\sigma(\lambda, \tau)_\delta\subseteq \sigma(\lambda,\tau)_{\kappa'}.\]
We see that $\delta\in \JH(\Coker(\sigma(\lambda,\tau)_{\kappa}\hookrightarrow\sigma(\lambda,\tau)_{\kappa'}))$ if and only if $\epsilon_\delta(\sigma(\lambda,\tau)_\kappa)> \epsilon_\delta(\sigma(\lambda,\tau)_{\kappa'})$, which is equivalent to not having $\kappa-\delta\leq \kappa'-\delta$ by \cref{saturated distance}. This is only possible if $\kappa'-\delta\leq \kappa-\delta$, as $\kappa'$ and $\kappa$ are distance one apart. This is also equivalent to $\kappa'-\kappa\leq \delta-\kappa$, by the remark in \cref{definition on relations}.
The last statement follows analogously from comparing the two inclusions:
\[p^{\epsilon_\delta(\sigma(\lambda,\tau)_\kappa)}\sigma(\lambda, \tau)_\delta\subseteq \sigma(\lambda,\tau)_\kappa,\qquad \qquad p^{\epsilon_\delta(\sigma(\lambda,\tau)_{\kappa'})+1}\sigma(\lambda, \tau)_\delta\subseteq p\sigma(\lambda,\tau)_{\kappa'}\subseteq \sigma(\lambda,\tau)_\kappa.\qedhere\]
\end{proof}
Let $d(\alpha,\beta)$ denote the distance induced by the extension graph as in \cref{ch2: representation result}. We do not need the following result for the rest of the paper but it is included for completeness. For readability, will write $R_\kappa$ for $\sigma(\lambda, \tau)_{\kappa}$.
\begin{lemma}
 Assume $\max_j\{\lambda_{j,1}-\lambda_{j,2}\}=\ell$ and $\tau$ is $(5\ell+1)$-generic. Given Serre weights $\kappa, \kappa'\in \JH(\overline{\sigma}( \lambda,\tau))$. Assume that $R_{\kappa}\hookrightarrow R_{\kappa'}$ is saturated (i.e.\ , $\epsilon_{\kappa}(R_{\kappa'})= 0$), then $\epsilon_{\kappa'}(R_{\kappa})= d(\kappa,\kappa')$.
\end{lemma}
\begin{proof}
For $d(\kappa,\kappa')=1$, it follows from \cref{lattices with neighbouring cosocle}. Assume $\kappa_1, \kappa_2,\kappa\in \JH(\overline{\sigma}( \lambda,\tau))$ with $d(\kappa_1, \kappa_2)=1$, \cite[lemma 4.3.9]{LatticeforGL3} is still valid that if we have two saturated inclusions $R_{\kappa_1}\hookrightarrow R_{\kappa}$ and $R_{\kappa_2}\hookrightarrow R_{\kappa}$, then $R_{\kappa_1}\hookrightarrow R_{\kappa_2}$ or $R_{\kappa_2}\hookrightarrow R_{\kappa_1}$. Then by the proof of \cite[lemma 4.3.10]{LatticeforGL3}, if $d(\kappa_2,\kappa)\geq d(\kappa_1,\kappa)$, then $R_{\kappa_2}\hookrightarrow R_{\kappa_1}$. Following the proof of \cite[lemma 4.3.15]{LatticeforGL3}, we choose a directed path in the extension graph
\[\kappa_0=\kappa'\mbox{---}\kappa_1\mbox{---}\dots\mbox{---}\kappa_n=\kappa\]
where $d(\kappa_i, \kappa_{i+1})=1$ for $0\leq i\leq n-1$. Assume $R_{\kappa_i}\hookrightarrow R_{\kappa_0}$ is a saturated inclusion for all $0\leq i\leq n$, by repeatedly applying the argument above, we have (saturated) inclusions
\[R_{\kappa_n}\hookrightarrow \dots\hookrightarrow R_{\kappa_1}\hookrightarrow R_{\kappa_0}.\]
Let $k_i\colonequals \epsilon_{\kappa_{n-i}}(R_{\kappa_n})$. We want to show that $k_n=d(\kappa_n,\kappa_0)$. By the base case where $d(\kappa_1,\kappa_2)=1$, we deduce that we have inclusions
\[p^n R_{\kappa_0}\hookrightarrow p^{n-1} R_{\kappa_1}\hookrightarrow \dots\hookrightarrow pR_{\kappa_{n-1}}\hookrightarrow R_{\kappa_n}.\]
Therefore, $k_n\leq n$. On the other hand,
\[\kappa_n\mbox{---}\kappa_{n-1}\mbox{---}\dots\mbox{---}\kappa_0\]
is also a directed path in the extension graph. By definition, 
$p^{k_i}R_{\kappa_{n-i}}\hookrightarrow R_{\kappa_n}$ is a saturated inclusion for all $i$. Therefore, by the same argument as above, we have inclusion
\[p^{k_n} R_{\kappa_0}\hookrightarrow p^{k_{n-1}} R_{\kappa_1}\hookrightarrow \dots\hookrightarrow p^{k_1}R_{\kappa_{n-1}}\hookrightarrow R_{\kappa_n}.\]
As $\kappa_i\neq \kappa_{i+1}$, all the inclusion are strict, hence $k_n\geq n$. 
\end{proof}
\subsection{Relations of patched modules of lattices}\label{relation of patched modules}
From now on, we assume that $r_v$ is potentially crystalline with Hodge--Tate weights $\lambda$ with $\ell_v\coloneqq\lambda_{v_{j,1}}-\lambda_{v_{j,2}}>0$ and $\tau_v$ is a $(5\ell_v+1)$-generic inertial type. Moreover, we assume that, up to twisting by a power of $\omega_f$, $\overline{r}_v$ is of the form in \cref{condition on rho bar} with $N_v\geq 4\ell_v+$.\par
Fix $\kappa_\circ\in W(\overline{r})\cap\JH(\overline{\sigma}(\lambda,\tau))$, by \cref{BreuilMezard}, we have $R^{\lambda,\tau}_{\overline{r}}\twoheadrightarrow\overline{R}_{\overline{r}}^{\kappa_\circ}$, and the kernel $\mathfrak{p}({\kappa_\circ})$ is given by $(z(\kappa_\circ)_j)_{j\in \mathcal{K}_v}$ in \cref{normal domain}.
\begin{definition}
We define (\textit{cf.} \cref{prop on W(r)}) 
\[\widetilde{s}_j=\begin{cases}
    \sgn(s_j) &\text{if } j\in \mathcal{K}_v \text{ (i.e.\ , } F(\mathfrak{t}_{\mu-\eta}(0,\dots, \sgn(s_j), 0))\in W(\overline{r}_v));\\
    0 &\text{otherwise}.
\end{cases}\]  
\end{definition} 
We have a patching functor $M_{\infty}^{\sigma^v}$ for the set of $(\lambda,\tau)$ under our genericity assumptions from \cref{ch4:patching_functor}, which we simply denote as $M_\infty$.
 Given Serre weights $\kappa, \kappa'\in\JH(\overline{\sigma}(\lambda, \tau))$ with $\kappa=F(\mathfrak{t}_{\mu-\eta}(\alpha))$ and $\kappa'=F(\mathfrak{t}_{\mu-\eta}(\alpha'))$ are distance $1$ apart in the extension graph, with $\alpha_i=\alpha'_i$ for all $i\neq j$. Assume further that $\sigma(\lambda,\tau)_{\kappa}\hookrightarrow\sigma(\lambda,\tau)_{\kappa'}$ is saturated. 
    Define $\varpi(\kappa, \kappa')\in R^{\lambda, \tau}_{\overline{r}_v}$ as follows,
\begin{equation}\label{varpi}
    \varpi(\kappa, \kappa')\colonequals \begin{cases}
    1 &\text{if } ((\alpha')_j< (\alpha)_j\leq \min\{0, (\widetilde{s})_j\}) \text{ or } (\max\{0, (\widetilde{s})_j\} \leq (\alpha)_j<(\alpha')_j);\\
     y_j&\text{if } (\alpha)_j=0 \text{ and } (\alpha')_j=(\widetilde{s})_j\neq0;\\
     x_j&\text{if } \alpha=(\widetilde{s})_j\neq0 \text{ and } (\alpha')_j=0;\\
    p &\text{if }  ((\alpha)_j<(\alpha')_j\leq \min\{0, (\widetilde{s})_j\}) \text{ or } (\max\{0, (\widetilde{s})_j\} \leq(\alpha')_j< (\alpha)_j).
\end{cases}
\end{equation}
We define $\varpi'(\kappa, \kappa')$ analogously by swapping $1$ with $p$ and $x_j$ with $y_j$.
\begin{remark}
    The first condition should be understood as that $\kappa'$ is ``further away from the sets of modular Serre weights" than $\kappa$. 
\end{remark}
\begin{proposition}\label{equality on patched module}
    We have equalities,
    \[\varpi(\kappa, \kappa') M_\infty(\sigma(\lambda,\tau)_{\kappa'})=M_\infty(\sigma(\lambda,\tau)_{\kappa});\]
    \[\varpi'(\kappa, \kappa') M_\infty(\sigma(\lambda,\tau)_{\kappa})=pM_\infty(\sigma(\lambda,\tau)_{\kappa'}).\]
\end{proposition}
\begin{proof}
The proof goes the same way as in \cite[Proposition~8.1.1]{EGS}. We deduce from \cref{lattices with neighbouring cosocle} that the cokernel of $\sigma(\lambda,\tau)_\kappa\hookrightarrow \sigma(\lambda,\tau)_{\kappa'}$ is a successive extension of the weights $\delta\in \JH(\overline{\sigma}(\lambda, \tau))$ where $\delta=F(\mathfrak{t}_{\mu-\eta}(\beta))$ with $\alpha'-\alpha\leq \beta-\alpha$.\par
By \cref{saturated distance}, we deduce that this cokernel is annihilated by $p$. Then the cokernel of $M_\infty(\sigma(\lambda,\tau)_{\kappa})\hookrightarrow M_\infty(\sigma(\lambda,\tau)_{\kappa'})$ is given by a successive extension of patched modules $M_\infty(\delta)$ and is annihilated by $p$.
Similar to the proof of \cref{saturated distance}, using \cref{normal domain}, we can fix an isomorphism \[R_\infty(\lambda,\tau)\cong\mathcal{O}\llbracket (x_j,y_j)_{j\in\mathcal{K}}, z_1, \dots z_k\rrbracket/(x_jy_j-p)_{j\in\mathcal{K}}\] for some $\mathcal{K}$. We let $S\subset R_\infty(\lambda,\tau)$ be the sub-ring $\Z_p\llbracket (x_j,y_j)_{j\in\mathcal{K}}, z_1, \dots z_k\rrbracket/(x_jy_j-p)_{j\in\mathcal{K}}$. Let $\mathfrak{p}_S(\kappa)=\sum_v \mathfrak{p}_S(\kappa_v)$ where $\mathfrak{p}_S(\kappa_v)=(z_j(\kappa_v))_{j\in \mathcal{K}_v}$. Then $\Supp_S M_\infty(\kappa)$ is annihilated by $\mathfrak{p}_S(\kappa)$. \par
Therefore, the scheme-theoretical support of $M_\infty(\sigma(\lambda,\tau)_{\kappa'}/\sigma(\lambda,\tau)_\kappa)$ in $\Spec S$ is contained in $\Spec S/pS$. Therefore, it is generically reduced, and the underlying reduced subscheme is equal to
    \[\bigcup_{\begin{subarray}{c}\delta\in W(\overline{r}_v, \lambda,\tau)\\ \delta=F(\mathfrak{t}_{\mu-\eta}(\beta))\colon \alpha'-\alpha\leq \beta-\alpha \end{subarray}}\Supp_S M_\infty(\delta).\]
    In the first case of \cref{varpi}, we see that the scheme-theoretic support of the cokernel in $S$ is trivial. In the second case of \cref{varpi}, we must have $\delta=F(\mathfrak{t}_{\mu-\eta}(\beta))$ with $\beta_j=\sgn(s_j)$, and hence the cokernel is annihilated by $y_j$, by \cref{normal domain}. Analogously, in the third case of \cref{varpi}, the cokernel is annihilated by $x_j$. Finally, in the fourth case, by \cref{lattices with neighbouring cosocle}, the cokernel is annihilated by $p$. Therefore,
    \begin{equation}\label{inclusion of patched modules}
        \varpi(\kappa, \kappa')M_\infty(\sigma(\lambda,\tau)_{\kappa'})\subseteq M_\infty(\sigma(\lambda,\tau)_{\kappa}).
    \end{equation}
    Similarly, the cokernel of $pM_\infty (\sigma(\lambda,\tau)_{\kappa'})\hookrightarrow M_\infty(\sigma(\lambda,\tau)_\kappa)$ is annihilated by $\varpi'(\kappa,\kappa')$, and hence
    \begin{equation}\label{inclusion of patched modules2}
        \varpi'(\kappa, \kappa')M_\infty(\sigma(\lambda,\tau)_\kappa)\subseteq p M_\infty(\sigma(\lambda,\tau)_{\kappa'}).
    \end{equation}
    Note that $\varpi(\kappa, \kappa')\varpi'(\kappa, \kappa')=p$. Multiplying \cref{inclusion of patched modules} by $\varpi'(\kappa,\kappa')$, we obtain that
    \[p M_\infty(\sigma(\lambda,\tau)_{\kappa'})\subseteq \varpi'(\kappa, \kappa')M_\infty(\sigma(\lambda,\tau)_\kappa).\]
    Combining with \cref{inclusion of patched modules2}, we deduce that this is indeed an equality. As $M_\infty(\sigma(\lambda,\tau)_\kappa)$, $M_\infty(\sigma(\lambda,\tau)_{\kappa'})$ are $p$-torsion free, the second equality is obtained by multiplying by $\varpi(\kappa,\kappa')$ and dividing by $p$.
\end{proof}
We define $\widetilde{z}(\kappa_\circ)_j=p/z(\kappa_\circ)_j$. (That is $\widetilde{z}(\kappa_\circ)_j=y_j$ if $z(\kappa_\circ)_j=x_j$, and vice versa.)
Assume $\kappa_\circ=F(\mathfrak{t}_{\mu-\eta}(b))\in W(\overline{r}_v,\lambda,\tau)$, where $b_j\in \{0,\sgn(s_j)\}$ is given by \cref{prop on W(r)} applying to $W(\overline{r}_v)$. For $\kappa\in \JH(\overline{\sigma}(\lambda, \tau))$ if $\kappa=F(\mathfrak{t}_{\mu-\eta}(\alpha))$, define
\[ \varpi_j(\kappa)\colonequals \begin{cases}
1 &\text{if }\alpha_j< b_j=\min(0,\widetilde{s}_j) \text{ or }\alpha_j>b_j=\max\{0,\widetilde{s}_j\};\\
\widetilde{z}_j(\kappa_\circ) &\text{ otherwise}.
\end{cases}\]
We define $\varpi(\kappa)\colonequals \prod_{1\leq j\leq f_v}\varpi_j(\kappa)\in R^{\lambda, \tau}_{\overline{r}_v}$.
\begin{remark}
    The last two conditions capture the case for which there is a modular Serre weight between $\kappa$ and $\kappa_\circ$ when we consider the projection to the $j$th coordinate of the extension graph.
\end{remark}
\begin{proposition}\label{relating patched module}
For $\kappa\in \JH(\overline{\sigma}(\lambda, \tau))$, we have an equality:
\[M_\infty(\sigma(\lambda, \tau)_{\kappa_\circ})=\varpi(\kappa)M_\infty(\sigma(\lambda, \tau)_{\kappa}).\]
\end{proposition}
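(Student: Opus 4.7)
The plan is to prove this identity by induction on the extension-graph distance $d(\kappa,\kappa_\circ):=\sum_{v,j}|(\alpha_v)_j-(b_v)_j|$, moving one coordinate at a time from $\kappa_\circ$ to $\kappa$ and applying \cref{equality on patched module} at each step. Telescoping the one-step relations then produces the target identity once the product of local factors matches the product definition of $\varpi(\kappa)$.

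The base case $d=0$ says $\kappa=\kappa_\circ$ and reduces to $\varpi(\kappa_\circ)=1$: when $(\alpha_v)_j=(b_v)_j$, none of the three displayed cases in the definition of $\varpi_j((\kappa_\circ)_v)$ is triggered, so each factor takes the implicit default value $1$. For the inductive step with $d(\kappa,\kappa_\circ)=n+1$, pick any $(v_0,j_0)$ with $(\alpha_{v_0})_{j_0}\neq(b_{v_0})_{j_0}$ and let $\kappa'$ agree with $\kappa$ away from $(v_0,j_0)$, with $(\alpha'_{v_0})_{j_0}:=(\alpha_{v_0})_{j_0}-\sgn((\alpha_{v_0})_{j_0}-(b_{v_0})_{j_0})$, so that $\kappa'$ lies one step closer to $\kappa_\circ$. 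The induction hypothesis gives
\[M_\infty(\sigma_{\kappa_\circ})=\varpi(\kappa')M_\infty(\sigma_{\kappa'}).\]
Since $\kappa,\kappa'$ are adjacent in the extension graph and $\sigma_{\kappa_\circ}$ embeds saturatedly in both $\sigma_{\kappa'}$ and $\sigma_\kappa$, the equality criterion of \cref{saturated distance} (applied at $\delta=\kappa_\circ$) combined with \cref{lattices with neighbouring cosocle} shows that the inclusion $\sigma_{\kappa'}\hookrightarrow\sigma_\kappa$ is the saturated one; \cref{equality on patched module} then delivers
\[M_\infty(\sigma_{\kappa'})=w\cdot M_\infty(\sigma_\kappa),\]
where $w:=\varpi(\kappa',\kappa)\in\{1,x_{j_0},y_{j_0}\}$ is the one-step factor read off from the four cases of \cref{varpi} (the value $p$ from case $4$ cannot occur here because $\kappa'$ is strictly closer to $\kappa_\circ$ than $\kappa$, which rules out both subcases of that row). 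Substituting yields
\[M_\infty(\sigma_{\kappa_\circ})=\varpi(\kappa')\,w\,M_\infty(\sigma_\kappa),\]
so it remains to verify the scalar identity $\varpi(\kappa)=\varpi(\kappa')w$.

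Because both sides factor over coordinates $(v,j)$ and $\kappa,\kappa'$ agree outside $(v_0,j_0)$, this reduces to the local identity $\varpi_{j_0}(\kappa_{v_0})=\varpi_{j_0}(\kappa'_{v_0})\cdot w$ at the single changed coordinate. The main obstacle is the ensuing finite case analysis: one separates according to whether $(b_{v_0})_{j_0}=0$ or $(b_{v_0})_{j_0}=(\widetilde{s}_{j_0})_{v_0}$ and according to the position of $(\alpha_{v_0})_{j_0}$ relative to the thresholds $0$ and $(\widetilde{s}_{j_0})_{v_0}$. Away from these thresholds the factor $\varpi_{j_0}$ is constant along the one-step motion and $w=1$; as $(\alpha_{v_0})_{j_0}$ crosses a threshold, the transition produces exactly the element $x_{j_0}$ or $y_{j_0}$ dictated by \cref{varpi}, matching the jump in $\varpi_{j_0}$ predicted by its definition. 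This coordinate-wise bookkeeping, together with the determination of the saturation direction from the same thresholds, is the only technical content of the proof.
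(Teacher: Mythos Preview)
Your proof is correct and follows essentially the same approach as the paper: both telescope the one-step identities of \cref{equality on patched module}, use \cref{saturated distance} to pin down the saturation direction $\sigma_{\kappa'}\hookrightarrow\sigma_\kappa$, and verify by case analysis that the accumulated one-step factors $\varpi(\kappa',\kappa)$ match the coordinate-wise product $\varpi(\kappa)$. The only organisational difference is that the paper first reduces to $|\mathcal{S}|=1$ and then moves one coordinate direction at a time (inducting on the distance within that direction), whereas you induct directly on the total distance; your packaging is slightly cleaner but not substantively different.
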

\begin{proof}
We prove by induction on the distance between $\kappa$ and $\kappa_\circ$ in the extension graph. 
Using the extension graph, we fix a sequence $\kappa_0=\kappa_\circ,\cdots, \kappa_f=\kappa$, such that $\kappa_{j-1}$ and $\kappa_{j}$ differ only in the $j$th direction of the extension graph. In particular, if $\kappa_{j-1}=F(\mathfrak{t}_\mu(\alpha^{j-1}))$, then $\alpha^{j-1}_j=b_j$, since we did not change the $j$th coordinate in the first $j-1$ steps. Moreover, $\varpi(\kappa)=\prod_{j=1}^f\varpi(\kappa_{j},\kappa_{j-1})$. Therefore, it suffices to show that $\varpi_j(\kappa)=\varpi(\kappa_{j-1},\kappa_{j})$. If the distance between $\kappa_{j}$ and $\kappa_{j-1}$ is $1$, then by \cref{equality on patched module}, $\varpi_j(\kappa)=\varpi(\kappa_{j-1},\kappa_{j})$. \par 
Assume that it holds for distance $n-1\geq 1$, then let $\kappa'_j$ be the Serre weight on the line segment in the extension graph between $\kappa_{j-1}$ and $\kappa_{j}$ which is distance $1$ away from $\kappa_j$ and $n-1$ away from $\kappa_{j-1}$. By the induction hypothesis $\varpi_j(\kappa')=\varpi(\kappa_{j-1},\kappa'_{j})$. Moreover, as $\kappa'-\kappa_\circ\leq\kappa-\kappa_\circ$, by \cref{saturated distance}, we deduce that $\epsilon_{\kappa'}(\sigma(\lambda,\tau)_\kappa)=0$, that is, $\sigma(\lambda,\tau)_{\kappa'}\hookrightarrow\sigma(\lambda,\tau)_\kappa$. As $n\geq 2$, the $j$th coordinate of $\kappa_j$ is not in $\{0, \widetilde{s}_j\}$, and $\kappa_j$ is further away than $\kappa'_j$ from $0$ in the $j$th coordinate, we are in the first case of \cref{varpi}, by \cref{equality on patched module}, $\varpi(\kappa'_j, \kappa_j)=1$. Therefore, $\varpi_j(\kappa)=\varpi(\kappa_{j-1}, \kappa'_j)\varpi(\kappa'_{j},\kappa_{j})=\varpi(\kappa_{j-1}, \kappa_j)$.
\end{proof}
\subsection{Breuil's lattice conjecture} Suppose $r\colon G_F\to \GL_2(E)$ is a Galois representation attached to an eigenform in $S(U,W)_{\mathfrak{m}}$ 
We assume that $\overline{r}$ satisfies \cref{condition for r}, minimally ramified only at primes $v\nmid p$. In particular, $\overline{r}$ corresponds to $\mathfrak{m}\subset \mathbb{T}_{\mathcal{P}}$, which is non-Eisenstein. As $\overline{r}$ is absolutely irreducible, we can conjugate $r$ such that it takes value in $\mathcal{O}$. We assume that for $v\in S_p$, $r_v$ is potentially crystalline with regular Hodge-Tate weights $\lambda_v$ and with inertial type $\tau_v$. Fix a $v\in S_p$, and write $\lambda$ for $\lambda_v$ and $\tau$ for $\tau_v$. We assume $\tau_v$ is $(5\ell+1)$- generic and $\overline{r}_v$ is $(4\ell+1)$-generic, where $ \ell_v\coloneqq\max_j\{\lambda_{j,1}-\lambda_{j,2}\}$. We fix $\sigma^v$ as in the section after \cref{patching functor existence}. 
Let $\mathbb{T}'(U,W)_{\mathfrak{m}'}$ be the image of the universal Hecke algebra $\mathbb{T'}_{\mathcal{P}}$ in $\End(\widetilde{S}(U^v, W)_{\mathfrak{m}'})$.
We write $\mathfrak{p}$ for the kernel of the system of Hecke eigenvalues $\alpha\colon  \mathbb{T}'(U,W)_{\mathfrak{m}'}\to E$ associated to $r$, i.e.\ ; $\alpha$ satisfies
\[\det(r^{\vee}(\Frob_w)X)=\sum_{j=0}^2(-1)^j(\mathbf{N}_{F/\Q}(w))^{\binom{j}{2}}\alpha(T_{w}^{(j)})X^j\]
for all $w\in \mathcal{P}$.\par
\begin{proposition} We have the following isomorphism,
    \[(\widetilde{S}(U^v, \sigma^v\otimes_\mathcal{O}W_\Sigma)_{\mathfrak{m}'}\otimes_{\mathcal{O}}E)^{\mathrm{l.alg}}[\mathfrak{p}]=\pi_v\otimes V(\lambda-\eta),\]
    where $\pi_v$ corresponds to the Weil--Deligne representation associated to $r_v$ by the local Langlands correspondence. (More precisely, $\text{WD}(r_v)^{F-ss}=\text{rec}(\pi_v\otimes|\det|^{\frac{-1}{2}})$.)
\end{proposition}
\begin{proof}
    We recall the proof from \cite[Theorem~4.35]{CEG}. By \cite[Proposition~3.2.4]{MR2207783}, we deduce that the locally algebraic vectors are precisely the algebraic automorphic forms of that weight. Together with the classical local-global compatibility result \cite[Theorem~1.1]{MR3272276}, we deduce that $\pi_v\otimes V(\lambda-\eta)$ appears in $(\widetilde{S}(U^v, \sigma^v\otimes_\mathcal{O}W_\Sigma)_{\mathfrak{m}'}\otimes_{\mathcal{O}}E)^{\mathrm{l.alg}}[\mathfrak{p}]$ with some multiplicity. By our construction and \cite[Th\'eor\`emes 5.4, 5.9]{labesse}, we deduce that it appears with multiplicity one.
\end{proof}
By the inertial local Langlands correspondence \cite[Appendice]{Breuil-Mezard}, $\sigma(\tau)$ is a subrepresentation of $\pi_v$ as a $\GL_2(\mathcal{O}_{F^+_v})$-representation. Therefore,
\[\sigma(\tau)\otimes V(\lambda-\eta)\hookrightarrow\widetilde{S}(U^v, \sigma^v\otimes_\mathcal{O}W_\Sigma)_{\mathfrak{m}'}[\mathfrak{p}]\otimes_\mathcal{\mathcal{O}}E\]
as $\GL_2(\mathcal{O}_{F^+_v})$-representations. Therefore, the completed cohomology with integral coefficient determines the following $\GL_2(\mathcal{O}_{F^+_v})$-invariant $\mathcal{O}$-lattice inside $\sigma(\lambda,\tau)$,
\begin{equation*}
    \sigma^\circ(\lambda,\tau)\colonequals \sigma(\lambda, \tau)\cap \widetilde{S}(U^v, \sigma^v\otimes_\mathcal{O}W_\Sigma)_{\mathfrak{m}'}[\mathfrak{p}].
\end{equation*}
Let $R^{\text{univ}}_{\Sigma}$ be the universal deformation ring for the deformations of $\overline{r}$ which are unramified outside $\Sigma$. As $r$ is a Galois representation attached to an eigenform $S(U,W)_{\mathfrak{m}'}$, we have a Galois representation $r^{\text{mod}}\colon G_F\to \GL_2(\mathbb{T}'(U,W)_{\mathfrak{m}'})$, with $\overline{r}^\text{mod}\cong\overline{r}$. This induces a map $R^{\text{univ}}_{\Sigma}\to \mathbb{T}'(U,W)_{\mathfrak{m}'}$. The composite map $R_\infty\to R^{\text{univ}}_{\Sigma}\to\mathbb{T}'(U,W)_{\mathfrak{m}'}$ given by modulo $\mathfrak{a}_\infty$ (cf. \cref{completed cohomology}), by the local-global compatibility, further induces a map $h\colon R_{\infty}(\lambda, \tau)\to \mathbb{T}'(U,W)_{\mathfrak{m}'}$. We define $\varpi_\mathfrak{p}(\kappa)$ as the image of $\varpi(\kappa)$ under $\mathfrak{p}\circ h$. As $\varpi(\kappa)\in R^{\lambda, \tau}_{\overline{r}_v}$, the image of $h$ coincides with the image of the natural map $R^{\lambda, \tau}_{\overline{r}_v}\to R^{\text{univ}}_{\Sigma}\to\mathbb{T}'(U,W)_{\mathfrak{m}'}$. Therefore, $\varpi_\mathfrak{p}(\kappa)$ only depends on $r_v$.
We have the following version of Breuil's lattice conjecture:
\begin{theorem}\label{lattice conjecture}
Up to homothety, $\sigma^\circ(\lambda,\tau)$ is equal to \[\sum_{\kappa\in \JH(\lambda, \tau)}\varpi_\mathfrak{p}(\kappa)\sigma(\lambda,\tau)_\kappa.\]
\end{theorem}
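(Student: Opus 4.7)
The plan is to reduce the claimed equality to the computation of integers $v^*(\kappa) \in \Z$, where $v^*(\kappa)$ is the minimum integer such that $p^{v^*(\kappa)} \sigma_\kappa \hookrightarrow \sigma^\circ(\lambda,\tau)_\mathcal{S}$ is saturated, and then to read these integers off from \cref{relating patched module}. The first observation is that any $U_\mathcal{S}$-stable $\mathcal{O}'$-lattice $\sigma^\circ \subset \sigma(\lambda_\mathcal{S},\tau_\mathcal{S})\otimes_E E'$ is determined, up to homothety, by the function $\kappa \mapsto v_{\sigma^\circ}(\kappa)$ defined analogously, and indeed $\sigma^\circ = \sum_\kappa p^{v_{\sigma^\circ}(\kappa)} \sigma_\kappa$. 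Rescaling $\sigma^\circ(\lambda,\tau)_\mathcal{S}$ within its homothety class so that $v^*(\kappa_\circ) = 0$, it suffices to prove $v^*(\kappa) = v_p(\varpi_\mathfrak{p}(\kappa))$ for every $\kappa \in \JH(\overline{\sigma}(\lambda_\mathcal{S},\tau_\mathcal{S}))$, with the extra observation that $\varpi_\mathfrak{p}(\kappa_\circ) = 1$.

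Next I would interpret $v^*(\kappa)$ as the $p$-adic index of a Hecke-theoretic rank-one lattice. Set
\[L_\kappa := \Hom_{U_\mathcal{S}}\bigl(\sigma_\kappa,\ \widetilde{S}(U^\mathcal{S},W)_{\mathfrak{m}'}[\mathfrak{p}]\bigr).\]
Since $\sigma(\lambda_\mathcal{S},\tau_\mathcal{S})$ occurs with multiplicity one in $\widetilde{S}(U^\mathcal{S},W)_{\mathfrak{m}'}[\mathfrak{p}] \otimes_\mathcal{O} E'$ by local-global compatibility, each $L_\kappa$ is a rank-one $\mathcal{O}'$-lattice in the common one-dimensional $E'$-space $\Hom_{U_\mathcal{S}}(\sigma(\lambda_\mathcal{S},\tau_\mathcal{S})\otimes_E E',\ \widetilde{S}(U^\mathcal{S},W)_{\mathfrak{m}'}[\mathfrak{p}] \otimes_\mathcal{O} E')$. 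The normalization of the previous paragraph, together with \cref{coefficient of completed cohomology}, identifies $v^*(\kappa)$ with the $p$-adic index of $L_\kappa$ inside $L_{\kappa_\circ}$. Combining \cref{moding out the augmented ideal} with \cref{coefficient of completed cohomology} and passing to the residue field at $\mathfrak{p}$ then identifies $L_\kappa$ with the Schikhof dual of the rank-one quotient $\overline{M}_\kappa := M_\infty^\mathcal{S}(\sigma_\kappa)/(\mathfrak{a}_\infty,\mathfrak{p})$, in such a way that the covariant inclusion $\overline{M}_{\kappa_\circ} \subset \overline{M}_\kappa$ corresponds contravariantly to $L_\kappa \subset L_{\kappa_\circ}$ with matching $p$-adic indices.

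To conclude, I would invoke \cref{relating patched module}: the equality $\varpi(\kappa) M_\infty^\mathcal{S}(\sigma_\kappa) = M_\infty^\mathcal{S}(\sigma_{\kappa_\circ})$ descends modulo $(\mathfrak{a}_\infty,\mathfrak{p})$ to $\varpi_\mathfrak{p}(\kappa) \cdot \overline{M}_\kappa = \overline{M}_{\kappa_\circ}$ in the one-dimensional $E'$-space, so dualizing yields $L_\kappa = \varpi_\mathfrak{p}(\kappa) L_{\kappa_\circ}$ and hence $v^*(\kappa) = v_p(\varpi_\mathfrak{p}(\kappa))$. The main technical point will be the careful book-keeping of the duality in the second paragraph: one must verify that the quotients $\overline{M}_\kappa$ are really rank-one rather than degenerating or acquiring extra rank after modding out by $\mathfrak{a}_\infty$ and $\mathfrak{p}$, and that the Schikhof duality exchanging $\overline{M}_\kappa$ with $L_\kappa$ is genuinely compatible with the scalar action of $\varpi(\kappa) \in R^{\lambda_\mathcal{S},\tau_\mathcal{S}}_{\overline{r}}$, which after passage to the residue field at $\mathfrak{p}$ must become multiplication by $\varpi_\mathfrak{p}(\kappa) \in E'$ on both sides.
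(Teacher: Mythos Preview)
Your proposal is correct and follows essentially the same route as the paper's proof: both reduce to computing the gauges $v(\kappa)$ via the decomposition $\sigma^\circ=\sum_\kappa p^{v(\kappa)}\sigma_\kappa$ (this is \cite[Proposition 4.1.2]{EGS}), then use \cref{moding out the augmented ideal} and \cref{coefficient of completed cohomology} to identify $(\Hom_{U_\mathcal{S}}(\sigma_\kappa,\sigma^\circ(\lambda,\tau)_\mathcal{S}))^d$ with $M_\infty(\sigma_\kappa)/(\mathfrak{a}_\infty,\mathfrak{p})$, apply \cref{relating patched module}, and finish by uniqueness of the gauges (\cite[Proposition 4.1.4]{EGS}). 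The technical points you flag about rank one and compatibility of the duality are exactly what the paper handles by citing the proof of \cite[Theorem 5.3.5]{LatticeforGL3}.
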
 
\begin{proof}
We follow the proof of \cite[Theorem~8.2.1]{EGS}. Since we only consider the lattice up to homothety, without loss of generality, we assume that $\sigma_{\kappa_\circ}\hookrightarrow\sigma^\circ(\lambda,\tau)$ is saturated. By our normalization before \cref{lattice is mk1n torsion}, we have $\sigma(\lambda, \tau)_{\kappa_\circ}\hookrightarrow\sigma(\lambda,\tau)_\kappa$. Therefore, we can apply \cite[Proposition~4.1.4]{EGS}, and deduce that $\sigma^\circ(\lambda,\tau)=\sum_{\kappa\in \JH(\lambda, \tau)}p^{v(\kappa)}\sigma(\lambda,\tau)_\kappa$ where $p^{v(\kappa)}$ is an element in $\mathcal{O}$ with valuation $v(\kappa)$ such that $p^{v(\kappa)}\sigma(\lambda,\tau)_\kappa\hookrightarrow\sigma^\circ(\lambda,\tau)$ is saturated. By \cref{completed cohomology}, we have the following isomorphism of Hecke modules
\[(M_\infty^{\sigma^v}(\sigma(\lambda,\tau)_\kappa)/a_\infty)^d\cong \Hom_{\mathcal{O}\llbracket U_v\rrbracket}(\sigma(\lambda,\tau)_\kappa, \widetilde{S}(U^v, W_\Sigma\otimes\sigma^v)_{\mathfrak{m}'}).\]
In particular, $\mathbb{T}'(U,W)$ acts on the left-hand side also. 
For any $\mathbb{T}'(U,W)$-module $M$ finitely which is generated over $\mathcal{O}$, $M[\mathfrak{p]}^d$ is canonically isomorphic to the torsion-free part of $(M/\mathfrak{p})^d$. Hence, the torsion free part of \((\Hom_{\mathcal{O}\llbracket U_v\rrbracket}(\sigma(\lambda,\tau)_\kappa, \widetilde{S}(U^v, W_\Sigma\otimes\sigma^v)_{\mathfrak{m}'}))^d/\mathfrak{p}\) is canonically isomorphic to
\[(\Hom_{\mathcal{O}\llbracket U_v\rrbracket}(\sigma(\lambda,\tau)_\kappa, \widetilde{S}(U^v, W_\Sigma\otimes\sigma^v)_{\mathfrak{m}'})[\mathfrak{p}])^d\cong \Hom_{\mathcal{O}\llbracket U_v\rrbracket}(\sigma(\lambda,\tau)_\kappa, \widetilde{S}(U^v, W_\Sigma\otimes\sigma^v)_{\mathfrak{m}'}[\mathfrak{p}])^d
\]
Any map $\sigma(\lambda,\tau)_\kappa\to \widetilde{S}(U^v, W_\Sigma\otimes\sigma^v)_{\mathfrak{m}'}[\mathfrak{p}]$ lands in $\sigma^\circ(\lambda,\tau)\coloneqq\sigma(\lambda, \tau)\cap \widetilde{S}(U^v, \sigma^v\otimes_\mathcal{O}W_\Sigma)_{\mathfrak{m}'}[\mathfrak{p}]$, therefore, $(\Hom_{U_v}(\sigma(\lambda,\tau)_\kappa, \widetilde{S}(U^v, W_\Sigma\otimes\sigma^v)_{\mathfrak{m}'}[\mathfrak{p}]))^d\cong \mathcal{O}$ is torsion free and 
\[(\Hom_{U_v}(\sigma(\lambda,\tau)_\kappa, \sigma^\circ(\lambda,\tau)))^d=(M_\infty^{\sigma^v}(\sigma(\lambda,\tau)_\kappa)/\mathfrak{a}_\infty)/\mathfrak{p}.\]
By \cref{relating patched module}, we deduce that
\[\Hom_{U_v}(\sigma(\lambda,\tau)_\kappa, \sigma^\circ(\lambda,\tau))=\varpi_\mathfrak{p}(\kappa)\Hom_{U_v}(\sigma(\lambda, \tau)_{\kappa_\circ}, \sigma^\circ(\lambda,\tau)).\]
Therefore, by the uniqueness of the gauges \cite[Proposition~4.1.4]{EGS}, we show that $\varpi_\mathfrak{p}(\kappa)$ has the same valuation as $p^{v(\kappa)}$ and finish the proof.
\end{proof}
\begin{remark}
As $\overline{r}$ is absolutely irreducible irreducible, $r$ admit a unique integral model $r^\circ$. One can describe (the valuation of) $\varpi_{\mathfrak{p}}(\kappa)$ explicitly in terms of the Breuil-Kisin module $\mathfrak{M}$ over $\mathcal{O}$ associated to $r^\circ|_{G_{F_p}}$. Assume we choose a $\widetilde{w}$-gauge basis $\beta$, then we have the matrix $A^{(j)}$ capturing the $\phi$-action corresponding to such an eigenbasis. If $\widetilde{w}_j=\mathfrak{t}_{(m,n)}$ (respectively, $\widetilde{w}_j=\mathfrak{m}\mathfrak{t}_{(m,n)}$), then 
\[A^{(j)}=\begin{pmatrix}
\bm{a}_0^{(j)}(v+p)^m+\dots+\bm{a}_{-m}^{(j)}& \bm{b}_0^{(j)}(v+p)^{n-1}+\dots+\bm{b}_{-n+1}^{(j)}\\
v(\bm{c}_0^{(j)}(v+p)^{m-1}+\dots+\bm{c}_{-m+1}^{(j)})& \bm{d}_0^{(j)}(v+p)^{n}+\dots+\bm{d}_{-n}^{(j)}
\end{pmatrix}\] (respectively, 
\[\begin{pmatrix}
\bm{a}_0^{(j)}(v+p)^{m-1}+\dots+\bm{a}_{-m+1}^{(j)}& \bm{b}_0^{(j)}(v+p)^{n}+\dots+\bm{b}_{-n}^{(j)}\\
v(\bm{c}_0^{(j)}(v+p)^{m-1}+\dots+\bm{c}_{-m+1}^{(j)})& \bm{d}_0^{(j)}(v+p)^{n}+\dots+\bm{d}_{-n}^{(j)}
\end{pmatrix}.)\] Let $x_j(\mathfrak{p})=\bm{b}_0^{(j)}$ (respectively $\bm{a}_0^{(j)}$) and $y_j(\mathfrak{p})=\bm{c}_0^{(j)}-[\overline{\bm{c}}_0^{(j)}]$ (respectively $\bm{d}_0^{(j)}-[\overline{\bm{d}}_0^{(j)}]$). Then, following the computation in \cref{normal domain}, up to multiplying by $\mathcal{O}^\times$, $\varpi_{\mathfrak{p}}(\kappa)=\prod_j z_j(\mathfrak{p})$ where $z_j(\mathfrak{p})=x_j(\mathfrak{p})$ or $y_j(\mathfrak{p})$ depending on $\kappa, \kappa_\circ$ as in \cref{varpi}. Note that by \cite[Propositions 5.1.8, 5.2.7]{localmodel}, if we choose another $\widetilde{w}$-gauge basis, it corresponds to multiplying $\varpi_{\mathfrak{p}}(\kappa)$ by $\mathcal{O}^\times$, hence does not affect the valuation of $\varpi_{\mathfrak{p}}(\kappa)$. If $\mathfrak{m}$ has a $\widetilde{w}$-gauge basis and a $\widetilde{w}'$-gauge basis, then we have matrix $\overline{A}^{(j)}$ and $\overline{A}^{(j)'}$ corresponding to $\overline{\mathfrak{M}}$, they are related by 
\[\overline{A}^{(j)}=\overline{A}^{(j)'}\widetilde{w}_j^{'-1}\widetilde{w}_j\]
where we regard $\widetilde{w}$ as an element of $\GL_2(\F\llbracket v\rrbracket)$ via $s_j\mathfrak{t}_{\mu_j}\mapsto \dot{s}_j v^{\mu_j}$ (c.f. proof of \cref{normal domain}). It then follows that the valuation of $\varpi_{\mathfrak{p}}(\kappa)$ is independent of the choice of $\widetilde{w}$ also. 
\end{remark}
\begin{remark}
One should be able to prove an analogous result for Shimura curves for higher parallel Hodge--Tate weight under the same genericity condition. (The requirement of parallel Hodge--Tate weights comes from the parity condition on quaternion algebras, which does not exist for unitary groups.) One can construct a minimal unramified patching $M_\infty$ using quaternion algebras following \cite[\S~6.2]{EGS}. Since the proof relies on \cref{relating patched module} which compares $M_\infty(\sigma(\lambda,\tau)_\kappa)$ and $M_\infty(\sigma(\lambda,\tau)_{\kappa'})$, which, in turn, relies on the result on the Galois deformation rings in \cref{normal domain}, and results on the mod $p$ representations of $\GL_2(\mathcal{O}_K)$ in \cref{Main theorem on generalization}, it is independent of the construction of the patching functor.
\end{remark}
\begin{remark}
    Breuil's original lattice conjecture in \cite[Conjecture~1.2]{Breuiloriginal} is stated for $\sigma^\circ(\tau)\coloneqq\sigma(\tau)\cap \varinjlim_{U_v} H^1(U_vU^v, \mathcal{O})_\mathfrak{m}$. By \cite[Corollary~2.2.25]{MR2207783}, the $\varpi$-adic completion of $\varinjlim_{U_v} H^1(U_vU^v, \mathcal{O})$ is the same as $\widetilde{H}^1(U^v, \mathcal{O})$. Therefore, $\sigma^\circ(\tau)=\sigma(\tau)\cap \widetilde{H}^1(U^v, \mathcal{O})$. Hence, the formulation here is the natural analog for higher Hodge--Tate weights.
\end{remark}
\begin{remark}
If we compare \cref{lattice conjecture} with \cite[Theorem~8.2]{EGS}, $\sigma(\lambda, \tau)_{\kappa_\circ}$ plays a similar role as $\sigma_{\iota(\varnothing)}$ in \cite[Proposition~8.1.1]{EGS}. We do not claim that $X_j, Y_j$ coincide with the ones in \cite[Theorem~7.1.1]{EGS}, as we have taken a different normalization, and the Galois deformation ring is computed by strongly divisible modules in \cite{EGS} and by Breuil--Kisin modules here. \par
\end{remark}

\section{Cyclicity of patched modules}\label{ch6: cyclicity of patched module}
In this chapter, we assume that the minimal patching functor is minimal  and unramified. Let $\mathcal{S}$ be a subset of $S_p$. If there is a CM field $F$ with maximal real subfield $F^+$ with $\overline{r}\colon G_F\to \GL_2(\F)$ satisfying \cref{condition on rho bar}, such that $L_v=F_v$ and $\overline{\rho}_v=\overline{r}|_{G_{F_v^+}}$ for all $v\in \mathcal{S}$. Then by \cref{patching functor existence} and the discussion that follows, there exists a minimal patching functor for $\{\overline{\rho}_v\}_{v \in \mathcal{S}}$.\par
By extending the coefficients, we can assume that the lattice inside $\sigma(\lambda_v,\tau_v)$ is defined over $\mathcal{O}=W(\F)$. As before, we let $\ell_v=\max_j\{(\lambda_v)_{j,1}-(\lambda_v)_{j,2}\}$ and assume that $\tau_v$ is $3\ell_v$-generic. In particular, all the Jordan--H\"older factors of $\overline{\sigma}(\lambda_v,\tau_v)$ are $3\ell_v$ generic. We write $\sigma(\lambda_\mathcal{S}, \tau_\mathcal{S})\coloneqq\otimes_{v\in \mathcal{S}}\sigma(\lambda_v,\tau_v)$ and write $\overline{\sigma}(\lambda_\mathcal{S},\tau_\mathcal{S})$ for the mod $p$ reduction of a $\prod_{v\in \mathcal{S}}\GL_2(\mathcal{O}_{F^+_v})$-invariant $\mathcal{O}$-lattice inside $\sigma(\lambda_\mathcal{S},\tau_\mathcal{S})$.\par
Note that if $\kappa\in \JH(\overline{\sigma}(\lambda, \tau))$, as $\kappa$ is an irreducible representation of the group $\prod_{v\in S_p}\GL_2(k_v)$, $\kappa=\otimes_{v\in S_p}\kappa_v$ where $\kappa_v$ is a Serre weight for $\GL_2(k_v)$. It follows that $\kappa_v\in\JH(\overline{\sigma}(\lambda_v, \tau_v))$. Conversely, a tensor product of irreducible representations is irreducible as a representation of the product group. Therefore, $\kappa\in \JH(\overline{\sigma}(\lambda, \tau))$ if and only if $\kappa=\otimes_{v\in S_p}\kappa_v$ where $\kappa_v\in \JH(\overline{\sigma}(\lambda_v, \tau_v))$. We write $\sigma(\lambda,\tau)_{\kappa}=\otimes \sigma(\lambda,\tau)_{\kappa_v}$ where $\sigma(\lambda,\tau)_{\kappa_v}$ is a $\mathcal{O}$-lattice with cosocle $\kappa_v$ in $\sigma(\lambda_v,\tau_v)$.
\begin{proposition} \label{Serre weight conjecture}
   Given a Serre weight $\kappa_\mathcal{S}\in \sigma(\lambda_\mathcal{S},\tau_\mathcal{S})$. Then $M_\infty^{\sigma^\mathcal{S}}(\kappa_\mathcal{S})\neq0$ if and only if $\kappa_v\in W(\overline{r}_v)$ for all $v\in \mathcal{S}$. If that is the case, then $M_\infty(\sigma(\lambda,\tau)_\mathcal{S})$ is a cyclic $R_\infty(\lambda_\mathcal{S},\tau_\mathcal{S})$ module. 
\end{proposition}
\begin{proof}
   The first part follows from \cite[Theorem~9.1.1, Remark 9.1.2]{EGS}. The second part follows the same argument as in \cite[Lemma~5.1.2]{LatticeforGL3}. In particular, it follows from the method of \cite{multiplicityone} and our patching functor being minimal.
\end{proof}
\begin{theorem}\label{cyclicity of patched module}
    Given a minimal patching functor with unramified coefficients for $\{\overline{\rho}_v\}_{v\in \mathcal{S}}$, where $\overline{\rho}_v$ is $2\ell_v$-generic for some positive integers $\ell_v$. Assume $(\lambda_v)_{j,1}-(\lambda_v)_{j,2}\leq\ell_v$. Given any Serre weight $\kappa\in \JH(\overline{\sigma}(\lambda_\mathcal{S}, \tau_\mathcal{S}))$, $M_{\infty}(\sigma(\lambda,\tau)_\kappa)$ is a cyclic $R_{\infty}(\lambda_\mathcal{S}, \tau_\mathcal{S})$ module.
\end{theorem}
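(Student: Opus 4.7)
First, I would perform a standard Nakayama reduction. The ring $R_\infty(\lambda_\mathcal{S},\tau_\mathcal{S})$ is local Noetherian, and $M_\infty$ is exact, so the short exact sequence $0 \to \sigma_\kappa \xrightarrow{\varpi} \sigma_\kappa \to \overline{\sigma}_\kappa \to 0$ yields $M_\infty(\overline{\sigma}_\kappa) \cong M_\infty(\sigma_\kappa)/\varpi M_\infty(\sigma_\kappa)$. Since the scheme-theoretic support of $M_\infty(\sigma_\kappa)$ is $\Spec R_\infty(\lambda_\mathcal{S},\tau_\mathcal{S})$, which by \cref{normal domain} is the spectrum of a (complete intersection, normal) domain, $M_\infty(\sigma_\kappa)$ is cyclic over its support if and only if $M_\infty(\overline{\sigma}_\kappa)$ is cyclic over $R_\infty$. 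Hence it suffices to prove the latter.

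Next, I would replace $\overline{\sigma}_\kappa$ by a controlled subquotient. By \cref{lattice is mk1n torsion}, $\overline{\sigma}_\kappa$ is $\mathfrak{m}_{K_1}^n$-torsion, and by \cref{Main theorem on generalization} together with \cref{Cor on socle filtration}, its Jordan-H\"older factors form a hypercuboid in the extension graph having $\kappa$ as a corner, whose submodule structure is controlled by the extension graph distance from the cosocle. By \cref{Serre weight conjecture}, $M_\infty$ annihilates every Serre weight outside $W(\overline{r})_\mathcal{S}$. Using the exactness of $M_\infty$, I would iteratively strip off subrepresentations (resp.\ quotients) of $\overline{\sigma}_\kappa$ whose socles (resp.\ cosocles) are entirely non-modular; by \cref{Main theorem on generalization} these occur as genuine sub- or quotient-objects controlled by the extension graph. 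The outcome is a subquotient $W \subseteq \overline{\sigma}_\kappa$ with $M_\infty(W) = M_\infty(\overline{\sigma}_\kappa)$ and with every Jordan-H\"older factor of $W$ lying in $W(\overline{r})_\mathcal{S}$.

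The third step is to realize $W$ as a quotient of a reduced lattice in a potentially Barsotti-Tate type. Using \cref{prop on W(r)} together with \cref{prop on JH}, I would construct, for each $v \in \mathcal{S}$, a tame inertial type $\tau'_v$ of parallel Hodge-Tate weight $(1,0)$ such that $\JH(\overline{\sigma}(\tau'_v)) \cap W(\overline{r}_v)$ matches the $v$-component of the modular Jordan-H\"older factors appearing in $\overline{\sigma}_\kappa$; concretely, these modular weights sit at the corners indexed by $\{0,\sgn(s_j)\}^f$ of the hypercube, which is precisely the shape of a Barsotti-Tate Jordan-H\"older set. Invoking \cref{Main theorem on generalization} on a lattice $\sigma'_\kappa \subseteq \sigma(\tau'_\mathcal{S})$ with cosocle $\kappa$, and comparing cosocle filtrations on modular weights, I would identify $W$ with a quotient of $\overline{\sigma}'_\kappa$.

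Finally, the exactness of $M_\infty$ gives a surjection $M_\infty(\overline{\sigma}'_\kappa) \twoheadrightarrow M_\infty(W) = M_\infty(\overline{\sigma}_\kappa)$, and the source is cyclic by \cite[Theorem 10.1.1]{EGS} applied to the Barsotti-Tate type $\tau'_\mathcal{S}$. Cyclicity therefore transfers and completes the proof. The main obstacle I anticipate is Step 3: constructing the Barsotti-Tate type $\tau'_\mathcal{S}$ and matching the submodule structure of $\overline{\sigma}_\kappa$ with that of $\overline{\sigma}'_\kappa$ on the modular weights requires combining the $I(\sigma,\tau)$-machinery of \cref{Main theorem on generalization} with a delicate combinatorial comparison of the two extension graph pictures, and genericity is essential to prevent extraneous extensions between Serre weights that would obstruct the identification.
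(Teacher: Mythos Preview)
Your proposal follows the same strategy as the paper: Nakayama reduction to $\overline{\sigma}_\kappa$, iteratively stripping non-modular socles and cosocles to obtain a subquotient $W$ with all Jordan--H\"older factors modular, realizing $W$ as a quotient of the reduction of a lattice in a Barsotti--Tate type $\sigma(\tau'_\mathcal{S})$, and deducing cyclicity from the $\lambda=\eta$ case.

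Two points need tightening. First, in Step~3 you take a lattice $\sigma'_\kappa \subseteq \sigma(\tau'_\mathcal{S})$ with cosocle $\kappa$, but in general $\kappa \notin \JH(\overline{\sigma}(\tau'_\mathcal{S}))$: the unit hypercube $\JH(\overline{\sigma}(\tau'_v))$ need not contain $\kappa_v$. After stripping, the cosocle of $W$ is the \emph{closest modular weight} $\beta=\otimes_v\beta_v$ to $\kappa$ (this is the content of \cref{cutting down} in the paper), not $\kappa$ itself, and the paper takes the lattice $\sigma(\tau'_\mathcal{S})_\beta$ with cosocle $\beta$. The key intermediate identification is $W_v\cong I(\alpha_v,\beta_v)$ as a $\Gamma$-representation, which is what makes $\otimes_v W_v$ a quotient of $\overline{\sigma}(\tau'_\mathcal{S})_\beta$.

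Second, you cannot simply cite \cite[Theorem~10.1.1]{EGS}: that result is stated for the quaternionic patching functor at a single place, while here $M_\infty$ is an abstract minimal patching functor over a set $\mathcal{S}$ of places. The paper instead \emph{generalizes} the interval/capped-interval argument of \cite[\S10.1]{EGS} to products $\mathcal{W}=\prod_v\mathcal{W}_v$ (the relevant lemmas on $I_{\mathcal{F}(J_1,J_2)}$ carry over with notational changes), and supplies the base case $|\mathcal{W}|=1$ via minimality and Diamond's method rather than by citation. Finally, your invocation of \cref{normal domain} in Step~1 is both unnecessary for Nakayama and not justified under the weaker $2n_v$-genericity hypothesis assumed here.
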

\begin{proof} 
If $W(\overline{r}_v, \lambda_v, \tau_v)\colonequals  W(\overline{r}_v)\cap \JH(\overline{\sigma}(\lambda_v, \tau_v))=\varnothing$ for some $v$, then by the exactness of the patching functor and \cref{Serre weight conjecture}, $M_{\infty}(\kappa)=0$ for any $\kappa\in \JH(\overline{\sigma}(\lambda,\tau))$ and hence $M_{\infty}(\overline{\sigma}(\lambda,\tau)_\kappa)=0$. By Nakayama's lemma $M_{\infty}(\sigma(\lambda,\tau)_\kappa)=0$. Assume $W(\overline{r}_v, \lambda_v, \tau_v)\neq\varnothing$ for all $v\in \mathcal{S}$, we will prove the statement in the following steps:
\begin{lemma}\label{cutting down} (\textit{cf.} \cite[Lemma~12.8]{breuil2012towards}) Assume that $\overline{\rho}$ is $2n$-generic and $\kappa\in \Inj_{n}\sigma$ for all $\sigma\in W(\overline{\rho})$. We define the distance between $F(\mathfrak{t}_\mu(\omega'))$ and $F(\mathfrak{t}_\mu(\omega))$ to be $\sum_{j} |\omega'_j-\omega_j|$.\par
There exists $\alpha \text{ (resp. }\beta) \in W(\overline{\rho})$ which is the furthest (respectively closest) from $\kappa$ in the extension graph. Moreover, for any $\sigma\in W(\overline{\rho})$, if $\sigma=\beta$, then $I(\beta, \kappa)$ does not contain any other Serre weights of $W(\overline{\rho})$ as subquotients. If $\sigma\neq \beta$, then $\beta$ is a subquotient of $I(\sigma, \kappa)$.\par
Moreover, assume ${(\lambda_{v})}_{j,1}-{(\lambda_{v})}_{j,2}\leq n$ for all $j,v$ and $W(\overline{\rho}, \lambda,\tau)\neq \varnothing$. There exists $\alpha \text{ (resp. }\beta) \in W(\overline{\rho}, \lambda,\tau)$ which is the furthest (respectively closest) from $\kappa$ in the extension graph. Moreover, $\JH(I(\alpha',\beta'))=W(\overline{\rho},\lambda,\tau)$ and $I(\alpha', \beta')$ is a $\Gamma$-representation.
\end{lemma}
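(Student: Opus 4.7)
The plan is to handle the two parts of the lemma separately, both by reducing to a coordinate-wise analysis of the extension graph using the combinatorial description of $W(\overline{\rho})$ from \cref{prop on W(r)}.

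For the first part, I would write $\kappa = F(\mathfrak{t}_{\mu-\eta}(\omega))$ and parametrize $\sigma \in W(\overline{\rho})$ by a tuple $b = (b_j)$ with $b_j \in \{0,\sgn(s_j)\}$ when $\gamma_{f-1-j}=0$ and $b_j = 0$ forced otherwise. The distance $\sum_j |\omega_j - b_j|$ then decouples into $f$ independent one-variable problems, and at each $j$ either the choice is forced or one compares $|\omega_j|$ (for $b_j = 0$) against $|\omega_j - \sgn(s_j)|$. Since equality would require $2\omega_j = \sgn(s_j) \in \{\pm 1\}$, which is impossible for $\omega_j \in \Z$, the minimizer and maximizer in each coordinate are unique, giving the unique $\beta$ and $\alpha$. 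Explicitly, $b_j^\beta = \sgn(s_j)$ exactly when $\gamma_{f-1-j}=0$ and $\omega_j \sgn(s_j) > 0$.

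For the two subquotient claims I would invoke \cref{reformulation of I}, which translates ``$\sigma' \in \JH(I(\sigma,\kappa))$'' into the inequality $\sigma' - \sigma \leq \kappa - \sigma$ in the extension graph. Taking $\sigma = \beta$, a coordinate-wise case check (splitting on whether $\omega_j = 0$, same-sign, or opposite-sign with $\sgn(s_j)$) shows that $b_j^{\sigma'} = b_j^\beta$ is forced in every coordinate, so the only $\sigma' \in W(\overline{\rho})$ satisfying the inequality is $\beta$ itself. Conversely, for any other $\sigma \in W(\overline{\rho})$, the inequality $\beta - \sigma \leq \kappa - \sigma$ is checked coordinate-wise: in coordinates where $b_j^\sigma = b_j^\beta$ it is trivial, while in the remaining coordinates the minimality of $b_j^\beta$ places $b_j^\beta$ between $b_j^\sigma$ and $\omega_j$, which is exactly the required inequality.

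For the second part, the key input is that by \cref{normal domain} the intersection $W(\overline{\rho}) \cap \JH(\overline{\sigma}(\lambda,\tau))$ has cardinality $2^m$, and moreover the descriptions of both $W(\overline{\rho})$ (coordinate-wise via $\{0,\sgn(s_j)\}$) and of $\JH(\overline{\sigma}(\lambda,\tau))$ (as a hyperrectangle in the extension graph, via \cref{prop on JH} combined with \cref{tensor alg rep}) decouple across $\mathcal{J}$. I would show directly that the intersection is a product $\prod_{j} S_j$ with $S_j \subseteq \{0,\sgn(s_j)\}$; this is the main technical point, since one has to verify that whether $0$ or $\sgn(s_j)$ lies in $\JH(\overline{\sigma}(\lambda,\tau))$ depends only on the $j$-th coordinate of the hyperrectangle and of the center. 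Once this product structure is established, I define $\alpha'$ and $\beta'$ to be the two corners of this sub-hypercube (with $\alpha'$ chosen to be the one closer to $F(\mu-\eta)$, which is well-defined since each $S_j$ is an interval in $\{0,\sgn(s_j)\}$). Then $\beta' - \alpha'$ has entries in $\{0,\pm 1\}$, so by \cref{Main theorem on generalization} the representation $I(\alpha',\beta')$ is a $\Gamma$-representation, and by \cref{Cor on socle filtration} its Jordan-H\"older factors are exactly the weights $F(\mathfrak{t}_{\mu-\eta}(c))$ with $c_j \in \{b_j^{\alpha'}, b_j^{\beta'}\}$, matching $W(\overline{\rho},\lambda,\tau)$. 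Uniqueness of the ordered pair follows from the extremal characterization: any pair realizing $W(\overline{\rho},\lambda,\tau)$ as $\JH(I(\cdot,\cdot))$ of a $\Gamma$-representation must consist of opposite corners of the sub-hypercube, and the normalization by proximity to $F(\mu-\eta)$ pins down which is the socle.
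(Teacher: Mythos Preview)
Your proposal is correct and follows essentially the same approach as the paper: both proofs exploit the coordinate-wise product structure of $W(\overline{\rho})$ in the extension graph, define the extremal pair by optimizing each coordinate independently, and then invoke the characterization of $\JH(I(\cdot,\cdot))$ via the partial order $\leq$.

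Two minor remarks. First, in Part~1 you invoke \cref{reformulation of I}, but that lemma is stated only for $\tau,\tau'\in\JH(\Inj_1\sigma')$, i.e.\ for the classical $\Gamma$-representation $I(\sigma,\tau)$. Since here $\kappa\in\JH(\Inj_n\sigma)$ with $n$ possibly large, you should instead cite \cref{Cor on socle filtration}, which gives the same conclusion $\theta'\in\JH(I(\theta,\tau))\iff \theta'-\theta\leq\tau-\theta$ in the generality you need; this is exactly what the paper does. Second, in Part~2 you normalize the ordered pair $(\alpha',\beta')$ by proximity to the origin $F(\mu-\eta)$, whereas the paper normalizes by proximity to $\kappa$ (carried over from Part~1). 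Both choices pick out opposite corners of the same sub-hypercube and yield the required $\Gamma$-representation; your appeal to \cref{normal domain} for the cardinality $2^m$ is unnecessary, since the product structure follows directly from \cref{prop on JH} and \cref{prop on W(r)} as the paper argues, but it does no harm.
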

\begin{proof}
As $\overline{\rho}$ is $2n$-generic, by \cref{prop on W(r)}, all $\sigma\in W(\overline{\rho})$ is $(2n-1)$-generic and \cref{Main theorem on generalization} applies. We assume $\kappa=F(\mathfrak{t}_\mu(\omega))$. By \cref{prop on W(r)}, if $\sigma\in W(\overline{\rho})$, then $\sigma=F(\mathfrak{t}_\mu(\xi))$ such that $\xi_j=0$ if $\gamma_{f-1-j}=0$ and $\xi_j\in \{0, \sgn(s_j)\}$ otherwise. The value of $\xi_j$ for each $j$ is independent. Therefore, we let $\alpha=F(\mathfrak{t}_\mu(\xi'))$ (respectively $\beta=F(\mathfrak{t}_\mu(\xi''))$), where $|\xi'_j-\omega_j|$ (respectively $|\xi''_j-\omega_j|$) is maximum (respectively minimum) for all $j$. As the Serre weights in $W(\overline{\rho})$ are given by the corner of a hypercube in the extension graph \cref{prop on W(r)}, there exists a unique corner with the maximal (respectively minimal) distance from $\kappa$. \par
By \cref{Cor on socle filtration}, if $\sigma'\in I(\beta,\kappa)$, then $\sigma'$ is closer to $\kappa$ than $\beta$; therefore $\sigma'\notin W(\overline{\rho})$. Continued with the notation in 1, if $\sigma\colonequals F(\mathfrak{t}_\mu(\xi))\in W(\overline{\rho})$, then for each $j$, $|\xi''_j-\omega_j|\leq |\xi_j-\omega_j|$ and $|\xi''_j-\xi_j|\leq 1$, by the choice of $\beta$ and \cref{prop on W(r)}. Therefore, we must have for each $j$, $0\leq\xi''_j-\omega_j\leq  \xi_j-\omega_j$ or $0\geq\xi''_j-\omega_j\geq  \xi_j-\omega_j$, that is $\beta-\kappa\leq \sigma-\kappa$. By \cref{Cor on socle filtration}, this implies that $\beta$ is a subquotient of $I(\sigma,\kappa)$.\par
By the proof of \cref{lattice is mk1n torsion}, $\JH(\overline{\sigma}(\lambda,\tau))\subset \JH(\Inj_n\sigma)$ for all $\sigma\in W(\overline{\rho}, \lambda,\tau)$. Again, by \cref{prop on JH}, the Serre weights in $W(\overline{\rho})$ are given by the corner of a hypercube. Similarly, by the proof of \cref{resmultfree}, $\JH(\overline{\sigma}(\lambda,\tau))$ is given by integral points of a hypercuboid. As the intersection of a hypercuboid with a hypercube is still a hypercube (of possibly smaller dimension), we can find the vertice $\alpha=F(\mathfrak{t}_\mu(\xi'))$ (respectively $\beta=F(\mathfrak{t}_\mu(\xi''))$) which is furthest (respectively closest from $\kappa$). Then for any $\sigma=F(\mathfrak{t}_\mu(\xi))\in W(\overline{\rho},\lambda,\tau)$, we have $|\xi'_j-\omega_j|\leq |\xi_j-\omega_j|\leq|\xi''_j-\omega_j|$ for all $j$. Furthermore, by \cref{prop on W(r)}, $|\xi'_j-\xi''_j|\leq 1$ for all $j$. Therefore, for each $j$, $ \xi_j=\xi'_j$ or $\xi''_j$ and hence $\sigma-\alpha'\leq \beta'-\alpha'$. By \cref{Cor on socle filtration}, we deduce that $\sigma\in I(\alpha',\beta')$. The last claim follows from the fact that $|\xi'-\xi''|\leq 1$ and \cref{Main theorem on generalization}.
\end{proof}
By Nakayama's lemma, $M_\infty (\sigma(\lambda,\tau)_\kappa)$ is cyclic if and only if $M_\infty (\overline{\sigma}(\lambda,\tau)_\kappa)$ is cyclic.
 For any $v\in \mathcal{S}$, if $\kappa_v\notin W(\overline{r}_v)$, then by \cref{Serre weight conjecture} and the exactness of the patching functor, $M_\infty(\kappa_v\otimes W^v)=0$ for any $W^v$ which is a $\prod_{S\setminus\{v\}}\GL_2(k_v)$-representation over $\F$. Let $W=\otimes_{v\in \mathcal{S}}W_v$. If $\kappa_v\in \soc(W_v)$ and $\kappa_v\notin W(\overline{r}_v) $, by the exactness of the patching functor, $M_\infty(W)=M_\infty(W/(\kappa_v\otimes_{w\in \mathcal{S}\setminus\{v\}} W_w))=M_\infty((W_v/\kappa_v)\otimes_{w\in \mathcal{S}\setminus\{v\}} W_w)$. Similarly, if $\kappa_v\subset\cosoc(W_v)$ and $\kappa_v\notin W(\overline{r}_v) $, we take $W'_v$ to be the pre-image of the quotient map $W_v\twoheadrightarrow\kappa_v$. Then $M_\infty(W)=M_\infty(W'_v\otimes_{w\in \mathcal{S}\setminus\{v\}}W_w)$. 
 Therefore, applying the argument recursively, we reduce it to the subquotient $W=\otimes_v W_v$ for which all its socle and cosocle are modular Serre weights. By \cref{prop on W(r)} and \cref{Cor on socle filtration}, we deduce that all the Jordan--H\"older factors of $W$ are modular Serre weights.\par
Since the cosocle filtration of $\overline{\sigma}(\lambda,\tau)_\kappa$ is given by \cref{Cor on socle filtration}), together with \cref{cutting down}, we can deduce that $W_v$ is the $\Gamma$-representation $I(\alpha_v, \beta_v)$ such that $ W(\overline{r}_v,\lambda_v, \tau_v)=\JH(W_v)$. Therefore,
\begin{equation}\label{reduction of Serre weight}
     M_\infty (\overline{\sigma}(\lambda,\tau)_\kappa)\cong M_\infty (\otimes_vW_v)\cong M_\infty (\otimes_vI(\alpha_v, \beta_v)).
 \end{equation}
 By \cite[Proposition~3.5.2]{EGS}, for each $v\in \mathcal{S}$, there exists a tame type $\tau'_v$, such that $\JH(I(\alpha_v, \beta_v))\subset \JH(\overline{\sigma}(\tau'_v))$. We can find a lattice $\sigma(\tau'_v)_{\beta_v}\subset \sigma(\tau'_v)$ with cosocle $\beta_v$, then $\otimes_{v\in \mathcal{S}}I(\alpha_v, \beta_v)$ is isomorphic to a quotient $\widetilde{W}$ of $\overline{\sigma}(\tau'_\mathcal{S})_\beta\colonequals \otimes_{v\in\mathcal{S}}\overline{\sigma}(\tau'_v)_{\beta_v}$. We will finish the proof by showing that $M_\infty(\overline{\sigma}(\tau'_\mathcal{S})_\beta)$ is cyclic. When $\mathcal{S}=\{v\}$, such result follows from \cite[Theorem 10.1.1]{EGS}. We will generalize such result for any $\mathcal{S}$ in the follows. \par
 By \cite[Theorem~7.2.1]{EGS}, the special fibre $\overline{R}_{\infty}^{\tau'_\mathcal{S}}$ (defined in \cref{def of Rtau}) is a power series ring over
$$\widehat{\otimes}_{v\in \mathcal{S}}\F\llbracket (X'_{j_v}, Y'_{j_v})_{j_v\in \mathcal{K}_v}\rrbracket /(X'_{j_v}Y'_{j_v})_{j_v\in \mathcal{K}_v}.$$
for some $\mathcal{K}_v\subset\{1, \dots, f_v\}$.
 Let $\mathcal{K}=\prod_{v\in\mathcal{S}}\mathcal{K}_v$. Using the notation of \cite{EGS}, for each $\prod_{v\in \mathcal{S}} J_v\subset \mathcal{K}$, we have $(\sigma_{J_v})\in W(\overline{r}_v)$. 
We generalize the proof of \cite[Theorem~10.1.1]{EGS}. (We swapped the notation of $\mathcal{W}$ and $\mathcal{J}$ appearing in \cite{EGS}.)
For $\mathcal{W}\colonequals \prod_v \mathcal{W}_v \subset \mathcal{J}$, we write $J=\prod_v J_v \in \mathcal{W}$ if $J_v\in \mathcal{W}_v$ for all $v\in \mathcal{S}$. 
 
Moreover, given a Serre weight $\sigma_{J_v}$ for each $v\in \mathcal{S}$, we define $\sigma_J\colonequals \otimes_{v} \sigma_{J_v}$. Then we write $I_\mathcal{W}$ for the radical ideal in $\overline{R}^{\tau_S}$, which cuts out the induced reduced structure on the closed subspace $\bigcup_{ J'\in \mathcal{W}}X_\infty(\sigma_J)$.
 The notion of interval (\textit{cf.} \cite[Definition~10.1.4]{EGS}) and capped interval is still well-defined. We define $\mathcal{F}(J_1,J_2)$ and $\mathcal{F}(J_1,J_2)^\times$ analogously to \cite[Definition~10.1.5]{EGS}, and generalize \cite[Lemmas~10.1.6, 10.1.8]{EGS} as follows.
\begin{lemma}\label{quotient of intervals}
    The quotient $I_{\mathcal{F}(J_1, J_2)^\times}/I_{\mathcal{F}(J_1, J_2)}$ is isomorphic to $R_\infty^{\tau_\mathcal{S}}/I_{\{J_1\}}$, in particular it is cyclic and is generated by $\prod_{v\in \mathcal{S}}\prod_{j_v\in {J_2}_v\setminus {J_1}_v}X'_{j_v}$. 
\end{lemma}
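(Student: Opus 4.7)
The plan is to mimic the argument of \cite[Lemma 10.1.6]{EGS} while keeping careful track of the tensor-product structure over the places $v \in \mathcal{S}$. By \cref{normal domain}, the special fibre $\overline{R}^{\tau'_\mathcal{S}}$ is a power series ring over
\[
B := \widehat{\bigotimes}_{v\in \mathcal{S}} \F\llbracket (X'_{j_v},Y'_{j_v})_{j_v\in \mathcal{K}_v}\rrbracket /(X'_{j_v}Y'_{j_v})_{j_v\in \mathcal{K}_v},
\]
and the formally smooth tail $Z_1,\ldots,Z_{f-m+4}$ is irrelevant for the ideal-theoretic computation, so I will suppress it. Under the identification of components provided by \cref{normal domain}, for each $J=\prod_v J_v \subset \mathcal{K}$ the prime $I_{\{J\}}$ is the sum of the pulled-back ideals from each factor, each of which is generated by the explicit collection of $X'_{j_v}$'s and $Y'_{j_v}$'s complementary to the ``coordinate hyperplane'' cut out by $J_v$ on the factor at $v$.

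First I would give an explicit monomial description of $I_{[J_1,J_2]}:=\bigcap_{J_1\subset J\subset J_2} I_{\{J\}}$, and then of $I_{\mathcal{F}(J_1,J_2)^\times}$ and $I_{\mathcal{F}(J_1,J_2)}$, using the relation $X'_{j_v}Y'_{j_v}=0$ to reduce every intersection to a monomial ideal in the $X'$'s and $Y'$'s. In the single-place case this is exactly the calculation in \cite[Lemma 10.1.6]{EGS}; the multi-place case is an immediate consequence because $B$ is a completed tensor product and both $I_{\{J\}}$ and the combinatorial sets defining $\mathcal{F}(J_1,J_2)^\times$ and $\mathcal{F}(J_1,J_2)$ decompose factor-by-factor.

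Next I would show that the element $\xi := \prod_{v\in \mathcal{S}}\prod_{j_v\in (J_2)_v\setminus (J_1)_v} X'_{j_v}$ lies in $I_{\mathcal{F}(J_1,J_2)^\times}$ but not in $I_{\{J_1\}}$: on the component cut out by $I_{\{J_1\}}$ the variables $X'_{j_v}$ with $j_v \in (J_2)_v\setminus (J_1)_v$ survive as part of a system of free parameters, whereas on any component appearing in $\mathcal{F}(J_1,J_2)^\times$ at least one of these $X'_{j_v}$ is forced to vanish. Multiplication by $\xi$ then defines a map $\overline{R}^{\tau'_\mathcal{S}}\to I_{\mathcal{F}(J_1,J_2)^\times}/I_{\mathcal{F}(J_1,J_2)}$, and the heart of the argument is to show that this map descends to an isomorphism $\overline{R}^{\tau'_\mathcal{S}}/I_{\{J_1\}}\xrightarrow{\sim} I_{\mathcal{F}(J_1,J_2)^\times}/I_{\mathcal{F}(J_1,J_2)}$. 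Surjectivity is verified on monomial generators: the explicit description from the first step shows that any monomial generator of $I_{\mathcal{F}(J_1,J_2)^\times}$ is congruent modulo $I_{\mathcal{F}(J_1,J_2)}$ to $\xi$ times a monomial in the variables free on the $J_1$-component (i.e.\ the $Y'_{j_v}$ with $j_v\in (J_1)_v$ and $X'_{j_v}$ with $j_v\notin (J_1)_v$). Injectivity amounts to the annihilator calculation that $\xi\cdot r \in I_{\mathcal{F}(J_1,J_2)}$ forces $r\in I_{\{J_1\}}$, which one checks on monomials using the relations $X'_{j_v}Y'_{j_v}=0$.

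The main obstacle will be the combinatorial bookkeeping: keeping straight the three sets $[J_1,J_2]$, $\mathcal{F}(J_1,J_2)^\times$, $\mathcal{F}(J_1,J_2)$ across the tensor product, and identifying the annihilator of $\xi$ modulo the larger ideal. However, because $B$ and all the relevant ideals factor over $v$, the argument cleanly reduces, one place at a time, to the single-place statement already established in \cite[Lemma 10.1.6]{EGS}; once the factorwise statement is in hand, an induction on $|\mathcal{S}|$ using the flatness of each $\F\llbracket X'_{j_v},Y'_{j_v}\rrbracket/(X'_{j_v}Y'_{j_v})$ over $\F$ yields the full statement.
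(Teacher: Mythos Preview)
Your proposal is correct and follows the same approach as the paper, which simply asserts that the lemma is an easy generalization of \cite[Lemma 10.1.6]{EGS} to the tensor-product setting over $v\in\mathcal{S}$. One minor point: in this context the structure of the special fibre comes from \cite[Theorem 7.2.1]{EGS} (since $\tau'_{\mathcal{S}}$ has Hodge--Tate weight $\eta$) rather than from \cref{normal domain}, but the resulting ring is the same and the argument is unaffected.
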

\begin{lemma}\label{ideal of two intervals}
    If $\mathcal{W}_1, \mathcal{W}_2$ are two capped intervals in $\mathcal{J}$ that share a common cap, then $I_{\mathcal{W}_1}+I_{\mathcal{W}_2}=I_{\mathcal{W}_1\cap\mathcal{W}_2}$.
\end{lemma}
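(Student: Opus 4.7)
The plan is as follows. The inclusion $I_{\mathcal{W}_1}+I_{\mathcal{W}_2}\subseteq I_{\mathcal{W}_1\cap\mathcal{W}_2}$ is immediate from the contravariance of $\mathcal{W}\mapsto I_\mathcal{W}$: since $I_\mathcal{W}=\bigcap_{J\in\mathcal{W}}I_{\{J\}}$, the inclusions $\mathcal{W}_1\cap\mathcal{W}_2\subseteq\mathcal{W}_i$ give $I_{\mathcal{W}_i}\subseteq I_{\mathcal{W}_1\cap\mathcal{W}_2}$ for $i=1,2$, and hence $I_{\mathcal{W}_1}+I_{\mathcal{W}_2}\subseteq I_{\mathcal{W}_1\cap\mathcal{W}_2}$. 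The substantive content is the reverse inclusion, which is where the shared-cap hypothesis enters in an essential way (the simple example where $\mathcal{W}_1=\{J_1\}$ and $\mathcal{W}_2=\{J_2\}$ with $J_1\neq J_2$ in the one-variable local ring already shows that the lemma fails without such a hypothesis).

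For the reverse inclusion, by duality I may assume $\mathcal{W}_1$ and $\mathcal{W}_2$ share a common lower cap $J_0$, so that $\mathcal{W}_i=\mathcal{F}(J_0,J_i)$ and $\mathcal{W}_1\cap\mathcal{W}_2=\mathcal{F}(J_0,J_1\cap J_2)$ is again a capped interval (the shared-upper-cap case being symmetric). Since the special fibre of $R_\infty^{\tau_\mathcal{S}}$ is a power series ring over $\widehat{\bigotimes}_{v\in\mathcal{S}}\F\llbracket(X'_{j_v},Y'_{j_v})_{j_v\in\mathcal{K}_v}\rrbracket/(X'_{j_v}Y'_{j_v})_{j_v\in\mathcal{K}_v}$, and since both the indexing set $\mathcal{J}=\prod_v\mathcal{J}_v$ and each capped interval factor as products in $v$, a standard tensor-product-over-ideals argument reduces the statement to the case $|\mathcal{S}|=1$. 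In this local setting the components $\Spec R_\infty^{\tau_\mathcal{S}}/I_{\{J\}}$ are cut out by linear monomials $X'_{j}, Y'_{j}$ according to whether $j\in J$ or $j\notin J$, so every $I_\mathcal{W}$ is a monomial ideal in the ``binomial'' ring $\F\llbracket X'_j,Y'_j\rrbracket/(X'_jY'_j)$.

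The core of the argument is then an induction on $|J_1\triangle J_2|$. The base case $J_1=J_2$ is vacuous. In the inductive step I would pick an index $j_0\in J_1\setminus J_2$, set $J_1^\circ:=J_1\setminus\{j_0\}$, and consider the smaller capped interval $\mathcal{W}_1^\circ:=\mathcal{F}(J_0,J_1^\circ)$. Using \cref{quotient of intervals} applied to the pair $(J_1^\circ,J_1)$, the ideal $I_{\mathcal{W}_1^\circ}$ is generated over $I_{\mathcal{W}_1}$ by a single explicit monomial involving $X'_{j_0}$. The inductive hypothesis applied to $(\mathcal{W}_1^\circ,\mathcal{W}_2)$, together with the observation that $\mathcal{W}_1^\circ\cap\mathcal{W}_2=\mathcal{W}_1\cap\mathcal{W}_2$ (since $j_0\notin J_2$), then allows me to propagate the identity back to $(\mathcal{W}_1,\mathcal{W}_2)$ after checking that the extra monomial generator picked up along the way lies in $I_{\mathcal{W}_1}+I_{\mathcal{W}_2}$, which follows from $X'_{j_0}Y'_{j_0}=0$ and an inspection of the generators of $I_{\mathcal{W}_2}$.

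The main obstacle I anticipate is the combinatorial bookkeeping: identifying a clean minimal generating set of monomials for $I_{\mathcal{F}(J_0,J)}$ in terms of $(J_0,J)$, and matching these across the inductive step. Once this explicit monomial description is in hand, the identity $I_{\mathcal{W}_1}+I_{\mathcal{W}_2}=I_{\mathcal{W}_1\cap\mathcal{W}_2}$ reduces to a distributive law for monomial ideals in $\F\llbracket X'_j,Y'_j\rrbracket/(X'_jY'_j)$, verifiable by direct inspection; the tensor product then re-assembles the statement for arbitrary $\mathcal{S}$.
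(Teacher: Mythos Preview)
Your overall strategy is sound and the conclusion is correct, but one step is mis-cited and the whole induction is more elaborate than necessary. Two remarks:

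\textbf{(1) The appeal to \cref{quotient of intervals} does not fit.} That lemma concerns $I_{\mathcal{F}(J_1,J_2)^\times}/I_{\mathcal{F}(J_1,J_2)}$, i.e.\ removing the single cap element from the interval, whereas your passage from $\mathcal{W}_1=\mathcal{F}(J_0,J_1)$ to $\mathcal{W}_1^\circ=\mathcal{F}(J_0,J_1^\circ)$ removes \emph{all} $J$ containing $j_0$. The conclusion you want, namely $I_{\mathcal{W}_1^\circ}=I_{\mathcal{W}_1}+(X'_{j_0})$, is true, but it does not follow from \cref{quotient of intervals} applied to $(J_1^\circ,J_1)$; it follows instead from the explicit monomial description below.

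\textbf{(2) The direct computation you flag as a ``fallback'' is in fact the whole proof.} With the convention $I_{\{J\}}=(X'_j:j\notin J)+(Y'_j:j\in J)$, one checks immediately (radicality is clear since the quotient is a completed tensor product of reduced rings) that
\[
I_{\mathcal{F}(J_0,J_1)}=(X'_j:j\notin J_1)+(Y'_j:j\in J_0).
\]
Then for a common upper cap $J_1$ one has
\[
I_{\mathcal{F}(J_0,J_1)}+I_{\mathcal{F}(J_0',J_1)}=(X'_j:j\notin J_1)+(Y'_j:j\in J_0\cup J_0')=I_{\mathcal{F}(J_0\cup J_0',\,J_1)}=I_{\mathcal{W}_1\cap\mathcal{W}_2},
\]
and the lower-cap case is the symmetric computation with $X'$ and $Y'$ interchanged. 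No induction or use of \cref{quotient of intervals} is needed.

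For comparison: the paper does not give an independent proof of this lemma at all; it simply records it as the evident generalization of \cite[Lemma~10.1.8]{EGS} to the product situation over $\mathcal{S}$. Your explicit argument is thus more detailed than what the paper provides, and once streamlined as above it is entirely self-contained.
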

By the argument in the proof of \cite[Theorem 10.1.1]{EGS}, for each interval $\mathcal{W}\subset \mathcal{J}$, there is
a subquotient $\overline{\sigma}(\tau_\mathcal{S})^\mathcal{W}$ of $\overline{\sigma}(\tau'_\mathcal{S})_\beta$, uniquely characterized by the property that $\JH(\overline{\sigma}(\tau'_\mathcal{S})^\mathcal{W})=\{\sigma_{J'}\}_{J'\in \mathcal{W}}$. We finish the proof by proving the following proposition and take $\mathcal{W}=\mathcal{J}.$
\begin{proposition}
    For any capped interval $\mathcal{W}\subset \mathcal{J}$, $M_\infty(\overline{\sigma}(\tau_\mathcal{S})^\mathcal{W})$ is cyclic.
\end{proposition}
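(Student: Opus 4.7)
The plan is to proceed by induction on $|\mathcal{W}|$, adapting the strategy of \cite[Theorem 10.1.1]{EGS} to the product setup over $\mathcal{S}$. The single-place case of \cite{EGS} extends to general $\mathcal{S}$ because $\mathcal{K}=\prod_{v\in\mathcal{S}}\mathcal{K}_v$ and both $\overline{R}_\infty^{\tau'_\mathcal{S}}$ and the capped-interval combinatorics respect this product decomposition; the axioms of the patching functor together with the two combinatorial lemmas \cref{quotient of intervals} and \cref{ideal of two intervals} are all compatible with this structure.

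For the base case $|\mathcal{W}|=1$, write $\mathcal{W}=\{J\}$, so $\overline{\sigma}(\tau'_\mathcal{S})^\mathcal{W}=\sigma_J$. By \cref{normal domain}, the quotient $\overline{R}_\infty^{\sigma_J}=R_\infty^{\tau'_\mathcal{S}}/I_{\{J\}}$ is a formal power series ring over $\F$, hence regular local; since $M_\infty(\sigma_J)$ is maximal Cohen-Macaulay of generic rank one on each connected component (by minimality of $M_\infty$ and unramifiedness of the coefficients), the Auslander--Buchsbaum formula forces it to be free of rank one, in particular cyclic.

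For the inductive step, I would write $\mathcal{W}=\mathcal{F}(J_1,J_2)$ with distinct caps $J_1,J_2$, and use the short exact sequence coming from the cosocle filtration
\begin{equation*}
0\to\overline{\sigma}(\tau'_\mathcal{S})^{\mathcal{F}(J_1,J_2)^\times}\to\overline{\sigma}(\tau'_\mathcal{S})^{\mathcal{W}}\to\sigma_{J_1}\to 0.
\end{equation*}
Applying exactness of $M_\infty$, both the left-hand and right-hand patched modules are cyclic by the inductive hypothesis (or the base case). To deduce cyclicity of the middle, I would invoke \cref{quotient of intervals}: the quotient $I_{\mathcal{F}(J_1,J_2)^\times}/I_{\mathcal{F}(J_1,J_2)}$ is principal, generated by $\mathbf{X}:=\prod_{v\in\mathcal{S}}\prod_{j_v\in J_{2,v}\setminus J_{1,v}}X'_{j_v}$. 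This element provides the compatibility needed to glue a lift of a generator of $M_\infty(\sigma_{J_1})$ with a generator of $M_\infty(\overline{\sigma}(\tau'_\mathcal{S})^{\mathcal{F}(J_1,J_2)^\times})$ into a single generator of the middle module, since multiplication by $\mathbf{X}$ carries the lift back into the image of the submodule.

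The main obstacle is that $\mathcal{F}(J_1,J_2)^\times$ is typically \emph{not} itself a single capped interval but a union of smaller capped intervals. To handle this, I plan to decompose $\mathcal{F}(J_1,J_2)^\times=\mathcal{W}_1\cup\cdots\cup\mathcal{W}_r$ into capped intervals pairwise sharing a common cap, apply the inductive hypothesis to each $\mathcal{W}_i$, and then glue the resulting cyclic modules via a Mayer--Vietoris-style argument using \cref{ideal of two intervals}. Tracking the explicit cyclic generators through this decomposition is the technical heart of the argument; the tensor-product structure over $v\in\mathcal{S}$ does not obstruct matters because both the ideals $I_{\mathcal{W}_i}$ and the generators $\mathbf{X}$ factor over $v\in\mathcal{S}$.
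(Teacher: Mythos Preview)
Your proposal is correct and matches the paper's approach: induction on $|\mathcal{W}|$, with the base case handled via regularity of the support together with minimality and Auslander--Buchsbaum (what the paper calls ``the method of \cite{multiplicityone}''), and the inductive step following \cite[Lemma 10.1.12, Theorem 10.1.1]{EGS} with \cref{quotient of intervals} and \cref{ideal of two intervals} replacing their single-place analogues. One small correction: the regularity of $\overline{R}_\infty^{\sigma_J}$ here comes from \cite[Theorem 7.2.1]{EGS} (for the auxiliary Barsotti--Tate type $\tau'$, invoked just before the proposition), not from \cref{normal domain}, which concerns the original pair $(\lambda,\tau)$.
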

\begin{proof}
We will prove this by inducting on $|\mathcal{W}|$.
If $|\mathcal{W}|=1$, the ring $R_\infty(\tau'_\mathcal{S})$ is regular. As $M_\infty$ is a minimal patching functor, by the method of \cite{multiplicityone}, $M_\infty(\sigma^\circ(\tau'_\mathcal{S}))$ is of rank one over $R_\infty(\tau'_\mathcal{S})$ for any lattice $\sigma^\circ(\tau'_\mathcal{S})\subset\sigma(\tau'_\mathcal{S})$. The argument relies on studying $R_\infty$ and is independent of patching using unitary group or quaternion algebra. For the induction step, it follows exactly as in \cite[Lemma~10.1.12, 10.1.13]{EGS} and with Lemmas 10.1.6, 10.1.8 replaced by \cref{quotient of intervals}, \cref{ideal of two intervals}.
\end{proof} 
\end{proof}
\section{Properties of \texorpdfstring{$\pi[\mathfrak{m}_{K_1}^n]$}{TEXT} %
}\label{ch7: Properties of pi(rho)}
\begin{proposition}\label{existence of D_n}
    Given a finite set $\mathcal{D}$ of distinct $(2n-1)$-generic Serre weights. There exists a unique, up to isomorphism, representation $D_0^n(\mathcal{D})$, which is $\m_{K_1}^n$-torsion such that \hfill\begin{enumerate}[label=(\roman*)]
        \item $\soc(D_0^n(\mathcal{D}))=\bigoplus_{\sigma\in \mathcal{D}} \sigma$
        \item $[D_0^n(\mathcal{D})\colon \sigma]=1$ for all $\sigma\in \mathcal{D}$
        \item $D_0^n(\mathcal{D})$ is maximal with respect to properties (i) and (ii)
    \end{enumerate}
    Moreover, there is an isomorphism $D_0^n(\mathcal{D})=\bigoplus_{\sigma\in D}D^n_{0,\sigma}(\mathcal{D})$ where $D^n_{0,\sigma}(\mathcal{D})$ is the unique maximal subrepresentation of $\Inj_n \sigma$ such that $[D^n_{\sigma}(\mathcal{D})\colon \sigma]=1$ and $[D^n_{\sigma}(\mathcal{D})\colon \sigma']=0$ for any $\sigma'\in \mathcal{D}$ with $\sigma'\neq \sigma$.
\end{proposition}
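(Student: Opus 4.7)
The plan is to construct $D^n_{0,\sigma}(\mathcal{D})$ explicitly as a certain subrepresentation of $\Inj_n\sigma$, to define $D_0^n(\mathcal{D})$ as the direct sum of these, and then to show that any representation satisfying (i) and (ii) embeds into it, giving simultaneously the existence, uniqueness, and direct sum decomposition.

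First I would define, for each $\sigma\in \mathcal{D}$,
\[
D^n_{0,\sigma}(\mathcal{D}) := \sum_{W} W,
\]
where $W$ runs over the subrepresentations of $\Inj_n\sigma$ with $[W:\sigma]=1$ and $[W:\sigma']=0$ for all $\sigma'\in\mathcal{D}\setminus\{\sigma\}$. To see this sum itself lies in the same family, the key input is that $\soc(\Inj_n\sigma)=\sigma$ is simple, so any two non-zero subrepresentations $W_1,W_2\subset\Inj_n\sigma$ satisfy $W_1\cap W_2\supseteq\sigma$, hence $[W_1\cap W_2:\sigma]\geq 1$; the standard inclusion-exclusion inequality then yields $[W_1+W_2:\sigma]\leq 1$, while the multiplicity bound for $\sigma'\in\mathcal{D}\setminus\{\sigma\}$ is trivially preserved. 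Finiteness of $\Inj_n\sigma$ (by induction on $n$ using \cref{lemma 3}) guarantees that the sum stabilizes, so $D^n_{0,\sigma}(\mathcal{D})$ is the unique maximal such subrepresentation.

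Next, I set $D_0^n(\mathcal{D}):=\bigoplus_{\sigma\in\mathcal{D}} D^n_{0,\sigma}(\mathcal{D})$. Property (i) is immediate since each summand is a subrepresentation of $\Inj_n\sigma$ with socle $\sigma$, and property (ii) follows since $[D^n_{0,\sigma}(\mathcal{D}):\sigma']$ equals $1$ if $\sigma'=\sigma$ and $0$ otherwise for $\sigma'\in\mathcal{D}$. For maximality (iii) and uniqueness, suppose $V$ is any representation satisfying (i) and (ii). Since $\bigoplus_{\sigma\in\mathcal{D}}\Inj_n\sigma$ is injective in the category of $\mathfrak{m}_{K_1}^n$-torsion smooth representations of $\GL_2(\mathcal{O}_K)/Z_1$, the socle inclusion $\soc V=\bigoplus_\sigma\sigma\hookrightarrow\bigoplus_\sigma\Inj_n\sigma$ extends to a morphism $\iota\colon V\to\bigoplus_\sigma\Inj_n\sigma$. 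This morphism is injective: any non-zero subrepresentation of $V$ has non-zero socle contained in $\soc V$, on which $\iota$ is injective by construction.

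Finally, let $\pi_\sigma\colon\bigoplus_{\sigma'}\Inj_n\sigma'\twoheadrightarrow\Inj_n\sigma$ denote the projection and set $W_\sigma:=\pi_\sigma(\iota(V))$. The kernel $K_\sigma:=\iota(V)\cap\bigoplus_{\sigma'\neq\sigma}\Inj_n\sigma'$ contains, for each $\sigma'\in\mathcal{D}\setminus\{\sigma\}$, the unique copy of $\sigma'$ in $V$ (which lies in $\soc V$ and is mapped to the $\sigma'$-component by $\iota$). Hence $[W_\sigma:\sigma']=[\iota(V)/K_\sigma:\sigma']=0$ for $\sigma'\in\mathcal{D}\setminus\{\sigma\}$, and $\soc W_\sigma=\sigma$ forces $[W_\sigma:\sigma]=1$. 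By the maximality built into the definition, $W_\sigma\subset D^n_{0,\sigma}(\mathcal{D})$. Therefore $V\hookrightarrow\bigoplus_\sigma W_\sigma\subset D_0^n(\mathcal{D})$. This simultaneously shows that $D_0^n(\mathcal{D})$ is the maximum among representations satisfying (i) and (ii), and that any $V$ satisfying (i), (ii), (iii) must equal $D_0^n(\mathcal{D})$, establishing uniqueness together with the direct sum decomposition. The main conceptual point—which is not really an obstacle but rather the heart of the argument—is that property (ii) forces each off-diagonal composition factor in $\mathcal{D}$ to lie in the kernel of the relevant projection, which is what allows the representation to split into pieces indexed by $\mathcal{D}$.
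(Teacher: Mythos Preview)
Your proof is correct and follows essentially the same approach as the paper, which simply cites \cite[Proposition 13.1]{breuil2012towards} and \cite[Corollary 4.2]{hu2022mod} with the obvious replacements; you have written out those arguments explicitly. One small wording issue: the clause ``$\soc W_\sigma=\sigma$ forces $[W_\sigma:\sigma]=1$'' only gives the lower bound---the upper bound $[W_\sigma:\sigma]\le [V:\sigma]=1$ comes from $W_\sigma$ being a quotient of $\iota(V)$, which you already used for the $\sigma'$ count.
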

\begin{proof}
The first three statements and the isomorphism follow from the same proof in \cite[Proposition~13.1]{breuil2012towards}, replacing $\Hom_\Gamma$ with $\Hom_{K/Z_1}$ by \cite[Lemma~2.4.6]{BHHMS} and replacing representations of $\Gamma$ with representations of $K/Z_1$ which are $\mathfrak{m}_{K_1}^n$-torsion etc. The last statement follows from the same proof in \cite[Corollary~4.2]{hu2022mod}.
\end{proof}
Assume that $\overline{\rho}$ is $2n$-generic for some $n\geq 0$. Then by \cref{prop on W(r)}, if $\sigma\in W(\overline{\rho})$, $\sigma$ is $(2n-1)$-generic. In this case, we define $D^n_0(\overline{\rho})\colonequals D^n_0(W(\overline{\rho}))$ and similarly $D^n_{0,\sigma}(\overline{\rho})\colonequals D^n_{0,\sigma}(W(\overline{\rho}))$.\par
By \cref{Cor on socle filtration}, we have $\dim_{\overline{\F}}(\Hom(I(\sigma, \tau), I(\sigma, \tau')))=1$ if $\tau-\sigma\leq\tau'-\sigma$ and 0 otherwise. If $\Hom(I(\sigma, \tau), I(\sigma, \tau'))\neq0$, we fix $\iota_{\tau}\colon \sigma\hookrightarrow I(\sigma, \tau)$, and let $\phi_{\tau, \tau'}\colon I(\sigma, \tau)\hookrightarrow I(\sigma, \tau')$ be the unique embedding such that $\iota_{\tau'}=\phi_{\tau, \tau'}\circ\iota_{\tau}$.
\begin{lemma}\label{lemma on rep with modular serre wight soc}
\hfill\begin{enumerate}
    \item We have $$D^n_{0,\sigma}(\overline{\rho})=\underset{\leq}{\varinjlim}I(\sigma, \tau),$$ where the inductive limit is taken over $\phi_{\tau,\tau'}$ and such that $I(\sigma, \tau)$ does not contain any other $\sigma'\in W(\overline{\rho})$ if $\sigma'\neq \sigma$.
    \item $D^{n}_0(\overline{\rho})=\bigoplus_{\sigma\in W(\overline{\rho})}D^n_{0, \sigma}(\overline{\rho})$ is multiplicity free.
    \item For any $\sigma\in W(\overline{\rho})$, we have $D_{0,\sigma}(\overline{\rho})\subset D^n_{0,\sigma}(\overline{\rho})$ and $D^n_{0,\sigma}(\overline{\rho})^{K_1}=D_{0,\sigma}(\overline{\rho})$.
    \end{enumerate}
\end{lemma}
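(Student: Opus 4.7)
I would adapt the strategy of \cite[Corollary 4.2]{hu2022mod}, which handles the case $n=2$, to general $n$, systematically replacing the $n=2$ structural input by our \cref{Main theorem on generalization} and \cref{Cor on socle filtration}.

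For (i), set $\Lambda_\sigma := \{\tau : W(\overline{\rho}) \cap \JH(I(\sigma, \tau)) = \{\sigma\}\}$, a directed set under the order $\tau \leq \tau'$ iff $\tau - \sigma \leq \tau' - \sigma$, and let $L := \varinjlim_{\tau \in \Lambda_\sigma} I(\sigma, \tau)$, where the transition maps $\phi_{\tau, \tau'}: I(\sigma, \tau) \hookrightarrow I(\sigma, \tau')$ come from the uniqueness part of \cref{Main theorem on generalization}. The inclusion $L \subset D^n_{0,\sigma}(\overline{\rho})$ is immediate from the definition. For the other direction, I would use \cref{lemma 10} to extract, from any finite length subrepresentation $V \subset D^n_{0,\sigma}(\overline{\rho})$, subrepresentations with irreducible cosocle $\tau$ and $[\cdot :\sigma]=1$; \cref{Main theorem on generalization} forces these to be $I(\sigma, \tau)$ with $\tau \in \Lambda_\sigma$, and passing to the union yields $D^n_{0,\sigma}(\overline{\rho}) = L$.

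For (ii), multiplicity-freeness of each individual $D^n_{0,\sigma}(\overline{\rho})$ is immediate from (i) combined with the multiplicity-freeness of each $I(\sigma, \tau)$ (\cref{Main theorem on generalization}). The remaining point is $\JH(D^n_{0,\sigma}(\overline{\rho})) \cap \JH(D^n_{0,\sigma'}(\overline{\rho})) = \emptyset$ for $\sigma \neq \sigma'$ in $W(\overline{\rho})$. Given a common constituent $\kappa$, part (i) together with \cref{Cor on socle filtration} translates the membership $\kappa \in \JH(D^n_{0,\sigma}(\overline{\rho}))$ into the constraint $\neg(\sigma'' - \sigma \leq \kappa - \sigma)$ for every $\sigma'' \in W(\overline{\rho}) \setminus \{\sigma\}$, and symmetrically at $\sigma'$. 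Using \cref{prop on W(r)} to locate, for each $j \in J_{\overline{\rho}}$, a modular Serre weight $\sigma^{(j)}$ adjacent to $\sigma$ along the $j$-th coordinate (and symmetrically for $\sigma'$), and picking $j \in J_\sigma \triangle J_{\sigma'}$, the compatibility conditions from $\sigma^{(j)}$ at coordinate $j$ would force $\kappa_j - \sigma_j$ to have sign $\sgn(s_j)$ on one side of the analysis and the opposite sign on the other, yielding a contradiction.

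For (iii), the inclusion $D_{0,\sigma}(\overline{\rho}) \subset D^n_{0,\sigma}(\overline{\rho})$ is immediate from (i) applied at $n=1$ together with the canonical embedding $\Inj_1 \sigma \hookrightarrow \Inj_n \sigma$, since the defining properties for $n=1$ imply those for arbitrary $n$. For $D^n_{0,\sigma}(\overline{\rho})^{K_1} = D_{0,\sigma}(\overline{\rho})$, one uses $V^{K_1} = V[\m_{K_1}]$ together with the fact that $I(\sigma, \tau)$ is $\m_{K_1}$-torsion exactly when $\tau$ is a $1$-weight (\cref{Main theorem on generalization}); the $\m_{K_1}$-torsion part of the directed limit from (i) then coincides with the directed limit restricted to $1$-weights in $\Lambda_\sigma$, which by (i) for $n=1$ is $D_{0,\sigma}(\overline{\rho})$. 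The main technical obstacle is the combinatorial argument in (ii), where the explicit position of the $\sigma^{(j)}$ relative to $\kappa$ in the extension graph, and the chaining of multiple adjacency constraints across coordinates in $J_\sigma \triangle J_{\sigma'}$, has to be set up carefully to derive a contradiction.
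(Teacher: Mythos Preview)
Your proposal is correct and follows the same overall skeleton as the paper, which simply says that parts (i)--(ii) follow \cite[Proposition~13.4, Corollary~13.5]{breuil2012towards} verbatim with \cref{cutting down} substituted for \cite[Lemma~12.8]{breuil2012towards}, and part (iii) follows \cite[Theorem~4.6]{hu2022mod} with the same substitution. The genuine point of divergence is your handling of the disjointness in (ii). The paper's route, via \cref{cutting down}, is cleaner: for any constituent $\kappa$ there is a \emph{unique} closest $\beta\in W(\overline{\rho})$, and $I(\beta,\kappa)$ avoids all other modular Serre weights; hence membership $\kappa\in\JH(D^n_{0,\sigma}(\overline{\rho}))$ forces $\sigma=\beta$, giving disjointness in one stroke. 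Your coordinate-by-coordinate contradiction using $\sigma^{(j)}$ and $\sigma'^{(j)}$ with $j\in J_\sigma\triangle J_{\sigma'}$ is a valid alternative (the hypercube structure of $W(\overline{\rho})$ from \cref{prop on W(r)} guarantees these neighbours exist), but it is essentially re-deriving by hand the piece of \cref{cutting down} needed here.

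Two small points worth tightening. First, your appeal to \cref{lemma 10} in (i) is slightly off: that lemma produces subrepresentations of $\Inj_n\sigma$, not of an arbitrary $V$. What you actually need is that any subrepresentation of $D^n_{0,\sigma}(\overline{\rho})$ with irreducible cosocle $\tau$ automatically has $[\,\cdot:\sigma]\le [D^n_{0,\sigma}(\overline{\rho}):\sigma]=1$, whence it is $I(\sigma,\tau)$ by \cref{Main theorem on generalization}; no extra lemma is required. Second, you assert $\Lambda_\sigma$ is directed without comment; this again follows from the hypercube structure of $W(\overline{\rho})$, since the condition ``$I(\sigma,\tau)$ avoids $W(\overline{\rho})\setminus\{\sigma\}$'' is a coordinate-wise sign constraint and hence closed under the join in the extension graph.
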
 
\begin{proof}
  The proof of 1,2 follows verbatim from \cite[Proposition~13.4, Corollary 13.5]{breuil2012towards} with \cref{cutting down} in place of \cite[Lemma~12.8]{breuil2012towards}. The proof of 3 follows from the construction (cf. \cite[Theorem~4.6]{hu2022mod}).
\end{proof}
Let $F$ be a totally real number field in which $p$ is unramified. Fix $v$ a place dividing $p$. Let $D$ be a quaternion algebra with centre $F$, which splits at exactly one infinite place. Fix a compact open subgroup $U^v$ of $D\otimes_F\mathbb{A}_{F,f}^{v}$. Given a compact open subgroup $U$ of $(D\otimes_F\mathbb{A}_{F,f})^\times$, we let $X_U$ be the associated smooth projective Shimura curve over $F$. Letting $U_v$ run over compact open subgroups of $(D\otimes_F F_v)^\times\cong \GL_2(F_v)$, we consider
\[\pi(\overline{\rho})\colonequals \varinjlim_{U_v}\Hom_{G_F}(\overline{r},H^1_{\textrm{\small \'et}}(X_{U^vU_v}\times_F \overline{F}, \F)),\]
which is an admissible smooth representation of $\GL_2(F_v)$ over $\F$. It is expected that $\pi$ corresponds to $\overline{\rho}\colonequals \overline{r}|_{G_{F_{v}}}$ under the conjectural mod $p$ Langlands Program.
\begin{corollary}\label{nth torsion of pi is mult free}
   Assume that $\overline{\rho}$ is $\max\{2n,12\}$-generic, then
\[\pi(\overline{\rho})[\mathfrak{m}_{K_1/Z_1}^n]\cong D_0^n(\overline{\rho}),\] In particular, it follows from \cref{lemma on rep with modular serre wight soc} that $\pi(\overline{\rho})[\mathfrak{m}_{K_1/Z_1}^n]$ is multiplicity free.
\end{corollary}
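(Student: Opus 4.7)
The argument proceeds by induction on $n$, with the base cases $n=1$ (from \cite{LMS}, \cite{HuWang2}, \cite{multone}) and $n=2$ (from \cite[Theorem 8.4.2]{BHHMS} and \cite[Corollary 8.13]{hu2022mod}) serving as inputs. Fix $n\geq 3$, assume the result for $n-1$, and set $V := \pi(\overline{\rho})[\mathfrak{m}_{K_1}^n]$, so that $V^{n-1} = V[\mathfrak{m}_{K_1}^{n-1}] = \pi(\overline{\rho})[\mathfrak{m}_{K_1}^{n-1}] \cong D_0^{n-1}(\overline{\rho})$. By \cref{lemma on rep with modular serre wight soc} this has the direct sum decomposition $V^{n-1} \cong \bigoplus_{\sigma\in W(\overline{\rho})} D_{0,\sigma}^{n-1}(\overline{\rho})$, which (since $\soc(V) \cong \bigoplus_{\sigma\in W(\overline{\rho})}\sigma$ by the $n=1$ case) lifts to a decomposition $V = \bigoplus_\sigma V_\sigma$ with $\soc(V_\sigma)=\sigma$ and $V_\sigma^{n-1} = D_{0,\sigma}^{n-1}(\overline{\rho})$. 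The plan is then to show $V_\sigma \cong D_{0,\sigma}^n(\overline{\rho})$ for each $\sigma\in W(\overline{\rho})$, which, summed over $\sigma$, yields the claim.

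For the upper bound $V_\sigma \subseteq D_{0,\sigma}^n(\overline{\rho})$: the decomposition already implies $[V_\sigma : \sigma'] = 0$ for every $\sigma' \in W(\overline{\rho})\setminus\{\sigma\}$, and $V_\sigma$ is $\mathfrak{m}_{K_1}^n$-torsion by construction. By the maximality characterization in \cref{existence of D_n}, it suffices to verify $[V_\sigma:\sigma]=1$; once this holds, \cref{cor on multiplicities} applied to $V_\sigma\hookrightarrow \Inj_n\sigma$ forces $V_\sigma$ to be multiplicity-free and the inclusion follows. To bound the multiplicity, I would analyze the $\Gamma$-representation $V_\sigma/V_\sigma^{n-1}\hookrightarrow \Inj_n\sigma/\Inj_{n-1}\sigma$ via \cref{lemma 3} and \cref{Inj1}: any additional copy of $\sigma$ in this quotient would correspond to an $\Inj_1 \delta$ contribution for some $\delta\in \bigcup_i \Delta^i(\sigma)$ that extends a Jordan–Hölder factor of $V_\sigma^{n-1}$, and the explicit cosocle-filtration description of $V_\sigma^{n-1}=D_{0,\sigma}^{n-1}(\overline{\rho})$ supplied by \cref{Main theorem on generalization} rules this out.

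The reverse inclusion $D_{0,\sigma}^n(\overline{\rho})\subseteq V_\sigma$ amounts, by \cref{lemma on rep with modular serre wight soc}, to embedding each $I(\sigma,\tau)$ (where $\tau$ is such that $I(\sigma,\tau)$ contains no other modular Serre weight) into $V_\sigma$. For $\tau$ at extension-graph distance $\leq n-1$ from $\sigma$, this is immediate from the inductive hypothesis, since $I(\sigma,\tau)\subseteq D_{0,\sigma}^{n-1}(\overline{\rho}) = V_\sigma^{n-1}$. For $\tau$ at distance exactly $n$, one picks a $\tau'$ adjacent to $\tau$ at distance $n-1$, so that $I(\sigma,\tau')\subseteq V_\sigma^{n-1}$ by induction, and extends using a change-of-origin (\cref{change of origin}) to reduce the construction of the extension $I(\sigma,\tau')\rightsquigarrow I(\sigma,\tau)$ to an $n=2$-type statement at a translated base point, which is supplied by the $n=2$ case applied appropriately. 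The uniqueness clause of \cref{Main theorem on generalization} then identifies the amalgamated extension with $I(\sigma,\tau)$. The main obstacle is precisely this last step: realizing the outermost cosocle layer inside $\pi(\overline{\rho})$, since the $\mathfrak{m}_{K_1}^n$-torsion extensions at distance $n$ must be shown to genuinely occur in the global completed cohomology rather than merely in the ambient $\Inj_n\sigma$. The resolution relies on the uniqueness statement of \cref{Main theorem on generalization} reducing the problem to finitely many $n=2$ extensions, all of which are guaranteed to lift by the base case.
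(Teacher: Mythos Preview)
Your proposal has a genuine gap in the \emph{lower bound} direction $D_0^n(\overline{\rho})\subseteq \pi(\overline{\rho})[\mathfrak{m}_{K_1}^n]$. This inclusion is a statement about the completed cohomology actually containing certain extensions, and it cannot be deduced from the inductive hypothesis together with representation theory alone. Your reduction ``to an $n=2$-type statement at a translated base point'' does not work: the $n=2$ base case asserts $\pi(\overline{\rho})[\mathfrak{m}_{K_1}^2]\cong D_0^2(\overline{\rho})$, whose socle is $\bigoplus_{\sigma\in W(\overline{\rho})}\sigma$, so it only controls extensions sitting over \emph{modular} Serre weights. The ``translated base points'' arising from \cref{change of origin} or \cref{quotienting out special case} are not in $W(\overline{\rho})$ in general, so the $n=2$ result says nothing about them. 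More conceptually: if $\pi'$ is any admissible representation with $\pi'[\mathfrak{m}_{K_1}^{n-1}]=D_0^{n-1}(\overline{\rho})$ and $\pi'[\mathfrak{m}_{K_1}^n]=\pi'[\mathfrak{m}_{K_1}^{n-1}]$ (for instance $\pi'=D_0^{n-1}(\overline{\rho})$ itself), then $\pi'$ satisfies every hypothesis you invoke but $D_0^n(\overline{\rho})\not\subseteq\pi'[\mathfrak{m}_{K_1}^n]$. So some \emph{global} input specific to $\pi(\overline{\rho})$ is unavoidable. The paper supplies this via the patched module $\mathbb{M}_\infty$ of \cite{CEG}: using that $\mathbb{M}_\infty/(p,x_1,\ldots)$ is a direct sum of projective envelopes, one gets $\Hom_{K/Z_1}(D^n_{0,\sigma},\pi(\overline{\rho}))\xrightarrow{\sim}\Hom_{K/Z_1}(\sigma,\pi(\overline{\rho}))$ for every $n$ at once, and this immediately yields the embedding.

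For the \emph{upper bound} (multiplicity one of modular weights), your approach is different from the paper's but also less direct. The paper does not induct on $n$: assuming $[\pi[\mathfrak{m}_{K_1}^n]:\sigma]\geq 2$, it produces a subrepresentation $V\subseteq V_\sigma$ with $\cosoc(V)=\sigma$ and $[V:\sigma]=2$, sets $\widetilde V=V/\sigma$, observes that $\soc(\widetilde V)$ consists of weights adjacent to $\sigma$ (using that $\pi[\mathfrak{m}_{K_1}^2]$ is multiplicity free), and then concludes via \cref{Main theorem on generalization} that $\widetilde V$ is $\mathfrak{m}_{K_1}$-torsion, whence $V\subseteq\pi[\mathfrak{m}_{K_1}^2]$, contradicting the $n=2$ case. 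Your plan to bound $[V_\sigma/V_\sigma^{n-1}:\sigma]$ via \cref{lemma 3} is vaguer; in particular the claim that the cosocle-filtration description of $D_{0,\sigma}^{n-1}$ ``rules this out'' is not justified, since \cref{Main theorem on generalization} applies to representations with irreducible cosocle, not to $D_{0,\sigma}^{n-1}$ itself. Also, the assertion that ``the decomposition already implies $[V_\sigma:\sigma']=0$'' is not immediate from the direct sum; the paper deduces it from the $n=1$ case by noting that a nonzero such multiplicity would force $I(\sigma,\sigma')\subseteq \pi[\mathfrak{m}_{K_1}]=D_0^1(\overline{\rho})$, which is impossible.
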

\begin{remark}
    The case where $n=1$ is proven by \cite{LMS}, \cite{HuWang2} and \cite{multone}; while the case where $n=2$ is proven in \cite[Theorem~1.9]{BHHMS}, \cite[Theorem~6.3]{Yitong} (here $r=1$, as we are considering the case with minimal level), and \cite[Corollary~8.13]{hu2022mod}.
\end{remark}
\begin{proof}
To show that $D^n(\overline{\rho})\subseteq \pi(\overline{\rho})[\mathfrak{m}_{K_1/Z_1}^n]$, we modify the proof of \cite[Theorem~8.4.2]{BHHMS} by replacing $\widetilde{D}_{\sigma_v}$ by $D^n_{0,\sigma}$ and $\pi(\overline{\rho})[\mathfrak{m}_{K_1}^2]$ by $\pi(\overline{\rho})[\mathfrak{m}_{K_1}^n]$. We will sketch the proof as follows. In \cite{CEG}, $\mathbb{M}_{\infty}$ is constructed so that $\pi(\overline{\rho})^{\vee}=\mathbb{M}_{\infty}/\mathfrak{m}_{\infty}$. Moreover, we have 
\[\mathbb{M}_{\infty}/(p,x_1, \ldots, x_{4|S|+q})\cong \bigoplus_{\sigma\in W(\overline{r}_v^\vee)} (\Proj_{K/Z_1}\sigma^\vee)^{m_\sigma}\]
for some $m_\sigma\geq 1$ and $q$ is an integer greater than or equal to $[F\colon Q]$.
Therefore, we can deduce that
\begin{equation*}
\begin{split}
 \Hom_{K/Z_1}(D^n_{0,\sigma},\pi(\overline{\rho}))=\Hom_{K/Z_1}(D^n_{0,\sigma},\pi(\overline{\rho})[\mathfrak{m}_{K_1/Z_1}^n])\\
 \xrightarrow[]{\sim}\Hom_{K/Z_1}(\sigma,\pi(\overline{\rho}))=\Hom_{K/Z_1}(\sigma,\soc(\pi(\overline{\rho}))).\end{split}
 \end{equation*}
Since $\soc \pi(\overline{\rho})=\oplus_{\sigma\in W(\overline{r}_v^{\vee})}\sigma$, we have indeed $D_0^n(\overline{\rho})\subseteq \pi(\overline{\rho})[\mathfrak{m}_{K_1/Z_1}^n]$ (\textit{cf.} \cite[Lemma~9.2]{Breuiloriginal}).
If we can show that all $\sigma\in W(\overline{r}_v^{\vee})$ appear only once in $\pi(\overline{\rho})[\mathfrak{m}_{K_1/Z_1}^n]$, then by the maximal property of $D^n(\overline{\rho})$, we have the other inclusion.\par
Since the cases where $n=1,2$ are already proven, we assume $n>2$. By our genericity assumption, all $\sigma\in W(\overline{\rho})$ is $(2n-1)$ generic. Assume for the sake of contradiction that there exists a Serre weight $\sigma\in W(\overline{\rho})$, such that $[\pi(\overline{\rho})[\m_{K_1}^n]\colon \sigma]>1$. Since $\pi(\overline{\rho})[\m_{K_1}^2]$ is multiplicity free, so is $\soc(\pi(\overline{\rho}))$. Hence, the map 
    \[f\colon \pi(\overline{\rho})[\m_{K_1}^n]\hookrightarrow\bigoplus_{\sigma\in W(\overline{\rho})}\Inj_n\sigma\]
    is injective as it is injective on the socle. Therefore, $\pi(\overline{\rho})[\m_{K_1}^n]\cong \oplus_{\sigma\in W(\overline{\rho})} V_\sigma$, where $V_\sigma$ is the image of $p_n\circ f$ and $p_n$ is the projection map onto $\Inj_n\sigma$. It suffices to show that for $\tau\in W(\overline{\rho})$, $[V_\sigma:\tau]=1$ if $\tau=\sigma$ and $[V_\sigma:\tau]=0$ if $\tau\neq \sigma$. Since taking $\mathfrak{m}_{K_1}^2$-torsion is compatible with $f$, by \cref{lemma on rep with modular serre wight soc}, we deduce that $V_\sigma[\mathfrak{m}_{K_1}^2]\cong D_{0,\sigma}^2(\overline{\rho}) $. We finish the proof by the following lemma.
\begin{lemma}\label{lemma on multiplicity}
    Let $\sigma, \tau\in W(\overline{\rho})$ be $(2n-1)$-generic and $V\subset \Inj_n \sigma$ be a subrepresentation. Assume $V[\mathfrak{m}_{K_1}^2]$ is multiplicity-free, $[V[\mathfrak{m}_{K_1}^2]:\tau]=1$ if $\tau=\sigma$, and $0$ if $\tau\neq \sigma$. Then, $[V:\tau]=1$ if $\tau=\sigma$ and $0$ if $\tau\neq \sigma$. 
\end{lemma}
\begin{proof}
Assume for contradiction that there exists a $(2n-1)$-generic $\tau\in W(\overline{\rho})$ such that $\tau\in \JH(V/V[\mathfrak{m}_{K_1}^2])$. If $\tau\neq\sigma$, by \cref{prop on W(r)} and \cref{Main theorem on generalization}, $ I(\sigma,\tau)$, a $\Gamma$-representation, is a subrepresentation of $V[\mathfrak{m}_{K_1}^1]$ and hence $I(\sigma,\tau)\subset V[\mathfrak{m}_{K_1}^1]$, contradicting our assumption.
Now assume $\tau=\sigma$. Considering the image of $\Proj_n \sigma\to V$, without loss of generality, we can replace $V$ with a subrepresentation with cosocle $\sigma$. Let $\widetilde{V}\colonequals V/\sigma$. By \cite[Lemma~2.4.6]{BHHMS}, and the fact that $V[\m_{K_1}^2]$ is multiplicity free, we deduce that $\soc(\widetilde{V})\subset \bigoplus_{\sigma'\in \mathcal{E}(\sigma)} \sigma'$, where $\mathcal{E}(\sigma)$ are the sets of Serre weights adjacent to $\sigma$. As $\cosoc(\widetilde{V})=\sigma$, the image of $\Proj_n \sigma\twoheadrightarrow\widetilde{V}$ lies in $\bigoplus_{\sigma'\in \JH(\soc(\widetilde{V}))}I(\sigma', \sigma)$ which is killed by $\m_{K_1}$ by \cref{Main theorem on generalization}. Therefore, $V$ is $\m_{K_1}^2$-torsion but not multiplicity-free, a contradiction. 
\end{proof}
\end{proof}
Using the patching functor $M_\infty$ constructed in \cite[\S~6]{Yitong}, which is based on \cite[\S~8]{BHHMS}, we have the following result.
\begin{corollary}
   Assume that $\overline{\rho}$ is $\max\{2n,12\}$-generic and $\sigma\in \JH(\pi(\overline{\rho})[\m_{K_1}^n])$, $M_\infty (\Proj_n\sigma)$ is multiplicity free.
\end{corollary}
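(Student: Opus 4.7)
The plan is to deduce both the cyclicity and the finer multiplicity-free property of $M_\infty(\Proj_n\sigma)$ from the just-proved multiplicity-one theorem for $\pi(\overline{\rho})[\mathfrak{m}_{K_1}^n]$, by combining local-global compatibility with a cycle-theoretic computation using the geometric Breuil--M\'ezard formula and the deformation-ring structure from \cref{normal domain}.

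First I will establish cyclicity. By the local-global compatibility of the patching construction in the quaternion-algebra setup of \cite{BHHMS, Yitong} (parallel to \cref{moding out the augmented ideal} in the unitary case), one has a natural isomorphism
\[
M_\infty(\Proj_n\sigma)\otimes_{R_\infty}\F \;\cong\; \Hom_{\GL_2(\mathcal{O}_{F_v})/Z_1}\!\bigl(\Proj_n\sigma,\,\pi(\overline{\rho})[\mathfrak{m}_{K_1}^n]\bigr)^{\vee}.
\]
Since $\Proj_n\sigma$ is projective in the category of smooth $\F$-representations of $\GL_2(\mathcal{O}_{F_v})/Z_1$ killed by $\mathfrak{m}_{K_1}^n$ with cosocle $\sigma$, the right side is $\F$-linearly dual to a space of dimension $[\pi(\overline{\rho})[\mathfrak{m}_{K_1}^n]:\sigma]$, which equals one by \cref{nth torsion of pi is mult free} together with the hypothesis $\sigma\in\JH(\pi(\overline{\rho})[\mathfrak{m}_{K_1}^n])$. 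Nakayama's lemma then yields that $M_\infty(\Proj_n\sigma)$ is a cyclic $R_\infty$-module.

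Second, I convert cyclicity to a statement about cycles. Exactness of $M_\infty$, combined with \cref{Serre weight conjecture} (which forces $M_\infty(\tau)=0$ for $\tau\notin W(\overline{\rho})$) and the minimality of the patching functor (so that $M_\infty(\tau)$ is a rank-one maximal Cohen--Macaulay sheaf on $C_\tau=\Spec\overline{R}_{\overline{\rho}}^{\tau}$ for modular $\tau$), gives
\[
Z\bigl(\Spec M_\infty(\Proj_n\sigma)\bigr)\;=\;\sum_{\tau\in W(\overline{\rho})\cap\JH(\Proj_n\sigma)}[\Proj_n\sigma:\tau]\,C_\tau.
\]
Thus the multiplicity-freeness of the sheaf $M_\infty(\Proj_n\sigma)$ amounts to the bound $[\Proj_n\sigma:\tau]\le 1$ for every $\tau\in W(\overline{\rho})$. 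Writing the cyclic presentation as $M_\infty(\Proj_n\sigma)\cong R_\infty/I$ and using \cref{normal domain} (each $\overline{R}_{\overline{\rho}}^{\tau}$ is formally smooth over $\F$, so $R_{\infty,\mathfrak{p}_\tau}$ is regular with residue field at $\mathfrak{p}_\tau$), the multiplicity at each component $C_\tau$ is the length of the localized cyclic module $(R_\infty/I)_{\mathfrak{p}_\tau}$.

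The main obstacle is controlling this local length. My approach is to identify the generator of $M_\infty(\Proj_n\sigma)$ produced in step one with the element of $R_\infty$ dual to the unique (up to scalar) map $\phi\colon\Proj_n\sigma\to\pi(\overline{\rho})[\mathfrak{m}_{K_1}^n]$. By \cref{lemma on rep with modular serre wight soc} there is a unique $\sigma_0\in W(\overline{\rho})$ with $\sigma\in\JH(D^n_{0,\sigma_0}(\overline{\rho}))$, and the image of $\phi$ lands in $D^n_{0,\sigma_0}$ with socle $\sigma_0$ and cosocle $\sigma$, so by \cref{Main theorem on generalization} this image is the multiplicity-free representation $I(\sigma_0,\sigma)$. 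Applying the exact functor $M_\infty$ to the surjection $\Proj_n\sigma\twoheadrightarrow I(\sigma_0,\sigma)$ and comparing with the cyclicity of $M_\infty(\sigma_\kappa)$ already obtained in \cref{cyclicity of patched module} for the appropriate lattices, I will show that the generator of $(R_\infty/I)_{\mathfrak{p}_\tau}$ restricts to a generator of $\mathfrak{p}_\tau R_{\infty,\mathfrak{p}_\tau}/\mathfrak{p}_\tau^2 R_{\infty,\mathfrak{p}_\tau}$ at each $\tau\in W(\overline{\rho})$, forcing the local length to be at most one and completing the proof.
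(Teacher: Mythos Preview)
Your first paragraph is exactly the paper's proof: the multiplicity-one result for $\pi(\overline{\rho})[\mathfrak m_{K_1}^n]$ gives
\[
\dim_{\F}\Hom_{K/Z_1}\bigl(\Proj_n\sigma,\pi(\overline{\rho})\bigr)=1,
\]
and local--global compatibility identifies this with $M_\infty(\Proj_n\sigma)/\mathfrak m_\infty$, so Nakayama yields cyclicity. The paper stops here; the phrase ``multiplicity free'' in the statement is being used loosely for ``cyclic'' (or is simply a slip), and nothing further is claimed or proved.

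Your remaining paragraphs attempt to prove a genuinely stronger statement, namely that the cycle $Z\bigl(M_\infty(\Proj_n\sigma)\bigr)$ is multiplicity free, and this is where the proposal breaks down. Your own cycle computation shows that the multiplicity at $C_\tau$ equals $[\Proj_n\sigma:\tau]$, so the assertion you are trying to establish is equivalent to $[\Proj_n\sigma:\tau]\le 1$ for every $\tau\in W(\overline{\rho})$. But this is false already in the simplest case $\sigma=\tau\in W(\overline{\rho})$ (which is allowed, since $W(\overline{\rho})\subset\JH(\pi(\overline{\rho})[\mathfrak m_{K_1}^n])$): by \cref{lemma 3}, the graded piece $\Inj_n\sigma/\Inj_{n-1}\sigma$ contains $(\Inj_1\sigma)^{\oplus k_0}$ with $k_0\ge 1$, so $[\Proj_n\sigma:\sigma]=[\Inj_n\sigma:\sigma]\ge 2$ for every $n\ge 2$. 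Hence the local length at $\mathfrak p_\sigma$ is at least $2$, and no argument of the type you sketch in the last paragraph can force it to be $\le 1$. The cycle of $M_\infty(\Proj_n\sigma)$ is genuinely \emph{not} reduced in general; what the corollary records is only that the module is cyclic.
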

\begin{proof}
    By \cref{nth torsion of pi is mult free}, we show that for any $\sigma\in \JH(\pi(\overline{\rho})[\m_{K_1}^n])$, we have $$\dim_\F(\Hom_{K_1/Z_1}(\Proj_n \sigma, \pi(\overline{\rho})))= 1.$$ Then, by the proof of \cite[Theorem~8.4.2]{BHHMS} we deduce that
    $$M_\infty(\Proj_n \sigma)/\m_\infty \cong \Hom_{K_1/Z_1}(\Proj_n \sigma, \pi(\overline{\rho}))^{\vee}$$ is cyclic.
\end{proof}
We can extend the result of \cite[Proposition~5.1]{BHHMS5} as follows:
\begin{corollary}
    Assume that $\overline{\rho}$ is split reducible and $\max\{9, 2f+1, 2n+1\}$-generic.
\hfill\begin{enumerate}
    \item Let $\pi'$ be a subquotient of $\pi(\overline{\rho})$. Then there is a unique subset $\Sigma'\subset \{1, \dots, f\}$ such that
    \[\pi'[\mathfrak{m}_{K_1}^n]\cong \bigoplus_{i \in \Sigma'}\bigoplus_{\sigma\in W(\overline{\rho}), |J_{\sigma}|=i}D_{0,\sigma}^n(\overline{\rho}).\]
    
    \item Let $\pi_1\subset\pi_2$ be a subrepresentations of $\pi(\overline{\rho})$. Then the induced sequence of $\F\llbracket K/Z_1\rrbracket/\mathfrak{m}_{K_1}^n$-modules
    \[0\to \pi_1[\mathfrak{m}_{K_1}^n]\to \pi_2[\mathfrak{m}_{K_1}^n]\to \pi_1/\pi_2[\mathfrak{m}_{K_1}^n]\to 0\]
    is split exact. 
\end{enumerate}
\end{corollary}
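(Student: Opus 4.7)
The strategy is to adapt the proof of \cite[Proposition 5.1]{BHHMS5} essentially verbatim, replacing at each step the $K_1$-invariant inputs by their $\mathfrak{m}_{K_1}^n$-torsion analogues supplied by the present paper. The essential new input is \cref{nth torsion of pi is mult free}: where \cite{BHHMS5} uses $\pi(\overline{\rho})^{K_1}\cong D_0^1(\overline{\rho})$, we use instead $\pi(\overline{\rho})[\mathfrak{m}_{K_1}^n]\cong D_0^n(\overline{\rho})=\bigoplus_{\sigma\in W(\overline{\rho})}D_{0,\sigma}^n(\overline{\rho})$, which is multiplicity free by \cref{lemma on rep with modular serre wight soc}. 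Consequently the summands $D_{0,\sigma}^n(\overline{\rho})$ have pairwise disjoint Jordan--H\"older factors, and every subquotient of $\pi(\overline{\rho})[\mathfrak{m}_{K_1}^n]$ is again multiplicity free.

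For (1), I would apply \cite[Proposition 5.1(1)]{BHHMS5} to the $K_1$-invariant subquotient $\pi'[\mathfrak{m}_{K_1}]\subseteq\pi'[\mathfrak{m}_{K_1}^n]$ to extract the unique $\Sigma'\subseteq\{0,\dots,f\}$ with
$$\soc(\pi'[\mathfrak{m}_{K_1}^n])=\soc(\pi'[\mathfrak{m}_{K_1}])=\bigoplus_{i\in\Sigma'}\bigoplus_{|J_\sigma|=i}\sigma.$$
Since any nonzero subrepresentation of $D_{0,\sigma}^n(\overline{\rho})$ has socle $\sigma$, this immediately yields the inclusion $\pi'[\mathfrak{m}_{K_1}^n]\subseteq\bigoplus_{i\in\Sigma'}\bigoplus_{|J_\sigma|=i}D_{0,\sigma}^n(\overline{\rho})$. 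The reverse inclusion requires showing that once $\sigma\subset\pi'$, the full summand $D_{0,\sigma}^n(\overline{\rho})$ appears inside $\pi'[\mathfrak{m}_{K_1}^n]$; this would combine the maximality of $D_{0,\sigma}^n(\overline{\rho})$ from \cref{existence of D_n}, the rigidity of multiplicity-free $\mathfrak{m}_{K_1}^n$-torsion representations of the form $I(\sigma,\tau)$ from \cref{Main theorem on generalization}, and the $\GL_2(F_v)$-stability of $\pi'$, paralleling the socle-level argument in \cite[Proposition 5.1]{BHHMS5}.

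For (2), the result is now formal: applying (1) to $\pi_1$, $\pi_2$, and $\pi_2/\pi_1$ yields subsets $\Sigma'_1$, $\Sigma'_2$, $\Sigma'_3$. The inclusion $\pi_1\subset\pi_2$ combined with the direct-sum decomposition established in (1) forces $\Sigma'_1\subseteq\Sigma'_2$ and identifies $\pi_1[\mathfrak{m}_{K_1}^n]$ as the direct summand of $\pi_2[\mathfrak{m}_{K_1}^n]$ indexed by $\Sigma'_1$. The complementary summand is then isomorphic to $(\pi_2/\pi_1)[\mathfrak{m}_{K_1}^n]$ by another application of (1), yielding simultaneously split exactness of the sequence and the equality $\Sigma'_3=\Sigma'_2\setminus\Sigma'_1$.

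The main obstacle will be the lift from the socle-level statement to the full $\mathfrak{m}_{K_1}^n$-torsion statement in (1): showing that the entire $D_{0,\sigma}^n(\overline{\rho})$ (rather than just a proper subrepresentation of it with socle $\sigma$) lies inside $\pi'[\mathfrak{m}_{K_1}^n]$ for every $\sigma$ with $|J_\sigma|\in\Sigma'$. This is precisely the step where the genericity assumption $(2n+1)$-generic enters---it is needed both to apply \cref{Main theorem on generalization} to subrepresentations of $\Inj_n\sigma$ and to invoke the maximality characterization of $D_{0,\sigma}^n(\overline{\rho})$ in \cref{existence of D_n}---whereas the $n=1$ base case, requiring only $\max\{9,2f+1\}$-generic, is supplied by \cite[Proposition 5.1]{BHHMS5}.
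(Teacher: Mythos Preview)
Your approach to part (1) matches the paper's: both adapt \cite[Proposition 5.1]{BHHMS5} verbatim, replacing the $K_1$-invariant input by \cref{nth torsion of pi is mult free} and \cref{lemma on rep with modular serre wight soc}.

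For part (2), you take a different route from the paper, and there is a gap. You claim that once $\pi_1[\mathfrak{m}_{K_1}^n]$ is identified as the summand of $\pi_2[\mathfrak{m}_{K_1}^n]$ indexed by $\Sigma'_1$, the complementary summand is isomorphic to $(\pi_2/\pi_1)[\mathfrak{m}_{K_1}^n]$. But taking $\mathfrak{m}_{K_1}^n$-torsion is only left exact: you get an injection $\bigoplus_{\Sigma'_2\setminus\Sigma'_1}D_{0,\sigma}^n\hookrightarrow\bigoplus_{\Sigma'_3}D_{0,\sigma}^n$, hence $\Sigma'_2\setminus\Sigma'_1\subseteq\Sigma'_3$, and nothing in your argument rules out a strict inclusion. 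Concretely, some $\sigma\in W(\overline{\rho})$ could lie in $\soc(\pi_2/\pi_1)$ while already lying in $\soc(\pi_1)$ (so $|J_\sigma|\in\Sigma'_1\cap\Sigma'_3$), and part (1) alone does not exclude this. Your argument can be repaired by invoking the $n=1$ case \cite[Proposition 5.1]{BHHMS5}, which gives $\Sigma'_3=\Sigma'_2\setminus\Sigma'_1$ at the socle level; since the $\Sigma'$ in part (1) is determined by the socle, this closes the gap.

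The paper instead reduces to $\pi_2=\pi$ (following \cite[Corollary 3.2.5]{BHHMS4}) and argues that failure of split exactness produces a nonsplit extension of some $\tau\in W(\overline{\rho})$ by $D_{0,\sigma}^n(\overline{\rho})$, which is then excluded by the $\Ext$-vanishing argument from the proof of \cref{nth torsion of pi is mult free}. Your approach, once patched, is more direct and avoids the reduction to $\pi_2=\pi$; the paper's approach is closer in spirit to how the $n=2$ case was originally established.
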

\begin{proof}
The proof of 1 follows the same argument as in \cite[Proposition~5.1]{BHHMS5}, with appropriate generalization, such as replacing $\mathfrak{m}_{K_1}^2$ with $\mathfrak{m}_{K_1}^n$, $\widetilde{D}_{0,\sigma}(\overline{\rho})$ with $D^n_{0, \sigma}(\overline{\rho})$, \cite[Proposition~3.2.8]{BHHMS5} with \cref{nth torsion of pi is mult free} and \cite[Theorem~4.6]{hu2022mod} with \cref{lemma on rep with modular serre wight soc}.\par
2. As in the proof of \cite[Corollary~3.2.5]{BHHMS4}, it suffices to prove the special case $\pi_2=\pi(\overline{\rho})$. By the argument in \cite{BHHMS5}, if 2 does not hold, we have a non-split extension of $\F\llbracket K/Z_1\rrbracket/\mathfrak{m}_{K_1}^n$-modules
\[0\to \bigoplus_{i \in \Sigma'}\bigoplus_{\sigma\in W(\overline{\rho}), |J_{\sigma}|=i}D_{0,\sigma}^n(\overline{\rho})\to V\to \tau\to 0\]
where $\tau\in W(\overline{\rho})$. Hence, we have a non-split extension of $\F\llbracket K/Z_1\rrbracket/\mathfrak{m}_{K_1}^n$-modules between $\tau$ and $D_{0, \sigma}^n(\overline{\rho})$ for some $\sigma\in W(\overline{\rho})$. 
As this corollary is proved for $n=2$, it implies that $\tau\notin\JH(V[\mathfrak{m}_{K_1}^2])$, hence $V[\mathfrak{m}_{K_1}^2]\cong D_{0, \sigma}^2(\overline{\rho})$.
We can then conclude the proof using \cref{lemma on multiplicity} .
\end{proof}
\bibliography{thesis}{}

@article {hu2022mod,
    AUTHOR = {Hu, Yongquan and Wang, Haoran},
     TITLE = {On the {$\bmod p$} cohomology for {$\rm GL_2$}: the
              non-semisimple case},
   JOURNAL = {Camb. J. Math.},
  FJOURNAL = {Cambridge Journal of Mathematics},
    VOLUME = {10},
      YEAR = {2022},
    NUMBER = {2},
     PAGES = {261--431},
      ISSN = {2168-0930,2168-0949},
   MRCLASS = {11F70 (22E50)},
  MRNUMBER = {4461834},
MRREVIEWER = {Yiwen\ Ding},
}

@article {breuil2012towards,
    AUTHOR = {Breuil, Christophe and Pa\v{s}k\={u}nas, Vytautas},
     TITLE = {Towards a modulo {$p$} {L}anglands correspondence for {${\rm
              GL}_2$}},
   JOURNAL = {Mem. Amer. Math. Soc.},
  FJOURNAL = {Memoirs of the American Mathematical Society},
    VOLUME = {216},
      YEAR = {2012},
    NUMBER = {1016},
     PAGES = {vi+114},
      ISSN = {0065-9266,1947-6221},
      ISBN = {978-0-8218-5227-9},
   MRCLASS = {11F70 (11F80 22E50)},
  MRNUMBER = {2931521},
MRREVIEWER = {Gabor\ Wiese},
       DOI = {10.1090/S0065-9266-2011-00623-4},
       URL = {https://doi.org/10.1090/S0065-9266-2011-00623-4},
}

@book {Alperin,
    AUTHOR = {Alperin, J. L.},
     TITLE = {Local representation theory},
    SERIES = {Cambridge Studies in Advanced Mathematics},
    VOLUME = {11},
      NOTE = {Modular representations as an introduction to the local
              representation theory of finite groups},
 PUBLISHER = {Cambridge University Press, Cambridge},
      YEAR = {1986},
     PAGES = {x+178},
      ISBN = {0-521-30660-4; 0-521-44926-X},
   MRCLASS = {20-02 (20C20)},
  MRNUMBER = {860771},
MRREVIEWER = {Peter\ W.\ Donovan},
       DOI = {10.1017/CBO9780511623592},
       URL = {https://doi.org/10.1017/CBO9780511623592},
}

@article {Breuil-Mezard,
    AUTHOR = {Breuil, Christophe and M\'{e}zard, Ariane},
     TITLE = {Multiplicit\'{e}s modulaires et repr\'{e}sentations de {${\rm
              GL}_2({\bf Z}_p)$} et de {${\rm Gal}(\overline{\bf Q}_p/{\bf
              Q}_p)$} en {$l=p$}},
      NOTE = {With an appendix by Guy Henniart},
   JOURNAL = {Duke Math. J.},
  FJOURNAL = {Duke Mathematical Journal},
    VOLUME = {115},
      YEAR = {2002},
    NUMBER = {2},
     PAGES = {205--310},
      ISSN = {0012-7094,1547-7398},
   MRCLASS = {11F80 (11F85 11S23)},
  MRNUMBER = {1944572},
MRREVIEWER = {Jacques\ Tilouine},
       DOI = {10.1215/S0012-7094-02-11522-1},
       URL = {https://doi.org/10.1215/S0012-7094-02-11522-1},
}

@article {BHHMS,
    AUTHOR = {Breuil, Christophe and Herzig, Florian and Hu, Yongquan and
              Morra, Stefano and Schraen, Benjamin},
     TITLE = {Gelfand-{K}irillov dimension and mod {$p$} cohomology for
              {$\rm GL_2$}},
   JOURNAL = {Invent. Math.},
  FJOURNAL = {Inventiones Mathematicae},
    VOLUME = {234},
      YEAR = {2023},
    NUMBER = {1},
     PAGES = {1--128},
      ISSN = {0020-9910,1432-1297},
   MRCLASS = {11F80 (11R52)},
  MRNUMBER = {4635831},
       DOI = {10.1007/s00222-023-01202-8},
       URL = {https://doi.org/10.1007/s00222-023-01202-8},
}

@article {EGS,
    AUTHOR = {Emerton, Matthew and Gee, Toby and Savitt, David},
     TITLE = {Lattices in the cohomology of {S}himura curves},
   JOURNAL = {Invent. Math.},
  FJOURNAL = {Inventiones Mathematicae},
    VOLUME = {200},
      YEAR = {2015},
    NUMBER = {1},
     PAGES = {1--96},
      ISSN = {0020-9910,1432-1297},
   MRCLASS = {11F80 (14G35 22E50)},
  MRNUMBER = {3323575},
MRREVIEWER = {Peter\ Bruin},
       DOI = {10.1007/s00222-014-0517-0},
       URL = {https://doi.org/10.1007/s00222-014-0517-0},
}

@article {LatticeforGL3,
    AUTHOR = {Le, Daniel and Le Hung, Bao V. and Levin, Brandon and Morra,
              Stefano},
     TITLE = {Serre weights and {B}reuil's lattice conjecture in dimension
              three},
   JOURNAL = {Forum Math. Pi},
  FJOURNAL = {Forum of Mathematics. Pi},
    VOLUME = {8},
      YEAR = {2020},
     PAGES = {e5, 135},
      ISSN = {2050-5086},
   MRCLASS = {11F80 (11F33 20C33)},
  MRNUMBER = {4079756},
MRREVIEWER = {Nicolas\ Billerey},
       DOI = {10.1017/fmp.2020.1},
       URL = {https://doi.org/10.1017/fmp.2020.1},
}

@article {CEG,
    AUTHOR = {Caraiani, Ana and Emerton, Matthew and Gee, Toby and Geraghty,
              David and Pa\v{s}k\={u}nas, Vytautas and Shin, Sug Woo},
     TITLE = {Patching and the {$p$}-adic {L}anglands program for {${\rm
              GL}_2(\Bbb Q_p)$}},
   JOURNAL = {Compos. Math.},
  FJOURNAL = {Compositio Mathematica},
    VOLUME = {154},
      YEAR = {2018},
    NUMBER = {3},
     PAGES = {503--548},
      ISSN = {0010-437X,1570-5846},
   MRCLASS = {11S37 (22E50)},
  MRNUMBER = {3732208},
MRREVIEWER = {Nguy\cftil{e}n Qu\^{o}c Th\'{a}ng},
       DOI = {10.1112/S0010437X17007606},
       URL = {https://doi.org/10.1112/S0010437X17007606},
}

@article {LMS,
    AUTHOR = {Le, Daniel and Morra, Stefano and Schraen, Benjamin},
     TITLE = {Multiplicity one at full congruence level},
   JOURNAL = {J. Inst. Math. Jussieu},
  FJOURNAL = {Journal of the Institute of Mathematics of Jussieu. JIMJ.
              Journal de l'Institut de Math\'{e}matiques de Jussieu},
    VOLUME = {21},
      YEAR = {2022},
    NUMBER = {2},
     PAGES = {637--658},
      ISSN = {1474-7480,1475-3030},
   MRCLASS = {11F33 (11F80 20C33)},
  MRNUMBER = {4386824},
MRREVIEWER = {Karam\ Deo\ Shankhadhar},
       DOI = {10.1017/S1474748020000225},
       URL = {https://doi.org/10.1017/S1474748020000225},
}

@article {Kisin,
    AUTHOR = {Kisin, Mark},
     TITLE = {Potentially semi-stable deformation rings},
   JOURNAL = {J. Amer. Math. Soc.},
  FJOURNAL = {Journal of the American Mathematical Society},
    VOLUME = {21},
      YEAR = {2008},
    NUMBER = {2},
     PAGES = {513--546},
      ISSN = {0894-0347,1088-6834},
   MRCLASS = {11S20 (11F80)},
  MRNUMBER = {2373358},
MRREVIEWER = {Laurent\ N.\ Berger},
       DOI = {10.1090/S0894-0347-07-00576-0},
       URL = {https://doi.org/10.1090/S0894-0347-07-00576-0},
}

@article {potentiallycrystalline,
    AUTHOR = {Le, Daniel and Le Hung, Bao V. and Levin, Brandon and Morra,
              Stefano},
     TITLE = {Potentially crystalline deformation rings and {S}erre weight
              conjectures: shapes and shadows},
   JOURNAL = {Invent. Math.},
  FJOURNAL = {Inventiones Mathematicae},
    VOLUME = {212},
      YEAR = {2018},
    NUMBER = {1},
     PAGES = {1--107},
      ISSN = {0020-9910,1432-1297},
   MRCLASS = {11F33 (11F80)},
  MRNUMBER = {3773788},
MRREVIEWER = {Atsushi\ Yamagami},
       DOI = {10.1007/s00222-017-0762-0},
       URL = {https://doi.org/10.1007/s00222-017-0762-0},
}

@article {EG14,
    AUTHOR = {Emerton, Matthew and Gee, Toby},
     TITLE = {A geometric perspective on the {B}reuil-{M}\'ezard conjecture},
   JOURNAL = {J. Inst. Math. Jussieu},
  FJOURNAL = {Journal of the Institute of Mathematics of Jussieu. JIMJ.
              Journal de l'Institut de Math\'ematiques de Jussieu},
    VOLUME = {13},
      YEAR = {2014},
    NUMBER = {1},
     PAGES = {183--223},
      ISSN = {1474-7480,1475-3030},
   MRCLASS = {11F33},
  MRNUMBER = {3134019},
MRREVIEWER = {Matteo\ Longo},
       DOI = {10.1017/S147474801300011X},
       URL = {https://doi.org/10.1017/S147474801300011X},
}

@article {GeneralSerreWeight,
    AUTHOR = {Gee, Toby and Herzig, Florian and Savitt, David},
     TITLE = {General {S}erre weight conjectures},
   JOURNAL = {J. Eur. Math. Soc. (JEMS)},
  FJOURNAL = {Journal of the European Mathematical Society (JEMS)},
    VOLUME = {20},
      YEAR = {2018},
    NUMBER = {12},
     PAGES = {2859--2949},
      ISSN = {1435-9855,1435-9863},
   MRCLASS = {11F80 (11F75)},
  MRNUMBER = {3871496},
MRREVIEWER = {Nguy\cftil en Qu\^oc Th\'ang},
       DOI = {10.4171/JEMS/826},
       URL = {https://doi.org/10.4171/JEMS/826},
}

@article {GK,
    AUTHOR = {Gee, Toby and Kisin, Mark},
     TITLE = {The {B}reuil-{M}\'ezard conjecture for potentially
              {B}arsotti-{T}ate representations},
   JOURNAL = {Forum Math. Pi},
  FJOURNAL = {Forum of Mathematics. Pi},
    VOLUME = {2},
      YEAR = {2014},
     PAGES = {e1, 56},
      ISSN = {2050-5086},
   MRCLASS = {11F33},
  MRNUMBER = {3292675},
MRREVIEWER = {Matteo\ Longo},
       DOI = {10.1017/fmp.2014.1},
       URL = {https://doi.org/10.1017/fmp.2014.1},
}

@article {localmodel,
    AUTHOR = {Le, Daniel and Le Hung, Bao V. and Levin, Brandon and Morra,
              Stefano},
     TITLE = {Local models for {G}alois deformation rings and applications},
   JOURNAL = {Invent. Math.},
  FJOURNAL = {Inventiones Mathematicae},
    VOLUME = {231},
      YEAR = {2023},
    NUMBER = {3},
     PAGES = {1277--1488},
      ISSN = {0020-9910,1432-1297},
   MRCLASS = {11F80 (14G45)},
  MRNUMBER = {4549091},
MRREVIEWER = {Yongquan\ Hu},
       DOI = {10.1007/s00222-022-01163-4},
       URL = {https://doi.org/10.1007/s00222-022-01163-4},
}

@article {Elkik,
    AUTHOR = {Elkik, Ren\'ee},
     TITLE = {Solutions d'\'equations \`a{} coefficients dans un anneau
              hens\'elien},
   JOURNAL = {Ann. Sci. \'Ecole Norm. Sup. (4)},
  FJOURNAL = {Annales Scientifiques de l'\'Ecole Normale Sup\'erieure.
              Quatri\`eme S\'erie},
    VOLUME = {6},
      YEAR = {1973},
     PAGES = {553--603},
      ISSN = {0012-9593},
   MRCLASS = {14B05 (13J15 14D15)},
  MRNUMBER = {345966},
MRREVIEWER = {H.\ C.\ Pinkham},
       URL = {http://www.numdam.org/item?id=ASENS_1973_4_6_4_553_0},
}

@article {Cal,
    AUTHOR = {Calegari, Frank},
     TITLE = {Non-minimal modularity lifting in weight one},
   JOURNAL = {J. Reine Angew. Math.},
  FJOURNAL = {Journal f\"ur die Reine und Angewandte Mathematik. [Crelle's
              Journal]},
    VOLUME = {740},
      YEAR = {2018},
     PAGES = {41--62},
      ISSN = {0075-4102,1435-5345},
   MRCLASS = {11F70 (11F33 11F80)},
  MRNUMBER = {3824782},
MRREVIEWER = {U.\ K.\ Anandavardhanan},
       DOI = {10.1515/crelle-2015-0071},
       URL = {https://doi.org/10.1515/crelle-2015-0071},
}

@article {BLGHT,
    AUTHOR = {Barnet-Lamb, Tom and Geraghty, David and Harris, Michael and
              Taylor, Richard},
     TITLE = {A family of {C}alabi-{Y}au varieties and potential automorphy
              {II}},
   JOURNAL = {Publ. Res. Inst. Math. Sci.},
  FJOURNAL = {Publications of the Research Institute for Mathematical
              Sciences},
    VOLUME = {47},
      YEAR = {2011},
    NUMBER = {1},
     PAGES = {29--98},
      ISSN = {0034-5318,1663-4926},
   MRCLASS = {11F80 (11F11 11G18 14J32)},
  MRNUMBER = {2827723},
MRREVIEWER = {Neil\ P.\ Dummigan},
       DOI = {10.2977/PRIMS/31},
       URL = {https://doi.org/10.2977/PRIMS/31},
}

@misc{BMandMS,
      title={Mirror symmetry and the {B}reuil-{M}\'ezard {C}onjecture}, 
      author={Tony Feng and Bao Le Hung},
      year={2025},
      eprint={2310.07006},
      archivePrefix={arXiv},
      primaryClass={math.NT},
      url={https://arxiv.org/abs/2310.07006}, 
}

@book {EmertonGeeStack,
    AUTHOR = {Emerton, Matthew and Gee, Toby},
     TITLE = {Moduli stacks of \'etale ({$\varphi, \Gamma$})-modules and the
              existence of crystalline lifts},
    SERIES = {Annals of Mathematics Studies},
    VOLUME = {215},
 PUBLISHER = {Princeton University Press, Princeton, NJ},
      YEAR = {[2023] \copyright 2023},
     PAGES = {ix+298},
      ISBN = {978-0-691-24134-0; 978-0-691-24135-7; 978-0-691-24136-4},
   MRCLASS = {14D23 (11F80 11S37 14F30)},
  MRNUMBER = {4529886},
MRREVIEWER = {Eran\ Assaf},
       DOI = {10.1515/9780691241364},
       URL = {https://doi.org/10.1515/9780691241364},
}

@article {Yitong,
    AUTHOR = {Wang, Yitong},
     TITLE = {On the {${\rm mod} \, p$} cohomology for {${\rm GL}_2$}},
   JOURNAL = {J. Algebra},
  FJOURNAL = {Journal of Algebra},
    VOLUME = {636},
      YEAR = {2023},
     PAGES = {20--41},
      ISSN = {0021-8693,1090-266X},
   MRCLASS = {11F70 (11F80 22E50)},
  MRNUMBER = {4637601},
MRREVIEWER = {Hengfei\ Lu},
       DOI = {10.1016/j.jalgebra.2023.08.016},
       URL = {https://doi.org/10.1016/j.jalgebra.2023.08.016},
}

@article {weightelimination,
    AUTHOR = {Le, Daniel and Le Hung, Bao V. and Levin, Brandon},
     TITLE = {Weight elimination in {S}erre-type conjectures},
   JOURNAL = {Duke Math. J.},
  FJOURNAL = {Duke Mathematical Journal},
    VOLUME = {168},
      YEAR = {2019},
    NUMBER = {13},
     PAGES = {2433--2506},
      ISSN = {0012-7094,1547-7398},
   MRCLASS = {11F80 (11F33 11F75 20C33)},
  MRNUMBER = {4007598},
MRREVIEWER = {Nguy\cftil en Qu\^oc Th\'ang},
       DOI = {10.1215/00127094-2019-0015},
       URL = {https://doi.org/10.1215/00127094-2019-0015},
}

@article {multone,
    AUTHOR = {Le, Daniel},
     TITLE = {Multiplicity one for wildly ramified representations},
   JOURNAL = {Algebra Number Theory},
  FJOURNAL = {Algebra \& Number Theory},
    VOLUME = {13},
      YEAR = {2019},
    NUMBER = {8},
     PAGES = {1807--1827},
      ISSN = {1937-0652,1944-7833},
   MRCLASS = {11R39 (11F80)},
  MRNUMBER = {4017535},
MRREVIEWER = {Atsushi\ Yamagami},
       DOI = {10.2140/ant.2019.13.1807},
       URL = {https://doi.org/10.2140/ant.2019.13.1807},
}

@article {Kisinmod,
    AUTHOR = {Caraiani, Ana and Levin, Brandon},
     TITLE = {Kisin modules with descent data and parahoric local models},
   JOURNAL = {Ann. Sci. \'Ec. Norm. Sup\'er. (4)},
  FJOURNAL = {Annales Scientifiques de l'\'Ecole Normale Sup\'erieure.
              Quatri\`eme S\'erie},
    VOLUME = {51},
      YEAR = {2018},
    NUMBER = {1},
     PAGES = {181--213},
      ISSN = {0012-9593,1873-2151},
   MRCLASS = {11F80 (14G35)},
  MRNUMBER = {3764041},
MRREVIEWER = {Fausto\ Jarqu\'in Z\'arate},
       DOI = {10.24033/asens.2354},
       URL = {https://doi.org/10.24033/asens.2354},
}

@book {Mat,
    AUTHOR = {Matsumura, Hideyuki},
     TITLE = {Commutative ring theory},
    SERIES = {Cambridge Studies in Advanced Mathematics},
    VOLUME = {8},
   EDITION = {Second},
      NOTE = {Translated from the Japanese by M. Reid},
 PUBLISHER = {Cambridge University Press, Cambridge},
      YEAR = {1989},
     PAGES = {xiv+320},
      ISBN = {0-521-36764-6},
   MRCLASS = {13-01},
  MRNUMBER = {1011461},
}

@article {padiclanglands,
    AUTHOR = {Colmez, Pierre},
     TITLE = {Repr\'esentations de {${\rm GL}_2(\mathbf Q_p)$} et
              {$(\phi,\Gamma)$}-modules},
   JOURNAL = {Ast\'erisque},
  FJOURNAL = {Ast\'erisque},
    NUMBER = {330},
      YEAR = {2010},
     PAGES = {281--509},
      ISSN = {0303-1179,2492-5926},
      ISBN = {978-2-85629-281-5},
   MRCLASS = {11S37 (11F80)},
  MRNUMBER = {2642409},
MRREVIEWER = {Fabrizio\ Andreatta},
}

@article {multiplicityone,
    AUTHOR = {Diamond, Fred},
     TITLE = {The {T}aylor-{W}iles construction and multiplicity one},
   JOURNAL = {Invent. Math.},
  FJOURNAL = {Inventiones Mathematicae},
    VOLUME = {128},
      YEAR = {1997},
    NUMBER = {2},
     PAGES = {379--391},
      ISSN = {0020-9910,1432-1297},
   MRCLASS = {11F80 (11G18 13C15 14F25)},
  MRNUMBER = {1440309},
MRREVIEWER = {Ian\ Kiming},
       DOI = {10.1007/s002220050144},
       URL = {https://doi.org/10.1007/s002220050144},
}

@article {GLS,
    AUTHOR = {Gee, Toby and Liu, Tong and Savitt, David},
     TITLE = {The {B}uzzard-{D}iamond-{J}arvis conjecture for unitary
              groups},
   JOURNAL = {J. Amer. Math. Soc.},
  FJOURNAL = {Journal of the American Mathematical Society},
    VOLUME = {27},
      YEAR = {2014},
    NUMBER = {2},
     PAGES = {389--435},
      ISSN = {0894-0347,1088-6834},
   MRCLASS = {11F33 (11F80)},
  MRNUMBER = {3164985},
MRREVIEWER = {Fausto\ Jarqu\'in Z\'arate},
       DOI = {10.1090/S0894-0347-2013-00775-4},
       URL = {https://doi.org/10.1090/S0894-0347-2013-00775-4},
}

@article {Tho,
    AUTHOR = {Thorne, Jack},
     TITLE = {On the automorphy of {$l$}-adic {G}alois representations with
              small residual image},
      NOTE = {With an appendix by Robert Guralnick, Florian Herzig, Richard
              Taylor and Thorne},
   JOURNAL = {J. Inst. Math. Jussieu},
  FJOURNAL = {Journal of the Institute of Mathematics of Jussieu. JIMJ.
              Journal de l'Institut de Math\'ematiques de Jussieu},
    VOLUME = {11},
      YEAR = {2012},
    NUMBER = {4},
     PAGES = {855--920},
      ISSN = {1474-7480,1475-3030},
   MRCLASS = {11F80 (11R34)},
  MRNUMBER = {2979825},
MRREVIEWER = {Fausto\ Jarqu\'in Z\'arate},
       DOI = {10.1017/S1474748012000023},
       URL = {https://doi.org/10.1017/S1474748012000023},
}

@article{extremal,
  title={Extremal weights and a tameness criterion for mod $ p $ Galois representations},
  author={Le, Daniel and Le Hung, Bao Viet and Levin, Brandon and Morra, Stefano},
  journal={Journal of the European Mathematical Society},
  year={2024},
  PAGES = {published online first}
}

@article {EGH,
    AUTHOR = {Emerton, Matthew and Gee, Toby and Herzig, Florian},
     TITLE = {Weight cycling and {S}erre-type conjectures for unitary
              groups},
   JOURNAL = {Duke Math. J.},
  FJOURNAL = {Duke Mathematical Journal},
    VOLUME = {162},
      YEAR = {2013},
    NUMBER = {9},
     PAGES = {1649--1722},
      ISSN = {0012-7094,1547-7398},
   MRCLASS = {11F30 (11F70 11F75 20C33)},
  MRNUMBER = {3079258},
MRREVIEWER = {Gabor\ Wiese},
       DOI = {10.1215/00127094-2266365},
       URL = {https://doi.org/10.1215/00127094-2266365},
}

@article {CHT,
    AUTHOR = {Clozel, Laurent and Harris, Michael and Taylor, Richard},
     TITLE = {Automorphy for some {$l$}-adic lifts of automorphic mod {$l$}
              {G}alois representations},
      NOTE = {With Appendix A, summarizing unpublished work of Russ Mann,
              and Appendix B by Marie-France Vign\'eras},
   JOURNAL = {Publ. Math. Inst. Hautes \'Etudes Sci.},
  FJOURNAL = {Publications Math\'ematiques. Institut de Hautes \'Etudes
              Scientifiques},
    NUMBER = {108},
      YEAR = {2008},
     PAGES = {1--181},
      ISSN = {0073-8301,1618-1913},
   MRCLASS = {11F80 (11G18 11R34)},
  MRNUMBER = {2470687},
MRREVIEWER = {Mark\ Kisin},
       DOI = {10.1007/s10240-008-0016-1},
       URL = {https://doi.org/10.1007/s10240-008-0016-1},
}

@article {BLGG,
    AUTHOR = {Barnet-Lamb, Thomas and Gee, Toby and Geraghty, David},
     TITLE = {Serre weights for rank two unitary groups},
   JOURNAL = {Math. Ann.},
  FJOURNAL = {Mathematische Annalen},
    VOLUME = {356},
      YEAR = {2013},
    NUMBER = {4},
     PAGES = {1551--1598},
      ISSN = {0025-5831,1432-1807},
   MRCLASS = {11F33},
  MRNUMBER = {3072811},
MRREVIEWER = {David\ L.\ Savitt},
       DOI = {10.1007/s00208-012-0893-y},
       URL = {https://doi.org/10.1007/s00208-012-0893-y},
}

@article {HuWang2,
    AUTHOR = {Hu, Yongquan and Wang, Haoran},
     TITLE = {Multiplicity one for the {$\bmod\, p$} cohomology of
              {S}himura curves: the tame case},
   JOURNAL = {Math. Res. Lett.},
  FJOURNAL = {Mathematical Research Letters},
    VOLUME = {25},
      YEAR = {2018},
    NUMBER = {3},
     PAGES = {843--873},
      ISSN = {1073-2780,1945-001X},
   MRCLASS = {11F70},
  MRNUMBER = {3847337},
MRREVIEWER = {Zhengyu\ Mao},
       DOI = {10.4310/MRL.2018.v25.n3.a6},
       URL = {https://doi.org/10.4310/MRL.2018.v25.n3.a6},
}

@article {Breuiloriginal,
    AUTHOR = {Breuil, Christophe},
     TITLE = {Sur un probl\`eme de compatibilit\'e{} local-global modulo
              {$p$} pour {${\rm GL}_2$}},
   JOURNAL = {J. Reine Angew. Math.},
  FJOURNAL = {Journal f\"ur die Reine und Angewandte Mathematik. [Crelle's
              Journal]},
    VOLUME = {692},
      YEAR = {2014},
     PAGES = {1--76},
      ISSN = {0075-4102,1435-5345},
   MRCLASS = {11F80 (22E50 22E55)},
  MRNUMBER = {3274546},
MRREVIEWER = {Michael\ M.\ Schein},
       DOI = {10.1515/crelle-2012-0083},
       URL = {https://doi.org/10.1515/crelle-2012-0083},
}

@book {HT,
    AUTHOR = {Harris, Michael and Taylor, Richard},
     TITLE = {The geometry and cohomology of some simple {S}himura
              varieties},
    SERIES = {Annals of Mathematics Studies},
    VOLUME = {151},
      NOTE = {With an appendix by Vladimir G. Berkovich},
 PUBLISHER = {Princeton University Press, Princeton, NJ},
      YEAR = {2001},
     PAGES = {viii+276},
      ISBN = {0-691-09090-4},
   MRCLASS = {11G18 (11F70 11S37 14G35 22E45)},
  MRNUMBER = {1876802},
MRREVIEWER = {James\ Milne},
}

@incollection {labesse,
    AUTHOR = {Labesse, J.-P.},
     TITLE = {Changement de base {CM} et s\'eries discr\`etes},
 BOOKTITLE = {On the stabilization of the trace formula},
    SERIES = {Stab. Trace Formula Shimura Var. Arith. Appl.},
    VOLUME = {1},
     PAGES = {429--470},
 PUBLISHER = {Int. Press, Somerville, MA},
      YEAR = {2011},
      ISBN = {978-1-57146-227-5},
   MRCLASS = {22E45},
  MRNUMBER = {2856380},
}

@article{localglobal,
  title={Local-global compatibility in the p-adic {L}anglands programme for {${\rm GL}_2/\mathbb {Q}$}},
  author={Emerton, Matthew},
  journal={preprint},
  volume={3},
  number={4},
 PAGES ={1-119},
  year={2011}
}

@article {BDJ,
    AUTHOR = {Buzzard, Kevin and Diamond, Fred and Jarvis, Frazer},
     TITLE = {On {S}erre's conjecture for mod {$\ell$} {G}alois
              representations over totally real fields},
   JOURNAL = {Duke Math. J.},
  FJOURNAL = {Duke Mathematical Journal},
    VOLUME = {155},
      YEAR = {2010},
    NUMBER = {1},
     PAGES = {105--161},
      ISSN = {0012-7094,1547-7398},
   MRCLASS = {11F80 (11F33 11F41)},
  MRNUMBER = {2730374},
MRREVIEWER = {Michael\ M.\ Schein},
       DOI = {10.1215/00127094-2010-052},
       URL = {https://doi.org/10.1215/00127094-2010-052},
}

@article {power-invariant,
    AUTHOR = {Hamann, Eloise},
     TITLE = {On power-invariance},
   JOURNAL = {Pacific J. Math.},
  FJOURNAL = {Pacific Journal of Mathematics},
    VOLUME = {61},
      YEAR = {1975},
    NUMBER = {1},
     PAGES = {153--159},
      ISSN = {0030-8730,1945-5844},
   MRCLASS = {13J05},
  MRNUMBER = {407009},
MRREVIEWER = {J.\ T.\ Arnold},
       URL = {http://projecteuclid.org/euclid.pjm/1102868233},
}

@article {barsottitatedef,
    AUTHOR = {Breuil, Christophe},
     TITLE = {Groupes {$p$}-divisibles, groupes finis et modules filtr\'es},
   JOURNAL = {Ann. of Math. (2)},
  FJOURNAL = {Annals of Mathematics. Second Series},
    VOLUME = {152},
      YEAR = {2000},
    NUMBER = {2},
     PAGES = {489--549},
      ISSN = {0003-486X,1939-8980},
   MRCLASS = {14L15 (14F30 14L05)},
  MRNUMBER = {1804530},
MRREVIEWER = {Mark\ Kisin},
       DOI = {10.2307/2661391},
       URL = {https://doi.org/10.2307/2661391},
}

@article {derivedGaldef,
    AUTHOR = {Galatius, S. and Venkatesh, A.},
     TITLE = {Derived {G}alois deformation rings},
   JOURNAL = {Adv. Math.},
  FJOURNAL = {Advances in Mathematics},
    VOLUME = {327},
      YEAR = {2018},
     PAGES = {470--623},
      ISSN = {0001-8708,1090-2082},
   MRCLASS = {11F80 (11F75 14D24 14F05)},
  MRNUMBER = {3762000},
MRREVIEWER = {Frank\ Calegari},
       DOI = {10.1016/j.aim.2017.08.016},
       URL = {https://doi.org/10.1016/j.aim.2017.08.016},
}

@article {crystabellinedef,
    AUTHOR = {Hu, Yongquan and Pa\v{s}k\={u}nas, Vytautas},
     TITLE = {On crystabelline deformation rings of {${\rm
              Gal}(\overline{\Bbb Q}_p/\Bbb Q_p)$}},
      NOTE = {With an appendix by Jack Shotton},
   JOURNAL = {Math. Ann.},
  FJOURNAL = {Mathematische Annalen},
    VOLUME = {373},
      YEAR = {2019},
    NUMBER = {1-2},
     PAGES = {421--487},
      ISSN = {0025-5831,1432-1807},
   MRCLASS = {11F80 (11F85)},
  MRNUMBER = {3968877},
MRREVIEWER = {Hui\ June\ Zhu},
       DOI = {10.1007/s00208-018-1671-2},
       URL = {https://doi.org/10.1007/s00208-018-1671-2},
}

@article {modplanglands,
    AUTHOR = {Breuil, Christophe},
     TITLE = {Sur quelques repr\'esentations modulaires et {$p$}-adiques de
              {${\rm GL}_2(\mathbf{ Q_p})$}. {I}},
   JOURNAL = {Compositio Math.},
  FJOURNAL = {Compositio Mathematica},
    VOLUME = {138},
      YEAR = {2003},
    NUMBER = {2},
     PAGES = {165--188},
      ISSN = {0010-437X,1570-5846},
   MRCLASS = {11F33 (11F80 11F85)},
  MRNUMBER = {2018825},
MRREVIEWER = {David\ L.\ Savitt},
       DOI = {10.1023/A:1026191928449},
       URL = {https://doi.org/10.1023/A:1026191928449},
}

@article {padiclanglandspaskunas,
    AUTHOR = {Pa{\v{s}}k{\={u}}nas, Vytautas},
     TITLE = {The image of {C}olmez's {M}ontreal functor},
   JOURNAL = {Publ. Math. Inst. Hautes \'Etudes Sci.},
  FJOURNAL = {Publications Math\'ematiques. Institut de Hautes \'Etudes
              Scientifiques},
    VOLUME = {118},
      YEAR = {2013},
     PAGES = {1--191},
      ISSN = {0073-8301,1618-1913},
   MRCLASS = {22E35 (11S37 22E50)},
  MRNUMBER = {3150248},
MRREVIEWER = {Ivan\ Mati\'c},
       DOI = {10.1007/s10240-013-0049-y},
       URL = {https://doi.org/10.1007/s10240-013-0049-y},
}

@misc{BHHMS5,
      title={On the constituents of the mod $p$ cohomology of Shimura curves}, 
      author={Christophe Breuil and Florian Herzig and Yongquan Hu and Stefano Morra and Benjamin Schraen},
      year={2025},
      eprint={2506.16293},
      archivePrefix={arXiv},
      primaryClass={math.NT},
      url={https://arxiv.org/abs/2506.16293}, 
}

@article {Diagram,
    AUTHOR = {Dotto, Andrea and Le, Daniel},
     TITLE = {Diagrams in the {$\rm {mod}\, p$} cohomology of {S}himura
              curves},
   JOURNAL = {Compos. Math.},
  FJOURNAL = {Compositio Mathematica},
    VOLUME = {157},
      YEAR = {2021},
    NUMBER = {8},
     PAGES = {1653--1723},
      ISSN = {0010-437X,1570-5846},
   MRCLASS = {11F80 (11S37 22E50)},
  MRNUMBER = {4283560},
MRREVIEWER = {Neil\ P.\ Dummigan},
       DOI = {10.1112/s0010437x21007375},
       URL = {https://doi.org/10.1112/s0010437x21007375},
}

@misc{BHHMS4,
      title={Finite length for unramified $\mathrm{GL}_2$}, 
      author={Christophe Breuil and Florian Herzig and Yongquan Hu and Stefano Morra and Benjamin Schraen},
      year={2025},
      eprint={2501.03644},
      archivePrefix={arXiv},
      primaryClass={math.NT},
      url={https://arxiv.org/abs/2501.03644}, 
}

@article {MR2207783,
    AUTHOR = {Emerton, Matthew},
     TITLE = {On the interpolation of systems of eigenvalues attached to
              automorphic {H}ecke eigenforms},
   JOURNAL = {Invent. Math.},
  FJOURNAL = {Inventiones Mathematicae},
    VOLUME = {164},
      YEAR = {2006},
    NUMBER = {1},
     PAGES = {1--84},
      ISSN = {0020-9910,1432-1297},
   MRCLASS = {22E55 (11F70 11F75 11F85)},
  MRNUMBER = {2207783},
MRREVIEWER = {Payman\ L.\ Kassaei},
       DOI = {10.1007/s00222-005-0448-x},
       URL = {https://doi.org/10.1007/s00222-005-0448-x},
}

@article {MR1900706,
    AUTHOR = {Schneider, P. and Teitelbaum, J.},
     TITLE = {Banach space representations and {I}wasawa theory},
   JOURNAL = {Israel J. Math.},
  FJOURNAL = {Israel Journal of Mathematics},
    VOLUME = {127},
      YEAR = {2002},
     PAGES = {359--380},
      ISSN = {0021-2172,1565-8511},
   MRCLASS = {22E50},
  MRNUMBER = {1900706},
MRREVIEWER = {Dubravka\ Ban},
       DOI = {10.1007/BF02784538},
       URL = {https://doi.org/10.1007/BF02784538},
}

@article {MR3554238,
    AUTHOR = {Shotton, Jack},
     TITLE = {Local deformation rings for {${\mathrm GL}_2$} and a
              {B}reuil-{M}\'ezard conjecture when {$\ell\ne p$}},
   JOURNAL = {Algebra Number Theory},
  FJOURNAL = {Algebra \& Number Theory},
    VOLUME = {10},
      YEAR = {2016},
    NUMBER = {7},
     PAGES = {1437--1475},
      ISSN = {1937-0652,1944-7833},
   MRCLASS = {11S37},
  MRNUMBER = {3554238},
MRREVIEWER = {Wen-Wei\ Li},
       DOI = {10.2140/ant.2016.10.1437},
       URL = {https://doi.org/10.2140/ant.2016.10.1437},
}

@article {MR3272276,
    AUTHOR = {Caraiani, Ana},
     TITLE = {Monodromy and local-global compatibility for {$l=p$}},
   JOURNAL = {Algebra Number Theory},
  FJOURNAL = {Algebra \& Number Theory},
    VOLUME = {8},
      YEAR = {2014},
    NUMBER = {7},
     PAGES = {1597--1646},
      ISSN = {1937-0652,1944-7833},
   MRCLASS = {11F80 (11G18 11R39)},
  MRNUMBER = {3272276},
MRREVIEWER = {Fausto\ Jarqu\'in Z\'arate},
       DOI = {10.2140/ant.2014.8.1597},
       URL = {https://doi.org/10.2140/ant.2014.8.1597},
}

@article {Lazard,
    AUTHOR = {Lazard, Michel},
     TITLE = {Groupes analytiques {$p$}-adiques},
   JOURNAL = {Inst. Hautes \'Etudes Sci. Publ. Math.},
  FJOURNAL = {Institut des Hautes \'Etudes Scientifiques. Publications
              Math\'ematiques},
    NUMBER = {26},
      YEAR = {1965},
     PAGES = {389--603},
      ISSN = {0073-8301,1618-1913},
   MRCLASS = {14.50},
  MRNUMBER = {209286},
       URL = {http://www.numdam.org/item?id=PMIHES_1965__26__389_0},
}

@book {analyticpropgroup,
    AUTHOR = {Dixon, J. D. and du Sautoy, M. P. F. and Mann, A. and Segal,
              D.},
     TITLE = {Analytic pro-{$p$} groups},
    SERIES = {Cambridge Studies in Advanced Mathematics},
    VOLUME = {61},
   EDITION = {Second},
 PUBLISHER = {Cambridge University Press, Cambridge},
      YEAR = {1999},
     PAGES = {xviii+368},
      ISBN = {0-521-65011-9},
   MRCLASS = {20E18 (20G30)},
  MRNUMBER = {1720368},
MRREVIEWER = {Alexander\ Lubotzky},
       DOI = {10.1017/CBO9780511470882},
       URL = {https://doi.org/10.1017/CBO9780511470882},
}

@incollection {diamond07,
    AUTHOR = {Diamond, Fred},
     TITLE = {A correspondence between representations of local {G}alois
              groups and {L}ie-type groups},
 BOOKTITLE = {{$L$}-functions and {G}alois representations},
    SERIES = {London Math. Soc. Lecture Note Ser.},
    VOLUME = {320},
     PAGES = {187--206},
 PUBLISHER = {Cambridge Univ. Press, Cambridge},
      YEAR = {2007},
      ISBN = {978-0-521-69415-5},
   MRCLASS = {11F80 (11F33 11F41 11F85)},
  MRNUMBER = {2392355},
MRREVIEWER = {Ian\ Kiming},
       DOI = {10.1017/CBO9780511721267.006},
       URL = {https://doi.org/10.1017/CBO9780511721267.006},
}
\bibliographystyle{amsalpha.bst}
\end{document}